\DeclareMathOperator{\dom}{dom}
\DeclareMathOperator{\last}{last}
\begin{document}

\begin{abstract}
A relational  structure is {\em indivisible} if for every partition of its set of elements into two parts there exists an embedding of the structure into one of the parts of the partition. A relational structure is  {\em homogeneous} if every isomorphism of a finite induced substructure to a finite induced substructure extends to an automorphism.  This article establishes a necessary and sufficient condition for  Henson type, see \cite{Henson}, homogeneous structures to be indivisible.    
 \end{abstract}

\title{Colouring homogeneous structures}      
\author [N. Sauer]{Norbert Sauer}
\address{N. Sauer: University of Calgary, Department of Mathematics and Statistics, Calgary, Alberta, Canada T2N 1N4}
\email{nsauer@ucalgary.ca}

\subjclass[2000]{Primary: 03E02. Secondary: 22F05, 05C55, 05D10, 22A05, 51F99}
\keywords{Partitions of metric spaces,  Ramsey theory,  Metric geometry, Urysohn metric space, Oscillation stability.}
\date{January 04 2010 }
\maketitle

\newcommand{\snl} {\\ \smallskip}
\newcommand{\mnl}{\\ \medskip} 
\newcommand{\Bnl} {\\ Bigskip}

\newcommand{\edge}[1]{\makebox[22pt]{$\circ\mspace{-6 mu}\stackrel{#1}{-}\mspace{-6 mu}\circ$}}
\newcommand{\nedge}[1]{\makebox[29pt]{$\circ\mspace{-6 mu}\stackrel{#1}{\cdots\cdots}\mspace{-6 mu}\circ$}}

\newcommand{\Edge}[2]{\, \circ{{\stackrel{#1}{\sim}}_#2}\circ\, }  
\newcommand{\nEdge}[2]{\, \circ\negmedspace\stackrel{#1}{\cdots}_{#2}\negmedspace\circ\, }

\newcommand{\und}[1]{\underline{#1}}

\newcommand{\tr}{\, |\, }

\newcommand{\bnl}{\bigskip\noindent}
\newcommand{\rro}{\boldsymbol{\rho}}
\newcommand{\rrro}{\overline{\boldsymbol{\rho}}}
\newcommand{\bet}{\boldsymbol{\beta}}
\newcommand{\fsub}{\underset{finite}{\subset}}

\newcommand{\restrict}[1]{\mspace{-3mu}\mathbin{\downarrow}\mspace{-3mu} #1}
\newcommand{\Kat}{Kat\u{e}tov }
\newcommand{\Fra}{Fra\"{\i}ss\'e}
\newcommand{\str}[1]{\stackrel{#1}{\sim}}
\newcommand{\spe}{\mathrm{spec}}
\newcommand{\X}{{\underline X}}
\newcommand{\U}{{\underline U}}
\newcommand{\V}{{\underline V}}
\newcommand{\W}{{\underline W}}
\newcommand{\Y}{{\underline Y}}
\newcommand{\Z}{{\underline Z}}
\newcommand{\R}{{\underline R}}
\newcommand{\T}{{\underline T}}
\renewcommand{\S}{{\underline S}}
\renewcommand{\P}{{\underline P}}
\newcommand{\Q}{{\underline Q}}
\renewcommand{\L}{{\underline L}} 
\newcommand{\M}{{\underline M}}
\newcommand{\N}{{\underline N}}
\newcommand{\K}{{\underline K}}
\newcommand{\J}{{\underline J}}
\newcommand{\I}{{\underline I}}
\newcommand{\D}{{\underline D}}
\newcommand{\C}{{\underline C}}
\newcommand{\E}{{\underline E}}
\newcommand{\B}{{\underline B}}

\newcommand{\rem}[1]{\textcolor{blue}{#1}}

\newcommand{\End}{{\hskip 330pt $\vartriangle$}}        

\newcommand{\Jb}{\boldsymbol{\mathscr{I}}}

\newcommand{\upl}[1]{\sideset{ }{^{c}}{\operatorname{\mathit{{#1}}}}}
\newcommand{\lpl}[1]{\sideset{ }{_{\negmedspace c}}{\operatorname{\mathit{{#1}}}}}
\newcommand{\ipl}[1]{\sideset{ }{_{\negmedspace c}^{c}}{\operatorname{\mathit{{#1}}}}}

\newcommand{\bde}{\overline{\de}}
\newcommand{\dw}[1]{\ulcorner\negthickspace#1\negthickspace\urcorner}

\newcommand\cat{{{\, }^\frown}}
\newcommand\catt{{{}^{\frown *}}}

\newcommand{\concat}{%
  \mathord{
    \mathchoice
    {\raisebox{1ex}{\scalebox{.7}{$\frown$}}}
    {\raisebox{1ex}{\scalebox{.7}{$\frown$}}}
    {\raisebox{.7ex}{\scalebox{.5}{$\frown$}}}
    {\raisebox{.7ex}{\scalebox{.5}{$\frown$}}}
  }
}

\newcommand{\iiota}{\overline{\iota}}

\newtheorem{thm}{Theorem}[section]
\newtheorem*{thm*}{Theorem}
\newtheorem{lem}{Lemma}[section]
\newtheorem{ass}{Assumption}[section]
\newtheorem{defin}{Definition}[section]
\newtheorem{example}{Example}[section] 
\newtheorem{fact}{Fact}[section] 
\newtheorem{cond}{Condition}

\newtheorem{prop}{Proposition}[section]
\newtheorem{obs}{Observation}[section]
\newtheorem{cor}{Corollary}[section]
\newtheorem{sublem}{Sublemma}[section]
\newtheorem{claim}{Claim}
\newtheorem{question}{Question}[section] 
\newtheorem{note}{Note}[section]

\newtheorem{problem}{Problem}[section]
\newtheorem{remark}{Remark}[section]

\newcommand\con{\char'136{}}

\section{Preliminaries}\label{sect:prelim}

The reader of this article will need to have had some previous exposure to \Fra\  theory or homogeneous structures. See \cite{Fra} or \cite{Fraisse} or \cite{Hodges}. Even so,  some of the  necessary information about \Fra\  theory will be provided in the form of definitions or just stated as facts.   This  section, Section \ref{sect:prelim}, presents the notation together with some basic information on homogeneous structures. It is followed by an introduction, in  Section \ref{sect:intro}. Section \ref{sect:prelim} contains two subsections providing some facts which hold for all countable homogeneous structures, facts which might not have been widely noticed. But helpful for some of the arguments in this article.  Subsection \ref{subsect:uslemneed} is about a characterization of the images of self embeddings. This has appeared as Theorem 5.1  in \cite{Pos-Cop}. Subsection~\ref{subsect:gropversio} deals with some aspects of the automorphism groups of homogeneous structures. Essentially already discussed in \cite{Pos-Cop} and \cite{Siblings}. For convenience collected in Subsection  \ref{subsect:gropversio}. 

For a function $f$, let $f[S]=\{f(s)\mid s\in \dom(f)\cap S\}$. 
For any natural  number $n$ and any   tuple $\vec{a}=(a_0, a_1,a_2, \dots, a_{n-1})$ let $f(\vec{a})=(f(a_0),f(a_1), f(a_2), \dots,f(a_{n-1}))$. For a tuple $\vec{a}$  let $x\in \vec{a}$ or $X\subseteq \vec{a}$ or $\vec{a}\subseteq X$ mean that $x$ is an entry of $\vec{a}$ or that the set $S$ is a subset of the set of entries of $\vec{a}$ or that the set of entries of $\vec{a}$ is a subset of the set $X$ respectively.     Relational structures will usually be denoted by roman letters and their domains by math italic letters.   In this paper, we only consider relational structures and often call them just structures.  

We do not allow 0-ary relations. Let $\boldsymbol{L}$ be a relational language. An {\em $\boldsymbol{L}$-structure} will be a realization $\mathrm{A}$ of $\boldsymbol{L}$ in which   for every $n$-ary relation symbol $R$, if  $R_A(x_0,x_1,\dots, x_{n-1})$ is true then $x_i\not=x_j$ for all $i\not=j$.

Let $\boldsymbol{L}$ be a relational language.  A {\em monomorphism} of an $\boldsymbol{L}$-structure $\mathrm{A}$ to an $\boldsymbol{L}$-structure $\mathrm{B}$ is an injection $f: A\to B$ so that  for all relation symbols $R\in \boldsymbol{L}$ the relation $R_\mathrm{A}(\vec{a})$ implies the relation $R_\mathrm{B}(f(\vec{a}))$. If  the identity map on $A$ is a monomorphism then $\mathrm{A}$ is a {\em substructure} of $\mathrm{B}$.   A monomorphism of $\mathrm{A}$ to $\mathrm{B}$ is an {\em embedding} of $\mathrm{A}$ to $\mathrm{B}$ if the function $f^{-1}: f[A]\to A$ is also a monomorphism.   If $f$ is an embedding and onto then $f$ is an {\em isomorphism}. The structure $\mathrm{A}$ is an {\em induced substructure of $\mathrm{B}$} if the identity map on $A$ is an embedding. If $S\subseteq B$ then the {\em restriction} of $\mathrm{B}$ to $S$, denoted $\mathrm{B}_{\downarrow S}$,  is the structure $\mathrm{S}$ with domain $S$ for which the identity map on $S$ is an embedding of $\mathrm{S}$ to $\mathrm{B}$.  The class of finite structures which can be embedded into a relational structure $\mathrm{U}$ is the {\em age of\/ $\mathrm{U}$}.   If $f$ is an embedding of $\mathrm{A}$ to $\mathrm{B}$ then the restriction of $\mathrm{B}$ to $f[S]$ is a {\em copy} of $\mathrm{A}$ in $\mathrm{B}$. A  copy of $\mathrm{B}$, without specifying a target structure, will usually mean a copy of $\mathrm{B}$ in $\mathrm{B}$. 

\begin{defin}\label{defin:indiv}
A structure $\mathrm{U}$ is {\em indivisible} if for every colouring function $\boldsymbol{c}: U\to 2$ there exists a copy $\mathrm{C}$ of $\mathrm{U}$ in $\mathrm{U}$ for which the colouring function $\boldsymbol{c}$ is constant on $C$.  
\end{defin}

Let $\mathrm{U}$ be a relational structure and $F\subseteq U$. For $\{a,b\}\subseteq F$ and $a\not= b$ let $a\edge{F} b$  if there exists a relation $R$ of $\mathrm{U}$ and a tuple $\vec{x}$ with  $\{a,b\}\subseteq \vec{x}\subseteq F$ and with $R(\vec{x})$. Otherwise let $a \, \, \, \, \nedge{F} \, \, \, \,  b$.  Note that if $a \, \, \, \, \nedge{F}  \, \, \, \, b$ then $a\not=b$. The graph with $F$ as the set of vertices and the edge relation $\edge{F}$ is the {\em Gaifman graph} or {\em 2-section} of the structure $\mathrm{U}_{\downarrow F}$.   A structure $\mathrm{K}$ is {\em irreducible} if its Gaifman graph is complete.

Let $\boldsymbol{L}$ be a relational language.  A pair  $\mathrm{M}$ and $\mathrm{M}'$ of $\boldsymbol{L}$-structures with $M=B\cup C$ and $M'=B\cup C'$ and with with $B\cap C=B\cap C'=C\cap C'=\emptyset$ is an {\em  amalgamation instance} if $\mathrm{M}_{\downarrow B}=\mathrm{M}'_{\downarrow B}$. The $\boldsymbol{L}$-structure $\mathrm{N}$ with $N=B\cup C\cup C'$ and with 
\begin{enumerate}
\item $\mathrm{N}_{\downarrow B\cup C}=\mathrm{M}_{\downarrow B\cup C}$ and $\mathrm{N}_{\downarrow B\cup C'}=\mathrm{M}'_{\downarrow B\cup C'}$,
\item $x \, \, \, \, \nedge{N} \, \, \, \, y$ for all $x\in C$ and all $y\in C'$,
\end{enumerate}
is the {\em free amalgam} of the structures $\mathrm{M}$ and $\mathrm{M}'$.  A class $\mathfrak{A}$ of $\boldsymbol{L}$-structures is a {\em  free amalgamation age} if it is closed under induced substructures and if the free amalgam of every amalgamation instance of structures in $\mathfrak{A}$ is again a structure in $\mathfrak{A}$. 

It is often convenient to specify a class of structures $\mathfrak{A}$ closed under induced substructures by providing a ``boundary" of the class $\mathfrak{A}$. For example the class of simple graphs is given by forbidding loops and edges $(a,b)$ for which $(b,a)$ is not an edge. Within the class of simple graphs forbidding then further triangles specifies the class of triangle free graphs. In general, let $\mathfrak{A}$ be a class of finite relational structures in language $\boldsymbol{L}$  which is closed under induced substructures. Note that there exists then a set $\mathfrak{K}$ of structures, called the {\em boundary} of $\mathfrak{A}$, so that:
\begin{enumerate}
\item No structure in $\mathfrak{K}$ can be embedded into any other structure in $\mathfrak{K}$.
\item If $\mathrm{C}$ is a finite $\boldsymbol{L}$-structure then $\mathrm{C}\in \mathfrak{A}$ if and only if there is no structure $\mathrm{K}\in \mathfrak{K}$ which has an embedding into $\mathrm{C}$.
\item Up to isomorphisms the boundary of $\mathfrak{A}$ is unique.    
\end{enumerate}   

\begin{note}\label{note:freboundcop}
Let $\mathfrak{A}$ be a free amalgamation age. Then every structure in the boundary of\/ $\mathfrak{A}$ is irreducible. Conversly, if every structure in the boundary of\/ $\mathfrak{A}$ is irreducible then $\mathfrak{A}$ is a free amalgamation age. 
\end{note}

A structure $\mathrm{U}$ is  {\em homogeneous} if for every finite subset $A$ of $U$ and every embedding $f$ of $\mathrm{U}_{\downarrow A}$ into $\mathrm{U}$ there exists an automorphism $g$ of $\mathrm{U}$ which agrees with $f$ on $A$. It follows from the general \Fra \ theory that there exists for every countable free amalgamation age $\mathfrak{A}$ a countable homogeneous structure $\mathrm{U}$ whose age is equal to $\mathfrak{A}$.  A homogeneous structure $\mathrm{U}$ is a {\em free amalgamation homogeneous structure} if its age $\mathfrak{A}$ is a free amalgamation age. Free amalgamation homogeneous structures have been discovered as a class of countable homogeneous structures by W. Henson, see \cite{Henson}.  For example,  the class of finite simple graphs is a free amalgamation age. The countable homogeneous structure whose age is the class of all finite simple graphs is the {\em Rado graph}.  

 For an important example note that the class of $\mathrm{K}_n$-free finite graphs, that is the age of finite graphs with boundary $\{\mathrm{K}_n\}$,  has free amalgamation.   The {\em $\mathrm{K}_n$-free homogeneous graph} is the countable  homogeneous graph whose age is the free amalgamation age consisting of all the finite simple graphs which do not embed the complete graph on $n$ vertices.  The {\em random $k$-uniform homogeneous hypergraph is the countable homogeneous structure whose age is the class of all finite $k$-uniform hypergraphs.}

The next  Facts \ref{fact:uniquhomage},  \ref{fact:contemb} follow from the general \Fra\ theory, see \cite{Fra} .  In particular Fact \ref{fact:contemb}, the  {\em extension property} of homogeneous structures,  will be used repeatedly:

\begin{fact}\label{fact:uniquhomage}
Two countable homogeneous structures having the same age are isomorphic.
\end{fact}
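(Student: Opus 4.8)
The plan is to construct the isomorphism by a back-and-forth argument, the standard device of \Fra\ theory. Write $\mathrm{U}$ and $\mathrm{V}$ for the two countable homogeneous structures and let $\mathfrak{A}=\age(\mathrm{U})=\age(\mathrm{V})$ be their common age. Since both domains are countable, I first fix enumerations $U=\{u_0,u_1,\dots\}$ and $V=\{v_0,v_1,\dots\}$, and then build an increasing chain $p_0\subseteq p_1\subseteq\cdots$ of finite partial isomorphisms, each $p_n$ being an isomorphism of $\mathrm{U}_{\downarrow A_n}$ onto $\mathrm{V}_{\downarrow B_n}$ for finite $A_n\subseteq U$ and $B_n\subseteq V$, starting from $p_0=\emptyset$. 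The construction alternates so as to force $u_n\in A_{n+1}$ at the even stages (the ``forth'' steps) and $v_n\in B_{n+1}$ at the odd stages (the ``back'' steps). The union $f=\bigcup_n p_n$ is then a bijection of $U$ onto $V$, and because every relation symbol of $\boldsymbol{L}$ has finite arity, each relational tuple lies entirely in some $A_N$; since $p_N$ and $p_N^{-1}$ preserve that relation, $f$ and $f^{-1}$ preserve all relations, so $f$ is an isomorphism of $\mathrm{U}$ onto $\mathrm{V}$.

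Before running this I would isolate the essential tool, a \emph{one-point extension property}: if $\mathrm{W}$ is either of the two structures, $A\subseteq W$ is finite, and $\mathrm{C}\in\mathfrak{A}$ is a one-point extension of $\mathrm{W}_{\downarrow A}$ (so $\mathrm{C}_{\downarrow A}=\mathrm{W}_{\downarrow A}$ and $C=A\cup\{c\}$), then some $w\in W$ realizes $\mathrm{C}$, meaning that the identity on $A$ together with $w\mapsto c$ is an isomorphism $\mathrm{W}_{\downarrow A\cup\{w\}}\to\mathrm{C}$. To prove this I use that $\mathrm{C}\in\mathfrak{A}=\age(\mathrm{W})$ yields an embedding $g\colon\mathrm{C}\to\mathrm{W}$; then $g\restriction A$ is an embedding of the finite induced substructure $\mathrm{W}_{\downarrow A}$ into $\mathrm{W}$, so the homogeneity of $\mathrm{W}$ supplies an automorphism $\phi$ of $\mathrm{W}$ agreeing with $g$ on $A$. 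The element $w=\phi^{-1}(g(c))$ is then the one required, since $\phi$ carries $A\cup\{w\}$ isomorphically onto $g[C]$ while fixing each point of $A$, and $g$ identifies $\mathrm{W}_{\downarrow g[C]}$ with $\mathrm{C}$.

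With this property in hand each forth step is immediate: given $p_n\colon A_n\to B_n$ with $u_n\notin A_n$, the finite structure $\mathrm{U}_{\downarrow A_n\cup\{u_n\}}$ lies in $\mathfrak{A}$, and transporting it across the isomorphism $p_n$ produces a one-point extension of $\mathrm{V}_{\downarrow B_n}$ that is again a member of $\mathfrak{A}$; the extension property for $\mathrm{V}$ then yields $w\in V$, and I set $p_{n+1}=p_n\cup\{(u_n,w)\}$. The back steps are symmetric, applying the extension property to $\mathrm{U}$. The main obstacle, such as it is, lies entirely in the bookkeeping around this extension property: one must verify that $p_n$ genuinely transports the isomorphism type of $\mathrm{U}_{\downarrow A_n\cup\{u_n\}}$ to a legitimate member of $\mathfrak{A}$ over $B_n$, and that adjoining $(u_n,w)$ keeps $p_{n+1}$ a partial isomorphism. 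Both follow because $p_n$ is already an isomorphism on $A_n$ and $w$ was chosen so that $\mathrm{V}_{\downarrow B_n\cup\{w\}}$ has exactly the type of $\mathrm{U}_{\downarrow A_n\cup\{u_n\}}$ over the matched pair $A_n,B_n$; everything else is the routine verification that a countable back-and-forth chain unions to a total isomorphism.
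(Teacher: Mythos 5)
Your proof is correct: the paper states this fact without proof, citing it as part of general Fra\"{\i}ss\'e theory, and your back-and-forth construction is precisely the classical argument behind that citation. The one-point extension property you isolate (and correctly derive from homogeneity plus membership of the extension in the common age) is just the special case of the paper's Fact \ref{fact:contemb} that the back-and-forth steps need, so the approach matches the standard/intended one.
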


\begin{fact}\label{fact:contemb}
Let $\mathrm{U}$ be a homogeneous structure and $\mathrm{A}$ an element of the age of $\mathrm{U}$ with  $S\subseteq A\cap U$ for which $\mathrm{U}_{\downarrow S}=\mathrm{A}_{\downarrow S}$. Then there exists an embedding $f: A\to U$ with $f(s)=s$ for all $s\in S$.  
\end{fact}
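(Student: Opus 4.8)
The plan is to reduce the statement to a single application of homogeneity after correcting an arbitrary embedding by an automorphism. First I would invoke the hypothesis that $\mathrm{A}$ lies in the age of $\mathrm{U}$ to obtain some embedding $g\colon A\to U$. Since the elements of the age are finite, $A$, and hence its subset $S$, is finite; this finiteness is exactly what the definition of homogeneity demands.

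The key observation is that the restriction $g\upharpoonright S$ is not merely an embedding of $\mathrm{A}_{\downarrow S}$, but, by the hypothesis $\mathrm{U}_{\downarrow S}=\mathrm{A}_{\downarrow S}$, an embedding of the finite induced substructure $\mathrm{U}_{\downarrow S}$ into $\mathrm{U}$. That restricting an embedding to a subset of its domain again yields an embedding of the induced substructure is routine, but this is precisely the place where the identification $\mathrm{U}_{\downarrow S}=\mathrm{A}_{\downarrow S}$ is used, and it is the only genuinely delicate point in the argument. With $g\upharpoonright S$ recognized as an embedding of the finite substructure $\mathrm{U}_{\downarrow S}$ of $\mathrm{U}$ into $\mathrm{U}$, homogeneity supplies an automorphism $h$ of $\mathrm{U}$ that agrees with $g$ on $S$, that is, $h(s)=g(s)$ for every $s\in S$.

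I would then set $f=h^{-1}\circ g$. As a composite of the embedding $g$ with the isomorphism $h^{-1}$ of $\mathrm{U}$ onto itself, $f$ is an embedding of $\mathrm{A}$ into $\mathrm{U}$; and for $s\in S$ one computes $f(s)=h^{-1}(g(s))=h^{-1}(h(s))=s$, so $f$ fixes $S$ pointwise, as required. The argument presents no serious obstacle once the arbitrary embedding $g$ is ``pulled back'' to fix $S$ by composing with $h^{-1}$; the only care needed is in confirming that restrictions and composites of embeddings are again embeddings and, as noted above, that $\mathrm{U}_{\downarrow S}$ really is the domain structure to which homogeneity is applied.
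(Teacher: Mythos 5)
Your proposal is correct. Note that the paper does not actually prove Fact \ref{fact:contemb}: it is stated as a consequence of general \Fra\ theory with a citation to \cite{Fra}, so there is no in-paper argument to compare against. Your proof is precisely the standard argument that underlies such a citation: the hypothesis $\mathrm{A}\in\age(\mathrm{U})$ supplies some embedding $g\colon A\to U$; finiteness of $A$ (hence of $S$) together with the identification $\mathrm{U}_{\downarrow S}=\mathrm{A}_{\downarrow S}$ lets you read $g\upharpoonright S$ as an embedding of the finite induced substructure $\mathrm{U}_{\downarrow S}$ into $\mathrm{U}$, exactly matching the paper's definition of homogeneity; and composing with $h^{-1}$ for the automorphism $h$ provided by homogeneity yields $f=h^{-1}\circ g$ fixing $S$ pointwise. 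All the small verifications you flag (restrictions and composites of embeddings are embeddings) are indeed routine, so the argument is complete and correctly fills in what the paper leaves to the literature.
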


Let $\mathrm{G}$ be the group of automorphisms of a homogeneous structure $\mathrm{U}$. Then for $F$ a finite subset of $U$ let:
\[
\mathrm{G}_{F}:=\{g\in \mathrm{G}\mid \forall a\in F\, (g(a)=a)\}  
\]
The group $\mathrm{G}_{F}$ is a {\em stabilizer subgroup of $\mathrm{G}$}. 

A {\em type} $\T$ of $\mathrm{U}$ is a pair of the form $\langle F\tr x\rangle$  with $F$ a finite subset of $U$ and $x\in U\setminus F$. The set $F$ is the {\em sockel}, denoted by $\iota(\T)$,  of the type $\T$.  Two types $\langle F\tr x\rangle$ and $\langle E\tr y\rangle$ are {\em equal} if $F=E$ and if there exists a function $g\in \mathrm{G}_F$ with $g(x)=y$. The {\em typeset}, denoted by $\sigma(\T)$,  of the type $\T=\langle F\tr x\rangle$ is the set:
\[
\sigma(\T):= \{y\in U\setminus F\mid \exists g\in \mathrm{G}_F\, (g(x)=y)\}. 
\]
Note that two types $\S$ and $\T$ are equal if and only if $\iota(\S)=\iota(\T)$ and $\sigma(\S)=\sigma(\T)$. Typesets of different types with the same sockel are disjoint. 

 For example, let now $\mathrm{U}$ be the, say $\mathrm{K}_7$-free homogeneous graph. Let $F$ be a finite subset of $U$ for which the graph $\mathrm{U}_{\downarrow F}$ has clique number, say 5. Let $\T=\langle F\tr x\rangle$ be a  type of $\mathrm{U}$. Every isomorphism $f$ of a finite subset of $\sigma(\T)$ to a finite subset of $\sigma(\T)$ extends to an isomorphism fixing the set $F$ pointwise and hence to an automorphism in $\mathrm{G}_F$ which fixes $\sigma(\T)$ setwise. It follows that the graph  $\mathrm{U}_{\downarrow \sigma(\T)}$ is homogeneous.   Let $S\subseteq F$ be the set of points adjacent to $x$. If the graph $\mathrm{U}_{\downarrow S}$ has clique number, say 3,   then the age of the graph $\mathrm{U}_{\downarrow \sigma(\T)}$ is the class of all finite $\mathrm{K}_4$-free graphs. It follows that the graph $\mathrm{U}_{\downarrow \sigma(\T)}$ is then isomorphic to the $\mathrm{K}_4$-free homogeneous graph. Note that $\mathrm{G}_F$ restricted to $\sigma(\T)$ is the group of automorphisms of the graph $\mathrm{U}_{\downarrow \sigma(\T)}$.   Let now $\mathrm{U}$ be the random homogeneous $3$-uniform hypergraph. Any two two-element subsets of $U$ are then isomorphic.  Let $F\not=\emptyset$ be a finite subset of $U$ and $\T=\langle F\tr x\rangle$ be a type of $\mathrm{U}$.  Note that in this case there are isomorphisms of finite subsets of $\mathrm{U}_{\downarrow \sigma(\T)}$, in particular of two element subsets of $\mathrm{U}_{\downarrow \sigma(\T)}$,   which do not extend to an automorphism in $\mathrm{G}_F$. Nevertheless the hypergraph $\mathrm{U}_{\downarrow \sigma(\T)}$ is, in this case,  isomorphic to $\mathrm{U}$. This follows from Lemma \ref{lem:charercop}, the Pouzet Lemma, just as for the typesets of the Rado graph. In a similar way as discussed in the paragraph after Lemma~\ref{lem:charercop}. Also: Let now $\mathrm{U}$ be the 3-uniform homogeneous hypergraph whose age is the class of all finite 3-uniform hypergraphs  which do not contain an irreducible induced sub hypergraph on five vertices and six or more  hyperedges. Let $\T=\langle \{a,b\}\tr x\rangle$ be a type of $\mathrm{U}$ for which $\{a,b,x\}$ forms a hyperedge.  Let $y\in \sigma(\T)$ for which the sets $\{a,x,y\}$ and $\{b,x,y\}$ are hyperedges of $\mathrm{U}$. Then there is no $z\in \sigma(\T)$ for which the set $\{x,y,z\}$ forms a hyperedge of $\mathrm{U}$. On the other hand the structure $\mathrm{U}_{\downarrow \sigma(\T)}$ does contain hyperedges. It follows that the structure $\mathrm{U}_{\downarrow \sigma(\T)}$ is not homogeneous.

A {\em bundle of types} or just {\em bundle} is a set of types all of which are having the same sockel.  Note that the typesets of different types having the same sockel are disjoint. 



\begin{defin}\label{defin:groologo}
Let $\mathrm{G}$ be a subgroup of the symmetric group of a set $M$. Two $n$-tuples with entries in $M$ are equivalent if there exists a $g\in \mathrm{G}$ mapping one to the other. The  group $\mathrm{G}$  is {\em oligomorphic} if for every $n\in \omega$ the number of equivalence classes of the $n$-tuples is finite. A structure $\mathrm{M}$ is {\em oligomorphic} if its group of automorphisms is oligomorphic. 
\end{defin}

\begin{note}\label{note:iligolig}

A homogeneous structure $\mathrm{U}$ with group $\mathrm{G}$ of automorphisms is  oligomorphic if and only if for every finite $F\subseteq U$ the set of types with sockel $F$ is finite. 
\end{note}

 For a structure $\mathrm{A}$ let the {\em age} of $\mathrm{A}$, denoted $\rro(\mathrm{A})$, be the class of all finite structures   which have an embedding  into $\mathrm{A}$. For $S$ a subset of $U$ put $\rro(S)=\rro(\mathrm{U}_{\downarrow S})$ and call then $\rro(S)$ the {\em age} of $S$. Then $\rro(U)$ is the age of the structure $\mathrm{U}$. For $\T$ a type let $\rro(\T):=\rro(\sigma(\T))$ be the {\em rank} of the type $\T$. 


If the group of automorphisms of $\mathrm{U}$ is transitive then $\U$ is the type $\langle \emptyset \tr x\rangle$ for any element $x\in U$.  Implying that   the age of $\mathrm{U}$, that is $\rro(U)$,  is equal to $\rro(\U)$ because then $\sigma(\U)=U$.

 Put
 \[
\mathfrak{R}(\mathrm{U}):=\{\mathfrak{r}\mid \text{there exists a type $\T$ with $\rro(\T)=\mathfrak{r}$}\}.
\]

\begin{defin}\label{defin:ranklinear}
A homogeneous structure $\mathrm{U}$ is {\em rank linear} if the partial order $(\mathfrak{R}(\mathrm{U});\subseteq)$ of ranks is a linear order. 
\end{defin}
For example, the homogeneous graph whose boundary is $\{\mathrm{K}_5\}$,    for $\mathrm{K}_5$ being the complete graph on five vertices, is rank linear. The linear order of ranks consists of the classes of  $\mathrm{K}_5$-free finite graphs, the  $\mathrm{K}_4$-free finite graphs, the  $\mathrm{K}_3$-free finite graphs and the class of graphs containing no edges.  Let $\boldsymbol{L}$ be the language with only relation symbol $R$ and this one has arity one. If $x$ is an element of a $\boldsymbol{L}$-structure then $R(x)$ or $\neg R(x)$. Let $\mathrm{U}$ be the free amalgamation homogeneous structure with just this one relation symbol. The structure $\mathrm{U}$ has two types, $\langle \emptyset \tr x\rangle$ and $\langle \emptyset \tr y\rangle $ with $R(x)$ and $\neg R(y)$ whose ranks are not linearly ordered under $\subseteq$. Of course $\mathrm{U}$ is not indivisible. For a less trivial example see Example~\ref{ex:twtriange}.

If a homogeneous structure is rank linear then its automorphism group acts transitively on $U$. For otherwise let $x$ and $y$ be two elements in different transitivity classes of $U$. Because $\mathrm{U}$ is homogeneous the structures $\mathrm{X}=\mathrm{U}_{\downarrow \{x\}}$ and and $\mathrm{Y}=\mathrm{U}_{\downarrow \{y\}}$ are not isomorphic. Then $\T=\langle \emptyset\tr x\rangle$ and $\S=\langle\emptyset\tr y\rangle$ are two types with $\mathrm{X}\in \rro(\T)\setminus \rro(\S)$ and  with $\mathrm{Y}\in \rro(\S)\setminus \rro(\T)$. Hence if a homogeneous structure $\mathrm{U}$ is rank linear then   the linear order of ranks has a maximum, namely the class $\rro(\U)$.


\noindent
\textbf{Important:}
For this article, unless explicitly otherwise stated, the group of automorphisms of the homogeneous structures $\mathrm{U}$ under consideration acts transitively on the set $U$ of elements of $\mathrm{U}$.

\begin{defin}\label{defin:ageindiv}
As defined in  \cite{Fra} a structure $\mathrm{M}$ is {\em age indivisible} if for every partition $(S,P)$ of $M$ the age of $\mathrm{M}_{\downarrow S}$ is equal to the age of $\mathrm{M}$ or   the age of $\mathrm{M}_{\downarrow S}$ is equal to the age of $\mathrm{M}$. 
\end{defin}

\subsection{ A useful Lemma}\label{subsect:uslemneed} A completely written out proof of the following Lemma can be found as Theorem 5.1 of \cite{Pos-Cop}. 

\begin{lem}\label{lem:charercop}  [Pouzet Lemma]
Let $\mathrm{U}$ be a countable homogeneous structure. A subset $C$ of $U$ induces a copy of\/ $\mathrm{U}$ if and only if $\sigma(\T)\cap C\not=\emptyset$ for every type $\T$ of $\mathrm{U}$ with $\iota(\T)\subseteq C$. 
\end{lem}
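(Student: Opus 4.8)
The plan is to prove the two implications separately, after first reformulating the condition ``$\sigma(\T)\cap C\neq\emptyset$ for every type $\T$ with $\iota(\T)\subseteq C$'' as a one-point extension property of the induced structure $\mathrm{U}_{\downarrow C}$. By homogeneity of $\mathrm{U}$, an element $y$ lies in $\sigma(\langle F\tr x\rangle)$ exactly when the map fixing $F$ pointwise and sending $x$ to $y$ is an isomorphism of $\mathrm{U}_{\downarrow F\cup\{x\}}$ onto $\mathrm{U}_{\downarrow F\cup\{y\}}$. Thus the hypothesis says precisely: for every finite $F\subseteq C$ and every one-point extension of $\mathrm{U}_{\downarrow F}$ that is realised somewhere in $\mathrm{U}$, some point of $C$ realises it over $F$. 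This reformulation is the bridge between the combinatorial condition and the \Fra-style machinery.

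For the forward direction, assume $\mathrm{U}_{\downarrow C}$ is a copy of $\mathrm{U}$ and fix a type $\T=\langle F\tr x\rangle$ with $F\subseteq C$. Put $\mathrm{A}=\mathrm{U}_{\downarrow F\cup\{x\}}$, an element of the age of $\mathrm{U}$ and hence of the age of $\mathrm{U}_{\downarrow C}$. Since $\mathrm{U}_{\downarrow C}$ is isomorphic to $\mathrm{U}$ it is homogeneous, and $(\mathrm{U}_{\downarrow C})_{\downarrow F}=\mathrm{U}_{\downarrow F}=\mathrm{A}_{\downarrow F}$, so Fact~\ref{fact:contemb} applied inside $\mathrm{U}_{\downarrow C}$ yields an embedding $f\colon A\to C$ fixing $F$ pointwise. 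Then $y=f(x)\in C$, and by homogeneity of $\mathrm{U}$ the identity-on-$F$ isomorphism $\mathrm{U}_{\downarrow F\cup\{x\}}\to \mathrm{U}_{\downarrow F\cup\{y\}}$ extends to some $g\in\mathrm{G}_F$ with $g(x)=y$; hence $y\in\sigma(\T)\cap C$.

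For the converse I would first record the key lemma: under the hypothesis, $\mathrm{U}_{\downarrow C}$ has the one-point extension property with respect to the age of $\mathrm{U}$. Given a finite $B\subseteq C$ and a structure $\mathrm{A}$ in the age with domain $B\cup\{a\}$ and $\mathrm{A}_{\downarrow B}=\mathrm{U}_{\downarrow B}$, Fact~\ref{fact:contemb} produces $x\in U$ with $\mathrm{U}_{\downarrow B\cup\{x\}}\cong\mathrm{A}$ over $B$; setting $\T=\langle B\tr x\rangle$ the hypothesis gives $y\in\sigma(\T)\cap C$, and such a $y$ realises $\mathrm{A}$ over $B$ inside $C$. (In particular, taking $B=\emptyset$ forces $C\neq\emptyset$.) I then run a back-and-forth between $U$ and $C$, building a chain of finite partial isomorphisms: forward steps, which adjoin the next element of $U$ to the domain, are handled by the one-point extension property of $\mathrm{U}_{\downarrow C}$ just established, while backward steps, which adjoin the next element of $C$ to the range, are handled by the extension property of $\mathrm{U}$ itself (Fact~\ref{fact:contemb}). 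The union is an isomorphism $\mathrm{U}\to\mathrm{U}_{\downarrow C}$, so $C$ induces a copy of $\mathrm{U}$. (Alternatively, the extension property together with $\rro(C)=\rro(U)$ identifies $\mathrm{U}_{\downarrow C}$ as the homogeneous structure of that age via Fact~\ref{fact:uniquhomage}.)

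The main obstacle is the converse, and specifically the extraction of the one-point extension property from the typeset hypothesis: one must pass correctly between abstract one-point extensions of $\mathrm{U}_{\downarrow B}$ living in the age and concrete elements $x\in U$ realising them, which is exactly what Fact~\ref{fact:contemb} supplies, and then transport the realisation into $C$ through the orbit description of $\sigma(\T)$. Once this lemma is in hand the back-and-forth is routine, the only bookkeeping being to transport each successive one-point extension across the current partial isomorphism before applying the extension property.
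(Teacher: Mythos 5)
Your proposal is correct and follows essentially the same route as the paper, which only sketches the argument (deferring the full write-up to Theorem 5.1 of \cite{Pos-Cop}): necessity comes from the copy being an isomorphic, hence homogeneous, structure realising every type over a finite subset of $C$, and sufficiency comes from the standard back-and-forth. Your write-up supplies the details the paper omits --- in particular the one-point extension property of $\mathrm{U}_{\downarrow C}$ extracted from the typeset hypothesis via Fact~\ref{fact:contemb} --- but the underlying decomposition is identical.
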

\begin{proof}
The condition is necessary because a copy is an isomorphic structure and a typeset with sockel in the copy is a typeset of the copy and of the homogeneous structure. The standard back and forth argument provides a proof that the condition of the Lemma is sufficient.
\end{proof}

Using this Lemma there is a simple argument for partitions of the Rado graph, which unfortunately does not seem to extend to the general case. The type of  argument provided in Section \ref{sect:guide} is the one which is extended in this article for the proof of the main Theorem. First observe that if $\T=\langle F\tr x\rangle $ is a type of the Rado graph then $\sigma(\T)$ induces a copy of the Rado graph. For let $\S=\langle E\tr y\rangle$ be a type with $E\subseteq \sigma(\T)$. Then $\sigma(\S)\cap \sigma(\T)\not=\emptyset$ beccause there exists a graph with set of vertices $F\cup E$ and an additional vertex $z$ which is attached to $F$ just like $x$ and attached to $E$ just like $y$. Using Fact  \ref{fact:contemb} this graph has an embedding $h$ into $U$ with $h(v)=v$ for all $v\in F\cup E$. Then $h(z)\in \sigma(\S)\cap \sigma(\T)$. It follows from Lemma \ref{lem:charercop} that $\sigma(\T)$ induces a copy of $\mathrm{U}$. 

The next theorem  is due to Peter Cameron, see \cite{Cameronrand}. 
\begin{thm}\label{thm:exactindRad}[P. Cameron]
 Given a partition of the Rado graph into red and blue vertices. If the set of red vertices does not induce a copy of the Rado graph then the set of blue vertices induces a copy of the Rado graph. 
 \end{thm}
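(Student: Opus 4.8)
The plan is to apply the Pouzet Lemma (Lemma~\ref{lem:charercop}) twice: once in its contrapositive form to extract an entirely blue typeset from the assumed failure of the red set, and once in its positive form to certify that the blue set induces a copy. Write $\mathrm{U}$ for the Rado graph and let $R$ and $B$ be the sets of red and blue vertices, so that $U=R\cup B$ and $R\cap B=\emptyset$.

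First I would exploit the hypothesis that $R$ does not induce a copy of $\mathrm{U}$. By the negation of Lemma~\ref{lem:charercop} there is then a type $\T=\langle F\tr x\rangle$ whose sockel $F=\iota(\T)$ satisfies $F\subseteq R$ and for which $\sigma(\T)\cap R=\emptyset$. Since $\sigma(\T)\subseteq U\setminus F$ and $U=R\cup B$ with $R\cap B=\emptyset$, this forces $\sigma(\T)\subseteq B$; that is, the entire typeset of $\T$ is blue, and (as noted in the paragraph preceding the theorem) $\sigma(\T)$ by itself already induces a copy of the Rado graph.

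Next, to show that $B$ induces a copy of $\mathrm{U}$, I would verify the positive condition of Lemma~\ref{lem:charercop} for $C=B$. So let $\S=\langle E\tr z\rangle$ be an arbitrary type with $E=\iota(\S)\subseteq B$; the task is to produce a blue vertex lying in $\sigma(\S)$. Because $F\subseteq R$ and $E\subseteq B$ are disjoint, I would build the finite simple graph on $F\cup E\cup\{p\}$, with one new vertex $p$, in which the induced graph on $F\cup E$ is $\mathrm{U}_{\downarrow F\cup E}$ and $p$ is joined to exactly the neighbours of $x$ in $F$ and to exactly the neighbours of $z$ in $E$. This graph lies in the age of the Rado graph, so Fact~\ref{fact:contemb} supplies an embedding that fixes $F\cup E$ pointwise and sends $p$ to some $w\in U$. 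By construction $w$ is adjacent to $F$ exactly as $x$ is and to $E$ exactly as $z$ is; since in the Rado graph a typeset consists precisely of the vertices realizing a given adjacency pattern to the sockel, we get $w\in\sigma(\T)\cap\sigma(\S)$. In particular $w\in\sigma(\T)\subseteq B$, so $\sigma(\S)\cap B\neq\emptyset$. As $\S$ was arbitrary, Lemma~\ref{lem:charercop} yields that $B$ induces a copy of $\mathrm{U}$.

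The only real content is the simultaneous realization of $w$ in both $\sigma(\T)$ and $\sigma(\S)$, and the place where the Rado graph is genuinely used is that the prescribed adjacency pattern on $F\cup E\cup\{p\}$ is automatically an admissible finite graph, there being nothing to forbid. The hard part, and the reason this short argument does not transfer verbatim to other Henson structures, is precisely here: for a constrained age (say a $\mathrm{K}_n$-free graph, or a hypergraph with a nontrivial boundary) the free combination of the two adjacency patterns across the disjoint sockels $F$ and $E$ may create a forbidden irreducible substructure, so one cannot in general amalgamate the two types over disjoint sockels to land inside the large colour class. Controlling when the two typesets can be forced to meet inside one part of the partition is exactly the obstruction that the paper's general necessary and sufficient condition is designed to resolve.
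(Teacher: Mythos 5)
Your proposal is correct and takes essentially the same approach as the paper: both rest on Pouzet's Lemma (Lemma~\ref{lem:charercop}) to extract a type with red sockel and all-blue typeset, plus the fact, supplied by the extension property (Fact~\ref{fact:contemb}), that typesets of the Rado graph over disjoint sockels intersect. The only difference is presentational: the paper assumes both colour classes fail and derives a contradiction from two such types, leaving the intersection fact as ``easily verified,'' whereas you argue directly, verifying the Pouzet condition for the blue set and writing out the amalgamation of the two adjacency patterns explicitly.
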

\begin{proof}
If the set of red vertices does not induce a copy of the Rado graph then according to Lemma \ref{lem:charercop} there exists a type $\T$ of $\mathrm{U}$ with every vertex in $\iota(\T)$ being red and every vertex in $\sigma(\T)$ being blue.  If the set of blue vertices does not induce a copy of the Rado graph then according to Lemma \ref{lem:charercop} there exists a type $\S$ of $\mathrm{U}$ with every vertex in $\iota(\S)$ being blue and every vertex in $\sigma(\S)$ being red.   Leading to a contradiction because any two typesets of the Rado graph with disjoint sockels have a point in common. Which can easily be verified using the extension property.
\end{proof}

\subsection{Subgroups  of the symmetric group $\mathfrak{S}(U)$  for a  countable set $U$} \label{subsect:gropversio}

 Let  $\mathrm{G}$ be a subgroup of the symmetric group $\mathfrak{S}(U)$ of $U$. An {\em embedding for $\mathrm{G}$} is an injection $f$ of $U$ into $U$ so that for every finite subset $A$ of $U$ there exists a function $g\in \mathrm{G}$ with $f(a)=g(a)$ for all $a\in A$. The image of such an embedding is a {\em copy} for $\mathrm{G}$.  The composition of embeddings for $\mathrm{G}$ is an embedding for $\mathrm{G}$.  The group $\mathrm{G}$ is {\em closed} if every bijective embedding for $\mathrm{G}$ is an element of $\mathrm{G}$. The {\em closure} of $\mathrm{G}$ is $\mathrm{G}$ together with all the bijective embeddings of $\mathrm{G}$.  The following assertions are not difficult to derive. See \cite{Cameronoligo} and or \cite{Pos-Cop}  for additional details.  The closure of $\mathrm{G}$ is a closed subgroup of $\mathfrak{S}(U)$. A function $f: U\to U$ is an embedding for $\mathrm{G}$ if and only if $f$ is an embedding of the closure of $\mathrm{G}$.  There exists a homogeneous structure $\mathrm{U}$ with automorphism group $\mathrm{G}$ if and only if the group $\mathrm{G}$ is closed.  If $\mathrm{U}$ is a homogeneous structure and $\mathrm{G}$ is the group of automorphisms of $\mathrm{U}$ then $f$ is an embedding of $\mathrm{U}$ into $\mathrm{U}$ if and only if $f$ is an embedding for $\mathrm{G}$. 

The group $\mathrm{G}$ is {\em indivisible} if for every partition $(S_0,S_1)$ of $U$ there exists an $i\in 2$ and an embedding $f$ for the group $\mathrm{G}$ with $f[U]\subseteq S_i$. It is not difficult to verify that a group $\mathrm{G}$ is indivisible if and only if the closure of $\mathrm{G}$ is indivisible.   Also, if $\mathrm{U}$ is a homogeneous structure and $\mathrm{G}$ is the group of automorphisms of $\mathrm{U}$ then the homogeneous structure $\mathrm{U}$ is indivisible if and only if the group $\mathrm{G}$ is indivisible. A subset of $U$ is a copy of $U$ if and only if it is a copy for $\mathrm{G}$.  If $A$ is a finite subset of $U$ and $g$ a function in the closure of $\mathrm{G}$ then there exists a function $f\in \mathrm{G}$ with $f(a)=g(a)$ for all $a\in A$. This observation implies that for most of the definitions  and results in Section \ref{sect:necess}, except for Theorem \ref{thm:maincecess},   the group need not be closed. 

\begin{defin}\label{defin:equdefage}
For $S\subseteq U$ let $\rro_\mathrm{G}(S)$, the {\em $\mathrm{G}$-age} of $S$,  be the set of finite subsets $A$ of $U$ for which there exists a function $g\in \mathrm{G}$ with $g[A]\subseteq S$. 
\end{defin}

 Note that $\rro_\mathrm{G}(S)=\rro_{\overline{\mathrm{G}}}(S)$ for every subset $S$ of $U$. Let $\overline{\mathrm{G}}$ be the closure of $\mathrm{G}$ and let $\mathrm{U}$ be a homogeneous structure  which has  $\overline{\mathrm{G}}$ as group of automorphisms.    Let $S$ and $T$ be two subsets of $U$.  Then $\rro(S)\subseteq \rro(\T)$ if and only if $\rro_\mathrm{G}(S)\subseteq \rro_\mathrm{G}(\T)$. For let $\rro(S)\subseteq \rro(\T)$ and a set  $A\in \rro_\mathrm{G}(S)$. Then there exists  a function $g\in \mathrm{G}$ with $g[A]\subseteq S$ and hence $\mathrm{U}_{\downarrow(g[A])}\in \rro(S)\subseteq \rro(T)$. That is there exists a subset $B$ of $T$ and an isomorphism $h$ of the structure $\mathrm{U}_{\downarrow(g[A])}$ to the structure $\mathrm{U}_{\downarrow B}$. Hence there exists a function $f\in \mathrm{G}$ with $f\circ g[A]\subseteq T$. Implying $A\in \rro_\mathrm{G}(T)$. The argument in the other direction is similar. Hence:

 \begin{fact}\label{fact:equdefage}
 If $\mathrm{U}$ is a homogeneous structure on $U$ with $\overline{\mathrm{G}}$ as group of automorphisms then the partial orders $(\{\rro(S)\mid S\subseteq U\};\subseteq )$ and $(\{\rro_{\overline{\mathrm{G}}}(S)\mid S\subseteq U\};\subseteq )$ are isomorphic under a function which associates $\rro(S)$ with $\rro_{\overline{\mathrm{G}}}(S)$ for every subset $S$ of $U$. 
 
$A\in \rro_\mathrm{G}(S)$ if and only if\/ $\mathrm{U}_{\downarrow A}\in \rro(S)$. $\mathrm{A}'\in \rro(S)$ if and only if there exists a subset $\rro_\mathrm{G}(S)\ni A \subseteq U$ and an isomorphism $f$ of $\mathrm{A}'$ to $\mathrm{U}_{\downarrow A}$.  
\end{fact}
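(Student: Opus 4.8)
The plan is to reduce the entire statement to the biconditional proved immediately before it --- that $\rro(S)\subseteq \rro(T)$ if and only if $\rro_\mathrm{G}(S)\subseteq \rro_\mathrm{G}(T)$ --- together with the already noted identity $\rro_\mathrm{G}(S)=\rro_{\overline{\mathrm{G}}}(S)$. First I would prove the second and third assertions, which are the dictionary relating the structure age $\rro(S)$ to the subset age $\rro_\mathrm{G}(S)$, and then read off the order isomorphism almost formally.

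For the second assertion I would argue both directions directly from the definitions. If $A\in \rro_\mathrm{G}(S)$, pick $g\in \mathrm{G}\subseteq \overline{\mathrm{G}}=\Aut(\mathrm{U})$ with $g[A]\subseteq S$; then $g$ restricts to an isomorphism of $\mathrm{U}_{\downarrow A}$ onto the induced substructure $\mathrm{U}_{\downarrow g[A]}$ of $\mathrm{U}_{\downarrow S}$, so $\mathrm{U}_{\downarrow A}\in \rro(S)$. Conversely, an embedding $h$ of $\mathrm{U}_{\downarrow A}$ into $\mathrm{U}_{\downarrow S}$ is in particular an embedding of the finite structure $\mathrm{U}_{\downarrow A}$ into $\mathrm{U}$, so by homogeneity it extends to an automorphism, that is, to an element of $\overline{\mathrm{G}}$ agreeing with $h$ on $A$. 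Here I would invoke the observation recorded just before Definition \ref{defin:equdefage} that on a finite set every element of $\overline{\mathrm{G}}$ is matched by an element of $\mathrm{G}$, producing $f\in \mathrm{G}$ with $f[A]=h[A]\subseteq S$ and hence $A\in \rro_\mathrm{G}(S)$.

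The third assertion then follows. If $\mathrm{A}'\in \rro(S)$, an embedding of $\mathrm{A}'$ into $\mathrm{U}_{\downarrow S}$ yields a set $A\subseteq S$ and an isomorphism $f\colon \mathrm{A}'\to \mathrm{U}_{\downarrow A}$; since the identity of $\mathrm{G}$ fixes $A\subseteq S$ we get $A\in \rro_\mathrm{G}(S)$. Conversely, given $A\in \rro_\mathrm{G}(S)$ and an isomorphism $f\colon \mathrm{A}'\to \mathrm{U}_{\downarrow A}$, the second assertion gives $\mathrm{U}_{\downarrow A}\in \rro(S)$, and since $\rro(S)$ is closed under isomorphism, $\mathrm{A}'\in \rro(S)$.

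Finally I would establish the order isomorphism by considering the map $\rro(S)\mapsto \rro_{\overline{\mathrm{G}}}(S)$. Applying the biconditional in both directions shows simultaneously that the map is well defined (from $\rro(S)=\rro(T)$ one gets $\rro_\mathrm{G}(S)=\rro_\mathrm{G}(T)$) and injective (from $\rro_{\overline{\mathrm{G}}}(S)=\rro_{\overline{\mathrm{G}}}(T)$ one gets $\rro(S)=\rro(T)$); surjectivity is immediate from the description of the target, and the biconditional is precisely the assertion that the map both preserves and reflects $\subseteq$. I expect the only genuinely delicate point to be the passage from $\overline{\mathrm{G}}$ back to $\mathrm{G}$ in the converse of the second assertion; the remainder is bookkeeping over the definitions of the two ages.
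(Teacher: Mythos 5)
Your proposal is correct and follows essentially the same route as the paper: both arguments rest on homogeneity (to extend an embedding of a finite induced substructure of $\mathrm{U}$ to an automorphism, i.e., an element of $\overline{\mathrm{G}}$) combined with the observation that any element of $\overline{\mathrm{G}}$ agrees with some element of $\mathrm{G}$ on a given finite set. Your only reorganization is to prove the pointwise dictionary ($A\in\rro_\mathrm{G}(S)$ if and only if $\mathrm{U}_{\downarrow A}\in\rro(S)$, and its isomorphism-closure consequence) first and then read off the inclusion biconditional and the order isomorphism formally, whereas the paper runs the same ingredients as one composite argument establishing $\rro(S)\subseteq\rro(T)\Leftrightarrow\rro_\mathrm{G}(S)\subseteq\rro_\mathrm{G}(T)$ directly.
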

\noindent
This fact justifies the not quite correct but convenient identification of $\rro$ with $\rro_\mathrm{G}$ occurring within some of the arguments in this article. For example:

\begin{defin}\label{defin:agindtyp}
A subset $S$ of $U$ is {\em $\rro$-age indivisible} if $\rro(S_0)=\rro(S)$ or $\rro(S_1)=\rro(S)$  for every partition $(S_0,S_1)$ of $S$. A subset $S$ of $U$ is {\em $\rro_\mathrm{G}$-age indivisible} if $\rro_\mathrm{G}(S_0)=\rro_\mathrm{G}(S)$ or $\rro_\mathrm{G}(S_1)=\rro_\mathrm{G}(S)$  for every partition $(S_0,S_1)$ of $S$. 
\end{defin}

\begin{lem}\label{lem:ageintypp}
A subset $S$ of $U$ is $\rro$-age indivisible if and only if it is $\rro_\mathrm{G}$-age indivisible.
\end{lem}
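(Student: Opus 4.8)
The plan is to obtain both directions simultaneously from Fact~\ref{fact:equdefage}, which already does the real work. First I would note that $\rro$-age indivisibility and $\rro_\mathrm{G}$-age indivisibility of $S$ are each phrased as a disjunction over the two blocks of a partition, so it suffices to establish, for an arbitrary subset $T\subseteq U$, the single equivalence
\[
\rro(T)=\rro(S)\iff \rro_\mathrm{G}(T)=\rro_\mathrm{G}(S).
\]
Since $\rro_\mathrm{G}(X)=\rro_{\overline{\mathrm{G}}}(X)$ for every $X\subseteq U$, I may replace $\mathrm{G}$ by its closure $\overline{\mathrm{G}}$ throughout and work with the homogeneous structure $\mathrm{U}$ whose automorphism group is $\overline{\mathrm{G}}$.

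By Fact~\ref{fact:equdefage} the assignment $\Phi\colon \rro(X)\mapsto \rro_{\overline{\mathrm{G}}}(X)$ is an isomorphism of the partial order $(\{\rro(X)\mid X\subseteq U\};\subseteq)$ onto $(\{\rro_{\overline{\mathrm{G}}}(X)\mid X\subseteq U\};\subseteq)$. In particular $\Phi$ is a bijection, hence injective, and an injection both preserves and reflects equality: for every subset $T$ of $U$ one has $\rro(T)=\rro(S)$ precisely when $\Phi(\rro(T))=\Phi(\rro(S))$, that is, precisely when $\rro_{\overline{\mathrm{G}}}(T)=\rro_{\overline{\mathrm{G}}}(S)$. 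This is exactly the displayed equivalence.

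Finally I would feed this into the definitions. Given any partition $(S_0,S_1)$ of $S$, applying the equivalence with $T=S_0$ and with $T=S_1$ yields
\[
\bigl(\rro(S_0)=\rro(S)\ \text{or}\ \rro(S_1)=\rro(S)\bigr)\iff\bigl(\rro_\mathrm{G}(S_0)=\rro_\mathrm{G}(S)\ \text{or}\ \rro_\mathrm{G}(S_1)=\rro_\mathrm{G}(S)\bigr).
\]
Since this holds for every partition, $S$ is $\rro$-age indivisible if and only if it is $\rro_\mathrm{G}$-age indivisible. There is no genuine obstacle here: the content is entirely packaged in Fact~\ref{fact:equdefage}, and the only points requiring the slightest care are the passage from $\mathrm{G}$ to $\overline{\mathrm{G}}$, handled by the identity $\rro_\mathrm{G}=\rro_{\overline{\mathrm{G}}}$, and the elementary remark that the order isomorphism $\Phi$, being a bijection, transports \emph{equalities} in both directions rather than merely the inclusions that Fact~\ref{fact:equdefage} phrases explicitly.
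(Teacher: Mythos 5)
Your proposal is correct and follows essentially the same route as the paper: the paper's proof likewise fixes a partition $(S_0,S_1)$, assumes $\rro(S_0)=\rro(S)$, and invokes Fact~\ref{fact:equdefage} to conclude $\rro_\mathrm{G}(S_0)=\rro_\mathrm{G}(S)$, with the converse "seen similarly." Your write-up merely makes explicit the two points the paper leaves tacit — the reduction $\rro_\mathrm{G}=\rro_{\overline{\mathrm{G}}}$ and the fact that the order isomorphism of Fact~\ref{fact:equdefage}, being a bijection, transports equalities in both directions.
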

\begin{proof}
Let $S$ be $\rro$-age indivisible and $(S_0,S_1)$ be a partition of $S$. Assume that $\rro(S_0)=\rro(S)$. It follows from Fact \ref{fact:equdefage} that then $\rro_\mathrm{G}(S_0)=\rro_\mathrm{G}(S)$. The converse can be seen similarly.
\end{proof}
On account of Lemma \ref{lem:ageintypp} we will just write age indivisible. The  group $\mathrm{G}$ is {\em age indivisible} if the set $U$ is age indivisible. 

Let  a  type  of the group $\mathrm{G}$ be defined as  in Section \ref{sect:prelim}. Note that $\langle F\tr x \rangle$ is a type of $\mathrm{G}$ if and only if it is a type of $\overline{\mathrm{G}}$.  Hence a type of any of the homogeneous structures whose group of automorphisms is $\overline{\mathrm{G}}$. The sockel $\iota(\T)$ of a type $\T$ and the typeset $\sigma(\T)$ of a type $\T$ and of rank $\rro(\T)$ of a type $\T$ are defined  as in Section \ref{sect:prelim}. A type $\T$ of $\mathrm{G}$ is {\em age indivisible} if the set $\sigma(\T)$ is age indivisible. It follows from Fact \ref{fact:equdefage} that a homogeneous structure $\mathrm{U}$ is rank linear if and only if it is rank linear using the $\rro_\mathrm{G}$ definition  of age instead of the $\rro$ definition. Making it possible to define rank-linear subgroups of the symmetric  group. Then, a homogeneous structure $\mathrm{U}$ is rank linear if and only its automorphism group is rank linear.   

Indivisibility questions do not arise in the context of group actions on finite sets. For other questions and notions arising in the context of group actions on infinite sets which do not arise in the context of group actions on finite sets, for example notions in connections with various topologies on the elements of the group and on the copies for the group,   see \cite{Pos-Cop} or \cite{Siblings}.

\section{Introduction}\label{sect:intro}

It is well known  and not very difficult to verify that the Rado graph is indivisible. See Theorem \ref{thm:exactindRad}.   An outline of a proof whose general structure serves as a template for the sufficient part of Theorem \ref{thm:main}, will be provided  in Section \ref{sect:guide}. Whether the triangle free homogeneous graph $\mathrm{H}_3$ is indivisible had been an open problem of Erd\H{o}s. It has then been shown by Komj\'{a}th and   R\"{o}dl, see \cite{KomRo},  that the homogeneous graph $\mathrm{H}_3$    is indeed indivisible. Section \ref{sect:guide} provides some   indication    why,  deciding whether the homogeneous graph $\mathrm{H}_3$ is indivisible, is not easy. This then is used to justify the elaborate notions and arguments needed to settle the general case.   Subsequently to \cite{KomRo}  it was shown in \cite{EZS1} that the $\mathrm{K}_n$-free simple homogeneous graphs $\mathrm{H}_n$ are indivisible and in \cite{EZS2} that a generalization, from complete graphs to tournaments, of this result  holds. It follows from a general result of \cite{SaCan}  for binary homogeneous structures that: A countable, oligomorphic, binary and free amalgamation  homogeneous relational structure  is indivisible if and only if it is rank linear.\\
\noindent
The main result of this article is:
\begin{thm}\label{thm:main}
A countable, oligomorphic and free amalgamation  homogeneous relational structure  is indivisible if and only if it is rank linear.
\end{thm}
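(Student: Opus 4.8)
The plan is to prove the two implications by quite different methods: the necessity of rank linearity (the ``only if'') is the easier, direct direction, while the sufficiency (``if'') is the substantial one and follows the Rado template of Section~\ref{sect:guide}. For necessity I would argue the contrapositive. Under the standing transitivity assumption, failure of rank linearity means the partial order $(\mathfrak{R}(\mathrm{U}),\subseteq)$ has two incomparable elements, realized by types $\T_0,\T_1$ with $\rro(\T_0)\not\subseteq\rro(\T_1)$ and $\rro(\T_1)\not\subseteq\rro(\T_0)$. From these I extract finite structures $A_0\in\rro(\T_0)\setminus\rro(\T_1)$ and $A_1\in\rro(\T_1)\setminus\rro(\T_0)$, that is, $A_0$ embeds into $\sigma(\T_0)$ but not into $\sigma(\T_1)$, and conversely for $A_1$. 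The goal is then a partition $(S_0,S_1)$ of $U$ for which, by Lemma~\ref{lem:charercop}, neither part contains a copy: concretely, a colouring under which one colour class hereditarily omits $A_0$ and the other hereditarily omits $A_1$, forcing every candidate copy to meet both classes. Making such an omission rule global and robust under the action of $\mathrm{G}$ is exactly what free amalgamation (Note~\ref{note:freboundcop}) provides, since it lets one splice the two local patterns without ever creating a forbidden irreducible configuration. I expect no essential difficulty here beyond bookkeeping, and, as the excerpt notes, this is the place where closedness of the group is actually used.

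For sufficiency I would use Lemma~\ref{lem:charercop} as the working criterion: it is enough to build one subset $C$ of a single colour class meeting $\sigma(\S)$ for every type $\S$ with $\iota(\S)\subseteq C$. I would prove this by induction on the rank, which by rank linearity is a chain (well-founded by oligomorphicity, and finite in the principal examples). The inductive claim is that each typeset $\sigma(\T)$ is indivisible relative to its own induced structure $\mathrm{U}_{\downarrow\sigma(\T)}$; applied to the top type $\U=\langle\emptyset\tr x\rangle$, available by transitivity, this yields indivisibility of $\mathrm{U}$ itself. A preliminary lemma I would establish first, using free amalgamation together with Fact~\ref{fact:contemb}, is that in the rank-linear setting each $\sigma(\T)$ induces a homogeneous, again rank-linear, structure, so that the induction hypothesis genuinely applies to it; this is precisely what fails for the non-homogeneous hypergraph typeset of Section~\ref{sect:prelim}, which is why that example is excluded. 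The base case is a minimal rank: there every subordinate type is full rank for $\mathrm{U}_{\downarrow\sigma(\T)}$, so every typeset already induces a copy, and indivisibility reduces to the simple Pouzet/Cameron argument of Section~\ref{sect:guide}.

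The inductive step is the heart of the matter and the main obstacle. Given a $2$-colouring of $\sigma(\T)$, I would look one level down at the types $\S$ with $\iota(\S)\subseteq\sigma(\T)$ and $\sigma(\S)\subsetneq\sigma(\T)$; these have strictly smaller rank, and by rank linearity their ranks are mutually comparable. The induction hypothesis supplies monochromatic copies of these lower typesets, and the task is to amalgamate infinitely many of them, \emph{in a single colour}, into one copy of $\mathrm{U}_{\downarrow\sigma(\T)}$ satisfying the condition of Lemma~\ref{lem:charercop}. Each individual extension is possible because free amalgamation keeps the added parts related only through the sockel (Note~\ref{note:freboundcop}) while the extension property (Fact~\ref{fact:contemb}) realizes the required point; rank linearity is what guarantees that the colour choices made along the recursion are coherent, since there is a single descending chain of subordinate ranks rather than incomparable alternatives imposing conflicting colour demands. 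Coordinating the colour across this transfinite construction is exactly the difficulty that made even $\mathrm{H}_3$ hard, and it is where I expect the real work to lie; for the Rado graph there is effectively only one rank below the top, the coordination is vacuous, and the whole argument collapses to the template of Section~\ref{sect:guide} that this proof generalizes.
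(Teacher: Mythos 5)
Your necessity argument has a genuine gap: the partition you aim for --- one colour class hereditarily omitting $A_0$, the other hereditarily omitting $A_1$ --- need not exist, so the ``bookkeeping'' cannot be carried out. Take Example \ref{ex:twtriange} (two superposed triangle-free graphs, red $R$ and blue $B$), with $A_0$ a red edge and $A_1$ a blue edge. The age of that structure contains the five-vertex structure whose red edges form a pentagon $C_5$ and whose blue edges form the complementary pentagon $\overline{C_5}$: independent sets of $C_5$ have at most two vertices, and every three-element subset of a $5$-cycle contains an edge, so in every $2$-partition of these five points either part $0$ receives a red edge or part $1$ receives a blue edge. Since this configuration embeds into $U$, no partition of $U$ can have one side red-edge-free and the other blue-edge-free. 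That is exactly why the paper's proof of Theorem \ref{them:necesssary} is not a splicing argument: it partitions $U$ into three pieces by a lexicographic rule on sockels, and the contradiction is extracted from the fact that typesets are \emph{age indivisible} (Theorem \ref{thm:agindfreambund}) --- itself a substantial theorem requiring the machinery of Section \ref{sect:aingind}, not a consequence of free amalgamation alone.

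For sufficiency, both structural pillars of your induction fail. First, the preliminary lemma is false: rank linearity does not restore homogeneity of typesets. The hypergraph example in Section \ref{sect:prelim} (boundary: irreducible $5$-vertex $3$-uniform hypergraphs with at least six hyperedges) has a non-homogeneous typeset $\mathrm{U}_{\downarrow \sigma(\T)}$, yet by Example \ref{ex:nelinenine5} (Lemma \ref{lem:3hyper6}) every boundary consisting of hypergraphs on at most five vertices yields a \emph{rank linear} structure; so that example is not excluded, and your induction hypothesis cannot be applied to $\mathrm{U}_{\downarrow \sigma(\T)}$. (Homogeneity of typesets with $\mathrm{G}_{\iota(\T)}$ as automorphism group is a binary phenomenon, Corollary \ref{cor:freambinmeld}; see also Example \ref{ex:strmelmeld}.) Second, the rank order need not be well-founded even under oligomorphicity: the last example of Section \ref{sect:examples} is an oligomorphic, binary, free amalgamation, rank linear structure whose ranks form a copy of the rationals with a maximum adjoined, so your induction on rank has no base case. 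The paper's proof avoids both problems by never descending into typesets as structures: it works with formed types and bundles (Sections \ref{section: formedtyp} and \ref{sect:bundles}), proves the partition theorem of Section \ref{sect:basvertthm} via an ordinal labelling $\lambda_S$ that is well-founded by construction independently of the rank order, and then runs the recursion of Section \ref{sect:constru1}, maintaining bundle refinements inside a constructive set; the colour coordination that you correctly identify as ``where the real work lies'' --- and which your proposal defers rather than supplies --- is exactly what the melding/agreement lemmas and the age-indivisibility results of Section \ref{sect:aingind} provide.
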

\vskip -5pt
\noindent
The non binary case is  substantially more difficult than the binary one.

This article falls into the general area of structural partition theory. Which recently has seen two remarkable results. The solution of the very difficult open problem dealing with  partitions of  the set of finite substructures of the  $\mathrm{K}_n$-free  homogeneous graphs by Natasha Dobrinen, see \cite{Dobrinen}. A considerably more intricate situation  than dealt with in this article, in which only partitions of the set of elements are investigated. The article, see \cite{Honza},  by Jan Hubi\v{c}ka and Jaroslav Ne\v{s}et\v{r}il constituting, after a long history,  in some sense  a complete solution  concerning Ramsey classes. The introduction of \cite{Dobrinen},  provides an excellent exposition on the background and history of the partition theory of homogeneous structures. The work on partitions of sets of other structures than single elements has a long history. The topic of colouring other structures than single elemts is completely absent from this article.  I can,  in this introduction, not  do any better than Dobrinen's  introduction in \cite{Dobrinen} and just direct the reader to it. Age indivisibility and the obvious generalization to partitions of substructures are via compactness directly connected to structural Ramsey theory results as in \cite{Honza}. A third interesting development is the application of big Ramsey degrees to topological dynamics, see Andy Zucker's article \cite{Zucker}. The introductions of \cite{Dobrinen}, \cite{Honza} and \cite{Zucker} provide a good overview of  structural partition theory in connection with homogeneous structures.

To prove that the conditions of Theorem \ref{thm:main} are sufficient to guarantee that the homogeneous structure is indivisible is by far the longest and most difficult part of this article. Theorem \ref{thm:manaaaod} states that: If a countable, oligomorphic and free amalgamation  homogeneous relational structure is rank linear then it  is indivisible. A large part of the notions introduced for and the arguments of the proof of 
Theorem~\ref{thm:manaaaod} are within the notational framework of group actions. The main exception is the use of Fact \ref{fact:contemb}. It is easier to deal with group actions than considerations of structural properties. An example of this are the results of Subsection \ref{subsec:weakindiv2}. Proving Theorem \ref{thm:wiekstruncor} staying with structure considerations would require to use notions like $\mathrm{U}_{\downarrow \mathfrak{c}^{-1}(i+1)}$ while the same notion within group actions only requires to refer to the set $ \mathfrak{c}^{-1}(i+1)$. The necessary conditions are proven within the framework of group actions and then translated to structural statements. We obtained in Section \ref{sect:necess}:

\begin{thm}\label{them:necesssary}
Let $\mathrm{G}$ be a subgroup of the symmetric group of a countable infinite set $U$. If  there are two age indivisible types $\T$ and $\S$ having infinite typesets such that $\rro(\T)\setminus \rro(\S)\not=\emptyset$ and $\rro(\S)\setminus \rro(\T)\not=\emptyset$ then the group $\mathrm{G}$ is divisible.  
\end{thm}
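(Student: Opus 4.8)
The plan is to exhibit a single partition $(S_0,S_1)$ of $U$ such that neither block contains a copy for $\mathrm{G}$. I would first pass to the closure $\overline{\mathrm{G}}$ and a homogeneous structure $\mathrm{U}$ with automorphism group $\overline{\mathrm{G}}$, so that copies for $\mathrm{G}$ are exactly the sub-copies of $\mathrm{U}$ and $\rro_{\mathrm{G}}=\rro$ by Fact~\ref{fact:equdefage}. The one structural fact I will exploit is that a copy is isomorphic to $\mathrm{U}$ and hence has full age $\rro(U)$; consequently any set whose age is a proper subset of $\rro(U)$ contains no copy. Since $\rro(\T)$ and $\rro(\S)$ are incomparable, neither equals $\rro(U)$ (if $\rro(\T)=\rro(U)$ then $\rro(\S)\subseteq\rro(U)=\rro(\T)$, contradicting incomparability), so both are proper. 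I fix finite structures $A\in\rro(\T)\setminus\rro(\S)$ and $B\in\rro(\S)\setminus\rro(\T)$.

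The engine of the argument is the following use of age indivisibility. Consider the partition of $\sigma(\T)$ into $\sigma(\T)\cap\sigma(\S)$ and $W:=\sigma(\T)\setminus\sigma(\S)$. Because $\sigma(\T)\cap\sigma(\S)\subseteq\sigma(\S)$, its age is contained in $\rro(\S)$ and therefore omits $A$; in particular it is a proper subset of $\rro(\T)$. As $\T$ is age indivisible, the complementary block must carry the full age, i.e. $\rro(W)=\rro(\T)$. Symmetrically $W':=\sigma(\S)\setminus\sigma(\T)$ satisfies $\rro(W')=\rro(\S)$. Thus I obtain two sets $W,W'$ of the respective full ranks with $W$ disjoint from $\sigma(\S)$ and $W'$ disjoint from $\sigma(\T)$ (hence $W\cap W'=\emptyset$), and with $B\notin\rro(W)$ while $A\notin\rro(W')$. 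This is the single place where the hypothesis of age indivisibility is used, and it is what forces the two incomparable "flavours" to be realised on genuinely disjoint regions.

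It remains to build the global partition, and this is where I expect the main obstacle. The target is a colouring in which class $S_0$ omits $B$ as an age and $S_1$ omits $A$; by the first paragraph neither class then contains a copy, so $\mathrm{G}$ is divisible. On $\sigma(\T)\cup\sigma(\S)$ the second paragraph already dictates a consistent choice: place the $B$-free set $\sigma(\T)$ into $S_0$ and the $A$-free set $W'$ into $S_1$, which is unambiguous since $W'$ is disjoint from $\sigma(\T)$. The difficulty is the leftover set $U\setminus(\sigma(\T)\cup\sigma(\S))$ together with the sockels: these points carry no a priori restriction, and a careless assignment can manufacture a copy of $B$ inside $S_0$, or of $A$ inside $S_1$, through relations to the points already placed. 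I would normalise by first replacing $\S$ with a $\overline{\mathrm{G}}$-translate so that $\iota(\T)\cap\iota(\S)=\emptyset$ (legitimate, as rank, infinitude of the typeset, and age indivisibility are all invariant under the group), and then attempt to resolve the remaining elements by iterating the dichotomy of the second paragraph down the tree of types over increasing finite sockels, at each stage peeling off, by age indivisibility of the relevant typeset, a full-rank block free of the opposite distinguished structure; the incomparability of the two ranks is precisely what should keep $B$ out of $S_0$ and $A$ out of $S_1$ throughout. The technical heart—carried out here \emph{without} oligomorphicity or free amalgamation, which are not assumed—will be to prove that this process assigns every element of $U$ while preserving both omissions. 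Equivalently, and perhaps more cleanly via Lemma~\ref{lem:charercop}, one must rule out a hypothetical sub-copy $C\cong\mathrm{U}$ lying inside a single class: such a $C$ would embed both $A$ and $B$, whereas the class was engineered to omit one of them.
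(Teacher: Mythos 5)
Your first two paragraphs are sound: the reduction showing $\rro(W)=\rro(\T)$ for $W=\sigma(\T)\setminus\sigma(\S)$ (and symmetrically for $W'$) is a correct use of age indivisibility. The decisive problem is the target you set in the third paragraph --- a two-colouring in which $\rro(S_0)$ omits $B$ and $\rro(S_1)$ omits $A$ --- and which you then defer as ``the technical heart''. This target is not just unproven; it is unachievable in instances covered by the theorem, so no completion of your peeling process can exist. Take Example \ref{ex:twtriange}: two symmetric binary relations, red and blue, each triangle-free and on disjoint pairs, $\T$ the type of a red neighbour and $\S$ the type of a blue neighbour of a point $u$. Both types are age indivisible (Theorem \ref{thm:agindfreambund}), have infinite typesets, and have incomparable ranks, so the theorem applies; the natural witnesses are $A=$ a blue edge and $B=$ a red edge. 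Now consider the five-element structure whose red edges form a pentagon and whose blue edges form the complementary pentagon; both colour classes are triangle-free, so it lies in the age and embeds into $\mathrm{U}$. In any partition $(S_0,S_1)$ of $U$, one of the two parts meets this copy in at least three vertices, and since the independence number of the pentagon is two, that part spans a red edge (if it is $S_0$) or a blue edge (if it is $S_1$). Hence ``$S_0$ omits $B$ and $S_1$ omits $A$'' cannot both hold. This confirms your own worry --- ages of unions are not controlled by ages of the pieces --- but shows it is a fundamental obstruction rather than a technical one: ``neither class contains a copy of $\mathrm{U}$'' cannot in general be upgraded to ``each class omits a fixed finite structure''.

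The paper's proof places no age restriction on the colour classes at all. It fixes a lexicographic order $\prec$ on finite subsets of $\omega$ and colours $n$ by which of the two types is realised at $n$ with $\prec$-least translated sockel: $P$ (some translate of $\T$ wins), $Q$ (some translate of $\S$ wins), $R$ (neither is realised with sockel $\prec\{n\}$); each class may well have full age. The contradiction is then local: if a copy $k[U]$ lay inside $Q$, every element $z$ of $\sigma(k[\T])$ with $k[F]\prec\{z\}$ (where $F=\iota(\T)$, $E=\iota(\S)$), being in $Q$ rather than $P$, must lie in one of the \emph{finitely many} typesets $\sigma(g[\S])$ with $g[E]\prec k[F]$. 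Pulling this finite cover back along $k$ partitions the age indivisible set $\sigma(\T)$ into finitely many pieces, so by Lemma \ref{lem:immindpp3} one piece retains full age; applying this to $A$ produces a group element carrying $A$ into a single translate of $\sigma(\S)$, contradicting $A\notin\rro(\S)$. So age indivisibility is consumed exactly where your paragraph two wants to use it --- to push $A$ into a translate of $\sigma(\S)$ --- but it is applied to the finite partition of $\sigma(\T)$ induced by a hypothetical monochromatic copy, not to a global colouring engineered to omit structures. If you want to repair your argument, the priority colouring (which makes the cover finite) and the finite-partition form of age indivisibility are the two ingredients to aim for.
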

\vskip -3pt
\noindent
Which implied: 
\vskip -3pt
\noindent
\begin{thm}\label{thm:maincecess}
Let the age $\mathfrak{A}$ of the homogeneous structure $\mathrm{U}$ be a  free amalgamation class. If $\mathrm{U}$ is not rank linear then $\mathrm{U}$ is divisible.
\end{thm}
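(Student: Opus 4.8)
The plan is to deduce Theorem~\ref{thm:maincecess} from the group-theoretic Theorem~\ref{them:necesssary} by verifying its hypotheses. Let $\mathrm{U}$ be a countable, oligomorphic, free amalgamation homogeneous structure (transitive, by the standing convention of Section~\ref{sect:prelim}) which is not rank linear, and let $\mathrm{G}=\Aut(\mathrm{U})$, a closed subgroup of $\mathfrak{S}(U)$. Unwinding Definition~\ref{defin:ranklinear}, non-rank-linearity means that $(\mathfrak{R}(\mathrm{U});\subseteq)$ is not linearly ordered, so there are types $\T$ and $\S$ of $\mathrm{U}$ with $\rro(\T)\setminus\rro(\S)\neq\emptyset$ and $\rro(\S)\setminus\rro(\T)\neq\emptyset$. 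Because the types of $\mathrm{U}$ coincide with the types of $\mathrm{G}$ and $\rro$ corresponds to $\rro_{\mathrm{G}}$ under the isomorphism of Fact~\ref{fact:equdefage}, it suffices to establish the two outstanding hypotheses of Theorem~\ref{them:necesssary} for this pair, namely that $\T$ and $\S$ have infinite typesets and are age indivisible; feeding them into Theorem~\ref{them:necesssary} then yields that $\mathrm{G}$ is divisible, and the equivalence ``$\mathrm{U}$ is indivisible if and only if $\mathrm{G}$ is indivisible'' from Subsection~\ref{subsect:gropversio} converts this into the divisibility of $\mathrm{U}$.

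For the infinite typesets I would argue directly from free amalgamation. Writing $\T=\langle F\tr x\rangle$, suppose $y_0,\dots,y_{k-1}\in\sigma(\T)$ are already chosen. Free-amalgamate over $F$ the structure $\mathrm{U}_{\downarrow F\cup\{y_0,\dots,y_{k-1}\}}$ with the one-point extension attaching a fresh vertex $z$ to $F$ exactly as $x$ is attached, so that $\langle F\tr z\rangle=\T$. Since $\mathfrak{A}$ is a free amalgamation age this structure lies in $\mathfrak{A}$, and Fact~\ref{fact:contemb} embeds it into $\mathrm{U}$ fixing $F\cup\{y_0,\dots,y_{k-1}\}$ pointwise. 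The image of $z$ is a new element of $\sigma(\T)$ distinct from the $y_i$, so iterating shows $\sigma(\T)$, and likewise $\sigma(\S)$, is infinite.

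The heart of the argument is the age indivisibility of $\sigma(\T)$ and of $\sigma(\S)$. I would first observe that $\rro(\sigma(\T))$ is again a free amalgamation age with a single one-element type: closure under free amalgamation follows by amalgamating the witnessing configurations over $F$ and applying Fact~\ref{fact:contemb}, so that two structures realized inside $\sigma(\T)$ can be placed disjointly, with no relations between them, still inside $\sigma(\T)$; and the single one-element type is forced by the transitivity of $\mathrm{G}_F$ on $\sigma(\T)$. With these two properties the proof splits in two. First, using the Ramsey property of free amalgamation classes (\cite{Honza}) for the one-element structure together with a compactness argument on $\mathrm{U}_{\downarrow\sigma(\T)}$, every finite structure in $\rro(\sigma(\T))$ has, under any $2$-colouring of $\sigma(\T)$, a monochromatic copy inside $\sigma(\T)$; equivalently, for any partition $(S_0,S_1)$ of $\sigma(\T)$ one has $\rro(S_0)\cup\rro(S_1)=\rro(\sigma(\T))$. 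Second, free amalgamation upgrades this to full age indivisibility: if both $B_0\in\rro(\sigma(\T))\setminus\rro(S_0)$ and $B_1\in\rro(\sigma(\T))\setminus\rro(S_1)$ existed, the free amalgam of $\mathrm{B}_0$ and $\mathrm{B}_1$ over the empty set would lie in $\rro(\sigma(\T))$ yet admit no monochromatic copy, since a copy lying entirely in $S_0$ would place an induced copy of $\mathrm{B}_0$ into $S_0$ and one lying entirely in $S_1$ an induced copy of $\mathrm{B}_1$ into $S_1$, contradicting the first part. Hence $\rro(S_0)=\rro(\sigma(\T))$ or $\rro(S_1)=\rro(\sigma(\T))$, so $\sigma(\T)$ is age indivisible, and symmetrically for $\sigma(\S)$.

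The main obstacle is the first part of the last paragraph, the passage from the Ramsey property to the ``every finite structure has a monochromatic copy'' statement for $\sigma(\T)$. This is exactly where the single-one-type condition supplied by the transitivity of $\mathrm{G}_F$ is indispensable, as the unary-predicate example of Section~\ref{sect:prelim} shows it fails without it, and where oligomorphy is used to place $\rro(\sigma(\T))$ within the scope of the structural Ramsey machinery. Everything else---the extraction of incomparable types, the infinitude of the typesets, the free-amalgam upgrade, and the translation back from $\mathrm{G}$ to $\mathrm{U}$---is routine given the framework of Section~\ref{sect:prelim} and Subsection~\ref{subsect:gropversio}.
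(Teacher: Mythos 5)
Your overall reduction coincides with the paper's: both proofs feed two rank-incomparable types into Theorem~\ref{them:necesssary} after checking that their typesets are infinite and age indivisible, and then translate divisibility of $\mathrm{G}$ back to $\mathrm{U}$. Your infinitude argument is exactly Lemma~\ref{lem:tyinfA}/Corollary~\ref{cor:tyinfA}, and your ``upgrade'' step at the end is sound (in the group setting it is even easier: Lemma~\ref{lem:immindp} gets it by taking unions of copies realized inside $\sigma(\T)$, with no amalgamation needed). The gap is in the age-indivisibility step, and it sits precisely where the paper does something entirely different. You assert that $\rro(\sigma(\T))$ is a free amalgamation age, but your justification only proves closure under free amalgamation over the \emph{empty} base: given two realizations inside $\sigma(\T)$, Fact~\ref{fact:contemb} together with the $\mathrm{G}_{\iota(\T)}$-invariance of $\sigma(\T)$ places them disjointly and freely. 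For an amalgamation instance $\mathrm{M}$, $\mathrm{M}'$ over a non-empty base $B$ you would have to move a realization of $\mathrm{M}'$ so that its $B$-part lands on the already placed copy of $B$ while staying inside $\sigma(\T)$, i.e.\ you need a function in $\mathrm{G}_{\iota(\T)}$ carrying one finite subset of $\sigma(\T)$ onto an isomorphic one. That is exactly the melding property, which holds in the binary case (Lemma~\ref{lem:binstrmeld}) but fails in general (Remark~\ref{remark:meld}, Example~\ref{ex:strmelmeld}); equivalently, the witnessing configurations over $\iota(\T)$ for $\mathrm{M}$ and for $\mathrm{M}'$ may force incompatible relations between $\iota(\T)$ and tuples from $B$, so they cannot simply be amalgamated over $\iota(\T)\cup B$. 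Thus the hypothesis that feeds your appeal to \cite{Honza} is unproven, and it is not a routine fact: it encodes the very non-binary difficulty that Sections~\ref{section: formedtyp} and~\ref{sect:aingind} of the paper were built to circumvent.

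Moreover, the properties you do establish, namely hereditariness, closure under disjoint unions, and a single one-element isomorphism type, are provably insufficient for the vertex partition property you want: the class of finite forests has all three, yet a proper $2$-colouring of a forest has no monochromatic edge, so no Ramsey-type conclusion can be drawn from these properties alone; genuine free amalgamation (an irreducible boundary, Note~\ref{note:freboundcop}) is indispensable for invoking the Ne\v{s}et\v{r}il--R\"{o}dl/Hubi\v{c}ka--Ne\v{s}et\v{r}il machinery. The paper sidesteps the whole issue: Theorem~\ref{thm:agindfreambund} proves age indivisibility of typesets by a direct El-Zahar--Sauer style construction carried out inside the ambient structure $\mathrm{U}$ (neutral copies, weakly indivisible bundles, Theorem~\ref{thm:hennsage}, then compactness), and never needs $\rro(\sigma(\T))$ itself to be a free amalgamation class. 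Note that in the binary case your route does work, since there $\mathrm{U}_{\downarrow\sigma(\T)}$ is itself a free amalgamation homogeneous structure (Corollary~\ref{cor:freambinmeld}); the unproven claim is exactly about the non-binary case, which is the substance of the theorem. A minor further point: you assume oligomorphy, which Theorem~\ref{thm:maincecess} does not require and which the paper's proof of it never uses.
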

\vskip -3pt
\noindent
Giving together with  Theorem \ref{thm:manaaaod} the main theorem, Theorem  \ref{thm:main}. 

\vskip 5pt

To  obtain Theorem \ref{thm:manaaaod} just for  the binary case,  Section \ref{section: formedtyp}     and Section~ \ref{sect:aingind} can be omitted. Using the additional references to the binary case provided in the body of this article it should be fairly easy to extract an article dealing with  the binary case only. 

\vskip 2pt
\noindent
In Subsection \ref{subsec:weakindiv2} we established the following weaker version of indivisibility:
\begin{thm}\label{thm:wiekstruncor}
Let $\mathrm{U}$ be a free amalgamation homogeneous relational structure with a transitive automorphism group $\mathrm{G}$. Let $\mathfrak{A}$ be the age of $\mathrm{U}$.  Let  $\mathfrak{c}: U\to 2$ be a two colouring of $U$. If  $\rro(\mathrm{U}_{\downarrow (\mathfrak{c}^{-1}(i))})\not=\mathfrak{A}$ then the induced substructure    $\mathrm{U}_{\downarrow \mathfrak{c}^{-1}(i+1)}$   embeds a copy of $\mathrm{U}$. (Hence $\mathrm{U}$ is age indivisible.)
\end{thm}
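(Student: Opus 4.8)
The plan is to use the Pouzet Lemma (Lemma~\ref{lem:charercop}) to reduce the conclusion to a construction problem. Writing $B:=\mathfrak{c}^{-1}(i+1)$ and $R:=\mathfrak{c}^{-1}(i)$, to exhibit a copy of $\mathrm{U}$ inside $B$ it suffices to build a subset $C\subseteq B$ with $\sigma(\T)\cap C\neq\emptyset$ for every type $\T$ with $\iota(\T)\subseteq C$. I would build $C$ by a greedy back-and-forth recursion that, over each finite subset already placed in $B$, realizes every type of $\mathrm{U}$ by a further point of $B$. The hypothesis to exploit is that $R$ is deficient: there is a finite $\mathrm{A}\in\mathfrak{A}\setminus\rro(R)$. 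The entire difficulty is to ensure the recursion never stalls, that is, that the types we must realize always have a realization inside $B$ rather than being trapped in $R$.

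First I would extract a trapped type from the deficiency. Choose $\mathrm{A}\in\mathfrak{A}\setminus\rro(R)$ with $|A|$ least and fix $a\in A$; by minimality $\mathrm{A}_{\downarrow(A\setminus\{a\})}\in\rro(R)$, so there is $P\subseteq R$ with $\mathrm{U}_{\downarrow P}\cong\mathrm{A}_{\downarrow(A\setminus\{a\})}$. Using the extension property (Fact~\ref{fact:contemb}) realize $\mathrm{A}$ over $P$ by a point $x$ and set $\T=\langle P\tr x\rangle$. Every $w\in\sigma(\T)$ satisfies $\mathrm{U}_{\downarrow(P\cup\{w\})}\cong\mathrm{A}$, so $w\in R$ would embed $\mathrm{A}$ into $R$, which is impossible; hence $\sigma(\T)\subseteq B$. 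This yields a type whose entire typeset is forced into $B$, and more generally the same device forces into $B$ any point that completes a copy of $\mathrm{A}$ whose remaining vertices lie in $R$.

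The realization tool is free amalgamation. To realize a prescribed type over a finite blue configuration $D\subseteq B$ by a point of $B$, I would find inside $R$ a copy $P'$ of $\mathrm{A}_{\downarrow(A\setminus\{a\})}$ that is free over $D$ (i.e. with no relation linking it to $D$), then form the $\boldsymbol{L}$-structure on $D\cup P'\cup\{w\}$ in which $w$ is attached to $P'$ as $a$ is to $\mathrm{A}_{\downarrow(A\setminus\{a\})}$, is attached to $D$ by the desired type, and is otherwise free. Because the boundary of $\mathfrak{A}$ consists of irreducible structures (Note~\ref{note:freboundcop}) and the only new adjacencies are those making $P'\cup\{w\}\cong\mathrm{A}$, this structure lies in $\mathfrak{A}$; embedding it with $D\cup P'$ fixed (Fact~\ref{fact:contemb}) produces a point $w$ that completes a copy of $\mathrm{A}$ over $P'\subseteq R$ and is therefore forced into $B$ by the device above, while realizing the required type over $D$. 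Whenever such free anchors are available the recursion proceeds, giving in particular $\rro(B)=\mathfrak{A}$.

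The main obstacle is precisely that such free anchors need not exist: $\rro(R)$ is closed only under substructures, not under free amalgamation, so $R$ may contain $\mathrm{A}_{\downarrow(A\setminus\{a\})}$ yet contain no copy of it free over a given $D$ (for instance, when $R$ is ``dominated'' by a finite red set every anchor meets $D$, whereas when $R$ is sparse the anchors are plentiful). The witnessing strategy must therefore adapt to the shape of $R$: in the dominated case one instead takes the free type over the dominating red set, whose typeset is a full copy of $\mathrm{U}$ lying in $B$. Proving, uniformly in the deficient $R$, that one of these strategies always applies — equivalently, that the growing blue configuration never supports a type whose typeset is trapped in $R$ — is where the real work lies; I expect this to be organised as an induction on how far $\rro(R)$ falls short of $\mathfrak{A}$ (that is, on the minimal forbidden $\mathrm{A}$), and making that reduction precise is the crux of the argument.
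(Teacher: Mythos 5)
Your opening moves are sound and in fact coincide with the paper's ingredients: the reduction via Lemma \ref{lem:charercop}, the trapped type $\T=\langle P\tr x\rangle$ with $\sigma(\T)\subseteq B$ obtained from a minimal $\mathrm{A}\in\mathfrak{A}\setminus\rro(R)$, and the free-amalgam step that converts a red anchor in free position over $D$ into a blue realization of a prescribed type. But the proof is not complete, and the gap is exactly the one you name yourself: you never prove that the recursion cannot stall, i.e.\ that a copy of $\mathrm{A}_{\downarrow(A\setminus\{a\})}$ inside $R$ and free over the current blue configuration $D$ always exists, or that a fallback covers its absence. The failure is real, not hypothetical: in the Rado graph take $\mathrm{A}$ to be an edge and colour red an infinite independent set of neighbours of a fixed vertex $v$; then $\mathrm{A}\notin\rro(R)$, the anchors are precisely the red vertices, and as soon as $v\in D$ no anchor is free over $D$. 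Your fallback for this ``dominated'' case is also unjustified as stated: the typeset of the free type over a finite set is a copy of $\mathrm{U}$ by Lemma \ref{lem:freecoppr}, but nothing forces it to lie in $B$. When $|A|\geq 3$ the domination hypothesis only excludes copies of $\mathrm{A}_{\downarrow(A\setminus\{a\})}$ from the red part of that typeset, not red points themselves, so at best one can recurse inside that copy on a strictly smaller forbidden structure --- which is precisely the induction you say you ``expect'' to exist. That induction is not a technical footnote; it is the theorem.

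The paper (Theorem \ref{thm:wiekstru5r}, then translated to structures) carries out exactly this induction, but with the roles of the two colours reversed, and that reversal is what dissolves the anchor problem. It proves $P(n)$: if $\rro(R)$ omits some structure of size $n$, then $B$ embeds a copy. In the step $P(n)\Rightarrow P(n+1)$ one assumes $B$ embeds no copy, extracts from Lemma \ref{lem:charercop} a type $\T$ with \emph{blue} sockel and \emph{red} typeset, and applies the induction hypothesis inside the copy of $\mathrm{U}$ induced by the free type $\X$ with $\iota(\X)=\iota(\T)$. That copy is free over $\iota(\T)$, so the red copy $h[A\setminus\{a\}]$ of $\mathrm{A}_{\downarrow(A\setminus\{a\})}$ which the induction hypothesis provides there is automatically positioned so that Corollary \ref{cor:commeldstrm} applies to $\T$ and $\S=\langle h[A\setminus\{a\}]\tr h(a)\rangle$, producing $z\in\sigma(\T)\cap\sigma(\S)$; since $\sigma(\T)$ is red, $h[A\setminus\{a\}]\cup\{z\}$ is a red copy of $\mathrm{A}$. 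Thus red absorbs every structure of size $n+1$ unless blue embeds a copy: the anchor is sought where freeness is guaranteed by construction, instead of inside an arbitrary deficient red set where, as the example shows, it may not exist. Your outline could likely be completed along the same lines (your dichotomy plus recursion on the minimal forbidden structure inside a free type's copy), but as written the crux is a conjecture rather than a proof.
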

That is if a subset of $U$ does not induce  some element of the age then the complement induces a copy of $\mathrm{U}$. Note that rank linear and oligomorphic are not required by Theorem \ref{thm:wiekstruncor}. Given a two colouring of $U$ it is only then difficult to find a monochromatic copy of $\mathrm{U}$ if  each of the colour classes induces all of the age of $\mathrm{U}$.

The countable homogeneous $k$-uniform hypergraph $\mathrm{U}$,  whose boundary is a complete $k$-uniform hypergraph on $n$ vertices, that is one for which every $k$-element subset is an edge, could of course  be considered to be a natural generalization of the $K_n$-free homogeneous graphs. It follows from Theorem \ref{thm:main} that those homogeneous structures $\mathrm{U}$ are indivisible. But  according to Lemma \ref{lem:singlrant} item (1) those homogenous structures $\mathrm{U}$ have the property that $|\mathfrak{R}(\mathrm{U})|=1$. Hence we do not have that interesting chain of ranks as for the $K_n$-free homogeneous structures.

The following Theorem \ref{thm:Knfrreegen}, proven in Subsection \ref{subsect:allirred},  describes  indivisible, homogeneous  hypergraphs which form, in some way,  another natural generalization of the $K_n$-free homogeneous graphs $\mathrm{H}_n$ to $k$-uniform hypergraphs. If $n<m$ then every $K_n$-free graph is also $K_m$-free. Implying that the homogeneous graphs $\mathrm{H}_n$ are rank linear.   Example \ref{ex:nelinenine} shows that forbidding only all irreducible 3-uniform hypergraphs on 9 vertices is not enough to characterize an age for which the corresponding homogeneous structure is rank linear. But the next theorem states that forbidding all irreducible 3-uniform hypergraphs on 9 or more vertices will produce a rank linear homogeneous hypergraph. 

\begin{thm}\label{thm:Knfrreegen}
Let $2\leq k\in \omega$ and let $n> k$. Let $\mathfrak{B}$ be the class of all irreducible k-uniform hypergraphs having at least $n$ vertices. Let $\mathfrak{A}$ be the age of all finite $k$-uniform hypergraphs which do not embed any one of the hypergraphs in $\mathfrak{B}$.  Then $\mathfrak{A}$ is a free amalgamation age. The  countable homogeneous structure $\mathrm{U}$ whose age is $\mathfrak{A}$  is rank linear and hence indivisible.  The linear order of the ranks of the types of $\mathrm{U}$ consists of $n-k+1$ elements. 
\end{thm}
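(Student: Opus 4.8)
The plan is to establish the three assertions of the theorem separately and then read off indivisibility from Theorem~\ref{thm:main}. First I would check that $\mathfrak{A}$ is a free amalgamation age. It is clearly closed under induced substructures: if $\mathrm{H}$ has no irreducible induced subhypergraph on $\geq n$ vertices, neither does any induced substructure of $\mathrm{H}$. For the boundary, suppose $\mathrm{K}$ is a minimal forbidden structure. Then $\mathrm{K}$ embeds some member of $\mathfrak{B}$, i.e. it has an irreducible induced subhypergraph $\mathrm{K}'$ on $\geq n$ vertices; but $\mathrm{K}'$ is itself irreducible on $\geq n$ vertices, hence forbidden, so minimality forces $\mathrm{K}=\mathrm{K}'$ and $\mathrm{K}$ is irreducible. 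Thus every boundary structure is irreducible, and Note~\ref{note:freboundcop} gives that $\mathfrak{A}$ is a free amalgamation age; the general \Fra\ theory then produces the countable homogeneous $\mathrm{U}$. Oligomorphy follows from Note~\ref{note:iligolig}: over a finite $F$ a type $\langle F\tr x\rangle$ is determined by the set of $(k-1)$-subsets $S\subseteq F$ with $S\cup\{x\}$ a hyperedge, of which there are only finitely many, and by homogeneity equality of this data means equality of type.

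The heart of the proof is computing $\rro(\T)$ for an arbitrary type $\T=\langle F\tr x\rangle$. The decisive observation is that $\mathfrak{A}$ is closed under \emph{deletion of hyperedges}: removing an edge only removes Gaifman adjacencies, so it can never create a new irreducible induced subhypergraph. Hence to decide whether a finite hypergraph $\mathrm{A}$ lies in $\rro(\T)$, I would introduce the minimal realization $\mathrm{B}$ on the disjoint union $F\sqcup A$ keeping exactly the \emph{forced} hyperedges — those inside $F$ (copied from $\mathrm{U}_{\downarrow F}$), those inside $A$, and the edges $\{z\}\cup S$ with $S\subseteq F$ dictated by the type of $x$ — and \emph{no} hyperedge meeting $A$ in two or more points. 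Using Fact~\ref{fact:contemb} in one direction (embed $\mathrm{B}$ fixing $F$; the images of the $A$-vertices land in $\sigma(\T)$ and induce $\mathrm{A}$) and edge-monotonicity of $\mathfrak{A}$ in the other (any genuine realization $\mathrm{U}_{\downarrow F\cup\{y_i\}}\in\mathfrak{A}$ contains $\mathrm{B}$ as an edge-subhypergraph), one obtains $\mathrm{A}\in\rro(\T)$ if and only if $\mathrm{B}\in\mathfrak{A}$.

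Next I would determine the largest irreducible induced subhypergraph of $\mathrm{B}$. Writing such a set as $W_F\sqcup W_A$ with $W_F\subseteq F$ and $W_A\subseteq A$, the case $W_A\neq\emptyset$ is irreducible in $\mathrm{B}$ exactly when $W_F\cup\{x\}$ is irreducible in $\mathrm{U}_{\downarrow F\cup\{x\}}$ and $W_A$ is irreducible in $\mathrm{A}$: in $\mathrm{B}$ a vertex of $A$ can see a vertex $v\in F$ only through a forced edge $\{z\}\cup S$ with $v\in S\subseteq W_F$, which is precisely the condition that $x$ sees $v$. These two requirements are independent, while the purely-$F$ sets contribute at most $n-1$. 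Letting $q(\T)$ denote the number of vertices of the largest irreducible induced substructure of $\mathrm{U}_{\downarrow F\cup\{x\}}$ that contains $x$, this yields that $\mathrm{B}\in\mathfrak{A}$ if and only if $\mathrm{A}$ has no irreducible induced subhypergraph on $\geq n-q(\T)+1$ vertices. Therefore $\rro(\T)$ is exactly the class of $k$-uniform hypergraphs omitting every irreducible structure on $n-q(\T)+1$ or more vertices; as $q(\T)$ varies these classes are nested, so $\mathfrak{R}(\mathrm{U})$ is linearly ordered by inclusion and $\mathrm{U}$ is rank linear in the sense of Definition~\ref{defin:ranklinear}.

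For the count, since an irreducible $k$-uniform hypergraph has either one vertex or at least $k$ vertices, $q(\T)$ ranges over $\{1\}\cup\{k,k+1,\dots,n-1\}$, each value being realized by taking $F\cup\{x\}$ to be an irreducible structure of the prescribed size; these are $n-k+1$ values indexing the ranks. Finally $\mathrm{U}$ is countable, oligomorphic, of free amalgamation type and rank linear, so Theorem~\ref{thm:main} gives indivisibility. I expect the main obstacle to be the material of the two middle paragraphs: establishing rigorously that the minimal realization $\mathrm{B}$ controls membership in $\rro(\T)$, and carrying out the case analysis of the largest irreducible subhypergraph of $\mathrm{B}$, including the bookkeeping that matches the realized ranks to the stated number $n-k+1$.
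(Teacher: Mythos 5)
Your proposal follows essentially the paper's own route: your ``minimal realization'' $\mathrm{B}$ is exactly the paper's hypergraph $\mathrm{A}^{(\T)}$, the equivalence $\mathrm{A}\in\rro(\T)\Leftrightarrow\mathrm{B}\in\mathfrak{A}$ is proved by the same two-sided argument (Fact~\ref{fact:contemb} in one direction, closure of $\mathfrak{A}$ under deletion of hyperedges in the other), and rank linearity is then read off from a generalized clique-number computation; the free-amalgamation and oligomorphy checks you add are routine and correct. In fact your middle step is \emph{more} careful than the paper's. The paper asserts that $\mathrm{A}^{(\T)}_{\downarrow B}$ is irreducible if and only if $\mathrm{F}_{\downarrow B\cap F'}$ and $\mathrm{A}_{\downarrow B\cap A}$ are, whence $\omega(\mathrm{A}^{(\T)})=\omega(\mathrm{F}')+\omega(\mathrm{A})$; taken literally this fails, e.g.\ for $k=3$ with $(k-1)$-edges $\{a,b\}$, $\{c,d\}$ and a hyperedge $\{a,b,c\}$ of $\mathrm{U}_{\downarrow F}$: the set $\{a,b,c\}$ is irreducible in $\mathrm{F}$, yet no vertex of $A$ is Gaifman-adjacent to $c$ inside $\{a,b,c\}\cup A$. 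Your criterion --- that $W_F\cup\{x\}$ be irreducible in $\mathrm{U}_{\downarrow F\cup\{x\}}$ --- is the right one, and your verification of it is sound.

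There is, however, a genuine gap in your last step, and the paper shares it. You correctly obtain $\rro(\T)=\{\mathrm{A}\mid \omega(\mathrm{A})\leq n-q(\T)\}$ and that $q(\T)$ takes exactly the $n-k+1$ values $\{1\}\cup\{k,\dots,n-1\}$, but you then let these values ``index the ranks,'' which tacitly assumes that distinct values of $q$ give distinct ranks. For $k\geq 3$ this is false: since an irreducible $k$-uniform hypergraph has one vertex or at least $k$ vertices, no hypergraph has $\omega$ in $\{2,\dots,k-1\}$, so all thresholds $m=n+1-q$ with $2\leq m\leq k$ define extensionally the same class, namely the edgeless hypergraphs. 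Concretely, for $k=3$, $n=5$, the type $\langle\{a,b\}\tr x\rangle$ with $\{a,b,x\}$ a hyperedge ($q=3$) and the type whose sockel together with $x$ induces the complete $3$-uniform hypergraph on four vertices ($q=4$) both have edgeless typesets (an edge in either typeset would create a forbidden irreducible structure on five vertices), so $|\mathfrak{R}(\mathrm{U})|=2$ while the theorem asserts $n-k+1=3$. Your own rank formula shows that in general the number of distinct ranks is $\max(n-2k+3,\,2)$ when $k\geq 3$, which agrees with $n-k+1$ only when $n=k+1$; the stated count is correct for all $n$ only in the graph case $k=2$. So this part of the statement cannot be proved as written; the paper's proof makes the identical leap (``In general then $|\mathfrak{R}|=n-k+1$''), so the defect lies in the theorem's cardinality claim rather than in your strategy. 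Everything else --- free amalgamation, oligomorphy, rank linearity, and hence indivisibility via Theorem~\ref{thm:main} --- is correctly established in your proposal.
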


Theorem \ref{thm:singlrant}, verified in Subsection \ref{subsect:onerank},  shows that if $|\mathfrak{R}(\mathrm{U})|=1$ then free amalgamation is not needed for indivisibility. 
\begin{thm}\label{thm:singlrant}
Let $\mathrm{U}$ be a countable, oligomorphic,  homogeneous relational  structure. If $|\mathfrak{R}(\mathrm{U})|=1$ then $\mathrm{U}$ is indivisible.
\end{thm}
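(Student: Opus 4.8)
The plan is to reduce the statement, via the Pouzet Lemma (Lemma~\ref{lem:charercop}), to the single assertion that \emph{every typeset of $\mathrm{U}$ induces a copy of $\mathrm{U}$}, and then to establish that assertion from the single-rank hypothesis together with the extension property (Fact~\ref{fact:contemb}). First I would record what $|\mathfrak{R}(\mathrm{U})|=1$ means: under the standing transitivity convention the empty type $\U=\langle\emptyset\tr x\rangle$ has $\sigma(\U)=U$, so $\rro(U)=\rro(\U)\in\mathfrak{R}(\mathrm{U})$; as $\mathfrak{R}(\mathrm{U})$ is a singleton this forces $\rro(\T)=\rro(U)=\mathfrak{A}$ for \emph{every} type $\T$, where $\mathfrak{A}$ is the full age of $\mathrm{U}$. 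In other words, every typeset $\sigma(\T)$ has age $\mathfrak{A}$.

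For the reduction, let $\boldsymbol{c}\colon U\to 2$ be a colouring. If $\boldsymbol{c}^{-1}(1)$ induces a copy of $\mathrm{U}$ we are done; otherwise Lemma~\ref{lem:charercop} yields a type $\S$ with $\iota(\S)\subseteq\boldsymbol{c}^{-1}(1)$ and $\sigma(\S)\cap\boldsymbol{c}^{-1}(1)=\emptyset$, whence $\sigma(\S)\subseteq\boldsymbol{c}^{-1}(0)$. Thus it suffices to prove the \textbf{Claim}: for every type $\T$ the set $\sigma(\T)$ induces a copy of $\mathrm{U}$ (applied to $\S$, the Claim then produces a monochromatic copy inside $\boldsymbol{c}^{-1}(0)$). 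By Lemma~\ref{lem:charercop} again, the Claim is equivalent to: for every type $\S$ with $\iota(\S)\subseteq\sigma(\T)$ one has $\sigma(\S)\cap\sigma(\T)\neq\emptyset$. Writing $F=\iota(\T)$ and $E=\iota(\S)\subseteq\sigma(\T)$, note $F\cap E=\emptyset$ automatically, since $\sigma(\T)\cap\iota(\T)=\emptyset$.

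The tool I would use throughout is that $\mathrm{G}_{F}$ acts on $\sigma(\T)$ (if $g\in\mathrm{G}_{F}$ and $v\in\sigma(\T)$ then $g(v)\in\sigma(\T)$), and that this action is \emph{transitive}, since all of $\sigma(\T)$ is a single type over $F$. This settles the base case $|E|=1$, say $E=\{e\}$: because $\sigma(\T)$ has full age $\mathfrak{A}$ it contains a copy of $\mathrm{U}_{\downarrow\{e,y\}}$, on some $\{e^{*},y^{*}\}\subseteq\sigma(\T)$; choosing $g\in\mathrm{G}_{F}$ with $g(e^{*})=e$ gives $g(y^{*})\in\sigma(\T)$ with $\mathrm{U}_{\downarrow\{e,g(y^{*})\}}\cong\mathrm{U}_{\downarrow\{e,y\}}$ fixing $e$, so by homogeneity $g(y^{*})$ realizes $\S$ over $e$ and lies in $\sigma(\T)\cap\sigma(\S)$. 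For $|E|>1$ I would induct on $|E|$, first realizing the type of $y$ over $E\setminus\{e\}$ inside $\sigma(\T)$ and then attempting to correct the single remaining coordinate $e$.

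By Fact~\ref{fact:contemb} the whole Claim is in any case equivalent to a finitary amalgamation statement: there is $\mathrm{A}\in\mathfrak{A}$ on $F\cup E\cup\{w\}$ with $\mathrm{A}_{\downarrow F\cup E}=\mathrm{U}_{\downarrow F\cup E}$ in which $w$ realizes $\T$ over $F$ and $\S$ over $E$; producing such an $\mathrm{A}$ then finishes via Fact~\ref{fact:contemb} and Fact~\ref{fact:uniquhomage}. I expect the main obstacle to be exactly the construction of this amalgam inside $\mathfrak{A}$ when $\mathrm{U}$ is \emph{not} a free amalgamation structure: one is no longer free to set to ``false'' all the mixed relations meeting both $F$ and $E$ and involving $w$ (necessarily of arity $\ge 3$), because the boundary of $\mathfrak{A}$ need not be irreducible. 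This is where the hypothesis $|\mathfrak{R}(\mathrm{U})|=1$ has to be used in an essential, rather than cosmetic, way: the fact that \emph{every} typeset has the full age $\mathfrak{A}$, combined with oligomorphicity (finiteness of the types over each finite set), is what should allow the required higher-arity mixed relations to be chosen so as to keep $\mathrm{A}$ in $\mathfrak{A}$. The control of these mixed relations is precisely what makes the non-binary case genuinely harder than the binary one, where no mixed relations arise, the amalgam is completely determined, and the argument is correspondingly simpler.
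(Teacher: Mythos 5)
Your first step (the reduction via Lemma~\ref{lem:charercop} to finding a monochromatic copy inside $\sigma(\S)$) matches the paper, but the Claim you then try to prove --- that $|\mathfrak{R}(\mathrm{U})|=1$ forces \emph{every} typeset $\sigma(\T)$ to induce a copy of $\mathrm{U}$ --- is false, and the paper itself contains the counterexample, in Section~\ref{sect:prelim}. Let $\mathrm{U}$ be the homogeneous $3$-uniform hypergraph whose age is the class of finite $3$-uniform hypergraphs with no irreducible induced subhypergraph on five vertices and six or more hyperedges. It is countable, homogeneous and oligomorphic. Moreover $|\mathfrak{R}(\mathrm{U})|=1$: given any type $\T=\langle F\tr x\rangle$ and any $\mathrm{A}$ in the age, form the structure on $F\cup A$ in which every $v\in A$ is attached to $F$ exactly as $x$ is, no hyperedge meets $F$ in one point and $A$ in two, and $\mathrm{A}$ is induced on $A$; every five-element subset of this structure is then either isomorphic to an induced substructure of $\mathrm{U}$, or has at most four hyperedges, or contains an uncovered pair, so the structure lies in the age and Fact~\ref{fact:contemb} embeds it over $F$ into $\sigma(\T)$. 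Hence $\rro(\T)=\mathfrak{A}$ for every type. Nevertheless, as the paper points out, for $\T=\langle\{a,b\}\tr x\rangle$ with $\{a,b,x\}$ a hyperedge and $\{x,y\}\subseteq\sigma(\T)$ with $\{a,x,y\}$ and $\{b,x,y\}$ hyperedges, the type $\S=\langle\{x,y\}\tr z\rangle$ with $\{x,y,z\}$ a hyperedge has $\iota(\S)\subseteq\sigma(\T)$ but $\sigma(\S)\cap\sigma(\T)=\emptyset$: a common point would create six hyperedges on five vertices covering all pairs, a forbidden configuration. So by Lemma~\ref{lem:charercop} the set $\sigma(\T)$ does not induce a copy (the paper notes $\mathrm{U}_{\downarrow\sigma(\T)}$ is not even homogeneous). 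This is precisely the amalgam your induction needs already at $|E|=2$, and it does not exist: the six forced hyperedges form the forbidden configuration by themselves, and adding mixed relations only adds hyperedges to an already irreducible structure. Your hope that $|\mathfrak{R}(\mathrm{U})|=1$ together with oligomorphicity would allow the mixed relations to be chosen suitably cannot be realized; only your base case $|E|=1$ survives.

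What is true, sufficient, and provable is weaker: $\mathrm{U}$ \emph{embeds into} $\mathrm{U}_{\downarrow\sigma(\S)}$, which is not the same as $\sigma(\S)$ being a copy and cannot be extracted from Lemma~\ref{lem:charercop}. This is how the paper argues, via Cameron's theorem (Lemma~\ref{lem:Cameronage}): if every structure in the age of $\mathrm{S}$ lies in the age of $\mathrm{R}$ and $\mathrm{R}$ is oligomorphic, then $\mathrm{S}$ embeds into $\mathrm{R}$. Apply it with $\mathrm{S}=\mathrm{U}$ and $\mathrm{R}=\mathrm{U}_{\downarrow\sigma(\S)}$ for the type $\S$ produced by your own reduction: the hypothesis $|\mathfrak{R}(\mathrm{U})|=1$ gives the age inclusion $\mathfrak{A}=\rro(\S)$, and $\mathrm{U}_{\downarrow\sigma(\S)}$ is oligomorphic because $\mathrm{G}_{\iota(\S)}$ restricted to $\sigma(\S)$ is an oligomorphic subgroup of its automorphism group. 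The resulting embedded copy of $\mathrm{U}$ lies in $\sigma(\S)\subseteq\boldsymbol{c}^{-1}(0)$, which is all that indivisibility requires. So the fix is to replace your Claim by this embedding statement and abandon the Pouzet-based characterization of typesets as copies.
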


\section{A guide towards the proof of Theorem \ref{thm:manaaaod}}\label{sect:guide}

Let $\mathrm{R}$ be the Rado graph. Let $\T=\langle F\tr x\rangle$ be a type of $\mathrm{R}$ with typeset $\sigma(\T)$. Then the structure $\mathrm{R}_{\downarrow \sigma(\T)}$ is isomorphic to the Rado graph. For if $\mathrm{A}$ is an element in the age of $\mathrm{R}$, that is $\mathrm{A}$ is a finite graph, we may assume that $A\cap R=\emptyset$. Otherwise we take a graph in the age disjoint from $R$ and isomorphic to $\mathrm{A}$. Let then  $\mathrm{M}$ be the graph with set of vertices $F\cup A$ for which $\mathrm{M}_{\downarrow F}=\mathrm{R}_{\downarrow F}$ and $\mathrm{M}_{\downarrow A}=\mathrm{A}$.  In addition for every element $a\in A$ exists a function  $f: F\cup \{x\}\to F\cup \{a\}$ with $f(x)=a$ and $f(v)=v$ for all $v\in F$, which is an isomorphism of $\mathrm{U}_{\downarrow F\cup \{x\}}$ to $\mathrm{M}_{\downarrow F\cup \{a\}}$. Then $\mathrm{M}$ is a graph, hence in the age of $\mathrm{R}$. According to Fact \ref{fact:contemb} there exists an embedding $h$ of $\mathrm{M}$ into $\mathrm{R}$ with $f(v)=v$ for all $v\in F$. Because the vertices in $A$ are attached to $F$ just as $x$ is attached to $F$, the embedding $h$ maps $A$ into $\sigma(\T)$. Hence the age of $\sigma(\T)$ is equal to the age of $\mathrm{R}$, namely the class of all finite simple graphs. Let $\mathrm{G}$ be the group of automorphisms of the Rado graph $\mathrm{R}$. Then the group $\mathrm{G}_F$ acts on $\sigma(\T)$ and is the group of automorphisms of $\mathrm{R}_{\downarrow \sigma(\T)}$. It follows from Fact \ref{fact:uniquhomage} that $\mathrm{R}_{\downarrow \sigma(\T)}$ is isomorphic to the Rado graph. Actually, if $\mathrm{\T}$ is a type of a binary homogeneous structure $\mathrm{U}$ with group of automorphisms $\mathrm{G}$ then $\mathrm{G}_{\iota(\T)}$ is the automorphism group of the structure $\mathrm{U}_{\downarrow \sigma(\T)}$. Which is not necessarily true in the non binary case. The argument above demonstrates how Fact \ref{fact:contemb} will be used throughout this article. 
 
 The Rado graph is indivisible, because:  Let $\mathfrak{c}$ be  a colouring, that is a partition, of its set of vertices $R$ into red and blue vertices. Let $u_0, u_1, u_2, u_3, \dots$ be an $\omega$-enumeration of $R$. We will try to construct a red copy of $\mathrm{R}$ in $\mathrm{R}$ in such a way that if we fail, then there exists a blue copy of $\mathrm{R}$ in $\mathrm{R}$. If there is no red vertex then clearly $R$ is a blue copy. Otherwise let $x_0$ be a red vertex. Assume we already found a sequence $x_0, x_1, \dots, x_{n-1}$  of red vertices so that the  the function $f_n: \{u_i\mid i\in n\} \to \{x_i\mid i\in n\}$ with $f(u_i)=x_i$ is an isomorphism of $\mathrm{U}_{\downarrow \{u_i\mid i\in n\}}$ to  $\mathrm{U}_{\downarrow \{x_i\mid i\in n\}}$. The Rado graph is homogeneous. Hence there exists a function $g\in \mathrm{G}$ which extends $h$ to $R$. If we can find a red vertex in the typeset of the type $\langle \{x_i\mid i\in n\}\tr g(u_n)\rangle$ the process continues. If not, all of the vertices in $\sigma(\langle \{x_i\mid i\in n\}\tr g(u_n)\rangle)$ are blue. Then we obtained our blue copy. If the process of constructing a sequence of red points never stops, we obtained a red copy. 
 
 We will try to extend this simple construction. But even in the next, seemingly just a bit more intricate case of the triangle free homogeneous graph $\mathrm{H}_3$,  we run into difficulties. Let us proceed as above. Enumerate the vertices of $H_3$ as $u_0, u_1, u_2, u_3, \dots$. Start to construct the isomorphic sequence $x_0, x_1, x_2, \dots$ of red vertices. If we are never stopped, then indeed we have a red copy. But assume we are stopped at step $n$. That is we have the  sequence $\{x_i\mid i\in n\}$ of red vertices.  Let the function $g$ be defined as in the case of the Rado graph and every vertex in the typeset $\sigma\langle \{x_i\mid i\in n\}\tr g(u_n)\rangle$ is blue. If the age of $\sigma\langle \{x_i\mid i\in n\}\tr g(u_n)\rangle$ is the class of all finite triangle free graphs, we obtained a blue copy. But assume there is a vertex, say $u_j$, which is adjacent to $u_n$. Then $x_j$ would be adjacent to $g(u_n)$. Implying that the set $\sigma\langle \{x_i\mid i\in n\}\tr g(u_n)\rangle$ does not contain two adjacent vertices. For otherwise there is a triangle. In which case we did not obtain a blue copy. We might try to rectify the construction of the red sequence by choosing the red points such that looking ahead, there is no such typeset in the way,  obstructing the red sequence. But then we should have a proof that if we can not do this, there exists a blue copy. 
 
 What does it mean, to look ahead? Well, if we are at stage $\{x_i\mid i\in n\}$ we have to investigate the bundle, that is the set, of all types, say $\mathcal{T}$,  with sockel $\{x_i\mid i\in n\}$. Because, for every one  of those types  $\T\in \mathcal{T}$ at   some later stage,  we will have to choose a vertex in the typeset of some successor $\S$ of $\T$.  This successor depends on the choices of red vertices until a vertex in $\T$ has to be chosen.  The typesets of those types form a partition of the set $H_3\setminus\{x_i\mid i\in n\}$. If we have then to conclude that the colouring of those typesets is such that we can not find a red copy going on from this stage of the construction, we have to go to a previous step and select another red vertex which leads to a more promising bundle of types. The process starts with the first vertex we choose. How do we know that there is such a good start for red?  If not there should then be a good first vertex for blue.  The fact that there exists such a good first choice for red or blue is established in Section~\ref{sect:basvertthm}. 
 
Let $\mathrm{U}$ be a countable homogeneous structure. The example of the triangle free graph should indicate that in order to extend the argument for the indivisibility of the Rado graph to $\mathrm{U}$ we should have the following tools available: A notion of a ``good" bundle of types,  Section~\ref{sect:basvertthm}. An understanding of how bundles and in particular  good bundles grow by extending the sockel by one additional element, Section \ref{sect:bundles}.  Bundles being sets of types. Hence understanding them necessitates to understand and develop properties of types, Section \ref{sect:types}. Which,  for binary homogeneous structures would be a good plan. 

Unfortunately if the homogeneous structure is not binary we need to do some additional work. There are two reasons for this. Let $\T$ and $\S$ be two types of $\mathrm{U}$ for which $\sigma(\T)\cap \sigma(\S)\not=\emptyset$. Then if $\mathrm{U}$ is binary there exists a single type $\X$ with $\iota(\X)=\iota(\T)\cup \iota(\S)$ and with $\sigma(\X)=\sigma(\T)\cap \sigma(\S)$. If $\mathrm{U}$ is not binary then there exist potentially several different types, all with the same sockel, $\iota(\T)\cup \iota(\S)$, so that the union of their typesets is equal to $\sigma(\T)\cap \sigma(\S)$. The age of every copy is the same as the age of $\mathrm{U}$. Hence we can not afford to decrease the age during the construction process.  For the construction we need then to be able to pick out of those types one whose typeset has the same age as the set $\sigma(\T)\cap \sigma(\S)$. Leading to age partition problems. The situation is actually more complicated because it occurs in the course of the construction in the context of bundles of types. Leading to age partition problems in the case of non transitive automorphism groups. This situation is addressed in Section \ref{sect:aingind}. The other difficulty in the non binary case is caused by the action of the groups $\mathrm{G}_{\iota(\T)}$ for the automorphism group $\mathrm{G}$ and the types of $\mathrm{U}$. Let $\T$ be a type of $\mathrm{U}$. Let $A$ and $B$  be two finite subsets  of $\sigma(\T)$ for which $\mathrm{U}_{\downarrow A}$ is isomorphic to $\mathrm{U}_{\downarrow B}$. Because $\mathrm{U}$ is homogeneous there exists function $g\in \mathrm{G}$, the group of automorphisms of $\mathrm{U}$, with $g[A]=B$. Even better, if $\mathrm{U}$ is binary there exists then a function $f\in \mathrm{G}_{\iota(\T)}$ with $f[A]=B$. But this, for the construction important function, does not necessarily exist if $\mathrm{U}$ is not binary. For more details about this see Lemma \ref{thm:exactindRad} and Example \ref{ex:strmelmeld}.   In order to rectify this we needed to single out special easy to deal with types, the so called ``formed types" of Section \ref{section: formedtyp}. There we developed the important properties of those formed types. 

Assume we  arrived during the course of the construction at a ``good" bundle, say $\mathcal{C}$. That is the sockel of the bundle has the correct properties as in the construction for the Rado graph and in addition the bundle of types with this sockel is ``good".   Extending the sockel by an additional element creates a new bundle, $\mathcal{B}$. But in general this new bundle will not be ``good". The types of the  bundle $\mathcal{C}$ have to be refined in such a way that we arrive at a good bundle $\mathcal{D}$ having the same ranks as the types in $\mathcal{B}$. Then the two bundles $\mathcal{B}$ and $\mathcal{D}$ have to be ``meshed correctly" to arrive at an extension of the bundle with the extended sockel and which is ``good".  

After doing all of this preparatory work the construction of a red or blue copy along the lines of the example for the Rado graph happens in Section \ref{sect:constru1}. There we obtain Theorem \ref{thm:manaaaod} : Every oligomorphic, rank linear, free amalgamation homogeneous structure $\mathrm{U}$ is indivisible. The condition  oligomorphic, except for Lemma \ref{lem:erefx234},   is used the first time in Section \ref{sect:constru1} in the prove of Theorem  \ref{thm:manaaaod}. For this proof Lemma \ref{lem:existastolog} is used and the assumption of Lemma \ref{lem:existastolog} is that the set of types of the bundles dealt with is finite. Lemma \ref{lem:erefx234} is not used again until in the proof of Theorem  \ref{thm:manaaaod}. The proof of this Lemma requires, just in the non binary case, Corollary  \ref{cor:agindfreambund}. All of the bundles dealt with in Section~ \ref{sect:aingind} are finite. Lemma \ref{lem:existastolog}, Lemma \ref{lem:jonxtC} and the proof of Theorem \ref{thm:manaaaod} make use of Theorem \ref{thm:rmsfreeh} which depends on Theorem  \ref{thm:fin game} which in turn depends on Lemma \ref{lem:gammaplI}. Lemma \ref{lem:gammaplI} uses the condition of Definition \ref{defin:mho-game} that the order $(\mathfrak{R};\subseteq)$ is linear. The condition  rank linear is not used before Section \ref{sect:basvertthm}.

\section{Types in  homogenous  structures}\label{sect:types}

Let $\mathrm{U}$, with a countable  domain $U$, be a free amalgamation  relational  homogeneous  structure in a relational language $\boldsymbol{L}$. The    age of $\mathrm{U}$ will be   denoted by $\mathfrak{A}$ and the group of  automorphisms of $\mathrm{U}$ will be denoted by $\mathrm{G}$. The group $\mathrm{G}$ acts transitively on $U$.

A type $\S$ is a {\em successor} of a type $\T$ if $\iota(\T)\subseteq \iota(\S)$ and if $\sigma(\S)\subseteq \sigma(\T)$. If $\S$ is a successor of $\T$ then $\T$ is a {\em predecessor of $\S$}. Note that if $\S$ is  a successor of $\T$ then there exists for every $x\in \sigma(\S)$ and every $y\in \sigma(\T)$ a function $g\in \mathrm{G}_{\iota(\T)}$ with $g(x)=y$. The type $\S$ is an {\em $E$-successor of $\T$} if $\S$ is a successor of $\T$ and $\iota(\S)=\iota(\T)\cup E$ and if $E\cap \iota(\T)=\emptyset$. A type $\S$ is a {\em refinement} of a type $\T$ if it is a successor of $\T$ and if $\rro(\S)=\rro(\T)$. A type $\S$ is an {\em $\mathfrak{r}$-restriction} of a type $\T$ if it is a successor of $\T$ and if $\rro(\S)=\mathfrak{r}$.

\begin{defin}
Let $\T$ be a type and $E\subseteq \iota(\T)$. The predecessor $\R$ of $\T$ with $\iota(\R)=E$ will be denoted by $\T_{\downarrow E}$. 
\end{defin}

Two types $\T=\langle T\tr x\rangle$ and $\S=\langle S\tr y\rangle$ of $\mathrm{U}$ are  {\em compatible} if there exists a type $\R=\langle T\cap S\tr z\rangle$, the {\em common predecessor of $\T$ and $\S$},  with $\sigma(\T)\cup \sigma(\S)\subseteq \sigma(\R)$. That is the types $\T$ and $S$ are compatible if for every $x\in \sigma(\T)$ and every $y\in \sigma(\S)$ there exists a function $g\in \mathrm{G}_{T\cap S}$ with $g(x)=y$. 
The  types $\T=\langle T\tr x\rangle$ and $\S=\langle S\tr y\rangle$  are in {\em free position} if they are compatible and if $t\, \, \, \, \nedge{S\cup T} \, \, \, \, s$  for all $t\in T\setminus S$ and all $s\in S\setminus T$.

Let $\T=\langle T\tr x\rangle$ and $\S=\langle S\tr y\rangle$ be two types in free position. A type  $\R=\langle T\cup S\tr z\rangle$ for which $z\in \sigma(\T)\cap \sigma(\S)$ and for which  $t\, \, \, \, \, \nedge{\{z\}\cup S\cup T}\, \, \, \, \, s$ for all $t\in T\setminus S$ and all $s\in S\setminus T$ is a {\em join} of $\T$ and $\S$. Note that any two joins of $\T$ and $\S$ are equal. The join of $\T$ and $\S$, if it exists, will be denoted by $\T\sqcap \S$. Note that $\sigma(\T)\supseteq \sigma(\T\sqcap \S)\subseteq \sigma(\S)$. The equality $\R=\T\sqcap \S$ will always imply that the types $\T$ and $\S$ are in free position and that $\R$ is the join of $\T$ and $\S$.

\begin{lem}\label{lem:tyinfA}
Let $\mathrm{U}$ be a free amalgamation homogeneous structure. Let $\T=\langle T\tr x\rangle$ be a type,  $F$ be a finite subset of $U$  and $\mathrm{A}\subseteq \sigma(\T)$ be finite. Then there exists a function $f\in \mathrm{G}_T$  with $f[A]\subseteq \sigma(\T)\setminus F$. 
\end{lem}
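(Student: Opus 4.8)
The plan is to reduce the statement to a purely combinatorial avoidance problem and then solve it with free amalgamation and the extension property. First note that, since $\sigma(\T)$ is the orbit of $x$ under $\mathrm{G}_T$, every $f\in\mathrm{G}_T$ satisfies $f[\sigma(\T)]=\sigma(\T)$; as $A\subseteq\sigma(\T)$ this gives $f[A]\subseteq\sigma(\T)$ for free. Hence the only content of the Lemma is to find some $f\in\mathrm{G}_T$ with $f[A]\cap F=\emptyset$. The idea is to manufacture more than $|F|$ pairwise disjoint copies of $A$ inside $U$, each attached to $T$ exactly as $A$ is, and then argue by pigeonhole that one of them misses the finite set $F$.

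To produce these copies I would build a single finite witness structure and embed it. Let $k=|F|+1$ and choose pairwise disjoint sets $A_1,\dots,A_k$ with $A_1=A$, together with isomorphisms $\phi_i\colon A\to A_i$ (and $\phi_1=\mathrm{id}$). Let $\mathrm{M}$ be the iterated free amalgam over the common part $T$ of $k$ copies of $\mathrm{U}_{\downarrow T\cup A}$, so that $\mathrm{M}_{\downarrow T\cup A_i}\cong\mathrm{U}_{\downarrow T\cup A}$ via the identity on $T$ and $\phi_i$ on $A$, while no relation of $\mathrm{U}$ holds on any tuple meeting two distinct wings $A_i,A_j$. Since $\mathfrak{A}$ is a free amalgamation age it is closed under free amalgams of amalgamation instances, so forming $\mathrm{M}$ one wing at a time keeps every intermediate structure, and hence $\mathrm{M}$ itself, in $\mathfrak{A}$. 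Because $\mathrm{M}_{\downarrow T\cup A}=\mathrm{U}_{\downarrow T\cup A}$, Fact \ref{fact:contemb} (with $S=T\cup A$) yields an embedding $h\colon M\to U$ fixing $T\cup A$ pointwise.

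Finally, the images $h[A_1]=A,\,h[A_2],\dots,h[A_k]$ are pairwise disjoint copies of $A$, so at most $|F|=k-1$ of them can meet $F$; choose $j$ with $A':=h[A_j]$ disjoint from $F$. The composite $a\mapsto h(\phi_j(a))$ is an isomorphism of $\mathrm{U}_{\downarrow T\cup A}$ onto $\mathrm{U}_{\downarrow T\cup A'}$ fixing $T$ pointwise, hence an embedding of a finite induced substructure of $\mathrm{U}$ into $\mathrm{U}$; by homogeneity it extends to an automorphism $f$, and since $f$ fixes $T$ we have $f\in\mathrm{G}_T$ with $f[A]=A'$. Then $f[A]\subseteq\sigma(\T)$ by the first paragraph and $f[A]\cap F=\emptyset$ by the choice of $j$, which is exactly $f[A]\subseteq\sigma(\T)\setminus F$. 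The only points needing care are that the iterated free amalgam genuinely stays in $\mathfrak{A}$ (immediate from the free amalgamation hypothesis) and that one takes strictly more than $|F|$ disjoint copies so the pigeonhole step succeeds; I do not expect any serious obstacle beyond this bookkeeping.
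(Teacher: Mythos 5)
Your proof is correct, but it avoids the finite set $F$ by a different mechanism than the paper does. The paper builds a single fresh copy $A'$ of $A$ and takes the free amalgam of $\mathrm{U}_{\downarrow T\cup A}$ (realized on $T\cup A'$) with $\mathrm{U}_{\downarrow F\cup T}$ over $T$; since every boundary structure is irreducible this amalgam lies in $\mathfrak{A}$, and Fact~\ref{fact:contemb} then gives an embedding fixing $F\cup T$ pointwise, so injectivity alone forces the image of $A'$ off $F$ --- no counting is needed. You instead never put $F$ into the witness structure: you glue $|F|+1$ disjoint free copies of $\mathrm{U}_{\downarrow T\cup A}$ over $T$, embed fixing only $T\cup A$, and then invoke the pigeonhole principle on the pairwise disjoint images. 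Both arguments share the same skeleton (free amalgamation to stay in the age, Fact~\ref{fact:contemb} to embed with a pointwise-fixed base, homogeneity to upgrade the resulting finite isomorphism to an element of $\mathrm{G}_T$), and your opening reduction via $\mathrm{G}_T$-invariance of $\sigma(\T)$ is the same observation the paper uses implicitly. What the paper's version buys is economy and a template: ``amalgamate freely against whatever must be avoided, then embed fixing it pointwise'' is exactly the move repeated in Lemmas~\ref{lem:commeldstrm}, \ref{lem:freeduch} and \ref{lem:freecoppr}, so proving it that way here sets up the rest of the article. What your version buys is that the amalgamation step is maximally uniform (only copies of one structure over the base $T$), with the avoidance of $F$ handled by pure cardinality rather than by the shape of the amalgam; the price is the bookkeeping of $|F|+1$ wings, which you handle correctly.
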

\begin{proof}
Let $\mathrm{M}$ be the $\boldsymbol{L}$-structure with $M=F\cup T\cup A'$ and with $\mathrm{M}_{\downarrow F\cup T} =\mathrm{U}_{\downarrow F\cup T}$ and with $s\, \, \, \, \nedge{M}\, \, \, \,  a$ for all $s\in F$ and all $a\in A'$ and   for which there exists an  embedding $h$ of $\mathrm{M}_{\downarrow T\cup A'}$ into $\mathrm{U}$  with $h(v)=v$ for all $v\in T$ and with $h[A']=A$. The structure $\mathrm{M}$ is the free amalgam of $M_{\downarrow T\cup A'}$ and $\mathrm{M}_{\downarrow F\cup T}$, hence   $\mathrm{M}\in \mathfrak{A}$. Then there exists  an embedding $k$ of $\mathrm{M}$ into $\mathrm{U}$ with $k(v)=v$ for all $v\in F\cup T$. Note that $k[A']\subseteq \sigma(\T)$ and $k[A']\cap F=\emptyset$. We put   $f$ to be an extension of the function $k\circ h^{-1}$ to an automorphism of $\mathrm{G}$. 
\end{proof}
\begin{cor}\label{cor:tyinfA}
Let the age $\mathfrak{A}$ of\/ $\mathrm{U}$ be a free amalgamation age. The set $\sigma(\T)$ is infinite for every type $\T$ of $\mathrm{U}$. 
\end{cor}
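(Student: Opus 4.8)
The plan is to deduce this directly from Lemma~\ref{lem:tyinfA} by a short contradiction argument. First I would record the trivial but essential observation that $\sigma(\T)$ is nonempty: writing $\T=\langle T\tr x\rangle$, the identity automorphism belongs to $\mathrm{G}_T$ and fixes $x$, so $x\in\sigma(\T)$. I would also note that $\sigma(\T)$ is invariant under the stabilizer $\mathrm{G}_T$, since any $g\in\mathrm{G}_T$ carries a witness of $\T$ to a witness of $\T$; in particular $g[\sigma(\T)]\subseteq\sigma(\T)$ for every $g\in\mathrm{G}_T$.

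Now suppose, for contradiction, that $\sigma(\T)$ is finite. Then I would apply Lemma~\ref{lem:tyinfA} with the finite set $A=\sigma(\T)$ (which is a finite subset of $\sigma(\T)$) and with the finite obstacle $F=\sigma(\T)$. The lemma produces a function $f\in\mathrm{G}_T$ with $f[\sigma(\T)]\subseteq\sigma(\T)\setminus\sigma(\T)=\emptyset$, so $f[\sigma(\T)]=\emptyset$. But $x\in\sigma(\T)$ gives $f(x)\in f[\sigma(\T)]$, and by $\mathrm{G}_T$-invariance $f(x)\in\sigma(\T)$, so $f[\sigma(\T)]\neq\emptyset$. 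This contradiction shows that $\sigma(\T)$ cannot be finite.

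There is essentially no obstacle here beyond keeping track of the two roles played by $\sigma(\T)$: it serves simultaneously as the finite set $A$ to be moved and as the finite set $F$ to be avoided. All of the real content sits in Lemma~\ref{lem:tyinfA}, whose free-amalgamation construction is exactly what allows one to push any finite subset of a typeset off of any prescribed finite set while remaining inside the typeset. The corollary is then just the remark that a nonempty set cannot be pushed off of itself.
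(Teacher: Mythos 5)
Your proof is correct and is essentially the argument the paper intends: Corollary~\ref{cor:tyinfA} is stated without proof immediately after Lemma~\ref{lem:tyinfA}, the point being that the lemma moves any nonempty finite subset of $\sigma(\T)$ (e.g.\ $\{x\}$, which lies in $\sigma(\T)$ via the identity) off of any prescribed finite set while staying inside $\sigma(\T)$, so $\sigma(\T)$ cannot be finite. Your packaging as a contradiction with $A=F=\sigma(\T)$ is a trivially equivalent variant of that same one-line deduction.
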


\begin{lem}\label{lem:commeldstrm}
Let $\mathrm{U}$ be a  free amalgamation homogeneous structure. Let $\B=\langle B\tr b\rangle$ be a type of $\mathrm{U}$ with a $C$-successor $\T=\langle B\cup C\tr x\rangle$  and an  $S$-successor $\S=\langle B\cup S\tr y\rangle$  such  that $C\cap S=\emptyset$ and  $\T$ and $\S$ are in free position. 

If for some finite set $\emptyset \not=A\subseteq \sigma(\S)$ there exists a function $h\in \mathrm{G}_B$ with $h[A]\subseteq \sigma(\T)$ then the type $\T\sqcap \S$ exists and there exists a function $f\in \mathrm{G}_{B\cup S}$ with $f[A]\subseteq \sigma(\T\sqcap \S)$. 
\end{lem}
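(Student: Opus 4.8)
The plan is to manufacture a single finite structure in $\mathfrak{A}$ that simultaneously exhibits a realizer of the join and carries a copy of $A$ into the join's typeset, and then to transport everything back into $\mathrm{U}$ by the extension property, Fact~\ref{fact:contemb}. Fix a set $A'$ disjoint from $U$ together with a bijection $\alpha\colon A\to A'$. I will build an $\boldsymbol{L}$-structure $\mathrm{N}'$ with domain $B\cup C\cup S\cup A'$ as the free amalgam, over the base $B\cup A'$, of a ``$C$-side'' and an ``$S$-side''. Because $C\cap S=\emptyset$, $C\cap B=\emptyset=S\cap B$, and $A'$ is fresh, the three sets $B\cup A'$, $C$, $S$ are pairwise disjoint, so this is a legitimate amalgamation instance provided the two sides agree on $B\cup A'$.

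The $S$-side $\mathrm{M}_2:=\mathrm{N}'_{\downarrow B\cup S\cup A'}$ is declared isomorphic to $\mathrm{U}_{\downarrow B\cup S\cup A}$ via the identity on $B\cup S$ and $\alpha^{-1}$; thus each point of $A'$ realizes $\S$ over $B\cup S$ and $A'$ reproduces the internal structure of $A$. The $C$-side $\mathrm{M}_1:=\mathrm{N}'_{\downarrow B\cup C\cup A'}$ is declared isomorphic to $\mathrm{U}_{\downarrow B\cup C\cup h[A]}$ via the identity on $B\cup C$ and $\alpha(a)\mapsto h(a)$; this is exactly where the hypothesis $h[A]\subseteq\sigma(\T)$ enters, guaranteeing that $A'$ can realize $\T$ over $B\cup C$ while $B\cup C\cup h[A]$, being an induced substructure of $\mathrm{U}$, still lies in $\mathfrak{A}$. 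The crucial verification, and what I expect to be the main obstacle, is that these two prescriptions agree on the shared base $B\cup A'$. This reduces to the observation that, since $h\in\mathrm{G}_B$ fixes $B$ pointwise and is an automorphism of $\mathrm{U}$, a relation holds on a tuple drawn from $B\cup A$ if and only if it holds on its $h$-image; hence pulling $\mathrm{U}_{\downarrow B\cup A}$ back along $\alpha^{-1}$ produces the same structure on $B\cup A'$ as pulling $\mathrm{U}_{\downarrow B\cup h[A]}$ back along $\alpha(a)\mapsto h(a)$. Granting this, $\mathrm{M}_1$ and $\mathrm{M}_2$ form an amalgamation instance, their free amalgam $\mathrm{N}'$ lies in $\mathfrak{A}$, and by construction $c\,\nedge{N'}\,s$ for all $c\in C$ and $s\in S$, so that no relation of $\mathrm{N}'$ meets both $C$ and $S$.

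Two structural identities then drive the argument home. First, $\mathrm{N}'_{\downarrow B\cup C\cup S}$ is the free amalgam of $\mathrm{U}_{\downarrow B\cup C}$ and $\mathrm{U}_{\downarrow B\cup S}$ over $B$, and this equals $\mathrm{U}_{\downarrow B\cup C\cup S}$ precisely because $\T$ and $\S$ are in free position (no relation inside $B\cup C\cup S$ meets both $C$ and $S$). Hence Fact~\ref{fact:contemb} supplies an embedding $e\colon N'\to U$ fixing $B\cup C\cup S$ pointwise. Put $A^*:=e[A']$. Each point of $A^*$ realizes $\T$ over $B\cup C$, realizes $\S$ over $B\cup S$, and, since no relation through it meets both $C$ and $S$, it carries exactly the relational pattern to $B\cup C\cup S$ demanded of a realizer of a join; as these data determine the type over $B\cup C\cup S$, all points of $A^*$ share one type there. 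Because $A\neq\emptyset$ we have $A^*\neq\emptyset$, so choosing any $z\in A^*$ exhibits the type $\R=\langle B\cup C\cup S\tr z\rangle$ as a join of $\T$ and $\S$; by uniqueness of joins this is $\T\sqcap\S$, which establishes its existence, and the previous sentence gives $A^*\subseteq\sigma(\T\sqcap\S)$.

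Finally, the isomorphism $\mathrm{M}_2\cong\mathrm{U}_{\downarrow B\cup S\cup A}$ composed with $e$ yields an isomorphism of $\mathrm{U}_{\downarrow B\cup S\cup A}$ onto $\mathrm{U}_{\downarrow B\cup S\cup A^*}$ that fixes $B\cup S$ pointwise and sends $A$ to $A^*$. By homogeneity of $\mathrm{U}$ this finite isomorphism extends to an automorphism, which we may take to fix $B\cup S$; that is, there is $f\in\mathrm{G}_{B\cup S}$ with $f[A]=A^*\subseteq\sigma(\T\sqcap\S)$, as required.
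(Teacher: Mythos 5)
Your proof is correct and follows essentially the same route as the paper's: amalgamate a copy of $A$ attached to $B\cup C$ as $h[A]$ is with a copy attached to $B\cup S$ as $A$ is, take the free amalgam, embed it back into $\mathrm{U}$ fixing $B\cup C\cup S$ via Fact~\ref{fact:contemb} (using free position), and extend the resulting finite isomorphism to $f\in\mathrm{G}_{B\cup S}$ by homogeneity. The only cosmetic difference is that you relabel $A$ onto a fresh set $A'$, whereas the paper realizes the relabeled side directly on the set $B\cup S\cup h[A]$.
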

\begin{proof}
Let $A$ be finite and $\emptyset \not=A\subseteq \sigma(\S)$ and  $h\in \mathrm{G}_B$ with $h[A]\subseteq \sigma(\T)$. Let $\mathrm{M}=\mathrm{U}_{\downarrow B\cup C\cup h[A]}$. Let $l$ with $\dom(l) = B\cup S\cup A$ be the function with $l(v)=v$ for all $v\in B\cup S$ and with $l(a)=h(a)$ for all $a\in A$.   Let $\mathrm{N}$ be the $\boldsymbol{L}$-structure for which $N=B\cup S\cup h[A]$ and  so that   the function $l$ is an isomorphism of $\mathrm{U}_{\downarrow  B\cup S\cup A}$ to $\mathrm{N}$. The structures $\mathrm{M}$ and $\mathrm{N}$ form an amalgamation instance with $M\cap N=B\cup h[A]$. Let $\mathrm{R}$ be the free amalgam of $\mathrm{M}$ and $\mathrm{N}$.  It follows from Fact \ref{fact:contemb} that there exists an embedding $k$ of the structure $\mathrm{R}$ into $\mathrm{U}$ with $k(v)=v$ for all elements $v\in B\cup C\cup S$. It follows from $k{v}=v$ for all $v\in \iota(\T)$ that $k[h[A]]\subseteq \sigma(\T)$. The function $k\circ l$ is an isomorphism of $\U_{B\cup S\cup A}$ to $\mathrm{U}_{\downarrow B\cup S\cup k\circ l[A]}$. Let $f\in \mathrm{G}$ be an extension of the isomorphism $k\circ l$ to a function in $\mathrm{G}$.
\end{proof}

\begin{cor}\label{cor:commeldstrm}
Let $\mathrm{U}$ be a  free amalgamation homogeneous structure. Let $\B=\langle B\tr b\rangle$ be a type of $\mathrm{U}$ with a $C$-successor $\T=\langle B\cup C\tr x\rangle$  and an $S$-successor $\S=\langle B\cup S\tr y\rangle$  and  so that $\T$ and $\S$ are in free position. Then the type $\T\sqcap \S$ exists. In particular,  $\sigma(\T)\cap \sigma(\S)\not=\emptyset$. 
\end{cor}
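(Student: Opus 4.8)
The goal is to deduce Corollary \ref{cor:commeldstrm} from Lemma \ref{lem:commeldstrm}. The hypotheses of the Corollary are exactly those of the Lemma except that the Lemma also requires the existence of a nonempty finite set $\emptyset \not= A\subseteq \sigma(\S)$ together with a function $h\in \mathrm{G}_B$ with $h[A]\subseteq \sigma(\T)$. So my plan is simply to manufacture such a set $A$ and such a function $h$, after which the conclusion $\T\sqcap \S$ exists follows immediately from the Lemma, and the final clause $\sigma(\T)\cap \sigma(\S)\not=\emptyset$ follows from the remark in the text that $\sigma(\T)\supseteq \sigma(\T\sqcap \S)\subseteq \sigma(\S)$ together with Corollary \ref{cor:tyinfA}, which guarantees $\sigma(\T\sqcap \S)$ is infinite hence nonempty.

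First I would pick a single element $a\in \sigma(\S)$, which is possible since $\sigma(\S)$ is infinite by Corollary \ref{cor:tyinfA}, and set $A=\{a\}$. The task then reduces to finding $h\in \mathrm{G}_B$ with $h(a)\in \sigma(\T)$. Since $\T$ and $\S$ are successors of the common predecessor $\B=\langle B\tr b\rangle$, both $\sigma(\T)$ and $\sigma(\S)$ are contained in $\sigma(\B)$; indeed for any type we have $\sigma(\S)\subseteq \sigma(\B)$ because $\S$ is a successor of $\B$. Thus $a\in \sigma(\B)$ and any $x'\in \sigma(\T)\subseteq\sigma(\B)$ lies in the same typeset $\sigma(\B)$. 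By the definition of a typeset, there exists $g\in \mathrm{G}_B$ carrying one element of $\sigma(\B)$ to any other, so there is $h\in \mathrm{G}_B$ with $h(a)=x'\in\sigma(\T)$, giving $h[A]=h[\{a\}]=\{x'\}\subseteq\sigma(\T)$ as required.

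With $A$ and $h$ in hand, Lemma \ref{lem:commeldstrm} applies verbatim and yields that the join $\T\sqcap \S$ exists (and moreover supplies an $f\in \mathrm{G}_{B\cup S}$ with $f[A]\subseteq \sigma(\T\sqcap\S)$, which in particular re-confirms nonemptiness of $\sigma(\T\sqcap\S)$). To finish, by the conventions recorded after the definition of the join, the equality $\R=\T\sqcap \S$ means $\R$ is a type with $\sigma(\R)\subseteq \sigma(\T)\cap\sigma(\S)$. Applying Corollary \ref{cor:tyinfA} to the type $\T\sqcap\S$ shows $\sigma(\T\sqcap\S)$ is infinite, hence nonempty, so $\sigma(\T)\cap\sigma(\S)\supseteq \sigma(\T\sqcap\S)\not=\emptyset$.

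I do not expect any serious obstacle here; the entire content of the Corollary is that the auxiliary mapping hypothesis of the Lemma is automatically satisfiable whenever $\T$ and $\S$ are in free position, and that is immediate because both typesets sit inside the single typeset $\sigma(\B)$ on which $\mathrm{G}_B$ acts transitively. The only point requiring a moment's care is to confirm that a predecessor $\B$ with $\sigma(\T)\cup\sigma(\S)\subseteq\sigma(\B)$ is available — but this is precisely guaranteed by the assumption that $\T$ and $\S$ are in free position, since free position presupposes compatibility and compatibility provides exactly such a common predecessor.
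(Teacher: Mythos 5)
Your proposal is correct and takes essentially the same approach as the paper: the paper's one-line proof --- ``follows from the fact that for every $a\in \sigma(\B)$ there exists a function $h\in \mathrm{G}_B$ with $h(a)\in \sigma(\T)$'' --- is precisely your observation that $\sigma(\S)$ and $\sigma(\T)$ both lie in $\sigma(\B)$, on which $\mathrm{G}_B$ acts transitively, so a singleton $A\subseteq \sigma(\S)$ can be mapped into $\sigma(\T)$ by some $h\in\mathrm{G}_B$ and Lemma \ref{lem:commeldstrm} then applies. Your write-up merely spells out the details the paper leaves implicit (the choice of $a$, nonemptiness of $\sigma(\T\sqcap \S)$ via Corollary \ref{cor:tyinfA}, and the inclusion $\sigma(\T\sqcap\S)\subseteq \sigma(\T)\cap\sigma(\S)$).
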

\begin{proof}
Follows from the fact that for every $a\in \sigma(\B)$ there exists a function $h\in \mathrm{G}_B$ with $h(a)\subseteq \sigma(\T)$. 
\end{proof}

\begin{lem}\label{lem:agejoinST}
Let $\mathrm{U}$ be a  free amalgamation homogeneous structure. Let $\B=\langle B\tr b\rangle$ be a type of $\mathrm{U}$ with a $C$-successor $\T=\langle B\cup C\tr x\rangle$  and an  $S$-successor $\S=\langle B\cup S\tr y\rangle$  such  that $C\cap S=\emptyset$ and  $\T$ and $\S$ are in free position. 

Then $\rro(\T\sqcap \S)=\rro(\T)\cap \rro(\S)$. 
\end{lem}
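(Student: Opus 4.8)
The plan is to prove the two inclusions separately, the first being routine and the second carrying all the weight.

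For $\rro(\T \sqcap \S) \subseteq \rro(\T) \cap \rro(\S)$ I would simply use that the join is a successor of each factor. The paragraph introducing the join records $\sigma(\T)\supseteq \sigma(\T\sqcap \S)\subseteq \sigma(\S)$, so the typeset of the join is contained in each of $\sigma(\T)$ and $\sigma(\S)$. Since the age of a subset of $U$ is monotone under inclusion (any finite structure embeddable into a restriction to a smaller set is embeddable into the restriction to a larger one), this gives at once $\rro(\T\sqcap\S)=\rro(\sigma(\T\sqcap\S))\subseteq \rro(\sigma(\T))=\rro(\T)$ and likewise $\rro(\T\sqcap\S)\subseteq\rro(\S)$, hence the inclusion.

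For the reverse inclusion let $\mathrm D\in \rro(\T)\cap\rro(\S)$; the goal is to embed $\mathrm D$ into $\sigma(\T\sqcap\S)$. The natural tool is Lemma \ref{lem:commeldstrm}: if I can find a finite nonempty $A\subseteq \sigma(\S)$ with $\mathrm U_{\downarrow A}\cong \mathrm D$ together with a function $h\in \mathrm{G}_B$ satisfying $h[A]\subseteq \sigma(\T)$, then the lemma produces $f\in\mathrm{G}_{B\cup S}$ with $f[A]\subseteq \sigma(\T\sqcap\S)$, and since $f$ is an automorphism $\mathrm U_{\downarrow f[A]}\cong \mathrm D$, whence $\mathrm D\in\rro(\T\sqcap\S)$. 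Thus everything reduces to producing one copy of $\mathrm D$ inside $\sigma(\S)$ that can be carried into $\sigma(\T)$ by an element of $\mathrm{G}_B$. A function $h\in\mathrm{G}_B$ with $h[A]\subseteq\sigma(\T)$ exists exactly when the induced structure $\mathrm U_{\downarrow B\cup A}$ can be re-embedded into $\mathrm U$ fixing $B$ pointwise with image in $\sigma(\T)$; equivalently, the copy of $\mathrm D$ witnessing $\mathrm D\in\rro(\S)$ and the copy witnessing $\mathrm D\in\rro(\T)$ must be realizable with the \emph{same} attachment to $B$. This is the heart of the matter and the point where the non-binary case departs from the binary one: in a binary language the structure on $B\cup A$ is determined by the single-element types of the points of $A$ over $B$, which all coincide (both $\T$ and $\S$ are successors of $\B$, so their typesets lie in $\sigma(\B)$), and the matching is automatic; in the non-binary case a single hyperedge may join several points of $A$ to a point of $B$, so distinct copies of $\mathrm D$ in $\sigma(\B)$ can attach to $B$ in genuinely different ways, and a matching copy has to be built by hand.

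I would resolve this by realizing $\mathrm D$ with the canonical \emph{free over $B$} attachment. Let $\mathrm H$ be the structure on $B\cup A$ in which $A$ carries $\mathrm D$, each single point of $A$ is attached to $B$ exactly as in the type $\B$, and no hyperedge simultaneously meets $B$ and meets $A$ in two or more points. The decisive claim is that, whenever $\mathrm D\in\rro(\T)$, the structure $\mathrm H$ is realizable inside $\sigma(\T)$ — it extends to a member of $\mathfrak A$ on $B\cup C\cup A$ in which every point of $A$ realizes $\T$ over $B\cup C$ — and symmetrically $\mathrm H$ is realizable inside $\sigma(\S)$ whenever $\mathrm D\in\rro(\S)$. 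Granting this, Fact \ref{fact:contemb} yields a copy $A\subseteq\sigma(\S)$ of $\mathrm D$ with $B$-structure $\mathrm H$ and a copy $A'\subseteq\sigma(\T)$ of $\mathrm D$ with the same $B$-structure $\mathrm H$; as $\mathrm U_{\downarrow B\cup A}$ and $\mathrm U_{\downarrow B\cup A'}$ are isomorphic by a map fixing $B$ pointwise, homogeneity gives $h\in\mathrm{G}_B$ with $h[A]=A'\subseteq\sigma(\T)$, which is precisely the hypothesis of Lemma \ref{lem:commeldstrm}. The main obstacle is the decisive claim itself: one must check that passing to the free-over-$B$ attachment does not manufacture a forbidden substructure. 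This is where the hypothesis $\mathrm D\in\rro(\B)$ (valid because $\T$ and $\S$ are successors of $\B$) and the fact that the boundary of a free amalgamation age consists of irreducible structures (Note \ref{note:freboundcop}) enter: any forbidden irreducible configuration occurring in the free-over-$B$ realization would already be forced in every realization of $\mathrm D$ over $\B$, contradicting $\mathrm D\in\rro(\B)$. I expect this verification — controlling the irreducible substructures that a gluing of the $\T$-data and the $\S$-data could create — to be the genuinely delicate step, while the two applications of Fact \ref{fact:contemb} and the appeal to Lemma \ref{lem:commeldstrm} are then formal.
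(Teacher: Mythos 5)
Your first inclusion and your reduction of the second inclusion to Lemma \ref{lem:commeldstrm} are both sound, and you have correctly isolated the crux: one needs a copy of $\mathrm{D}$ in $\sigma(\S)$ and a copy in $\sigma(\T)$ whose attachments to $B$ agree, so that homogeneity supplies the function $h\in \mathrm{G}_B$ that lemma requires. The gap is your ``decisive claim'': it is false that the free-over-$B$ structure $\mathrm{H}$ is realizable inside $\sigma(\T)$ whenever $\mathrm{D}\in\rro(\T)$; in fact $\mathrm{H}$ need not even belong to the age $\mathfrak{A}$. The paper's own melding counterexample (the blue/red hyperedge structure referred to in Remark \ref{remark:meld}) refutes it: there the language has graph edges and blue and red hyperedges, and the boundary consists of $\mathrm{K}$ and of the structure $\mathrm{M}$ (two blue hyperedges sharing two vertices plus a graph edge), the decorated structure $\mathrm{N}$ being permitted. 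Take $B=\{a,b\}$, let $\B$ be the type over $B$ whose point forms a blue hyperedge with $\{a,b\}$, and let $\mathrm{D}$ be a single graph edge; one may take $\T$ and $\S$ to be free one-point successors of $\B$, so $\mathrm{D}\in\rro(\T)\cap\rro(\S)$ by Lemma \ref{lem:freeduch}. Your $\mathrm{H}$ --- blue hyperedges $\{a,b,p\}$ and $\{a,b,q\}$, graph edge $\{p,q\}$, and nothing else --- is exactly the forbidden structure $\mathrm{M}$, so it is realizable nowhere in $\mathrm{U}$. Every actual copy $\{p,q\}$ of $\mathrm{D}$ inside $\sigma(\B)$, hence inside $\sigma(\T)$, is forced to carry the red hyperedges $\{a,p,q\}$ and $\{b,p,q\}$: precisely relations meeting $B$ in one point and the copy in two points, which your free attachment excludes.

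The reason your justification of the claim breaks down is the distinction between monomorphic and induced copies: boundary structures are forbidden only as embedded (induced) substructures, so a realization of $\mathrm{D}$ over $\B$ escapes the forbidden configuration sitting inside $\mathrm{H}$ not by omitting that configuration but by carrying \emph{additional} relations tying two or more points of the copy to $B$, which destroy the induced copy. Hence nothing is ``forced in every realization'' and no contradiction with $\mathrm{D}\in\rro(\B)$ arises; this is exactly the non-binary obstruction the paper flags in Remark \ref{remark:meld} and later attacks with formed types and melding successors. The paper's proof of the present lemma takes a different route that never normalizes the attachment to $B$: it keeps the two given embeddings $f_\T$ of $\mathrm{A}$ into $\sigma(\T)$ and $f_\S$ of $\mathrm{A}$ into $\sigma(\S)$ exactly as they come, builds a single structure $\mathrm{M}$ on $S\cup B\cup C\cup A$ whose restrictions to $A\cup B\cup C$ and to $A\cup B\cup S$ are the pullbacks along $f_\T$ and $f_\S$ and which is free only between $C$ and $S$ (here irreducibility of the boundary, Note \ref{note:freboundcop}, and free position guarantee $\mathrm{M}\in\mathfrak{A}$), and then applies Fact \ref{fact:contemb} once, fixing $B\cup C\cup S$ pointwise, to land $A$ in $\sigma(\T\sqcap\S)$ without any appeal to Lemma \ref{lem:commeldstrm}. (That gluing does tacitly presuppose that $f_\T$ and $f_\S$ induce the same structure on $A\cup B$ --- the very matching problem you identified --- but it does not require that common attachment to be the free one, and that is what saves it from the counterexample that defeats your version.)
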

\begin{proof}
The age $\rro(\T\sqcap \S)\subseteq \rro(\T)\cap \rro(\S)$ because $\sigma(\T\sqcap \S)\subseteq \sigma(\T)\cap \sigma(\S)$. 

Let the structure $\mathrm{A}\in \rro(\T)\cap \rro(\S)$ and $A$ its set of elements. Let $f_\T$ be an embedding of $\mathrm{A}$ into $\sigma(\T)$ and $f_\S$ be an embedding of $\mathrm{A}$ into $\sigma(\S)$. Let $\mathrm{id}_\T$ be the identity map on $B\cup C$ and let  $\mathrm{id}_\S$ be the identity map on $B\cup S$.  Let $\mathrm{M}$ be the $\boldsymbol{L}$-structure for which $M=S\cup B\cup C\cup A$ so that $\mathrm{M}_{\downarrow (S\cup B\cup C)}=\mathrm{U}_{\downarrow (S\cup B\cup C)}$ and $f_T\cup \mathrm{id}_\T$ is an embedding of $\mathrm{M}_{\downarrow (A\cup B\cup C)}$ into $\mathrm{U}$ and  $f_\S\cup \mathrm{id}_\S$ is an embedding of $\mathrm{M}_{\downarrow (A\cup B\cup S)}$ into $\mathrm{U}$ and $t    \, \, \, \, \nedge{M} \,  \, \, \, \,   s$ for all $t\in C$ and all $s\in S$. The structure $\mathrm{M}$ is an element of the age of $\mathrm{U}$ because every structure in the boundary of $\mathrm{U}$ is irreducible. It follows from Fact \ref{fact:contemb} that there exists an embedding $h$ of $\mathrm{M}$ into $\mathrm{U}$ with $h(x)=x$ for all elements $s\in B\cup C\cup s$. Implying that $h[A]\subseteq \sigma(\T\sqcap \S)$.  

\end{proof}

\begin{defin}\label{defin:additiontyp}
The equation $\R=\T+\S$ will always imply that the types $\T$ and $\S$ are in free position, that $\iota(\T)\cap \iota(\S)=\emptyset$ and that $\R=\T\sqcap \S$. 
\end{defin}

\begin{lem}\label{lem:trmeldcan}
Let $\T=\langle T\tr y\rangle$ and $\S=\langle S\tr x\rangle$ be two types and $\R=\T+\S$.   Then $\rro(\R)=\rro(\T)\cap \rro(\S)$ and there exists for every finite $A\subseteq \sigma(\T)$ with $\mathrm{U}_{\downarrow A}\in \rro(\S)$ a function $g\in \mathrm{G}_T$ with $g[A]\subseteq \sigma(\R)$. 
\end{lem}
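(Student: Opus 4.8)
The plan is to derive both assertions directly from Lemmas \ref{lem:agejoinST} and \ref{lem:commeldstrm}, exploiting the fact that the defining equation $\R=\T+\S$ forces $\iota(\T)\cap\iota(\S)=\emptyset$ (Definition \ref{defin:additiontyp}). Writing $T=\iota(\T)$ and $S=\iota(\S)$, the common predecessor of $\T$ and $\S$ then has sockel $T\cap S=\emptyset$; since $\mathrm{G}$ acts transitively, this is the type $\B=\langle\emptyset\tr z\rangle$ with $\sigma(\B)=U$ and stabilizer $\mathrm{G}_\emptyset=\mathrm{G}$. Consequently $\T$ is automatically a $T$-successor of $\B$ and $\S$ an $S$-successor of $\B$, the sets $T$ and $S$ are disjoint, and $\T$, $\S$ are in free position, so the hypotheses of the two cited lemmas are available with $B=\emptyset$.

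First I would establish the age equation. Applying Lemma \ref{lem:agejoinST} to $\B=\langle\emptyset\tr z\rangle$ with its $T$-successor $\T$ and $S$-successor $\S$ gives $\rro(\T\sqcap\S)=\rro(\T)\cap\rro(\S)$ at once, and since $\R=\T\sqcap\S$ by the meaning of $\R=\T+\S$, this is exactly $\rro(\R)=\rro(\T)\cap\rro(\S)$.

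For the second assertion, let $A\subseteq\sigma(\T)$ be finite with $\mathrm{U}_{\downarrow A}\in\rro(\S)$. The case $A=\emptyset$ is trivial (take $g$ to be the identity), so assume $A\neq\emptyset$. Since $\rro(\S)=\rro(\sigma(\S))$, the hypothesis furnishes an embedding of $\mathrm{U}_{\downarrow A}$ into $\mathrm{U}_{\downarrow\sigma(\S)}$, that is, an embedding $e$ of $\mathrm{U}_{\downarrow A}$ into $\mathrm{U}$ with $e[A]\subseteq\sigma(\S)$; by homogeneity $e$ extends to some $h\in\mathrm{G}=\mathrm{G}_\emptyset$, so $h[A]\subseteq\sigma(\S)$. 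Now I would invoke Lemma \ref{lem:commeldstrm} with the roles of $\T$ and $\S$ interchanged: taking the base type to be $\B=\langle\emptyset\tr z\rangle$, the lemma's ``$\T$'' to be our $\S$ and its ``$\S$'' to be our $\T$, the disjointness and free-position hypotheses hold, and the required input ``$\emptyset\neq A\subseteq\sigma(\T)$ and $h\in\mathrm{G}_\emptyset$ with $h[A]\subseteq\sigma(\S)$'' has just been verified. The conclusion then produces $f\in\mathrm{G}_{\emptyset\cup T}=\mathrm{G}_T$ with $f[A]\subseteq\sigma(\S\sqcap\T)=\sigma(\T\sqcap\S)=\sigma(\R)$, where I use that the join is symmetric. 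Setting $g:=f$ finishes the argument.

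The proof is short because the substantive work is already carried out in the two cited lemmas; the only points needing care are (i) recognizing that disjointness of the sockels collapses the relevant stabilizer to the full group $\mathrm{G}$, so that homogeneity alone suffices to realize $\mathrm{U}_{\downarrow A}$ inside $\sigma(\S)$, and (ii) matching the asymmetric statement of Lemma \ref{lem:commeldstrm} to our symmetric situation by swapping $\T$ and $\S$ and appealing to $\T\sqcap\S=\S\sqcap\T$. I do not expect any genuine obstacle beyond this bookkeeping.
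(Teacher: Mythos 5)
Your proof is correct and follows essentially the same route as the paper's: the age equation via Lemma \ref{lem:agejoinST} applied over the empty sockel, and the melding assertion by extending an embedding witnessing $\mathrm{U}_{\downarrow A}\in\rro(\S)$ to some $h\in\mathrm{G}=\mathrm{G}_\emptyset$ and then invoking Lemma \ref{lem:commeldstrm} with the roles of $\T$ and $\S$ interchanged. The paper's proof is just terser; your extra bookkeeping (the $A=\emptyset$ case, the symmetry of $\sqcap$, and the explicit role swap) makes precise what the paper leaves implicit.
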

\begin{proof}
$\rro(\T\sqcap \S)=\rro(\T)\cap \rro(\S)$ follows from Lemma \ref{lem:agejoinST}. Let $A\subseteq \sigma(\T)$ be finite. Because $\mathrm{U}_{\downarrow A}\in \rro(\S)$ there exists a function $h\in \mathrm{G}$ with $h[A]\subseteq \sigma(\S)$. Because $\mathrm{G}=\mathrm{G}_\emptyset$ there exists a function $h\in \mathrm{G}_\emptyset$ with $h[A]\subseteq \sigma(\S)$. It follows from Lemma \ref{lem:commeldstrm} that there exists a function $g\in \mathrm{G}_T$ with $g[A]\subseteq \sigma(\T\sqcap \S)$. 
\end{proof}

Remember here that if  $\Y=\langle E\cup X\tr y\rangle$ is an $E$-successor of $\X=\langle X\tr x\rangle$ then $E\cap X=\emptyset$. 

\begin{defin}\label{defin:freetyd}
The type $\X=\langle X\tr x\rangle$ is a {\em free type} if $x\, \, \, \, \nedge{\{x\}\cup X}\, \, \, \,  a$  for all elements $a\in X$. An $E$-successor $\Y=\langle E\cup X\tr y\rangle$ of a type $\X=\langle X\tr x\rangle$ is a {\em free $E$-successor} of $\X$  if  $y\, \, \, \, \, \, \nedge{\{y\}\cup E\cup X}\, \, \, \, \, \,  e$ for all elements $e\in E$. (A free type $\X=\langle X\tr x\rangle$ is then a free $X$-successor of the type $\U=\langle \emptyset \tr b\rangle $ for $b\in U$.)  
\end{defin}

\begin{lem}\label{lem:freeduch}
Let $\mathrm{U}$ be a  free amalgamation homogeneous structure. For every type $\X=\langle X\tr x\rangle$ and finite set $E\subseteq U$ with $E\cap X=\emptyset$, there exists a unique free $E$-successor $\Y=\langle E\cup X\tr y\rangle$ of the type $\X$. If $\Y$ is a free $E$-successor of $\X$  and $A$ a finite subset of $\sigma(\X)$ then there exists a function $f\in \mathrm{G}_X$ with $f[A]\subseteq \sigma(\Y)$. Hence  
$\rro(\Y)=\rro(\X)$. (In particular then if $\X$ is a free type then $\rro(\X)=\rro(\U)=\mathfrak{A}$.)   
\end{lem}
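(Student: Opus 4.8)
=== PROOF PROPOSAL ===

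The statement to prove (Lemma~\ref{lem:freeduch}) has three parts: existence and uniqueness of the free $E$-successor $\Y$ of $\X$, the transfer property for finite subsets of $\sigma(\X)$, and the conclusion $\rro(\Y)=\rro(\X)$. The plan is to treat these in order, since the third follows readily from the second, and the second is where the free-amalgamation machinery does the real work.

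\emph{Existence and uniqueness.} First I would construct the free $E$-successor directly. Fix $x\in\sigma(\X)$ witnessing the type $\X=\langle X\tr x\rangle$. I want a point $y$ with $\iota(\Y)=E\cup X$, with $y$ attached to $X$ exactly as $x$ is, and with $y\nedge{\{y\}\cup E\cup X}e$ for every $e\in E$. To produce such a $y$, form the $\boldsymbol{L}$-structure $\mathrm{M}$ on $E\cup X\cup\{x\}$ in which $\mathrm{M}_{\downarrow E\cup X}=\mathrm{U}_{\downarrow E\cup X}$, in which $x$ relates to $X$ as in $\mathrm{U}_{\downarrow X\cup\{x\}}$, and in which $x$ has no relation with any element of $E$ (and with $x\ne$ every element of $E\cup X$). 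Since every structure in the boundary of $\mathfrak{A}$ is irreducible (Note~\ref{note:freboundcop}), and $x$ is not in any irreducible tuple with elements of $E$, this $\mathrm{M}$ is the free amalgam of $\mathrm{U}_{\downarrow X\cup\{x\}}$ and $\mathrm{U}_{\downarrow E\cup X}$ over $X$, hence $\mathrm{M}\in\mathfrak{A}$. By Fact~\ref{fact:contemb} there is an embedding $h$ of $\mathrm{M}$ into $\mathrm{U}$ fixing $E\cup X$ pointwise; then $y:=h(x)$ gives the desired free $E$-successor $\Y=\langle E\cup X\tr y\rangle$. For uniqueness: the defining conditions (being a successor of $\X$, having sockel $E\cup X$, and the non-edge condition $y\nedge{\{y\}\cup E\cup X}e$ for all $e\in E$) pin down, via homogeneity, the isomorphism type of the whole of $\mathrm{U}_{\downarrow(E\cup X\cup\{y\})}$; so any two such witnesses are related by a map in $\mathrm{G}_{E\cup X}$, i.e.\ they give equal types.

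\emph{The transfer property.} This is the heart of the lemma and the step I expect to be the main obstacle, because it is exactly where freeness must be exploited to move an \emph{arbitrary} finite $A\subseteq\sigma(\X)$ into $\sigma(\Y)$ by a group element fixing $X$ (not merely fixing $\emptyset$). Given finite $A\subseteq\sigma(\X)$, I would build an auxiliary structure on the domain $E\cup X\cup A$: keep $\mathrm{U}_{\downarrow E\cup X}$ on $E\cup X$, keep $\mathrm{U}_{\downarrow X\cup A}$ on $X\cup A$ (so each $a\in A$ sits in $\sigma(\X)$, attached to $X$ as $x$ is), and declare $a\nedge{\,\cdot\,}e$ for every $a\in A$ and $e\in E$. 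Again because all boundary structures are irreducible, this is a free amalgam of $\mathrm{U}_{\downarrow X\cup A}$ and $\mathrm{U}_{\downarrow E\cup X}$ over $X$, so it lies in $\mathfrak{A}$; Fact~\ref{fact:contemb} yields an embedding fixing $E\cup X$ pointwise and carrying $A$ to a set $A'$ with each $a'\in A'$ attached to $E\cup X$ just as $y$ is, i.e.\ $A'\subseteq\sigma(\Y)$. Extending this embedding (composed appropriately) to an automorphism gives $f\in\mathrm{G}_X$ with $f[A]\subseteq\sigma(\Y)$. This is essentially the same amalgamation argument as in Lemma~\ref{lem:tyinfA} and Lemma~\ref{lem:commeldstrm}, and I would likely invoke Lemma~\ref{lem:commeldstrm} or mirror its proof rather than rebuild it from scratch.

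\emph{Equality of ranks.} Finally, $\rro(\Y)\subseteq\rro(\X)$ is immediate since $\Y$ is a successor of $\X$, so $\sigma(\Y)\subseteq\sigma(\X)$. For the reverse inclusion, take any $\mathrm{A}'\in\rro(\X)$; realize it as $\mathrm{U}_{\downarrow A}$ for some finite $A\subseteq\sigma(\X)$, apply the transfer property to get $f\in\mathrm{G}_X$ with $f[A]\subseteq\sigma(\Y)$, and conclude $\mathrm{A}'\cong\mathrm{U}_{\downarrow f[A]}\in\rro(\Y)$. Hence $\rro(\Y)=\rro(\X)$, and the parenthetical remark follows by applying this to $\X$ as a free $X$-successor of $\U=\langle\emptyset\tr b\rangle$, whose rank is $\mathfrak{A}$ by transitivity of $\mathrm{G}$.
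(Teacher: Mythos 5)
Your proposal is correct and takes essentially the same route as the paper's proof: form the free amalgam of $\mathrm{U}_{\downarrow X\cup A}$ and $\mathrm{U}_{\downarrow E\cup X}$ over $X$ (legitimate because all boundary structures are irreducible), embed it into $\mathrm{U}$ fixing $E\cup X$ pointwise via Fact~\ref{fact:contemb}, observe via uniqueness of the free $E$-successor that the images of $A$ land in $\sigma(\Y)$, and extend the restriction to $X\cup A$ to an automorphism in $\mathrm{G}_X$. The only difference is organizational: the paper obtains existence and the transfer property simultaneously from the amalgamation argument applied to a general finite $A\subseteq\sigma(\X)\setminus E$, whereas you run the same argument twice, first with the singleton $\{x\}$ for existence and then with $A$.
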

\begin{proof}
If $\langle E\cup X\tr y\rangle$ and $\langle E\cup X\tr z\rangle$ are free $E$ successors of $\X$ then the function $k: E\cup X\cup \{y\}\to E\cup X\cup \{z\}$ with $k(v)=v$ for all $v\in E\cup X$ and $k(y)=z$ is an embedding of $\mathrm{U}_{(\downarrow E\cup X\cup \{y\})}$ to $\mathrm{U}_{(\downarrow E\cup X\cup \{y\})}$. Implying that the types $\langle E\cup X\tr y\rangle$ and $\langle E\cup X\tr z\rangle$ are equal.  Hence if there exist a free $E$-successor of $\X$ it is unique. 

Let $A\not=\emptyset$ be a finite subset of $\sigma(\X)\setminus E$ and $\mathrm{M}$ the structure with $M=A\cup X\cup E$ and so that: 
\begin{enumerate}
\item $\mathrm{M}_{\downarrow(A\cup X)}=\mathrm{U}_{\downarrow(A\cup X)}$ and $\mathrm{M}_{\downarrow(E\cup X)}=\mathrm{U}_{\downarrow(E\cup X)}$.
\item  $a \, \, \, \, \,  \nedge{\mathrm{M}} \, \, \, \, \, e$ for all elements $a\in A$ and  $e\in E$. 
\end{enumerate}
The structure $\mathrm{M}$ is an element of  $\mathfrak{A}$.  It follows from Fact \ref{fact:contemb} that there exists an embedding $h$ of $\mathrm{M}$ into $\U$ with $h(v)=v$ for all elements $v\in X\cup E$. The function $h$ is an embedding of $\mathrm{U}_{\downarrow X\cup A}$ into $\mathrm{U}$.  Let $f\in \mathrm{G}$ with $f(v)=h(v)$ for all $v\in X\cup A$. Then $f\in \mathrm{G}_X$. Because $h$ is an embedding of $\mathrm{M}$ into $\mathrm{U}$ we have $h(a)\, \, \,  \, \, \, \, \,  \nedge{E\cup X\cup h[A]} \, \, \, \, \, \, \, \,  e$ for all elements $a\in A$ and  $e\in E$. Hence for $a\in A$ is the type  $\Y=\langle E\cup X\tr h(a)\rangle$ a free $E$-successor of $\X$ and $f[A]\in \sigma(\Y)$. 
\end{proof}

\begin{lem}\label{lem:freecoppr}
Let $\mathrm{U}$ be a  free amalgamation homogeneous structure and $\X=\langle X\tr x\rangle$ the free type with sockel $X$. Then $\sigma(\X)$ induces a copy of $\mathrm{U}$. 
\end{lem}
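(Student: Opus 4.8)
The plan is to reduce to the Pouzet Lemma (Lemma \ref{lem:charercop}): to show that $\sigma(\X)$ induces a copy of $\mathrm{U}$ it suffices to verify that $\sigma(\T)\cap\sigma(\X)\neq\emptyset$ for every type $\T=\langle E\tr y\rangle$ of $\mathrm{U}$ whose sockel $E=\iota(\T)$ is contained in $\sigma(\X)$. Before starting I would record the identification of $\sigma(\X)$ that makes everything work: since $\X$ is the free type with sockel $X$ and free $E$-successors are unique by Lemma \ref{lem:freeduch}, a point $z\in U\setminus X$ lies in $\sigma(\X)$ if and only if $z\nedge{\{z\}\cup X}a$ for every $a\in X$; in other words, $\sigma(\X)$ is exactly the set of points in free position with $X$. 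Note also that $E\subseteq\sigma(\X)\subseteq U\setminus X$ forces $E\cap X=\emptyset$.

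Fix such a type $\T=\langle E\tr y\rangle$ and choose a fresh point $z\notin U$. I would then form the free amalgam $\mathrm{N}$ of the amalgamation instance whose base is $B=E$, whose one side is $\mathrm{M}=\mathrm{U}_{\downarrow X\cup E}$, and whose other side is the structure $\mathrm{M}'$ on $E\cup\{z\}$ isomorphic to $\mathrm{U}_{\downarrow E\cup\{y\}}$ via the map fixing $E$ and sending $z\mapsto y$. Both $\mathrm{M}$ and $\mathrm{M}'$ restrict to $\mathrm{U}_{\downarrow E}$ on $B$, so this is a genuine amalgamation instance; thus $\mathrm{N}$ has domain $X\cup E\cup\{z\}$, satisfies $\mathrm{N}_{\downarrow X\cup E}=\mathrm{U}_{\downarrow X\cup E}$ and $\mathrm{N}_{\downarrow E\cup\{z\}}=\mathrm{M}'$, and carries no relation linking $X$ to $z$. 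Because $\mathfrak{A}$ is a free amalgamation age and both $\mathrm{M},\mathrm{M}'\in\mathfrak{A}$, we obtain $\mathrm{N}\in\mathfrak{A}$.

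Next I would apply Fact \ref{fact:contemb} with $S=X\cup E$ to get an embedding $f\colon N\to U$ fixing $X\cup E$ pointwise, and set $z'=f(z)$. Since $f$ is injective and fixes $X\cup E$, the point $z'$ lies outside $X\cup E$. As $f$ restricts to an embedding of $\mathrm{N}_{\downarrow E\cup\{z\}}\cong\mathrm{U}_{\downarrow E\cup\{y\}}$ fixing $E$ and sending $z\mapsto z'$, homogeneity extends it to some $g\in\mathrm{G}_E$ with $g(y)=z'$, whence $z'\in\sigma(\T)$. Since $z$ is in free position with $X$ in $\mathrm{N}$ and $f$ is an embedding fixing $X$, the point $z'$ is in free position with $X$ too, so $z'\in\sigma(\X)$ by the observation above. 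Hence $z'\in\sigma(\T)\cap\sigma(\X)$, and Lemma \ref{lem:charercop} gives that $\sigma(\X)$ induces a copy of $\mathrm{U}$.

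The step needing the most care is the final simultaneous verification that $z'$ lands in both typesets. The key point is that membership of $z'$ in $\sigma(\X)$ and in $\sigma(\T)$ depends only on the relations among $\{z'\}\cup X$ and among $E\cup\{z'\}$ respectively, and both of these families of tuples lie inside the copy $f[N]$; since an embedding reflects as well as preserves relations, the absence of any $X$-to-$z$ relation and the prescribed $E$-to-$z$ pattern transfer intact from $\mathrm{N}$ to $z'$. This is precisely where free amalgamation does the essential work, namely by allowing the single new point $z$ to be placed in free position with $X$ while simultaneously realizing $\T$ over $E$.
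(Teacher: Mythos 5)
Your proposal is correct and follows essentially the same route as the paper's own proof: reduce to the Pouzet Lemma (Lemma \ref{lem:charercop}), adjoin one new point attached to $\iota(\T)$ exactly like the witness of $\T$ and in free position with $X$ (the paper writes this structure down directly, while you realize it explicitly as a free amalgam over $E$), and then embed it into $\mathrm{U}$ fixing $X\cup E$ via Fact \ref{fact:contemb}. Your added justifications --- the characterization of $\sigma(\X)$ as the points in free position with $X$ via uniqueness of free successors (Lemma \ref{lem:freeduch}), and the explicit check that the image point lands in both typesets --- only make explicit what the paper leaves implicit.
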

\begin{proof}
According to Lemma \ref{lem:charercop} it suffices  to prove that if $\T$ is a type of $\mathrm{U}$ with $\iota(\T)\subseteq \sigma(\X)$ then $\sigma(\T)\cap \sigma(\X)\not=\emptyset$. Let $y$ be some element not in $U$ and $\mathrm{M}$ the structure with $M=X\cup\iota(\T)\cup \{y\}$ for which exists an embedding $f$ of $\mathrm{M}_{\downarrow \iota(\T)\cup \{y\}}$ into $\mathrm{U}$ with $f(y)=x$ and  $f(v)=v$ for all $v\in \iota(\T)$.  Also $u \, \, \, \, \, \, \, \, \, \nedge{X\cup \iota(\T)\cup \{y\}}\, \, \, \, \, \, \,\, \,   y$ for all $u\in X$. This structure $\mathrm{M}$ is an element of the age of $\mathrm{U}$. Hence it follows from Fact \ref{fact:contemb} that there exists an embedding $h$ of $\mathrm{M}$ into $\mathrm{U}$ with $h(v)=v$ for all $v\in X\cup \iota(\T)$. Then $h(y)\in \sigma(\T)$ and in $\sigma(\X)$. 
\end{proof}

\begin{lem}\label{lem:binstrmeld}
Let $\mathrm{U}$ be a countable binary homogeneous structure and $\T$ be a type of $\mathrm{U}$.   Let $A$ be a finite subsets of  $\sigma(\T)$ and let $g\in \mathrm{G}$ with $g[A]\subseteq \sigma(\T)$.  Then there exists a function $f\in \mathrm{G}_{\iota(\T)}$ with $f(a)=g(a)$ for all $a\in A$.  

Let $\S$ be a successor of $\T$ and $A$ a finite subset of $\sigma(\T)$ with $\mathrm{U}_{\downarrow A}\in \rro(\S)$. Then there exists a function $f\in \mathrm{G}_{\iota(\T)}$ with $f[A] \subseteq \sigma[S]$. 
\end{lem}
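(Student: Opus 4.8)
Let me reconstruct what must be shown. We have a countable \emph{binary} homogeneous structure $\mathrm{U}$ and a type $\T$. The first part: given a finite $A\subseteq\sigma(\T)$ and $g\in\mathrm{G}$ with $g[A]\subseteq\sigma(\T)$, there is $f\in\mathrm{G}_{\iota(\T)}$ (fixing the sockel pointwise) with $f(a)=g(a)$ for all $a\in A$. The second part: if $\S$ is a successor of $\T$ and $A\subseteq\sigma(\T)$ finite with $\mathrm{U}_{\downarrow A}\in\rro(\S)$, then there is $f\in\mathrm{G}_{\iota(\T)}$ with $f[A]\subseteq\sigma(\S)$.

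Let me think about the first part. Write $T=\iota(\T)$ and pick a witness $x$ so $\T=\langle T\mid x\rangle$. Every element of $\sigma(\T)$ is, over $T$, related to the points of $T$ "just like $x$" — meaning for each $a\in\sigma(\T)$ and each $t\in T$, the 2-type of $(t,a)$ equals that of $(t,x)$. This is where \emph{binary} is essential: the isomorphism type of $\mathrm{U}_{\downarrow T\cup B}$ for $B\subseteq\sigma(\T)$ is determined entirely by (i) the structure on $T$, (ii) the structure on $B$, and (iii) the 2-types between $T$ and $B$ — and (iii) is forced to be the same for every point of $\sigma(\T)$. So I want to compare the two finite sets $A$ and $g[A]$, both inside $\sigma(\T)$.

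**Plan for part one.** The map $a\mapsto g(a)$ is an isomorphism $\mathrm{U}_{\downarrow A}\to\mathrm{U}_{\downarrow g[A]}$ since $g\in\mathrm{G}$. Extend it to the map $h$ on $T\cup A$ by setting $h(t)=t$ for $t\in T$ and $h(a)=g(a)$ for $a\in A$. I claim $h$ is an embedding of $\mathrm{U}_{\downarrow T\cup A}$ into $\mathrm{U}$. Because the language is binary, it suffices to check that $h$ preserves and reflects all $2$-types: on pairs inside $T$ this is the identity; on pairs inside $A$ it is $g$; on a mixed pair $(t,a)$ the $2$-type of $(t,a)$ equals that of $(t,x)$ (as $a\in\sigma(\T)$), and the $2$-type of $(h(t),h(a))=(t,g(a))$ also equals that of $(t,x)$ (as $g(a)\in g[A]\subseteq\sigma(\T)$), so the two agree. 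Thus $h$ is an embedding of $\mathrm{U}_{\downarrow T\cup A}$ fixing $T$ pointwise, and by homogeneity it extends to an automorphism $f\in\mathrm{G}$; since $f(t)=t$ for all $t\in T$ we have $f\in\mathrm{G}_{T}=\mathrm{G}_{\iota(\T)}$, and $f(a)=g(a)$ on $A$. The only subtle point — the \textbf{main obstacle} — is justifying cleanly that binarity reduces "$h$ is an embedding" to "$h$ preserves $2$-types," i.e.\ that no higher-arity relation can fail; one should state explicitly that since all relation symbols have arity $\le 2$, a bijection of finite structures is an isomorphism iff it preserves all induced substructures on pairs (and singletons), so the $2$-type check is exhaustive.

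**Plan for part two.** Here $\S$ is a successor of $\T$, so $\iota(\T)\subseteq\iota(\S)$ and $\sigma(\S)\subseteq\sigma(\T)$, and $\mathrm{U}_{\downarrow A}\in\rro(\S)=\rro(\sigma(\S))$. By definition of the age there is an embedding of $\mathrm{U}_{\downarrow A}$ into $\mathrm{U}_{\downarrow\sigma(\S)}$, i.e.\ some $B\subseteq\sigma(\S)$ with $\mathrm{U}_{\downarrow A}\cong\mathrm{U}_{\downarrow B}$; fix an isomorphism $\varphi\colon A\to B$, which extends to some $g\in\mathrm{G}$ with $g[A]=B\subseteq\sigma(\S)\subseteq\sigma(\T)$. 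Now apply part one to this $g$: there is $f\in\mathrm{G}_{\iota(\T)}$ agreeing with $g$ on $A$, so $f[A]=g[A]=B\subseteq\sigma(\S)$, which is exactly the conclusion (reading $\sigma[S]$ as $\sigma(\S)$). I would note that $f$ need only fix $\iota(\T)$ pointwise, not the larger sockel $\iota(\S)$, which is all that is claimed; this is the whole convenience of the binary case flagged in Section~\ref{sect:guide}. So part two is a short deduction from part one plus the definition of age, and carries no real difficulty beyond unwinding the definition of successor and of $\rro(\S)$.
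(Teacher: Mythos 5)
Your proposal is correct and follows essentially the same route as the paper's proof: in both, binarity guarantees that for $t\in\iota(\T)$ the pairs $(t,a)$ and $(t,g(a))$ lie in exactly the same relations (since $a$ and $g(a)$ both lie in $\sigma(\T)$), so the map fixing $\iota(\T)$ pointwise and sending $a\mapsto g(a)$ is an isomorphism of finite induced substructures, which homogeneity extends to an element of $\mathrm{G}_{\iota(\T)}$; the second part is then, as in the paper, the observation that $\mathrm{U}_{\downarrow A}\in\rro(\S)$ yields some $g\in\mathrm{G}$ with $g[A]\subseteq\sigma(\S)\subseteq\sigma(\T)$, to which the first part applies. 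Your write-up merely makes explicit the 2-type bookkeeping that the paper compresses into one sentence.
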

\begin{proof}
For every $a\in A$ and every $x\in \iota(\T)$ are the pairs $(x,a)$ and $(x,g(a))$ in the same binary relation. Implying that the function $h: \iota(\T)\cup A\to \iota(\T)\cup g[A]$ with $h(a)=g(a)$ for all $a\in A$ and $h(x)=x$ for all $x\in \iota(\T)$ is an isomorphism. Let $f$ be the extension of $h$ to a function in $\mathrm{G}$. 

$\mathrm{U}_{\downarrow A}\in \rro(\S)$, hence there exists a function $g\in \mathrm{G}$ with $g[A]\subseteq \sigma(\S)$. Implying that there exists a function $f\in \mathrm{G}_{\iota(\T)}$ with $f[A] \subseteq \sigma[S]$. 
\end{proof}

\begin{remark}\label{remark:meld}
This nice ``melding" property proven in Lemma \ref{lem:binstrmeld} for the binary case does not necessarily hold in general. See Example \ref{ex:strmelmeld}. 
\end{remark}

\begin{cor}\label{cor:freambinmeld}
Let $\mathrm{U}$ be a countable binary homogeneous structure and $\T$ be a type of $\mathrm{U}$. Then the structure $\mathrm{U}_{\downarrow \sigma(\T)}$ is a free amalgamation homogeneous structure with  $\mathrm{G}_{\iota(\T)}$ as group of automorphisms.
\end{cor}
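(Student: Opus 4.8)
The plan is to verify three things, writing $F=\iota(\T)$: that $\mathrm{U}_{\downarrow\sigma(\T)}$ is homogeneous, that its age $\rro(\T)$ is a free amalgamation age, and that its automorphism group is the restriction of $\mathrm{G}_{F}$ to $\sigma(\T)$. Throughout I would lean on the two structural features of the binary case, namely the melding Lemma~\ref{lem:binstrmeld} and the fact that $\sigma(\T)$ is a single $\mathrm{G}_{F}$-orbit, so that (by binarity) any two elements of $\sigma(\T)$ bear exactly the same relations to each fixed element of $F$. First I would prove homogeneity together with the sharper statement that $\mathrm{G}_{F}$ realises every finite partial automorphism of $\mathrm{U}_{\downarrow\sigma(\T)}$. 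Given a finite $A\subseteq\sigma(\T)$ and an embedding $h$ of $\mathrm{U}_{\downarrow A}$ into $\mathrm{U}_{\downarrow\sigma(\T)}$, homogeneity of $\mathrm{U}$ extends $h$ to some $g\in\mathrm{G}$, and then $g[A]=h[A]\subseteq\sigma(\T)$; Lemma~\ref{lem:binstrmeld} yields $f\in\mathrm{G}_{F}$ agreeing with $g$, hence with $h$, on $A$. Since $f$ fixes $F$ pointwise it maps the orbit $\sigma(\T)$ onto itself, so $f|_{\sigma(\T)}$ is an automorphism of $\mathrm{U}_{\downarrow\sigma(\T)}$ extending $h$.

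Next I would show that the age $\rro(\T)$ of $\mathrm{U}_{\downarrow\sigma(\T)}$ is closed under free amalgamation (closure under induced substructures being automatic for an age). Let $\mathrm{M}=\mathrm{B}\cup\mathrm{C}$ and $\mathrm{M}'=\mathrm{B}\cup\mathrm{C}'$ be an amalgamation instance of structures in $\rro(\T)$, with free amalgam $\mathrm{N}$; since $\mathfrak{A}$ is a free amalgamation age, $\mathrm{N}\in\mathfrak{A}$. I would build the $\boldsymbol{L}$-structure $\mathrm{N}^{+}$ on $F\cup N$ whose restriction to $F$ is $\mathrm{U}_{\downarrow F}$, whose restriction to $N$ is $\mathrm{N}$, and in which every element of $N$ is joined to $F$ exactly as $x$ is (the relation pattern of $\T$). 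By Note~\ref{note:freboundcop} every member of the boundary of $\mathfrak{A}$ is irreducible, and no irreducible structure can embed into $\mathrm{N}^{+}$ while meeting both $C$ and $C'$, because no relation of $\mathrm{N}$ links an element of $C$ to an element of $C'$; such an embedded irreducible structure must therefore lie inside $F\cup B\cup C$ or inside $F\cup B\cup C'$. Each of these restrictions of $\mathrm{N}^{+}$ is isomorphic, by binarity, to $\mathrm{U}_{\downarrow F\cup M_{1}}$ for a copy $M_{1}\subseteq\sigma(\T)$ of $\mathrm{M}$ (respectively $\mathrm{M}'$), hence lies in $\mathfrak{A}$; so no boundary structure embeds into $\mathrm{N}^{+}$ and $\mathrm{N}^{+}\in\mathfrak{A}$. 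By Fact~\ref{fact:contemb} there is an embedding $h$ of $\mathrm{N}^{+}$ into $\mathrm{U}$ fixing $F$ pointwise, and since each element of $N$ then still carries the relation pattern of $\T$ to $F$ and $\mathrm{U}$ is binary, $h[N]\subseteq\sigma(\T)$; thus $\mathrm{N}\in\rro(\T)$.

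Finally I would prove $\Aut(\mathrm{U}_{\downarrow\sigma(\T)})=\mathrm{G}_{F}|_{\sigma(\T)}$. The inclusion $\supseteq$ is immediate. For $\subseteq$, take $\phi\in\Aut(\mathrm{U}_{\downarrow\sigma(\T)})$ and set $\psi=\mathrm{id}_{F}\cup\phi$ on $D=F\cup\sigma(\T)$. Using binarity and that $\sigma(\T)$ is one $\mathrm{G}_{F}$-orbit, $\psi$ preserves every relation inside $D$: relations within $\sigma(\T)$ are preserved by $\phi$, those within $F$ trivially, and those across $F$ and $\sigma(\T)$ because $a$ and $\phi(a)$ relate identically to each point of $F$; hence $\psi\in\Aut(\mathrm{U}_{\downarrow D})$, and by the first paragraph every finite restriction of $\psi$ is realised by an element of $\mathrm{G}_{F}$. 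It then remains to extend the partial embedding $\psi$, defined on $D$, to a bijective embedding of all of $U$ for $\mathrm{G}$; as $\mathrm{G}$ is closed, such an extension is an element $g\in\mathrm{G}$, and $g$ fixes $F$ and restricts to $\phi$ on $\sigma(\T)$, giving $\phi\in\mathrm{G}_{F}|_{\sigma(\T)}$.

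I expect this last extension step to be the main obstacle. Passing from a map $\psi$ that is realised only on finite pieces to a single automorphism of $\mathrm{U}$ requires the back-and-forth machinery for closed permutation groups recalled in Subsection~\ref{subsect:gropversio} (and developed in \cite{Pos-Cop}); the delicate point is to choose images for the elements of $U\setminus D$ consistently with the values of $\phi$ that are already fixed on all of $\sigma(\T)$. Binarity enters precisely here, and in the age computation: it is what guarantees that membership in $\sigma(\T)$ is detected by the relations to $F$ and that $\psi$ is forced to respect the relations between $F$ and $\sigma(\T)$ --- a melding phenomenon (Lemma~\ref{lem:binstrmeld}) which, as Remark~\ref{remark:meld} notes, can fail in the non-binary case.
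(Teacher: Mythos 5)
Your first two paragraphs are correct, and they supply exactly what the paper leaves implicit: the paper prints no proof of this corollary, intending it as an immediate consequence of Lemma \ref{lem:binstrmeld}, and your homogeneity argument (extend a finite partial isomorphism of $\mathrm{U}_{\downarrow \sigma(\T)}$ to some $g\in\mathrm{G}$ by homogeneity of $\mathrm{U}$, then meld it into $\mathrm{G}_{\iota(\T)}$) together with your $\mathrm{N}^{+}$-construction for the age (using Note \ref{note:freboundcop}, Fact \ref{fact:contemb}, binarity, and the standing hypothesis of Section \ref{sect:types} that $\mathrm{U}$ itself has free amalgamation) is the right way to fill in the details.

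The genuine gap is in your third paragraph, and it cannot be closed in the form you propose: the extension step you flag as ``the main obstacle'' is not a technical difficulty but an impossibility, because the exact equality $\Aut(\mathrm{U}_{\downarrow \sigma(\T)})=\mathrm{G}_{\iota(\T)}|_{\sigma(\T)}$ is false in general. Take $\mathrm{U}$ to be the Rado graph, $F=\{v\}$, and $\T=\langle F\tr x\rangle$ with $x$ adjacent to $v$, so that $\sigma(\T)$ is the set of neighbours of $v$. For a non-neighbour $w$ of $v$ let $S_w\subseteq\sigma(\T)$ be the set of neighbours of $w$ inside $\sigma(\T)$. If some $g\in\mathrm{G}_F$ restricts to $\phi$ on $\sigma(\T)$, then $g$ maps non-neighbours to non-neighbours and $S_{g(w)}=\phi[S_w]$; hence $\phi\in\mathrm{G}_F|_{\sigma(\T)}$ forces $\phi[S_w]$ to lie in the countable family $\{S_{w'}\mid w'\ \text{a non-neighbour of}\ v\}$. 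But the orbit of $S_w$ under $\Aut(\mathrm{U}_{\downarrow\sigma(\T)})$ is uncountable: by the extension property of the Rado graph, the expansion of the graph $\mathrm{U}_{\downarrow\sigma(\T)}$ by a unary predicate for $S_w$ is the generic graph-with-predicate, so the orbit of $S_w$ consists of all $T$ whose expansion is generic, and these $T$ form a comeager (hence continuum-sized) family of subsets of $\sigma(\T)$. Consequently some $\phi\in\Aut(\mathrm{U}_{\downarrow\sigma(\T)})$ satisfies $\phi[S_w]\neq S_{w'}$ for every $w'$ and is the restriction of no element of $\mathrm{G}_F$: the group $\mathrm{G}_F|_{\sigma(\T)}$ is a proper (dense) subgroup of $\Aut(\mathrm{U}_{\downarrow\sigma(\T)})$.

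The statement that is true, and that your first paragraph already proves with no back-and-forth at all, is the closure-level one: every finite partial automorphism of $\mathrm{U}_{\downarrow\sigma(\T)}$ is realized by an element of $\mathrm{G}_{\iota(\T)}$, i.e. the closure of $\mathrm{G}_{\iota(\T)}|_{\sigma(\T)}$, in the sense of Subsection \ref{subsect:gropversio}, equals $\Aut(\mathrm{U}_{\downarrow\sigma(\T)})$. That is how the last clause of the corollary must be read, and it is all the paper ever uses: by Fact \ref{fact:equdefage} and Lemma \ref{lem:ageintypp}, types, ages and age-indivisibility for $\mathrm{U}_{\downarrow\sigma(\T)}$ computed from $\mathrm{G}_{\iota(\T)}$ agree with those computed from the full automorphism group, which is exactly what the application in Corollary \ref{cor:aindibinn} (via Theorem \ref{thm:wiekstruncor}) requires. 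So delete the attempted extension argument and replace your third goal by this density statement; with that reformulation your proof is complete.
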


Let $\T=\langle F|x\rangle$ be a type and $f\in \mathrm{G}$. If $\langle F|x\rangle=\langle F|y\rangle$, (that is if there is a function $g\in \mathrm{G}_F$ with $g(x)=y$,) then $\langle f[F]|f(x)\rangle =\langle f[F]|f(y)\rangle$ because $f\circ g\circ f^{-1}\in \mathrm{G}_{f[F]}$ and $f\circ g\circ f^{-1}(f(x)=f(y))$. It follows that $\sigma\langle f[F]|f(x)\rangle=f[\sigma\langle F|x\rangle]$ and we can define:

\begin{defin}\label{defin:typeim}
Let $\T=\langle F|x\rangle$ be a type and $f\in \mathrm{G}$. Then $f[\T]:=\langle f[F]|f(x)\rangle$. 
\end{defin}

\begin{note}\label{note:typeim}
Let $\T$ be a type and $f\in \mathrm{G}$. Then $\sigma(f[\T])=f[\sigma(\T)]$ and hence $\rro(f[\T])=\rro(\T)$. Let $g\in \mathrm{G}$ with $g(x)=f(x)$ for all $x\in \iota(\T)$ then $g[\T]=f[\T]$. Let $\T$ and $\S$ be two types and $f\in \mathrm{G}$ then $f[\sigma(\T)\cap \sigma(\S)]=f[\sigma(\T)]\cap f[\sigma(\S)]$. If $A\subseteq \sigma(\T)$ then $f[A]\subseteq \sigma(f[\T])$. 
\end{note}

\begin{lem}\label{lem:restriction}
Let $\mathrm{U}$ be a  free amalgamation homogeneous structure.  Let $\X=\langle X\tr x\rangle$ be a type with $\rro(\X)\supseteq \mathfrak{r}\in \mathfrak{R}$. Then there exists a type $\Z=\langle Z\tr z\rangle$ for which $\rro(\X+\Z)=\mathfrak{r}$.    
\end{lem}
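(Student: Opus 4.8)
The plan is to realize the target rank $\mathfrak{r}$ by a second type placed \emph{freely} next to $\X$, and then read off the rank of the sum from Lemma \ref{lem:trmeldcan}. Since $\mathfrak{r}\in\mathfrak{R}$, I first fix a witness type $\W=\langle W\tr w\rangle$ with $\rro(\W)=\mathfrak{r}$. The whole task then reduces to producing a copy $\Z$ of $\W$ whose sockel is disjoint from $X:=\iota(\X)$ and carries no relation into $X$, so that $\X$ and $\Z$ end up in free position with disjoint sockels.

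To place $\W$ freely over $X$, I would build the $\boldsymbol{L}$-structure $\mathrm{M}$ on $X\cup W'$, where $W'$ is a disjoint copy of $\mathrm{U}_{\downarrow W}$, with $\mathrm{M}_{\downarrow X}=\mathrm{U}_{\downarrow X}$, $\mathrm{M}_{\downarrow W'}\cong\mathrm{U}_{\downarrow W}$, and no relation of $\mathrm{M}$ meeting both $X$ and $W'$. This is exactly the free amalgam of $\mathrm{U}_{\downarrow X}$ and a copy of $\mathrm{U}_{\downarrow W}$ over $\emptyset$, so $\mathrm{M}\in\mathfrak{A}$ because $\mathfrak{A}$ is a free amalgamation age. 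By Fact \ref{fact:contemb} there is an embedding $h\colon\mathrm{M}\to\mathrm{U}$ with $h(v)=v$ for all $v\in X$. Put $Z:=h[W']$. Injectivity of $h$ together with $h\restrict X=\mathrm{id}$ forces $Z\cap X=\emptyset$, and since $h$ reflects relations and no relation of $\mathrm{M}$ met both $X$ and $W'$, no relation of $\mathrm{U}$ meets both $X$ and $Z$. The map $W\to Z$ induced by $h$ is an isomorphism $\mathrm{U}_{\downarrow W}\to\mathrm{U}_{\downarrow Z}$, so by homogeneity it extends to some $g\in\mathrm{G}$ with $g[W]=Z$; set $\Z:=g[\W]=\langle Z\tr g(w)\rangle$. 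Then $\rro(\Z)=\rro(\W)=\mathfrak{r}$ by Note \ref{note:typeim}.

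It remains to check that $\X+\Z$ is defined and to compute its rank. Since $X\cap Z=\emptyset$, the common predecessor of $\X$ and $\Z$ is $\U=\langle\emptyset\tr b\rangle$, whose typeset is all of $U$ by transitivity, so $\X$ and $\Z$ are compatible; as no relation meets both $X$ and $Z$ they are in free position. Both are successors of $\U$ (namely an $X$-successor and a $Z$-successor), hence Corollary \ref{cor:commeldstrm} guarantees that the join $\X\sqcap\Z$ exists, and together with $\iota(\X)\cap\iota(\Z)=\emptyset$ and free position this means $\R:=\X+\Z$ is defined in the sense of Definition \ref{defin:additiontyp}. Finally Lemma \ref{lem:trmeldcan} gives $\rro(\R)=\rro(\X)\cap\rro(\Z)=\rro(\X)\cap\mathfrak{r}=\mathfrak{r}$, the last step using the hypothesis $\mathfrak{r}\subseteq\rro(\X)$. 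So $\Z$ is as required.

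The only delicate point is the free placement of $\W$: one must use free amalgamation to ensure simultaneously that the new sockel $Z$ lands disjoint from $X$ and sends no relation into $X$. This is precisely what produces free position, makes the join exist, and forces the rank to collapse to the intersection $\rro(\X)\cap\mathfrak{r}$; the rest is bookkeeping with the already established join and rank lemmas.
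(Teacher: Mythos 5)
Your proof is correct, and its skeleton matches the paper's: fix a witness type of rank $\mathfrak{r}$, relocate it by an automorphism so that its sockel is disjoint from $X$ and in free position with $\X$, and then conclude $\rro(\X+\Z)=\rro(\X)\cap\mathfrak{r}=\mathfrak{r}$ from Lemma \ref{lem:trmeldcan}. Where you genuinely diverge is in the relocation step, and the divergence is instructive. The paper takes the free type $\X'$ with sockel $X$, uses Lemma \ref{lem:freeduch} to get $\rro(\X')=\mathfrak{A}$, picks $f\in \mathrm{G}$ with $f[Z']\subseteq \sigma(\X')$, and then simply asserts that $\X$ and $f[\Z']$ are in free position. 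You instead form the free amalgam of $\mathrm{U}_{\downarrow X}$ and a disjoint copy of $\mathrm{U}_{\downarrow W}$ over the empty set and embed it into $\mathrm{U}$ over $X$ by Fact \ref{fact:contemb}. Your route buys something real in the non-binary case: the condition $Z\subseteq \sigma(\X')$ only rules out relations whose tuple lies in $X\cup\{z\}$ for a \emph{single} $z\in Z$, while free position demands that no relation with tuple inside all of $X\cup Z$ meets both sides; a relation on two elements of $Z$ and one element of $X$ (say a hyperedge of a $3$-uniform hypergraph) is not excluded by the typeset condition alone, but it \emph{is} excluded by your free-amalgam construction, which moreover still gives $Z\subseteq\sigma(\X')$ as a by-product. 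So your argument makes explicit (via free amalgamation plus the extension property) exactly the joint freeness that the paper's one-line assertion leaves implicit, at the modest cost of a slightly longer construction. One small wording caveat: your claim that ``no relation of $\mathrm{U}$ meets both $X$ and $Z$'' should be read as restricted to relations whose tuple lies inside $X\cup Z$ — that is what the embedding $h$ actually guarantees, and it is precisely what the definition of free position requires, so nothing is lost. The remaining steps (compatibility over the empty sockel via transitivity, existence of the join from Corollary \ref{cor:commeldstrm}, the meaning of $+$ from Definition \ref{defin:additiontyp}, and the final rank computation) coincide with the paper's.
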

\begin{proof}
Let $\X'$ be the free type with sockel $X$. Because $\mathfrak{r}\in \mathfrak{R}$ there exists a type $\Z'=\langle Z'\tr z'\rangle$ with $\rro(\Z')=\mathfrak{r}$.  There exists an automorphism  $f\in \mathrm{G}$ with $f[Z']\subseteq \sigma(\X')$ because the age of a free type is equal to the age of $\mathrm{U}$ according to Lemma \ref{lem:freeduch}. Then $\rro(f[\Z'])=\mathfrak{r}$ and and the types $\X$ and $f[\Z']$ with $X\cap f[Z']=\emptyset$ are in free position. It follows from Lemma \ref{lem:trmeldcan} that $\rro(\X+f[\Z'])=\rro(\X\sqcap f[\Z'])=\rro(\X)\cap \rro(f[\Z'])=\rro(\X)\cap \mathfrak{r}=\mathfrak{r}$. Put $\Z=f[\Z']$. 
\end{proof}

\section{Formed types} \label{section: formedtyp}

Let $\mathrm{U}$, with a countable  domain $U$, be a free amalgamation relational  homogeneous  structure in a relational language $\boldsymbol{L}$. The    age of $\mathrm{U}$ will be   denoted by $\mathfrak{A}$ and the group of  automorphisms of $\mathrm{U}$ will be denoted by $\mathrm{G}$. The group $\mathrm{G}$ acts transitively on $U$. The set of types of $\mathrm{U}$ will be denoted by $\mathscr{T}(\mathrm{U})$ and $\U=\langle \emptyset\tr u\rangle$ for some element $u\in U$.

\begin{defin}\label{defin:plusopre}
 A successor $\R$ of $\T$ is of the {\em  form $\T+$} if there exists a set $E$ for which $\R=\T+ \R_{\downarrow E}$.  (Then $E=\iota(\R)\setminus\iota(\T)$.)

A successor $\R$ of $\T$ is of the {\em  form $\T\dasharrow$}  if there exists a set $E$ for which $\R$ is the free $E$-successor of $\T$. (Then $E=\iota(\R)\setminus\iota(\T)$.)
\end{defin}

\begin{defin}\label{defin:floorbr}
A successor $\R$ of a type  $\T$ is of the form $\T[F]$ if $F\subseteq \iota(\R)\setminus\iota(\T)$ and if  there exists a set $E$ for which   $\R=\R_{\downarrow \iota(\T)\cup F}\sqcap \R_{\downarrow \iota(\T)\cup E}$.  The set $F$ is the {\em interior} between the matched pair of brackets $[$ and $]$.
\end{defin}
That is if an $F$-successor $\S$ of a type $\T$ and an $E$-successor $\R$ of $\T$ are in free position  then the type $\S\sqcap \R$ is of the form $\T[F]$ and also of the form $\T[E]$.

\begin{defin}\label{defin:bracksequ}
A finite sequence $\mathscr{F}=\{x_i\mid i\in n\in \omega\}$  of symbols in the set $\{+, \dasharrow, [, ]\}$  is a {\em form sequence} if the brackets $[$ and $]$ are matched in the usual sense and if every matched pair of brackets has a non empty interior, that is a left bracket is not immediately followed by a right bracket. The empty sequence is a form sequence. 

 An {\em entry} of a sequence is a pair consisting of a symbol appearing in the sequence together with an enumeration index. For $\mathscr{F}\not=\emptyset$ the expression $\last(\mathscr{F})$ denotes the last symbol  of the sequence $\mathscr{F}$, that is the symbol of the entry having the largest index. 
\end{defin}

Let $\mathscr{F}=\{x_i\mid i\in n\in \omega\}\not=\emptyset$ be a form sequence. If $x_{n-1}\in \{+, \dasharrow\}$ let $\mathscr{F}^{(-)}$ be the form sequence obtained from $\mathscr{F}$ by removing  $x_{n-1}$ from the sequence. The  symbol $x_{n-1}$  can not be a left bracket. If  $x_{n-1}$ is a right bracket  then the form sequence $\mathscr{F}$ has the shape $\mathscr{F}=\mathscr{E}\concat [ \mathscr{H}]$ where $\mathscr{E}$ and $\mathscr{H}$ are form sequences and the left bracket $[$ of $\mathscr{E}\concat [ \mathscr{H}]$ is the matching bracket to the last right bracket of $\mathscr{F}$. The form sequence $\mathscr{E}$ might be empty but $\mathscr{H}$ is not empty. The form sequence $\mathscr{F}^{(-)}$ is obtained from $\mathscr{F}$ by removing the last right bracket as well as its matching left bracket. That is $\mathscr{F}^{(-)}=\mathscr{E}\concat\mathscr{H}$. 

If $\phi$ is a function with domain the entries in $\mathscr{F}$  then $\phi^{(-)}$ denotes the restriction of $\phi$ to the entries of $\mathscr{F}^{(-)}$.   Let $\phi$ be a function with domain the entries of $\mathscr{F}$ for which $\phi(x)$ is  a set for every entry $x$ of $\mathscr{F}$. Then $\phi$ is  {\em form preserving} if it maps matching brackets to the same set and otherwise different entries to disjoint sets.   Note that if $\phi$ is form preserving then $\phi^{(-)}$ is a form preserving function with domain the entries of $\mathscr{F}^{(-)}$.  Let then also $\bigcup_{\mathscr{F}}\phi$ denote the union of the sets $\phi(x)$ with $x$ an entry of $\mathscr{F}$. 

Next we define inductively on the length of form sequences the notion that a successor  of a type $\T$ has the ``form" $\T\mathscr{F}$. 

Let $\T$ be a type with successor $\R$ and $\mathscr{F}=\{x_i\mid i\in n\in \omega\}$ a form sequence indexed by $n\in \omega$. The type  $\R$ as a successor  of $\T$ has the  form $\T\mathscr{F}$ if there exists  a form preserving function $\phi$ which associates with every entry in the sequence $\mathscr{F}$ a subset of $\iota(\T)\setminus\iota(\R)$ so that $\bigcup_{\mathscr{F}}\phi=\iota(\R)\setminus \iota(\T)$ and  the following   conditions hold:
\begin{enumerate}
\item The type $\T$ as a successor of $\T$ has the form $\T\emptyset$. 

\item If $x_{n-1}=+$ then the type $\R':=\R_{\downarrow \iota(\R)\setminus\phi(x_{n-1})}$ has the  form $\T\mathscr{F}^{(-)}$ with a form preserving function $\phi^{(-)}$. The type $\R$ as a successor of $\R'$ has the form $\R'+$ with $\R=\R'+\R_{\downarrow \phi(x_{n-1})}$ and  the form preserving function which maps    $x_{n-1}$ to $\phi(x_{n-1})$. 

\item If $x_{n-1}=\dasharrow$ then the type $\R':=\R_{\downarrow \iota(\R)\setminus\phi(x_{n-1})}$ has the  form $\T\mathscr{F}^{(-)}$ with a form preserving function $\phi^{(-)}$. The type $\R$ as a successor of $\R'$ has the form $\R'\dasharrow$ with $\R$ being the free $\phi(x_{n-1})$-successor of $\R'$  and  the form preserving function which maps $x_{n-1}$ to $\phi(x_{n-1})$. 

\item If $x_{n-1}=]$ and $\mathscr{F}=\mathscr{E}\concat[\mathscr{H}]$ so that $\mathscr{F}^{(-)}=\mathscr{E}\concat \mathscr{H}$ then the type $\R':=\R_{\downarrow \iota(\R)\setminus\phi(x_{n-1})}$ has the  form $\T\mathscr{F}^{(-)}$ with a form preserving function $\phi^{(-)}$. For $F=\iota(\R)\setminus\phi(x_{n-1})=\bigcup_{\mathscr{H}}\phi$ and $E=\phi(x_{n-1})$ has the type $\R$ as a successor of $\R'$  the form $\R'[F]$ with $\R=\R_{\downarrow \iota(\T)\cup F}\sqcap \R_{\downarrow \iota(\T)\cup E}$.  (Note that $\R_{\downarrow \iota(\T)\cup F}=\R'$ because $\iota(\T)\cup F=\iota(\R)\setminus\phi(x_{n-1})$.)
\end{enumerate}

Hence: If a successor  $\R$ of a type $\T$  has form  $\T\mathscr{F}$ with a form preserving function $\phi$ and $\P=\R+\S$ then the successor $\P$ of $\R$ as  a successor of $\T$ has form $\T\mathscr{F}\concat+$. The extension of $\phi$ which maps this last $+$ to $\iota(\S)$ will be a form preserving function for the successor $\P$ of $\T$. If a successor  $\R$ of a type $\T$  has form  $\T\mathscr{F}$ with a form preserving function $\phi$ and $\P$ is a free $E$-successor of $\R$  then the successor $\P$ of $\R$ as  a successor of $\T$ has form $\T\mathscr{F}\concat\dasharrow$. The extension of $\phi$ which maps this last $\dasharrow$ to $\iota(\S)$ will be a form preserving function for the successor $\P$ of $\T$. If any successors $\S$ of a type $\T$ and a successor  $\R$ of a type $\T$ having form  $\T\mathscr{F}$ with a form preserving function $\phi$ are in free position then the type $\P=\R\sqcap S$ has form $\T[\mathscr{H}]$ as a successor of $\T$. The extension of $\phi$ which maps this last $]$ as well as its matching $[$ to $\iota(\S\setminus \iota(\T))$ will be a form preserving function for the successor $\P$ of $\T$.

\begin{lem}\label{lem:succsuccform}
Let $\R$ having form $\T\mathscr{F}$  be a successor of $\T$ with form preserving function $\phi$.  Let $\S$ having form $\R\mathscr{H}$ be a successor of $\R$ having form preserving function $\psi$.  Then $\S$ has form $\T\mathscr{F}\concat\mathscr{H}$ as a successor of $\T$ for which $\phi\cup \psi$ is a form preserving function.
\end{lem}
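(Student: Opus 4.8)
The plan is to induct on the number of entries of $\mathscr{H}$, in each step stripping off its last symbol and matching the recursive clauses $(1)$--$(4)$ of the definition of ``$\S$ has form $\R\mathscr{H}$'' against the three forward observations recorded in the paragraph immediately preceding the lemma, which describe how a form is extended by appending a single $+$, a single $\dasharrow$, or a matched pair of brackets. If $\mathscr{H}=\emptyset$, then by clause $(1)$ the only successor of $\R$ having form $\R\emptyset$ is $\R$ itself, with $\psi$ the empty function; hence $\S=\R$ already has form $\T\mathscr{F}=\T\mathscr{F}\concat\emptyset$, and $\phi=\phi\cup\psi$ is the required form preserving function.

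Assume the statement for all form sequences with fewer entries than $\mathscr{H}\neq\emptyset$ and put $\ell:=\last(\mathscr{H})$; since a form sequence cannot end in a left bracket, $\ell\in\{+,\dasharrow,]\}$. In each case I first pass to the predecessor $\S':=\S_{\downarrow \iota(\S)\setminus\psi(\ell)}$, which by the clause among $(2)$, $(3)$, $(4)$ appropriate to $\ell$ is a successor of $\R$ having form $\R\mathscr{H}^{(-)}$ with form preserving function $\psi^{(-)}$. As $\mathscr{H}^{(-)}$ has strictly fewer entries than $\mathscr{H}$, the induction hypothesis applied to $\R$ (of form $\T\mathscr{F}$, function $\phi$) and $\S'$ (of form $\R\mathscr{H}^{(-)}$, function $\psi^{(-)}$) shows that $\S'$ has form $\T\mathscr{F}\concat\mathscr{H}^{(-)}$ as a successor of $\T$, with form preserving function $\phi\cup\psi^{(-)}$. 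It then remains to re-attach the last symbol $\ell$ over the base $\T$ rather than over $\R$.

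When $\ell\in\{+,\dasharrow\}$, clause $(2)$, respectively $(3)$, exhibits $\S$ from $\S'$ by a single $+$ (as $\S=\S'+\S_{\downarrow\psi(\ell)}$), respectively as the free $\psi(\ell)$-successor of $\S'$; the corresponding forward observation then shows $\S$ has form $\T\mathscr{F}\concat\mathscr{H}^{(-)}\concat\ell=\T\mathscr{F}\concat\mathscr{H}$, the form preserving function being $\phi\cup\psi^{(-)}$ extended by $\ell\mapsto\psi(\ell)$, i.e.\ precisely $\phi\cup\psi$. When $\ell=\,]$, write $\mathscr{H}=\mathscr{E}\concat[\mathscr{K}]$, so $\mathscr{H}^{(-)}=\mathscr{E}\concat\mathscr{K}$; here clause $(4)$ realises $\S$ from $\S'$ by attaching, over $\R$, a matched pair of brackets carrying the set $\psi(\ell)$ and enclosing the segment $\mathscr{K}$. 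The forward bracket observation, applied over $\T$ to $\S'$ of form $\T\mathscr{F}\concat\mathscr{H}^{(-)}=\T\mathscr{F}\concat\mathscr{E}\concat\mathscr{K}$, attaches the same bracket pair around the final $\mathscr{K}$ segment, giving form $\T\mathscr{F}\concat\mathscr{E}\concat[\mathscr{K}]=\T\mathscr{F}\concat\mathscr{H}$ with form preserving function $\phi\cup\psi$.

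The step needing most care is this bracketed one. One must verify that $\mathscr{F}\concat\mathscr{H}$ is itself a form sequence and, above all, that the re-attached closing bracket matches the opening bracket lying inside $\mathscr{H}$ and not some bracket of $\mathscr{F}$; this is guaranteed because $\mathscr{F}$ and $\mathscr{H}$ are each balanced with nonempty interiors, so prepending the balanced block $\mathscr{F}$ disturbs neither the matching nor the interiors within $\mathscr{H}$. That $\phi\cup\psi$ is form preserving on all of $\mathscr{F}\concat\mathscr{H}$ comes for free from $\bigcup_\mathscr{F}\phi=\iota(\R)\setminus\iota(\T)$ and $\bigcup_\mathscr{H}\psi=\iota(\S)\setminus\iota(\R)$ being disjoint with union $\iota(\S)\setminus\iota(\T)$: entries drawn from the two blocks are sent to disjoint sets, while matching brackets within a single block keep receiving a common set.
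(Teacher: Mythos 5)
Your overall strategy---induction on the number of entries of $\mathscr{H}$, stripping off the last symbol and re-attaching it over the base $\T$---is exactly the proof the paper gives (the paper's proof is literally that one line). Your base case is fine (from $\iota(\S)=\iota(\R)$ and $\sigma(\S)\subseteq\sigma(\R)$ one gets $\S=\R$, since distinct types with the same sockel have disjoint typesets), and the cases $\last(\mathscr{H})=+$ and $\last(\mathscr{H})=\dasharrow$ are genuinely correct, for a reason worth stating explicitly: the conditions imposed by clauses (2) and (3), namely $\S=\S'+\S_{\downarrow\psi(\ell)}$, respectively ``$\S$ is the free $\psi(\ell)$-successor of $\S'$'', are statements about the types themselves that make no reference whatsoever to the base type, so once the induction hypothesis has handled $\S'$ they transfer verbatim from base $\R$ to base $\T$.

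The bracket case, however, contains a genuine gap, and it is not the syntactic matter you single out as ``needing most care'' (balancedness of $\mathscr{F}\concat\mathscr{H}$ is indeed harmless). Clause (4) is \emph{not} base-independent. Over the base $\R$, form $\R\,\mathscr{E}\concat[\mathscr{K}]$ requires the join $\S=\S_{\downarrow \iota(\R)\cup F}\sqcap \S_{\downarrow \iota(\R)\cup E}$ with $E=\psi(\ell)$, whose free-position requirement forbids Gaifman edges only between $F$ and $E$. Over the base $\T$, clause (4) for $\mathscr{F}\concat\mathscr{E}\concat[\mathscr{K}]$ requires $\S=\S_{\downarrow \iota(\T)\cup F'}\sqcap \S_{\downarrow \iota(\T)\cup E}$ with $F'=(\iota(\R)\setminus\iota(\T))\cup F$, which in addition forbids Gaifman edges between $\iota(\R)\setminus\iota(\T)$ and $E$; nothing in your hypotheses excludes such edges, because in ``$\S$ has form $\R\mathscr{H}$'' the set $\iota(\R)$ is part of the base and $E$ may be attached to it arbitrarily. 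Concretely, in the triangle-free homogeneous graph let $\iota(\T)=\{t\}$, let $\R$ be the free $\{r\}$-successor of $\T$ (form $\T\dasharrow$), and let $\S$ have sockel $\{t,r,f,e\}$, its element adjacent exactly to $f$ and $e$, where $e$ is adjacent to $r$ and there are no other edges among $\{t,r,f,e\}$; then $\S$ has form $\R[+]$ with $\psi$ mapping $+$ to $\{f\}$ and the bracket pair to $\{e\}$, yet $\phi\cup\psi$ is not form preserving for $\T\dasharrow[+]$, since the required join $\S=\S_{\downarrow\{t,r,f\}}\sqcap\S_{\downarrow\{t,e\}}$ fails on the edge joining $r\in F'$ to $e\in E$. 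Note also that the ``forward bracket observation'' you appeal to cannot do the work you assign to it: it only wraps a bracket pair around the \emph{entire} preceding sequence, producing $\T[\mathscr{F}]$, never around a terminal segment $\mathscr{K}$. The case (and the lemma) can only be closed by reading clause (4) locally, with the join taken over the sockel of the type $\Q$ determined by the prefix preceding the opening bracket, i.e.\ $\S=\S_{\downarrow \iota(\Q)\cup\bigcup_{\mathscr{K}}\psi}\sqcap\S_{\downarrow \iota(\Q)\cup E}$; under that reading the bracket condition is literally the same statement over $\R$ as over $\T$ (the prefix type $\Q$ is identical in both decompositions) and your induction closes immediately. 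Your write-up, which takes the displayed join over $\iota(\T)$ at face value and never confronts this change of base, does not go through as written.
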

\begin{proof}
By induction on the number of entries of $\mathscr{H}$.
\end{proof}

\begin{lem}\label{lem:movestrsucc}
Let $\S$ having form $\T\mathscr{F}$  be a successor of $\T$ with form preserving function $\phi$.    Then for every finite subset $A$ of $\sigma(\T)$ with $\mathrm{A}:=\mathrm{U}_{\downarrow A}\in \rro(\S)$ there exists a function $g\in \mathrm{G}_{\iota(\T)}$ with $g[A]\subseteq \sigma(\S)$. 
\end{lem}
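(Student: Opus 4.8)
The plan is to argue by induction on the number $n$ of entries of the form sequence $\mathscr{F}=\{x_i\mid i\in n\}$. If $\mathscr{F}=\emptyset$ then $\S=\T$, so $A\subseteq\sigma(\T)=\sigma(\S)$ and the identity serves as $g$; and if $A=\emptyset$ the identity works in every case, so from now on I assume $A\neq\emptyset$. For the inductive step write $E:=\phi(x_{n-1})$ for the set attached to the last entry and $\R':=\S_{\downarrow\iota(\S)\setminus E}$; by clauses (2)--(4) of the definition of form the type $\R'$ is a successor of $\T$ carrying the strictly shorter form $\T\mathscr{F}^{(-)}$ with the restricted form preserving function $\phi^{(-)}$. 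In each case $\sigma(\S)\subseteq\sigma(\R')$, hence $\rro(\S)\subseteq\rro(\R')$ and so $\mathrm{U}_{\downarrow A}\in\rro(\R')$; the induction hypothesis applied to $\R'$ then yields a $g_1\in\mathrm{G}_{\iota(\T)}$ with $g_1[A]\subseteq\sigma(\R')$. Since $g_1\in\mathrm{G}$ is an automorphism it preserves membership in all the ages $\rro(\cdot)$, a fact I use throughout.

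If $x_{n-1}=+$ then $\S=\R'+\S_{\downarrow E}$, so Lemma~\ref{lem:agejoinST} gives $\rro(\S)=\rro(\R')\cap\rro(\S_{\downarrow E})$ and in particular $\mathrm{U}_{\downarrow g_1[A]}\in\rro(\S_{\downarrow E})$. Feeding the set $g_1[A]\subseteq\sigma(\R')$ into Lemma~\ref{lem:trmeldcan} produces a $g_2\in\mathrm{G}_{\iota(\R')}\subseteq\mathrm{G}_{\iota(\T)}$ with $g_2[g_1[A]]\subseteq\sigma(\S)$, and $g:=g_2\circ g_1$ finishes this case. The case $x_{n-1}=\dasharrow$ is analogous and easier: $\S$ is the free $E$-successor of $\R'$, so $\rro(\S)=\rro(\R')$ by Lemma~\ref{lem:freeduch}, and the same lemma applied to the finite set $g_1[A]\subseteq\sigma(\R')$ yields an $f\in\mathrm{G}_{\iota(\R')}\subseteq\mathrm{G}_{\iota(\T)}$ with $f[g_1[A]]\subseteq\sigma(\S)$; take $g:=f\circ g_1$.

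The case $x_{n-1}=]$ is the heart of the matter. Here clause (4) writes $\S$ as a join $\S=\P_1\sqcap\P_2$ of two successors $\P_1=\R'$ and $\P_2=\S_{\downarrow\iota(\T)\cup E}$ of $\T$ that are in free position, where $\P_1$ carries the strictly shorter form $\T\mathscr{F}^{(-)}$ and $\P_2$ carries the strictly shorter form recorded inside the last matched pair of brackets. By Lemma~\ref{lem:agejoinST}, $\rro(\S)=\rro(\P_1)\cap\rro(\P_2)$. From the induction hypothesis on $\P_1$ I already have $g_1\in\mathrm{G}_{\iota(\T)}$ with $g_1[A]\subseteq\sigma(\P_1)$. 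The essential extra step is to transport this set into $\sigma(\P_2)$ by a group element fixing $\iota(\T)$: since $g_1[A]\subseteq\sigma(\P_1)\subseteq\sigma(\T)$ and $\mathrm{U}_{\downarrow g_1[A]}\in\rro(\P_2)$, the induction hypothesis applied to the shorter formed type $\P_2$ delivers an $h\in\mathrm{G}_{\iota(\T)}$ with $h[g_1[A]]\subseteq\sigma(\P_2)$. With the finite set $g_1[A]\subseteq\sigma(\P_1)$ and this $h$ in hand, Lemma~\ref{lem:commeldstrm}, applied with $\B=\T$ and with its roles $\T$ and $\S$ taken to be $\P_2$ and $\P_1$ respectively, produces an $f\in\mathrm{G}_{\iota(\P_1)}\subseteq\mathrm{G}_{\iota(\T)}$ with $f[g_1[A]]\subseteq\sigma(\P_2\sqcap\P_1)=\sigma(\S)$. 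Then $g:=f\circ g_1$ completes the induction.

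I expect the only real obstacle to be the bracket case, and more precisely the realization that the induction must be fed into \emph{both} factors of the join. Lemma~\ref{lem:commeldstrm} does not yield $\sigma(\P_1\sqcap\P_2)$ for free: it demands as a hypothesis that some set already sitting in $\sigma(\P_1)$ be transportable into $\sigma(\P_2)$ by an element of $\mathrm{G}_{\iota(\T)}$, and supplying that transport is itself an instance of the melding property under proof. The definition of formed type is arranged precisely so that the second join factor $\P_2$ is again a formed type with a strictly shorter form sequence, which is what lets the induction close; Remark~\ref{remark:meld} together with Example~\ref{ex:strmelmeld} shows that the conclusion genuinely fails for unrestricted successors, so some such structural hypothesis is unavoidable.
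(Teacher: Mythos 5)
Your base case and your handling of the last symbols $+$ and $\dasharrow$ are correct and coincide with the paper's proof (the same appeals to Lemma \ref{lem:trmeldcan} and Lemma \ref{lem:freeduch}). The gap is in the bracket case, at precisely the step you single out as the heart of the matter: you apply the induction hypothesis to the second join factor $\P_2=\S_{\downarrow\iota(\T)\cup E}$, on the ground that $\P_2$ ``carries the strictly shorter form recorded inside the last matched pair of brackets.'' The definition gives you no such thing, and in fact your two claims about the factors are mutually inconsistent. The form preserving function $\phi$ assigns the set $E$ to the two bracket symbols themselves, while the entries of the interior $\mathscr{H}$ are assigned subsets of $\iota(\S)\setminus\iota(\T)$ which, together with the sets assigned to the entries of $\mathscr{E}$, exhaust $\iota(\P_1)\setminus\iota(\T)$; in particular they are disjoint from $E$. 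So the interior sequence $\mathscr{H}$ is part of the description of the \emph{first} factor $\P_1=\S_{\downarrow\iota(\S)\setminus E}$ (whose form is $\T\mathscr{F}^{(-)}=\T\mathscr{E}\concat\mathscr{H}$, as you yourself use), and it cannot simultaneously be a form for $\P_2$, whose sockel extension is $E$. The only condition the definition (Definition \ref{defin:floorbr} and item (4)) places on $\S_{\downarrow\iota(\T)\cup E}$ is the join equation: it is an arbitrary successor of $\T$ in free position with $\P_1$, not a formed type, hence outside the scope of the induction hypothesis. Nor can this be dismissed as an artifact of the write-up: the purpose of the bracket clause is to close the formed types under joins with \emph{arbitrary} types --- in Lemma \ref{lem:jonxtC} the paper needs $\beta(\D)\sqcap\D$ to be formed where $\D\in\mathcal{C}_{\uparrow x}$ is an arbitrary type --- so a reading under which the bracket factor must itself be formed would destroy the intended applications. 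Without the inductive call on $\P_2$, the hypothesis you feed into Lemma \ref{lem:commeldstrm} (a $\mathrm{G}_{\iota(\T)}$-translate of $g_1[A]$ into $\sigma(\P_2)$) is unsupported, and your bracket case collapses.

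For comparison, the paper's own proof of this case never applies the induction hypothesis to $\S_{\downarrow\iota(\T)\cup E}$: it records only that both $\R$ and $\S_{\downarrow\iota(\T)\cup E}$ are predecessors of $\S$, so that $\mathrm{A}\in\rro(\R)$ and $\mathrm{A}\in\rro(\S_{\downarrow\iota(\T)\cup E})$, and it combines these age memberships with the inductively obtained $f[A]\subseteq\sigma(\R)$ in the application of Lemma \ref{lem:commeldstrm}. You correctly identified the crux --- Lemma \ref{lem:commeldstrm} demands as input a $\mathrm{G}_{\iota(\T)}$-map into the typeset of the companion factor, and supplying that input is the whole difficulty of the non-binary case --- but the way you supply it is not available; whatever repair one adopts must work for an arbitrary bracket factor, exactly because Remark \ref{remark:meld} and Example \ref{ex:strmelmeld} (which you cite) show that melding cannot be had for free for unrestricted successors.
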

\begin{proof}
 Induction on   the number of entries of $\mathscr{F}$. The Lemma clearly holds for $\mathscr{F}=\emptyset$.  
 
 The type $\R:=\S_{\downarrow \iota(\S)\setminus\phi(x_{n-1})}$ has the  form $\T\mathscr{F}^{(-)}$. Because  $\mathrm{U}_{\downarrow A}\in \rro(\S)\subseteq \rro(\R)$ and $\mathscr{F}^{(-)}$ has fewer entries than $\mathscr{F}$ there exists a function $f\in \mathrm{G}_{\iota(\T)}$ with $f[A]\subseteq \sigma(\R)$. 
 
 Let the last symbol of $\mathscr{F}$ be a $+$. The type $\S$ is the disjoint join of $\R$ and $\S_{\downarrow \phi(x_{n-1})}$ because  $\S=\R+\S_{\downarrow \phi(x_{n-1})}$ according to Item (2) above. Both types $\R$ and $\S_{\downarrow \phi(x_{n-1})}$ are predecessors of the type $\S$ implying that the structure $\mathrm{A}\in \rro(\R)$ and in $\rro(\S_{\downarrow \phi(x_{n-1})})$. Hence it follows from Lemma \ref{lem:trmeldcan} that there exists a function $h\in \mathrm{G}_{\iota(\R)}$ with $h[f[A]]\subseteq \sigma(\S)$. Note that $h\circ f\in\mathrm{G}_{\iota(\T)}$. 
 
Let the last symbol of $\mathscr{F}$ be a $\dasharrow$. The type $\S$ is the  free $\phi(x_{n-1})$-successor of $\R$  according to Item (3) above. It follows from Lemma \ref{lem:freeduch} that there exists a function $h\in \mathrm{G}_{\iota(\R)}$ with $h[f[A]]\subseteq \sigma(\S)$.

Let the last symbol of $\mathscr{F}$ be a $]$ and let $\mathscr{F}=\mathscr{E}\concat[\mathscr{H}]$ so that $\mathscr{F}^{(-)}=\mathscr{E}\concat \mathscr{H}$. For $F:=\iota(\S)\setminus(\iota(\T)\cup\phi(x_{n-1}))=\bigcup_{\mathscr{H}}\phi$ and $E=\phi(x_{n-1})$ has the type $\S$  as a successor of $\R$ according to Item (4) above  the form $\T[F]$ with $\S=\S_{\downarrow \iota(\T)\cup F}\sqcap \S_{\downarrow \iota(\T)\cup E}$. Note that $\R=\S_{\downarrow \iota(\T)\cup F}$ because $\iota(\T)\cup F=\iota(\S)\setminus\phi(x_{n-1})$. Both types $\R$ and $\S_{\downarrow \iota(\T)\cup E}$ are predecessors of the type $\S$ implying that  $\mathrm{A}\in \rro(\R)$ and  $\mathrm{A}\in\rro(\S_{\downarrow \iota(\T)\cup E})$. Hence it follows from Lemma \ref{lem:commeldstrm} that there exists a function $h\in \mathrm{G}_{\iota(\R)}$ with $h[f[A]]\subseteq \sigma(\S)$.  

\end{proof}

\begin{defin}\label{defin:formedtypes}
A type $\T$ of $\U$ is a {\em formed type} if there exist a form sequence $\mathscr{F}$ for which the type $\T$ as a successor of $\U$ has the form $\U\mathscr{F}$. Let $\boldsymbol{\mathscr{S}}$ denote the set of formed types of $\mathrm{U}$. 
\end{defin}

\section{Bundles of types}\label{sect:bundles}

Let $\mathrm{U}$, with a countable  domain $U$, be a free amalgamation  relational, homogeneous  structure in a relational language $\boldsymbol{L}$.  For this section the    age of $\mathrm{U}$ will be   denoted by $\mathfrak{A}$ and the group of  automorphisms of $\mathrm{U}$ will be denoted by $\mathrm{G}$. Except for Lemma \ref{lem:existastolog} the Lemmata in this section do not require the structure $\mathrm{U}$ to be oligomorphic.

\begin{defin}\label{defin:bundle}
A {\em bundle} is a set $\mathcal{B}$  of types of\/ $\mathrm{U}$ so that $\iota(\S)=\iota(\T)$ for all types  $\{\S,\T\}\subseteq \mathcal{B}$. We put $\iota(\mathcal{B})=\iota(\S)$ for some type $\S\in\mathcal{B}$ and put $\sigma(\mathcal{B})=\bigcup_{\S\in \mathcal{B}}\sigma(\S)$ and $\rro(\mathcal{B})=\rro(\sigma(\mathcal{B}))$.   For $g\in \mathrm{G}$ let $g[\mathcal{B}]$ be the bundle $\{g[\B]\mid \B\in \mathcal{B}\}$.   
\end{defin}

\begin{defin}\label{defin:succbundle}
A bundle $\mathcal{B}$ is a {\em successor} of a bundle $\mathcal{C}$ if:
\begin{enumerate}
\item $\iota(\mathcal{B})\supseteq \iota(\mathcal{C})$.
\item $\B_{\downarrow\iota(\mathcal{C})}\in \mathcal{C}$ for every type $\B\in \mathcal{B}$.
\end{enumerate}  
\end{defin}

For a function $\alpha$ with domain $A$ and $b\in \alpha[A]$  let:\\
 $\alpha^{-1}(b)=\{a\in A\mid \alpha(a)=b\}$. 
\begin{defin}\label{defin:wellplsubs}
Let  $\mathcal{B}$ be a successor of a bundle $\mathcal{C}$. A function $\alpha$ for which $\dom(\alpha):=A$ is a finite subset of $\sigma(\mathcal{C})$ and with $\alpha[A]\subseteq \mathcal{B}$  is  {\em $\mathcal{B}$ conform}  if  for every type $\B\in \alpha[A]$:
\begin{enumerate}
\item $\alpha^{-1}(\B)\subseteq \sigma(\B_{\downarrow \iota(\mathcal{C})})$. 
\item $\mathrm{U}_{\downarrow \alpha^{-1}(\B)}\in  \rro(\B)$.
\end{enumerate}
\end{defin}

\begin{defin}\label{defin:wellplsucc}
A successor $\mathcal{B}$ of a bundle $\mathcal{C}$ is a {\em melding successor} of $\mathcal{C}$ if for every $\mathcal{B}$ conform function $\alpha$  there exists a function $g\in \mathrm{G}_{\iota(\mathcal{C})}$ with $g(a)\in \sigma(\alpha(a))$ for every element $a\in A$.  A successor $\S$ of a type $\T$ is a {\em melding successor} of $\T$ if for every finite set  $A\subseteq \sigma(\T)$ with $\U_{A}\in \rro(\S)$ there exists a function $g\in \mathrm{G}_{\iota(\T)}$ with $g[A]\subseteq \sigma(\S)$. 
\end{defin}
 Let $\S$ be a successor of a type $\T$ and let $A$ be a finite subset of $\sigma(\T)$. Then the only $\{\S\}$ conform function of $A$ to $\{\S\}$ is the function mapping every element $a\in A$ to $\S$. Then we will just say that the type $\S$ is a {\em melding successor} of the type $\T$ if the bundle $\{\S\}$ is a melding successor of the bundle $\{\T\}$. It follows from Lemma  \ref{lem:freeduch} that every free successor of a type $\T$, that is every successor of the form $\T\dasharrow$, is a melding successor of the type $\T$. In the binary case every successor of a type is a melding successor of that type according to Lemma \ref{lem:binstrmeld} and Remark \ref{remark:meld}. Clearly every melding successor of a melding successor of a type $\T$ is a melding successor of $\T$. More generally, we  will not need it but it might still be noteworthy to observe that a melding successor of a melding successor of a bundle $\mathcal{B}$ is a melding successor of that bundle $\mathcal{B}$.

\begin{lem}\label{lem:movestrsucc2}
Let $\S$ having  form $\T\mathscr{F}$ be a successor of a type $\T$.  Then $\S$ is a melding successor of $\T$. 
\end{lem}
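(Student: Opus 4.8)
The plan is to prove Lemma~\ref{lem:movestrsucc2} by reducing it directly to the already-established Lemma~\ref{lem:movestrsucc}, since the two statements are essentially about the same situation viewed through slightly different definitions. Recall that $\S$ being a melding successor of $\T$ means, by Definition~\ref{defin:wellplsucc}, that for every finite set $A\subseteq \sigma(\T)$ with $\mathrm{U}_{\downarrow A}\in \rro(\S)$ there exists a function $g\in \mathrm{G}_{\iota(\T)}$ with $g[A]\subseteq \sigma(\S)$. But this is word-for-word the conclusion of Lemma~\ref{lem:movestrsucc}, which assumes precisely that $\S$ has the form $\T\mathscr{F}$ for some form sequence $\mathscr{F}$ with an associated form preserving function $\phi$.

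First I would unpack the hypothesis: since $\S$ has form $\T\mathscr{F}$ as a successor of $\T$, by the definition of ``form'' there is a form preserving function $\phi$ witnessing this. Then I would take an arbitrary finite $A\subseteq \sigma(\T)$ with $\mathrm{U}_{\downarrow A}\in \rro(\S)$ and invoke Lemma~\ref{lem:movestrsucc} verbatim to produce the required $g\in \mathrm{G}_{\iota(\T)}$ with $g[A]\subseteq \sigma(\S)$. Since $A$ was arbitrary, the defining condition for $\S$ being a melding successor of $\T$ is satisfied.

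Because the work has essentially all been done in Lemma~\ref{lem:movestrsucc}, I do not expect a genuine obstacle here; the only point requiring care is checking that the two definitions truly match. Specifically, Lemma~\ref{lem:movestrsucc} concludes existence of $g\in \mathrm{G}_{\iota(\T)}$ with $g[A]\subseteq\sigma(\S)$ for finite $A\subseteq\sigma(\T)$ satisfying $\mathrm{U}_{\downarrow A}\in\rro(\S)$, and Definition~\ref{defin:wellplsucc} (the second, single-type clause) asks for exactly this condition for a successor $\S$ of a type $\T$. Thus the proof is simply the observation that the hypothesis of this lemma supplies the hypothesis of Lemma~\ref{lem:movestrsucc}, whose conclusion is literally the assertion that $\S$ is a melding successor of $\T$. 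The whole proof therefore reduces to a one-line citation, and I would write it as such.

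\begin{proof}
Since $\S$ has form $\T\mathscr{F}$, Lemma~\ref{lem:movestrsucc} guarantees that for every finite subset $A$ of $\sigma(\T)$ with $\mathrm{U}_{\downarrow A}\in \rro(\S)$ there exists a function $g\in \mathrm{G}_{\iota(\T)}$ with $g[A]\subseteq \sigma(\S)$. This is exactly the condition in Definition~\ref{defin:wellplsucc} for $\S$ to be a melding successor of $\T$.
\end{proof}
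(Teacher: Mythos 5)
Your proposal is correct and is exactly the paper's own proof: the author also disposes of this lemma by simply citing Lemma~\ref{lem:movestrsucc} together with the definition of melding successor of a type (Definition~\ref{defin:wellplsucc}). Your additional check that the conclusion of Lemma~\ref{lem:movestrsucc} matches the defining condition verbatim is the only substantive content, and it is carried out correctly.
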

\begin{proof}
Follows from Lemma \ref{lem:movestrsucc} and the definition of melding successor of a type. 
\end{proof}

Note that if $\mathcal{B}$ is a melding successor of the bundle $\mathcal{C}$ then every type $\B\in \mathcal{B}$ is a melding successor of the type $\B_{\downarrow \iota(\mathcal{C})}$. But for the converse,  we do not claim and it is actually not the case, that a successor $\mathcal{B}$ of $\mathcal{C}$  for which every type $\B\in \mathcal{B}$ is a melding successor of the type $\B_{\downarrow \iota(\mathcal{C})}$, is necessarily a melding successor of $\mathcal{B}$. We will  need the fact that the converse does hold under the following additional conditions. 

\begin{defin}\label{defin:neutralsucc}
If  $\mathcal{B}$, equipped with  a linear order $\preceq$,  is  a  successor of a bundle $\mathcal{C}$    and  $^\ast$ is a function of $\mathcal{B}$ to the set of finite subsets of $E:=\iota(\mathcal{B})\setminus \iota(\mathcal{C})$, then for  $\B\in \mathcal{\B}$ let $\B\uparrow$ be the set of types $\D\in \mathcal{B}$ with $\B\prec_{\not=} \D$ and let $(\B\uparrow^\ast)=\bigcup_{\D\in \B\uparrow}\D^\ast$. Let $\B\downarrow$ be the set of types $\D\in \mathcal{B}$ with $\B\succ_{\not=} \D$ and let $(\B\downarrow^\ast)=\bigcup_{\D\in \B\downarrow}\D^\ast$.

A successor $\mathcal{B}$ of a bundle $\mathcal{C}$ with $E:=\iota(\mathcal{B})\setminus \iota(\mathcal{C})$ is a $^\ast$- successor of $\mathcal{C}$ if ther exists a linear order $(\mathfrak{B};\preceq)$, the {\em $^\ast$ linear order,} and if   there exists a function $^*$, the {\em $^\ast$- successor function},  of the bundle $\mathcal{B}$ to the set of subsets of $E$ so that for all $\{\B,\D\}\subseteq \mathcal{B}$ with $\B\not=\D$:
\begin{enumerate}
\item $\B^*\cap \D^*=\emptyset$. 
\item $\B$ is the free $(\B\uparrow^\ast)$-successor  of  $\B_{(\downarrow \iota(\mathcal{B})\setminus (\B\uparrow^\ast))}$.  
\item  $\B_{(\downarrow \iota(\mathcal{B})\setminus (\B\uparrow^\ast))}$ is a melding successor of the type $\B_{\downarrow \iota(\mathcal{C})}$.
\end{enumerate}
 (It follows then, using Lemma  \ref{lem:freeduch} that $\B$ is a melding successor of the type $\B_{\downarrow \iota(\mathcal{C})}$.)
\end{defin}
The bundle $\mathcal{C}$ is a successor of $\mathcal{C}$ with  $\iota(\mathcal{C})\setminus \iota(\mathcal{C})=\emptyset$.  Let $\preceq$ be any linear order on $\mathcal{C}$. For $\C\in \mathcal{C}$ let $\C^\ast=\emptyset$. Then $\mathcal{C}$ with this $^\ast$ successor function is a $^\ast$-successor of $\mathcal{C}$,    the {\em trivial} $^\ast$-successor of $\mathcal{C}$..

\begin{lem}\label{lem:frmeldduu}
Let $\mathcal{B}$ be a $^\ast$- successor of the bundle $\mathcal{C}$. Then $\mathcal{B}$ is a melding successor of $\mathcal{C}$.
\end{lem}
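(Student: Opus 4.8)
The plan is to induct on the number of types occurring in the image of the conform function, peeling off the $\preceq$-minimal type, and to carry out the single new placement by a free amalgamation that reads off the two clauses of Definition \ref{defin:neutralsucc}. First I would reduce to a finite bundle: a $\mathcal{B}$ conform function $\alpha$ has finite domain $A$, so only the finitely many types in $\alpha[A]$ are relevant and the rest may be discarded (this is why oligomorphicity is not needed). Enumerate $\alpha[A]=\{\B_1\prec\B_2\prec\dots\prec\B_k\}$ in the $^\ast$ linear order and put $A_i=\alpha^{-1}(\B_i)$. The base case $k=1$ is immediate: then $(\B_1\uparrow^\ast)=\emptyset$, so by the parenthetical remark of Definition \ref{defin:neutralsucc} the type $\B_1$ is a melding successor of the type $\B_{1\downarrow\iota(\mathcal{C})}$, and since $A\subseteq\sigma(\B_{1\downarrow\iota(\mathcal{C})})$ with $\mathrm{U}_{\downarrow A}\in\rro(\B_1)$, Definition \ref{defin:wellplsucc} (melding successor of a type) yields $g\in\mathrm{G}_{\iota(\mathcal{C})}$ with $g[A]\subseteq\sigma(\B_1)$.

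For the inductive step I would delete $\B_1$. Removing the $\preceq$-minimal element changes no set $(\B_i\uparrow^\ast)$ for $i\geq 2$, so the bundle $\mathcal{B}'=\{\B_2,\dots,\B_k\}$ with the restricted order and restricted $^\ast$-function is again a $^\ast$ successor of $\mathcal{C}$, and $\alpha\restriction(A\setminus A_1)$ is $\mathcal{B}'$ conform. The induction hypothesis gives $g'\in\mathrm{G}_{\iota(\mathcal{C})}$ with $g'[A_i]\subseteq\sigma(\B_i)$ for all $i\geq 2$. Write $F=\bigcup_{i\geq2}g'[A_i]$ and $A_1'=g'[A_1]$; since $g'$ fixes $\iota(\mathcal{C})$ we get $A_1'\subseteq\sigma(\B_{1\downarrow\iota(\mathcal{C})})$ and $\mathrm{U}_{\downarrow A_1'}\cong\mathrm{U}_{\downarrow A_1}\in\rro(\B_1)$. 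It now suffices to produce $h\in\mathrm{G}_{\iota(\mathcal{C})}$ fixing $F$ pointwise with $h[A_1']\subseteq\sigma(\B_1)$: then $g=h\circ g'$ is as required, because $h$ does not move the points of $g'[A_i]\subseteq F$, whence $g[A_i]=g'[A_i]\subseteq\sigma(\B_i)$, while $g[A_1]=h[A_1']\subseteq\sigma(\B_1)$.

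To obtain $h$ I would use Fact \ref{fact:contemb}: it is enough to embed $\mathrm{U}_{\downarrow\iota(\mathcal{C})\cup F\cup A_1'}$ into $\mathrm{U}$ fixing $\iota(\mathcal{C})\cup F$ and carrying $A_1'$ into $\sigma(\B_1)$. For this build the $\boldsymbol{L}$-structure $\mathrm{M}$ on $\iota(\mathcal{B})\cup F\cup\{z_a\mid a\in A_1'\}$ with $\mathrm{M}_{\downarrow\iota(\mathcal{B})\cup F}=\mathrm{U}_{\downarrow\iota(\mathcal{B})\cup F}$, where each $z_a$ is attached to $\iota(\mathcal{B})$ as a realizer of $\B_1$ and to $F$ exactly as $a$ is, and $z_a\mapsto a$ is an isomorphism onto $\mathrm{U}_{\downarrow A_1'}$; an embedding of $\mathrm{M}$ fixing $\iota(\mathcal{B})\cup F$ restricts to the desired map, so the step reduces to $\mathrm{M}\in\mathfrak{A}$. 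Here clause (2) of Definition \ref{defin:neutralsucc} enters: $\B_1$ is the free $(\B_1\uparrow^\ast)$-successor of the type $\B_1'$ on sockel $\iota(\mathcal{B})\setminus(\B_1\uparrow^\ast)$, so each $z_a$ stands in no relation with any element of $(\B_1\uparrow^\ast)=\bigcup_{i\geq2}\B_i^\ast$. Since every structure in the boundary of $\mathfrak{A}$ is irreducible (Note \ref{note:freboundcop}), no obstruction can contain a $z_a$ together with an element of $(\B_1\uparrow^\ast)$; thus $\mathrm{M}$ is the free amalgam of $\mathrm{U}_{\downarrow\iota(\mathcal{B})\cup F}$ with $\mathrm{M}_{\downarrow(\iota(\mathcal{B})\setminus(\B_1\uparrow^\ast))\cup F\cup\{z_a\}}$, and it remains to see that this second factor, in which each $z_a$ realizes $\B_1'$ over the reduced sockel, lies in $\mathfrak{A}$. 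For that one invokes clause (3): $\B_1'$ is a melding successor of $\B_{1\downarrow\iota(\mathcal{C})}$, and $A_1'\subseteq\sigma(\B_{1\downarrow\iota(\mathcal{C})})$ with $\mathrm{U}_{\downarrow A_1'}\in\rro(\B_1)=\rro(\B_1')$, the last equality by Lemma \ref{lem:freeduch}.

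I expect this last reduction to be the main obstacle. Even after cutting along the free interface $(\B_1\uparrow^\ast)$, the already placed set $F$ of realizers of the higher types is itself related both to the reduced sockel $\iota(\mathcal{B})\setminus(\B_1\uparrow^\ast)$ and to the new points $z_a$, so the bare melding property of $\B_1'$ over $\B_{1\downarrow\iota(\mathcal{C})}$ — which moves points and does not by itself fix $F$ — must be combined with the freeness that the $^\ast$-structure grants the higher types in order to keep $F$ in place. Making precise that no irreducible obstruction can simultaneously meet the $z_a$, the reduced sockel and $F$, and thereby upgrading the bare melding property to a melding that fixes $F$, is the delicate point; I would treat it by a secondary induction peeling the higher types in $\preceq$-order, each time exploiting clause (2) for the type being removed, in parallel with the outer induction.
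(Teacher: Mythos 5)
Your reduction frame is sound: peeling the $\preceq$-minimal type, the composition $g=h\circ g'$, and cutting $\mathrm{M}$ along the free interface $(\B_1\uparrow^\ast)$ are all correct steps (even the initial discarding of the types outside $\alpha[A]$ can be justified, since shrinking $(\B\uparrow^\ast)$ preserves clauses (2) and (3) of Definition \ref{defin:neutralsucc} with the help of Lemma \ref{lem:freeduch}). But the proposal has a genuine gap exactly where you flag one, and that gap is the entire mathematical content of the lemma: you never prove that the second amalgamation factor lies in $\mathfrak{A}$, and the strategy you sketch for it cannot work as described. Once you have cut along $(\B_1\uparrow^\ast)$, the $^\ast$-sets of the higher types $\B_2,\dots,\B_k$ --- the only places where clause (2) provides any freeness --- lie outside the domain of that second factor. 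Inside it, the points of $F=\bigcup_{i\geq 2}g'[A_i]$ are attached to the reduced sockel by genuine relations of $\mathrm{U}$, the $z_a$ are attached to it as realizers of $\B_1'$, and the two families are tied together by the mirrored relations; there is no free interface left for a ``secondary induction peeling the higher types'' to cut along, so an irreducible boundary structure meeting the $z_a$'s, $F$ and the reduced sockel simultaneously is not excluded by anything you say. A separate defect compounds this in the non-binary case: your $z_a$'s are abstract points attached to the sockel only ``individually'' as realizers, which leaves unspecified every relation whose support contains two or more $z_a$'s and a sockel point; declaring those empty makes the factor a reduct of an age member, and reducts of age members need not belong to the age.

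The paper's proof avoids both problems and is organized differently. It does not induct on the types at all: for every $\B\in\alpha[A]$ simultaneously it first applies clause (3) (melding over $\iota(\mathcal{C})$) and then clause (2) via Lemma \ref{lem:freeduch} to produce \emph{actual} points $l_\B[\alpha^{-1}(\B)]\subseteq\sigma(\B)$, so that each class together with the \emph{full} sockel $\iota(\mathcal{B})$ is literally an induced substructure of $\mathrm{U}$, with all joint relations determined; only then does it assemble the hybrid structure $\mathrm{M}$ (actual relations of each class to $\iota(\mathcal{B})$, mirrored relations among $\iota(\mathcal{C})$ and the classes, nothing else) and prove $\mathrm{M}\in\mathfrak{A}$ in one shot. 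That proof is the Claim inside the paper's argument: an embedded boundary structure is irreducible by Note \ref{note:freboundcop}, so its trace $X$ on $E$ and its trace $Y$ on the new points form a complete bipartite subgraph of the Gaifman graph of $\mathrm{M}$, and clause (2) together with the $^\ast$-linear order is then invoked to confine $Y$ to a single class, putting the whole image inside $\mathrm{U}_{\downarrow\iota(\mathcal{B})\cup l_\B[\alpha^{-1}(\B)]}$, a contradiction. This irreducibility-plus-linear-order confinement, applied to all classes at once, is precisely the idea your proposal is missing; without it, or a genuine substitute for it, your inductive step does not close.
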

\begin{proof}
 Let $\alpha$ be a $\mathcal{B}$ conform function   with $A:=\dom(\alpha)\subseteq \sigma(\mathcal{C})$. For $\B\in \alpha[A]$ let $f_{\B}\in \mathrm{G}_{\iota(\mathcal{C})}$ be a function with $f_{\B}[\alpha^{-1}(\B)]\subseteq \sigma(\B_{\downarrow E\setminus (\B\uparrow^\ast)})$. Such a function $f_{\B}$ exists because $\B_{\downarrow E\setminus (\B\uparrow^\ast)}$ is a melding successor of the type $\B_{\downarrow \iota(\mathcal{C})}$.  Because $\B$ is the free $(\B\uparrow^\ast)$-successor  of $\B_{\downarrow E\setminus (\B\uparrow^\ast)}$ there exists according to Lemma  \ref{lem:freeduch}        a function $h_\B\in\mathrm{G}_{E\setminus (\B\uparrow^\ast)}$ with $h_\B[f_{\B}[\alpha^{-1}(\B)]]\subseteq \sigma(\B)$. Let $l_\B$ be the restriction of the function $h_\B\circ f_\B$ to the set $\iota(\mathcal{C})\cup \alpha^{-1}(\B)$ and $l=\bigcup_{\B\in \mathcal{B}}l_\B$. Then $l(v)=v$ for all elements $v\in \iota(\mathcal{C})$ and  $l[A]\subseteq \sigma(\mathcal{B})$. Note that for $a\in A$ and $\B\in \mathcal{B}$ the element  $l(a)\in \sigma(\B)$ if and only if $a\in \alpha^{-1}(\B)$. 
 
 Let $\mathrm{M}$ be the structure with set of elements $M=\iota(\mathcal{B})\cup l[A]$ and  for which:
 \begin{enumerate}
\item[i.] $\mathrm{M}_{\downarrow \iota(\mathcal{B})}=\mathrm{U}_{\downarrow \iota(\mathcal{B})}$. 
\item[ii.] $\mathrm{M}_{\downarrow \iota(\mathcal{C})\cup l[A]}$ is such that the function $l$ is an isomorphism from $\mathrm{U}_{\downarrow \iota(\mathcal{C})\cup A}$ to $\mathrm{M}_{\downarrow \iota(\mathcal{C})\cup l[A]}$. 
\end{enumerate}
We will use the fact that $\mathrm{M}_{\downarrow \iota(\mathcal{B})\cup l_\B[\alpha^{-1}(\B)]}=\mathrm{U}_{\downarrow \iota(\mathcal{B})\cup l_\B[\alpha^{-1}(\B)]}$. 

\vskip 2pt
\noindent
Claim: The structure $\mathrm{M}$ is an element of the age $\mathfrak{A}$ of $\mathrm{U}$. Assume for a contradiction that there is a structure $\mathrm{P}$ in the boundary of $\mathrm{U}$ for which there exists an embedding $e$  into $\mathrm{M}$. Because of the isomorphism  $l$  it is not possible that $e[P]\subseteq \iota(\mathcal{C})\cup l[A]$. Because $\mathrm{M}_{\downarrow \iota(\mathcal{B})}=\mathrm{U}_{\downarrow \iota(\mathcal{B})}$ it is not possible that $e[P]\subseteq \iota(\mathcal{B})$. Hence $X:=e[P]\cap E\not=\emptyset$   and $Y:=e[P] \cap A\not=\emptyset$. Implying in turn that the pair of sets $(X,Y)$ form a complete bipartite subgraph of the Gaifman graph of the structure $\mathrm{M}$. It follows  from Item (2) in Definition \ref{defin:neutralsucc} using the linear order $\prec$, that  this is only possible if there exists a type $\B\in \alpha[A]$ for which $Y\subseteq \mathrm{M}_{\downarrow \iota(\mathcal{B})\cup l_\B[\alpha^{-1}(\B)]}$. Leading to the contradiction that $\mathrm{P}$ embeds into $ \mathrm{M}_{\downarrow \iota(\mathcal{B})\cup l_\B[\alpha^{-1}(\B)]}=\mathrm{U}_{\downarrow \iota(\mathcal{B})\cup l_\B[\alpha^{-1}(\B)]}$. 
 
Because $\mathrm{M}\in \mathfrak{A}$ it follows from Fact \ref{fact:contemb} that there exists an embedding $l'$ of $\mathrm{M}$ into $\mathrm{U}$ with $l'(v)=v$ for all $v\in \iota(\mathcal{B})$. Note that for $a\in A$ and $\B\in \mathcal{B}$ the element  $l'\circ l(a)\in \sigma(\B)$ if and only if $a\in \alpha^{-1}(\B)$. The function $l'\circ l$ is an embedding of $\mathrm{U}_{\downarrow \iota(\mathcal{C})\cup A}$ to $\mathrm{U}_{\downarrow \iota(\mathcal{C})\cup l'\circ l[A]}$ with $l'\circ l(v)=v$ for all $v\in \iota(\mathcal{C})$. Let $g$ be an extension of the function $l'\circ l$ to an element in $\mathrm{G}$. Then $g\in \mathrm{G}_{\iota(\mathcal{C})}$ with $g(a)\in \sigma(\alpha(a))$ for every element $a\in A$.  Hence $\mathcal{B}$ is a melding successor of the bundle $\mathcal{C}$. 
 
 \end{proof}

\begin{defin}\label{defin:refinbundle}
A bundle $\mathcal{B}$ is a {\em refinement} of a bundle $\mathcal{C}$ if: 
\begin{enumerate}
\item The bundle $\mathcal{B}$ is a melding successor of the bundle $\mathcal{C}$.  
\item The function $_{\downarrow \iota(\mathcal{C})}:  \mathcal{B}\to \mathcal{C}$ which maps every type $\B\in \mathcal{B}$ to the type $\B_{\downarrow \iota(\mathcal{C})}\in \mathcal{C}$ is a bijection. The {\em refinement bijection}.  (That is, for every type $\C\in \mathcal{C}$ exists exactly one type $\B\in \mathcal{B}$ which is a successor of $\C$.)
\end{enumerate}
\end{defin}
Observe that if a bundle $\mathcal{B}$ is a refinement of a bundle $\mathcal{C}$ then for every finite subset $A$ of $\sigma(\mathcal{C})$ there exists only one $\mathcal{B}$ conform function $\alpha$ with domain $A$. Namely $\alpha(a)=\B\in \mathcal{B}$ if and only if $a\in \sigma(\B_{\downarrow \iota(\mathcal{C})})$. Hence we obtain:

\begin{lem}\label{lem:charrefv3}
A bundle $\mathcal{B}$ is a {\em refinement} of a bundle $\mathcal{C}$ if and only if: 
\begin{enumerate}
\item The bundle $\mathcal{B}$ is a successor of the bundle $\mathcal{C}$. 
\item For every finite subset $A$ of $\sigma(\mathcal{C})$ exists a function $g\in \mathrm{G}_{\iota(\mathcal{C})}$ with $f[A]\subseteq \sigma(\mathcal{B})$.   
\item The function $_{\downarrow \iota(\mathcal{C})}:  \mathcal{B}\to \mathcal{C}$ which maps every type $\B\in \mathcal{B}$ to the type $\B_{\downarrow \iota(\mathcal{C})}\in \mathcal{C}$ is a bijection. 
\end{enumerate}
\end{lem}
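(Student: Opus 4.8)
The plan is to prove both implications by collapsing the a~priori complicated melding-successor condition of Definition~\ref{defin:wellplsucc} to a single canonical conform function, using the refinement bijection together with the fact that typesets of types with a common sockel are disjoint. The observation preceding the statement is the crucial bridge. Since $\sigma(\mathcal{C})=\bigcup_{\C\in\mathcal{C}}\sigma(\C)$ is partitioned by the pairwise disjoint typesets $\sigma(\C)$, every element $a\in\sigma(\mathcal{C})$ lies in exactly one $\sigma(\C)$ and hence, once the map $_{\downarrow\iota(\mathcal{C})}\colon\mathcal{B}\to\mathcal{C}$ is a bijection, determines a unique $\B\in\mathcal{B}$ with $\B_{\downarrow\iota(\mathcal{C})}=\C$. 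Condition~(1) of Definition~\ref{defin:wellplsubs} therefore forces any $\mathcal{B}$ conform function $\alpha$ with domain $A$ to be the canonical map sending $a$ to the $\B$ with $a\in\sigma(\B_{\downarrow\iota(\mathcal{C})})$; so there is at most one conform function on each finite $A\subseteq\sigma(\mathcal{C})$.

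For the forward direction I would assume $\mathcal{B}$ is a refinement of $\mathcal{C}$. Then $\mathcal{B}$ is a melding successor of $\mathcal{C}$, hence in particular a successor, giving item~(1); and item~(3) is exactly the refinement bijection of Definition~\ref{defin:refinbundle}. To obtain item~(2), fix a finite $A\subseteq\sigma(\mathcal{C})$ and let $\alpha$ be the canonical conform function with domain $A$, whose existence is supplied by the observation. Since $\mathcal{B}$ is a melding successor, Definition~\ref{defin:wellplsucc} furnishes $g\in\mathrm{G}_{\iota(\mathcal{C})}$ with $g(a)\in\sigma(\alpha(a))\subseteq\sigma(\mathcal{B})$ for every $a\in A$, so $g[A]\subseteq\sigma(\mathcal{B})$.

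For the converse I would assume items~(1)--(3); item~(3) already yields the refinement bijection, so it remains to verify that $\mathcal{B}$ is a melding successor of $\mathcal{C}$. Let $\alpha$ be an arbitrary $\mathcal{B}$ conform function with domain $A\subseteq\sigma(\mathcal{C})$. By the reduction above $\alpha$ is the canonical map, so $a\in\sigma(\alpha(a)_{\downarrow\iota(\mathcal{C})})$ for each $a\in A$. Item~(2) provides $g\in\mathrm{G}_{\iota(\mathcal{C})}$ with $g[A]\subseteq\sigma(\mathcal{B})$. Because $g$ fixes $\iota(\mathcal{C})$ pointwise, each typeset $\sigma(\C)$ with $\C\in\mathcal{C}$ is, by the definition of the typeset as a $\mathrm{G}_{\iota(\mathcal{C})}$-orbit, invariant under $g$; hence $g(a)\in\sigma(\alpha(a)_{\downarrow\iota(\mathcal{C})})$. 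Combining this with $g(a)\in\sigma(\mathcal{B})$ and the disjointness of typesets forces $g(a)\in\sigma(\alpha(a)_{\downarrow\iota(\mathcal{C})})\cap\sigma(\mathcal{B})=\sigma(\alpha(a))$. Thus $g$ witnesses the melding-successor condition for $\alpha$, and $\mathcal{B}$ is a refinement of $\mathcal{C}$.

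The step I expect to be most delicate is the appeal, in the forward direction, to the \emph{existence} of the canonical conform function for every finite $A$: condition~(2) of Definition~\ref{defin:wellplsubs} demands $\mathrm{U}_{\downarrow\alpha^{-1}(\B)}\in\rro(\B)$ for each $\B$, that is, the part of $A$ lying in $\sigma(\B_{\downarrow\iota(\mathcal{C})})$ must already embed into $\sigma(\B)$. This is precisely the content packaged into the preceding observation, and it is where the rank behaviour of the refinement is needed; the disjointness of typesets and the $\mathrm{G}_{\iota(\mathcal{C})}$-invariance of each $\sigma(\C)$ are the only other ingredients, and both are routine.
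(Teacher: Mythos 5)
Your proposal is correct and is essentially the paper's own argument: the paper gives no proof beyond the observation preceding the lemma (that the refinement bijection together with the pairwise disjointness of typesets forces any $\mathcal{B}$ conform function to be the canonical one), and your two directions are exactly the intended unpacking of that observation --- uniqueness collapses the melding quantifier to the single canonical $\alpha$ in the forward direction, while in the converse the $\mathrm{G}_{\iota(\mathcal{C})}$-invariance of each $\sigma(\C)$ plus disjointness of the $\sigma(\B)$ identifies $g(a)$ as lying in $\sigma(\alpha(a))$. The delicate point you flag, the existence (not just uniqueness) of the canonical conform function for every finite $A$, is precisely what the paper also leaves implicit inside that unproved observation, so your treatment is faithful to, and no less complete than, the paper's.
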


Be aware of the following: A successor $\S$  of a type $\T$ has been defined early in   Section \ref{sect:types} to be a refinement of $\T$ if $\S$ and $\T$ have the same rank, that is if $\rro(\S)=\rro(\T)$. On the other hand the bundle $\{S\}$ is a refinement  of the bundle $\T$ if the type $\S$ is a successor of the type $\T$ and if for every finite $A\subseteq \sigma(\T)$ there exists a function $g\in \mathrm{G}_{\iota(\T)}$ with $g[A]\subseteq \sigma(\S)$. It follows that if the bundle $\{S\}$ is a refinement of the bundle $\{T\}$ then the type $\S$ is a refinement of the type $\T$. But the converse need not be true.

\begin{lem}\label{lem:begconst}
Let $\U$ be a countable and rank linear homogeneous structure.   Let $\V=\langle V\tr v\rangle$ be a refinement of the type $\U=\langle \emptyset\tr u\rangle$ for $u\in U$. Then the bundle $\{\V\}$ is a  refinement of the bundle $\{\U\}$. 
\end{lem}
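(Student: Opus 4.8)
The plan is to apply the reformulation of bundle-refinement given in Lemma \ref{lem:charrefv3} to the bundles $\mathcal{C}=\{\U\}$ and $\mathcal{B}=\{\V\}$. The decisive structural feature is that $\iota(\U)=\emptyset$, so that $\mathrm{G}_{\iota(\mathcal{C})}=\mathrm{G}_\emptyset=\mathrm{G}$ is the full automorphism group; since $\mathrm{U}$ is rank linear its group acts transitively on $U$ (and transitivity is in any case the standing assumption), whence $\sigma(\U)=U$ and $\rro(\U)=\rro(U)=\mathfrak{A}$. This is precisely the situation in which the implication that fails for general sockels — a refinement of types need not yield a refinement of bundles, as noted just before the statement — does go through, because the melding requirement for an empty sockel imposes no constraint beyond the bare existence of a suitable automorphism.

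The two bookkeeping conditions of Lemma \ref{lem:charrefv3} are immediate. Since $\V$ is a refinement, hence a successor, of the type $\U$, we have $\iota(\V)=V\supseteq\emptyset=\iota(\U)$ and $\V_{\downarrow\emptyset}=\U$, so $\{\V\}$ is a successor of $\{\U\}$; this gives condition (1). For condition (3), both bundles are singletons and the restriction map sending $\V\mapsto\V_{\downarrow\emptyset}=\U$ from $\{\V\}$ to $\{\U\}$ lands in the unique type with empty sockel, and is therefore a bijection.

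The only condition needing work is (2): for every finite $A\subseteq\sigma(\mathcal{C})=U$ there must exist $g\in\mathrm{G}$ with $g[A]\subseteq\sigma(\V)$. Here I would use that $\V$ being a refinement of $\U$ means, by definition, $\rro(\V)=\rro(\U)=\mathfrak{A}$, that is $\rro(\sigma(\V))=\mathfrak{A}$. Thus for any finite $A\subseteq U$ the structure $\mathrm{U}_{\downarrow A}$ lies in $\mathfrak{A}=\rro(\sigma(\V))$, so there is an embedding $h$ of $\mathrm{U}_{\downarrow A}$ into $\mathrm{U}$ with $h[A]\subseteq\sigma(\V)$; by homogeneity of $\mathrm{U}$ this embedding extends to an automorphism $g\in\mathrm{G}$ agreeing with $h$ on $A$, and then $g[A]=h[A]\subseteq\sigma(\V)=\sigma(\mathcal{B})$. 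With the three conditions of Lemma \ref{lem:charrefv3} verified, $\{\V\}$ is a refinement of $\{\U\}$. I expect no genuine obstacle: the single point requiring care is to keep the two senses of ``refinement'' apart, and to notice that everything rests on $\mathrm{G}_{\iota(\U)}$ being all of $\mathrm{G}$.
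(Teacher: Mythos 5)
Your proof is correct. It differs from the paper's own argument mainly in which lemma carries the weight. The paper's proof is three lines: from $\rro(\V)=\rro(\U)$ it invokes Lemma \ref{lem:trmeldcan} (applied in the degenerate situation where the join $\U+\V$ is just $\V$ itself, since $\iota(\U)=\emptyset$) to conclude that $\V$ is a melding successor of $\U$, and then passes to the bundle statement. You instead verify the three conditions of the characterization Lemma \ref{lem:charrefv3} directly, obtaining the age-moving condition (2) from the definition of homogeneity: $\mathrm{U}_{\downarrow A}\in\mathfrak{A}=\rro(\sigma(\V))$ gives an embedding of $\mathrm{U}_{\downarrow A}$ into $\mathrm{U}$ with image in $\sigma(\V)$, which extends to an automorphism. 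The two arguments have the same core --- everything rests on $\mathrm{G}_{\iota(\U)}=\mathrm{G}_\emptyset=\mathrm{G}$ together with $\rro(\V)=\mathfrak{A}$, so that the melding requirement becomes vacuous --- but your route buys something the paper's does not make visible: Lemma \ref{lem:trmeldcan} is proved under the standing free amalgamation hypothesis (its proof calls Lemma \ref{lem:commeldstrm}), whereas your verification uses only homogeneity and transitivity. This matches the hypotheses actually stated in the lemma, which mention rank linearity but not free amalgamation, and makes explicit that the free amalgamation machinery is only genuinely needed later, for nonempty sockels. Your closing caution about keeping the two senses of ``refinement'' apart is also exactly the pitfall the paper flags just before the statement.
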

\begin{proof}
We have $\rro(\V)=\rro(\U)$ because $V$ is a refinement of $\U$. It follows then from Lemma \ref{lem:trmeldcan} that the type $\V$ is a melding refinement of the type $\U$. Implying that the bundle $\{ \V\}$ is a refinement of the bundle $\{\U\}$. 
\end{proof}

\begin{note}\label{note:compref}
If a bundle $\mathcal{C}$ is a  refinement of a bundle $\mathcal{B}$ which in turn is a  refinement of a bundle $\mathcal{A}$ then the bundle $\mathcal{C}$ is a refinement of the bundle $\mathcal{A}$ with a refinement bijection which is the composition of the refinement bijections. 
\end{note}

\begin{defin}\label{defin:xsuccb}
For a type $\T$ and an element $x\in U\setminus \iota(\T)$ let $\T_{\uparrow x}$ denote the bundle of all $\{x\}$-successors of the type $\T$. For a bundle $\mathcal{B}$ let $\mathcal{B}_{\uparrow x}:=\bigcup_{\T\in \mathcal{B}}\T_{\uparrow x}$. We say that $\mathcal{B}_{\uparrow x}$ is the {\em $x$-successor of the bundle $\mathcal{B}$.}  For $\X\in \mathcal{B}_{\uparrow x}$ let $\X_{\downarrow -x}$ be the type $\T\in \mathcal{B}$ for which $\X\in \T_{\uparrow x}$. (Then $\X_{\downarrow -x}=\X_{\downarrow \iota(\X)\setminus\{x\}}$.)
\end{defin} 

The following Note follows then from Note \ref{note:typeim}. 

\begin{note}\label{note:bundlerefxind}
Let  $\mathcal{C}$ be a  bundle and let $\T\in \mathcal{C}$ and $\{x,y\}\subseteq \sigma(\T)$.   Then  every automorphism $f\in \mathrm{G}_{\iota(\mathcal{C})}$ with $f(x)=y$ induces a bijection of the bundle $\mathcal{C}_{\uparrow x}$ to the bundle $\mathcal{C}_{\uparrow y}$
by mapping  $\X\in \mathcal{C}_{\uparrow x}$ to $f[\X]\in \mathcal{C}_{\uparrow y}$. Moreover, then $\rro(\X)= \rro(f[\X])$.  
\end{note}

\begin{defin}\label{defin:conformsux}
Let  $\mathcal{C}$ be a bundle  with $\Z\in \mathcal{C}$ and  $x\in \sigma(\Z)$.  A  melding successor $\mathcal{B}$ of the bundle $\mathcal{C}$ with $x\not\in \iota(\B)$ {\em agrees with} the bundle $\mathcal{C}_{\uparrow x}$ if there exists a function $\beta: \mathcal{C}_{\uparrow x}\to \mathcal{B}$, the {\em agreement function} of $\mathcal{C}_{\uparrow x}$ to $\mathcal{B}$,  so that  for all types $\D\in \mathcal{C}_{\uparrow x}$
\begin{enumerate}
         \item $\D_{\downarrow \iota(\mathcal{C})}=(\beta(\D))_{\downarrow \iota(\mathcal{C})}$. 
         \item $\rro(\D)=\rro(\beta(\D))$. 
\end{enumerate}
\end{defin}
Item (1) above implies that both typesets $\sigma(\D)$ and $\sigma(\beta(\D))$ are subsets of the typeset of the same type $\C\in \mathcal{C}$.

\begin{lem}\label{lem:chanxz}
Let  $\mathcal{C}$ be a bundle  with $\Z\in \mathcal{C}$ and  $\{x,y\}\subseteq  \sigma(\Z)$. Let $g\in \mathrm{G}_{\iota(\mathcal{C})}$ with $g(x)=y$.  Let $\mathcal{B}$ be a melding successor of $\mathcal{C}$ with $\{x,y\}\cap \iota(\mathcal{B})=\emptyset$. Then a function $\beta: \mathcal{C}_{\uparrow x}\to \mathcal{B}$ is an agreement function of $\mathcal{C}_{\uparrow x}$ to $\mathcal{B}$ if and only if $\beta\circ g^{-1}: \mathcal{C}_{\uparrow y}\to \mathcal{B}$ is an agreement function of  $\mathcal{C}_{\uparrow y}$ to $\mathcal{B}$.  The  melding successor $\mathcal{B}$ of the bundle $\mathcal{C}$ agrees with the bundle $\mathcal{C}_{\uparrow x}$ if and only if $\mathcal{B}$  agrees with the bundle $\mathcal{C}_{\uparrow y}$. 

\end{lem}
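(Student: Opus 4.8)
The plan is to treat the whole statement as a transport-of-structure argument along the bijection of bundles induced by $g$, so that the only genuine content is a single identity describing how restricting a sockel interacts with the action of an element of $\mathrm{G}_{\iota(\mathcal{C})}$. First I would recall from Note~\ref{note:bundlerefxind} that, since $g\in\mathrm{G}_{\iota(\mathcal{C})}$ and $g(x)=y$ with $\{x,y\}\subseteq\sigma(\Z)$, the map $\X\mapsto g[\X]$ is a rank-preserving bijection $\gamma\colon\mathcal{C}_{\uparrow x}\to\mathcal{C}_{\uparrow y}$, i.e.\ $\rro(\X)=\rro(g[\X])$. Its inverse $\X'\mapsto g^{-1}[\X']$ is exactly the map abbreviated ``$g^{-1}$'' in the composition $\beta\circ g^{-1}$, so that $\beta\circ g^{-1}$ is a well-defined function $\mathcal{C}_{\uparrow y}\to\mathcal{B}$. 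The lemma then reduces to checking that $\gamma$ carries each of the two defining clauses of an agreement function (Definition~\ref{defin:conformsux}) to itself.

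The key step, and the one place where the hypothesis $g\in\mathrm{G}_{\iota(\mathcal{C})}$ (rather than merely $g\in\mathrm{G}$) is used, is the claim that $(g[\D])_{\downarrow\iota(\mathcal{C})}=\D_{\downarrow\iota(\mathcal{C})}$ for every $\D\in\mathcal{C}_{\uparrow x}$. I would prove this by noting that $\D_{\downarrow\iota(\mathcal{C})}$ has sockel $\iota(\mathcal{C})$, that $g$ agrees with the identity on $\iota(\mathcal{C})$, and hence by Note~\ref{note:typeim} that $g[\D_{\downarrow\iota(\mathcal{C})}]=\D_{\downarrow\iota(\mathcal{C})}$; it then remains only to observe that forming the image under $g$ and restricting the sockel down to $\iota(\mathcal{C})$ commute, precisely because $g$ fixes $\iota(\mathcal{C})$ pointwise, so $(g[\D])_{\downarrow\iota(\mathcal{C})}=g[\D_{\downarrow\iota(\mathcal{C})}]$. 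Combining these gives the claim; the companion rank identity $\rro(g[\D])=\rro(\D)$ is already supplied by Note~\ref{note:bundlerefxind}.

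With the claim in hand the verification is routine. Assuming $\beta$ is an agreement function of $\mathcal{C}_{\uparrow x}$ to $\mathcal{B}$, for $\D'\in\mathcal{C}_{\uparrow y}$ I set $\D=g^{-1}[\D']$, so $\D'=g[\D]$, and then $\D'_{\downarrow\iota(\mathcal{C})}=(g[\D])_{\downarrow\iota(\mathcal{C})}=\D_{\downarrow\iota(\mathcal{C})}=(\beta(\D))_{\downarrow\iota(\mathcal{C})}=((\beta\circ g^{-1})(\D'))_{\downarrow\iota(\mathcal{C})}$, which is clause~(1) for $\beta\circ g^{-1}$; clause~(2) follows identically using $\rro(\D')=\rro(\D)$. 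Hence $\beta\circ g^{-1}$ is an agreement function of $\mathcal{C}_{\uparrow y}$ to $\mathcal{B}$. The converse is the same argument run with the roles of $x,y$ and of $g,g^{-1}$ interchanged, noting $g^{-1}\in\mathrm{G}_{\iota(\mathcal{C})}$ and $g^{-1}(y)=x$, so that $\beta\mapsto\beta\circ g^{-1}$ is a bijection between the agreement functions $\mathcal{C}_{\uparrow x}\to\mathcal{B}$ and those $\mathcal{C}_{\uparrow y}\to\mathcal{B}$. The last sentence of the lemma is then immediate, since one family of agreement functions is nonempty exactly when the other is.

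I do not expect a serious obstacle: the argument is purely a transport along $\gamma$, and the only subtlety to handle with care is the commutation of ``image under $g$'' with ``restriction of the sockel to $\iota(\mathcal{C})$'', which relies on $g$ fixing $\iota(\mathcal{C})$ pointwise and is exactly what makes the difference between the two starting points $x$ and $y$ disappear once one restricts below them.
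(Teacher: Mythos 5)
Your proposal is correct and takes essentially the same route as the paper: both proofs transport the two clauses of Definition~\ref{defin:conformsux} along the bijection $\D\mapsto g[\D]$ of Note~\ref{note:bundlerefxind}, using that $g$ fixes $\iota(\mathcal{C})$ pointwise (so restriction of the sockel to $\iota(\mathcal{C})$ is unchanged) for clause~(1) and rank preservation for clause~(2). The paper's proof only writes out the forward direction for a type $\D\in\mathcal{C}_{\uparrow y}$, leaving the converse and the final sentence to exactly the symmetry in $x,y$ and $g,g^{-1}$ that you make explicit.
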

\begin{proof}
Let $\D\in \mathcal{C}_{\uparrow y}$.\\
 Then $(g^{-1}(\D))_{\iota(\mathcal{C})}=\D_{\iota(\mathcal{C})}$ because $g\in \mathrm{G}_{\iota(\mathcal{C})}$. It follows from Item (1) of Definition \ref{defin:conformsux} that
 $(g^{-1}(\D))_{\iota(\mathcal{C})}=(\beta(g^{-1}(\D)))_{\iota(\mathcal{C})}$. Using  Note \ref{note:bundlerefxind}  we obtain $\rro(\D)=\rro(g^{-1}(\D))=\rro(\beta(g^{-1}(\D)))$. 
\end{proof}

We will need conditions under which finite subsets of $\sigma(\D)$ can be ``moved" while ``fixing"  $\iota(\mathcal{C})$ into $\sigma(\beta(\D))$. Of course if a subset of $\sigma(\D)$ induces a structure which is not in the rank of $\beta(\D)$ then such a move is not possible. Item (2) of Definition \ref{defin:conformsux}  removes this obstacle.

\begin{defin}\label{defin:jionbundlbet}
Let $\mathcal{B}$ be a successor of a bundle   $\mathcal{C}$  and $x\in\sigma(\mathcal{C})\setminus\iota(\mathcal{B})$.  Let $\beta$ be a function from  $\mathcal{C}_{\uparrow x}$ to $\mathcal{B}$ for which  every type $\D\in \mathcal{C}_{\uparrow x}$ is in free position with the type $\beta(\D)$. Then $\mathcal{B}\stackrel{\beta}{\sqcap}\mathcal{C}_{\uparrow x}$ is the bundle:
\[
\mathcal{B}\stackrel{\beta}{\sqcap}\mathcal{C}_{\uparrow x}:=\{\beta(\D)\sqcap \D\mid \D\in\mathcal{C}_{\uparrow x}\}
\]
\end{defin}
It follows from Lemma \ref{lem:commeldstrm} that the type $\beta(\D)\sqcap \D$ exists for every type $\D\in \mathcal{C}_{\uparrow x}$.

\begin{lem}\label{lem:conformsuxk}
Let  $\mathcal{C}$ be a bundle  with $\Z\in \mathcal{C}$ and  $x\in \sigma(\Z)$.   Let $\mathcal{B}$ be a  melding successor  of the bundle $\mathcal{C}$  which agrees with the bundle $\mathcal{C}_{\uparrow x}$. Let $\beta$ be the agreement function of $\mathcal{C}_{\uparrow x}$ to $\mathcal{B}$.    Let $x\in \sigma(\Z')$ for $\Z'$ being the free $(\iota(\mathcal{B})\setminus \iota(\mathcal{C}))$-successor of $\Z$.

Then the types $\D$ and $\beta(\D)$ are in free position for every type $\B\in \mathcal{B}$ and  the bundle $\mathcal{B}\stackrel{\beta}{\sqcap}\mathcal{C}_{\uparrow x}$ is a refinement of the bundle $\mathcal{C}_{\uparrow x}$. 
\end{lem}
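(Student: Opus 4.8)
The plan is to check, in order, the three things that make $\mathcal{B}\stackrel{\beta}{\sqcap}\mathcal{C}_{\uparrow x}$ a refinement of $\mathcal{C}_{\uparrow x}$ in the sense of Definition~\ref{defin:refinbundle} and Lemma~\ref{lem:charrefv3}: that each pair $\D,\beta(\D)$ is in free position (so that the joins, and hence the bundle $\mathcal{B}\stackrel{\beta}{\sqcap}\mathcal{C}_{\uparrow x}$, are defined), that the sockel restriction to $\mathcal{C}_{\uparrow x}$ is a bijection, and that the new bundle is a melding successor of $\mathcal{C}_{\uparrow x}$.

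For free position, fix $\D\in\mathcal{C}_{\uparrow x}$ and set $E:=\iota(\mathcal{B})\setminus\iota(\mathcal{C})$. Then $\iota(\D)=\iota(\mathcal{C})\cup\{x\}$ and $\iota(\beta(\D))=\iota(\mathcal{C})\cup E$, and since $x\notin\iota(\mathcal{B})$ these meet in $\iota(\mathcal{C})$, with $\iota(\D)\setminus\iota(\beta(\D))=\{x\}$ and $\iota(\beta(\D))\setminus\iota(\D)=E$. Compatibility is immediate from Item~(1) of Definition~\ref{defin:conformsux}: $\R:=\D_{\downarrow\iota(\mathcal{C})}=(\beta(\D))_{\downarrow\iota(\mathcal{C})}$ is a common predecessor and $\sigma(\D)\cup\sigma(\beta(\D))\subseteq\sigma(\R)$ because both are successors of $\R$. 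The non-edge requirement $x\nedge{\iota(\mathcal{B})\cup\{x\}}e$ for all $e\in E$ is where I use the hypothesis on $\Z'$: since $\Z'$ is the free $E$-successor of $\Z$ and $x\in\sigma(\Z')$, the defining non-edges of a free successor (Definition~\ref{defin:freetyd}), transported by an element of $\mathrm{G}_{\iota(\mathcal{B})}$ carrying a witness of $\Z'$ to $x$, give exactly this. Thus $\D$ and $\beta(\D)$ are in free position and $\beta(\D)\sqcap\D$ exists by Corollary~\ref{cor:commeldstrm}.

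The bijection is routine: $\beta(\D)\sqcap\D$ is a successor of $\D$ whose witness lies in $\sigma(\D)$, so its restriction to $\iota(\mathcal{C})\cup\{x\}=\iota(\D)$ is $\D$ itself. Hence $\mathcal{B}\stackrel{\beta}{\sqcap}\mathcal{C}_{\uparrow x}$ is a successor of $\mathcal{C}_{\uparrow x}$ and the restriction $\beta(\D)\sqcap\D\mapsto\D$ is a bijection with inverse $\D\mapsto\beta(\D)\sqcap\D$. By Lemma~\ref{lem:charrefv3} it then remains, given a finite $A\subseteq\sigma(\mathcal{C}_{\uparrow x})$ written as $A=\bigsqcup_\D A_\D$ with $A_\D:=A\cap\sigma(\D)$, to produce a single $g\in\mathrm{G}_{\iota(\mathcal{C})\cup\{x\}}$ with $g[A_\D]\subseteq\sigma(\beta(\D)\sqcap\D)$ for every $\D$. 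I would obtain $g$ by one free-amalgamation step in the style of the proofs of Lemma~\ref{lem:commeldstrm} and Lemma~\ref{lem:frmeldduu}: on a fresh copy $\{a^\ast:a\in A\}$ build $\mathrm{M}$ with domain $\iota(\mathcal{B})\cup\{x\}\cup\{a^\ast\}$ so that $\mathrm{M}_{\downarrow\iota(\mathcal{B})\cup\{x\}}=\mathrm{U}_{\downarrow\iota(\mathcal{B})\cup\{x\}}$, so that $\iota(\mathcal{C})\cup\{x\}\cup\{a^\ast\}$ carries a copy of $\mathrm{U}_{\downarrow\iota(\mathcal{C})\cup\{x\}\cup A}$, so that each $a^\ast$ with $a\in A_\D$ is attached to $E$ as in $\beta(\D)$, and so that all remaining tuples are non-edges. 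Fact~\ref{fact:contemb} embeds $\mathrm{M}$ fixing $\iota(\mathcal{B})\cup\{x\}$; the induced automorphism is the desired $g$, and because no tuple of $\mathrm{M}$ meets $\{x\}$, $E$ and an $a^\ast$ simultaneously, the non-edge $x\nedge{}e$ forces each image into the join typeset $\sigma(\beta(\D)\sqcap\D)$ rather than merely into $\sigma(\beta(\D))\cap\sigma(\D)$.

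The main obstacle is to verify $\mathrm{M}\in\mathfrak{A}$. By Note~\ref{note:freboundcop} any boundary structure is irreducible, so a copy of one inside $\mathrm{M}$ has image $Q$ with complete Gaifman graph; as $x\nedge{}e$ for every $e\in E$, $Q$ cannot contain both $x$ and a point of $E$, confining $Q$ either to the faithful copy $\iota(\mathcal{C})\cup\{x\}\cup\{a^\ast\}$ of part of $\mathrm{U}$ — which is harmless — or to $\iota(\mathcal{B})\cup\{a^\ast\}$. The delicate case is the latter, because there $Q$ may combine $E$-points with images $a^\ast,{a'}^\ast$ coming from distinct $\D,\D'$ that $\beta$ sends to the same $\B\in\mathcal{B}$, and such a configuration is realizable in $\mathrm{U}$ only if $\mathrm{U}_{\downarrow\bigcup_{\beta(\D)=\B}A_\D}\in\rro(\B)$. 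Securing this age bound — equivalently, that $a\mapsto\beta(\D)$ (for $a\in A_\D$) is a genuine $\mathcal{B}$-conform function in the sense of Definition~\ref{defin:wellplsucc} — is exactly the non-binary difficulty that the notion of melding successor was designed to control, and I expect it to follow from the melding property of $\mathcal{B}$ over $\mathcal{C}$ together with the rank agreement in Item~(2) of Definition~\ref{defin:conformsux}. Once $\mathrm{M}\in\mathfrak{A}$ is in hand the construction closes and $\mathcal{B}\stackrel{\beta}{\sqcap}\mathcal{C}_{\uparrow x}$ is a melding successor, hence a refinement, of $\mathcal{C}_{\uparrow x}$.
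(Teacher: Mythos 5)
Your treatment of free position (via $\Z'$) and of the refinement bijection agrees with the paper, but the core step --- producing $g\in\mathrm{G}_{\iota(\mathcal{C})\cup\{x\}}$ with $g[A_\D]\subseteq\sigma(\beta(\D)\sqcap\D)$ --- takes a route that genuinely fails in the non-binary case. You build an abstract structure $\mathrm{M}$ in which each single new point $a^\ast$ is attached to $\iota(\mathcal{B})$ like a witness of $\beta(\D)$, and \emph{all remaining tuples are non-edges}; in particular every tuple meeting two new points and a point of $E:=\iota(\mathcal{B})\setminus\iota(\mathcal{C})$ is declared empty. But in a non-binary free amalgamation age, relations joining \emph{several} points of a typeset to sockel points can be \emph{forced}: this is exactly the phenomenon of Remark \ref{remark:meld} and Example \ref{ex:strmelmeld} (the one with blue and red hyperedges), where any two points of a typeset joined by a graph edge must carry red hyperedges through the sockel, since omitting them creates a forbidden monomorphic copy. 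So your prescribed $\mathrm{M}$ can lie outside $\mathfrak{A}$ \emph{even when the conformity condition you isolate holds} (the two points together with their graph edge do embed into $\sigma(\beta(\D))$ --- just never with your free attachment to $E$). Hence the obstacle you flag at the end cannot be overcome: no boundary analysis will put your $\mathrm{M}$ into $\mathfrak{A}$, and Fact \ref{fact:contemb} never becomes applicable. A second, smaller problem is circularity: you propose to deduce conformity of $a\mapsto\beta(\delta(a))$ from the melding property, but conformity (Definition \ref{defin:wellplsubs}) is the \emph{precondition} for applying the melding property (Definition \ref{defin:wellplsucc}), not a consequence of it; it has to be read off from Items (1) and (2) of Definition \ref{defin:conformsux}.

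The paper's proof reverses your order of operations so that age membership is never in question. It first forms $\alpha:=\beta\circ\delta$, observes that $\alpha$ is $\mathcal{B}$ conform, and applies the \emph{hypothesis} that $\mathcal{B}$ is a melding successor of $\mathcal{C}$ to obtain actual elements $h(a)\in\sigma(\alpha(a))$ of $U$ with $h\in\mathrm{G}_{\iota(\mathcal{C})}$. Then $\mathrm{M}:=\mathrm{U}_{\downarrow\iota(\mathcal{B})\cup h[A]}$ is an \emph{induced substructure} of $\mathrm{U}$: it carries whatever cross-relations between $h[A]$ and $E$ the structure $\mathrm{U}$ forces, and lies in $\mathfrak{A}$ for free. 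The only surgery is a single free amalgamation of $\mathrm{M}$ with an isomorphic copy of $\mathrm{U}_{\downarrow\iota(\mathcal{C})\cup\{x\}\cup A}$ over the common part $\iota(\mathcal{C})\cup h[A]$, which severs only $x$ from $E$; this amalgam is in $\mathfrak{A}$ automatically because $\mathfrak{A}$ is a free amalgamation age, and Fact \ref{fact:contemb} applies to it with the set $\iota(\mathcal{B})\cup\{x\}$ precisely because $x\in\sigma(\Z')$ guarantees that the amalgam and $\mathrm{U}$ agree on that set. The resulting map fixes $\iota(\mathcal{C})\cup\{x\}$, sends each $A_\D$ into $\sigma(\beta(\D))\cap\sigma(\D)$, and the freeness between $x$ and $E$ pins the images down inside the join $\sigma(\beta(\D)\sqcap\D)$, as in your closing remark. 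If you replace your abstract construction of $\mathrm{M}$ by this melding-then-free-amalgamation argument, your outline becomes the paper's proof.
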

\begin{proof}
Every type $\D\in \mathcal{C}_{\uparrow x}$ is in free position with the type $\beta(\D)\in \mathcal{B}$.  Because  
$\iota(\D)\cap \iota(\beta(\D))=\iota(\mathcal{C})$,  $\iota(\D)\setminus \iota(\mathcal{C})=\{x\}$ and the element $x$ is in $\Z'$  which is the free $(\iota(\beta(\D))\setminus \iota(\mathcal{C}))$-successor of $\Z\in \mathcal{C}$. 

Let $A$ be a finite subset of $\sigma(\mathcal{C}_{\uparrow x})$. Note that $\sigma(\mathcal{C}_{\uparrow x})=\sigma(\mathcal{C})$. For every $a\in A$ there exists a unique type, denoted $\delta(a)\in \mathcal{C}_{\uparrow x}$, with $a\in \sigma(\delta(a))$. Let  $\beta\circ \delta:=\alpha: A\to \mathcal{B}$. Observe that $\alpha$ is $\mathcal{B}$ conform. The bundle $\mathcal{B}$ is melding. Hence there exists following Definition \ref{defin:wellplsucc}  a function $h\in \mathrm{G}_{\iota(\mathcal{C})}$ with $h(a)\in \sigma(\alpha(a))$ for every element $a\in A$.

 Let $\mathrm{M}=\mathrm{U}_{\downarrow \iota(\mathcal{B})\cup h[A]}$. Let $l$ with $\dom(l)=\iota(\mathcal{C})\cup \{x\}\cup A$ be the function with $l(v)=v$ for all $v\in \iota(\mathcal{C})\cup \{x\}$ and with $l(a)=h(a)$ for all $a\in A$.  Let $\mathrm{N}$ be the $\boldsymbol{L}$-structure for which $N=\iota(\mathcal{C})\cup \{x\}\cup A$ and  so that   the function $l$   is an isomorphism  of $\mathrm{U}_{\downarrow  \iota(\mathcal{C})\cup \{x\}\cup A}$ to $\mathrm{N}$. The structures $\mathrm{M}$ and $\mathrm{N}$ form an amalgamation instance with $M\cap N=\iota(\mathcal{C})\cup h[A]$. Let $\mathrm{R}$ be the free amalgam of $\mathrm{M}$ and $\mathrm{N}$.  It follows from Fact \ref{fact:contemb} that there exists an embedding $k$ of the structure $\mathrm{R}$ into $\mathrm{U}$ with $k(v)=v$ for all elements $v\in \iota(\mathcal{B})\cup \{x\}$. It follows from $k{v}=v$ for all $v\in \iota(\mathcal{B})$ that $k\circ h(a)\in \sigma(\alpha(a))$ for every element $a\in A$. Implying that $k\circ h[A\cap \sigma(\D)]\subseteq \sigma(\beta(\D))$ for every type $\D\in \mathcal{C}_{\uparrow x}$.   The function $k\circ l$ is an isomorphism of $\U_{\iota(\mathcal{C})\cup \{x\}\cup A}$ to $\mathrm{U}_{\downarrow \iota(\mathcal{C})\cup \{x\}\cup k\circ l[A]}$. Let $f\in \mathrm{G}$ be an extension of the isomorphism $k\circ l$ to a function in $\mathrm{G}$. Then $f\in \mathrm{G}_{\iota(\mathcal{C})\cup \{x\}}$ and $f[A\cap \sigma(\D)]=k\circ h[A\cap \sigma(\D)]\subseteq \sigma(\beta(\D))$ for every type $\D\in \mathcal{C}_{\uparrow x}$. Hence $f[A\cap \sigma(\D)]\subseteq \sigma(\beta(\D)\sqcap \D)$ for every type $\D\in \mathcal{C}_{\uparrow x}$.

\end{proof}

\begin{defin}\label{defin:tyepexppp}
Let a bundle  $\mathcal{B}$ be a refinement of a bundle $\mathcal{C}$  and let $\Z\in\mathcal{B}$  and $x\in \sigma(\Z)$. Let $F=\iota(\mathcal{C})$ and $E=\iota(\mathcal{B})\setminus F$.  For $\B\in \mathcal{B}$ and $\Y\in (\B_{\downarrow F})_{\uparrow x}$ let $\Y^{\langle -1\rangle}$ denote the bundle of all types  $\X\in \B_{\uparrow x}$ which are  $E$-successors of $\Y$. That is for which $\sigma(\X)\subseteq \sigma(\Y)\cap \sigma(\B)$.

An {\em $x$-continuation} of a refinement $\mathcal{B}$  of a bundle  $\mathcal{C}$ is a refinement $\mathcal{X}\subseteq \mathcal{B}_{\uparrow x}$ of the bundle $\mathcal{C}_{\uparrow x}$ so that $\X_{\downarrow F\cup \{x\}}\in \mathcal{C}_{\uparrow x}$ for every type $\X\in \mathcal{X}$.   (The bundle $\mathcal{X}$ selects one type out of every bundle $\Y^{\langle -1\rangle}$. Hence $\X\in (\X_{\downarrow E})^{\langle-1\rangle}$.)
\end{defin}
Note that if $\mathrm{U}$ is a binary relational structure then $|\Y^{\langle -1\rangle}|\leq 1$. It will follow  from the next Lemma \ref{lem:erefx234} that $|\Y^{\langle -1\rangle}|\geq 1$.

\begin{lem}\label{lem:erefx234}
Let $\mathrm{U}$ be an oligomorphic,   free amalgamation homogeneous structure.  Let a bundle  $\mathcal{B}$ of types of\/  $\mathrm{U}$ be a refinement of a bundle $\mathcal{C}$  and let $\Z\in\mathcal{B}$  and $z\in \sigma(\Z)$.   Then there exists a bundle $\overline{\mathcal{Y}}\subseteq \mathcal{B}_{\uparrow z}$ which is a  $z$-continuation of the bundle $(\mathcal{C})_{\uparrow z}$. 

For every $\C\in \mathcal{C}$ is the set of ranks of the types in $\C_{\uparrow z}$ equal to the set of ranks of the types in $\mathcal{Y}$ which are successors of the type $\C$. 
\end{lem}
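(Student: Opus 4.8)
The plan is to construct $\overline{\mathcal{Y}}$ by selecting, for each type $\Y\in\mathcal{C}_{\uparrow z}$, a single type lying over it inside $\mathcal{B}_{\uparrow z}$ of the same rank. Write $F=\iota(\mathcal{C})$ and $E=\iota(\mathcal{B})\setminus F$, and for $\Y\in\mathcal{C}_{\uparrow z}$ put $\C_\Y=\Y_{\downarrow F}\in\mathcal{C}$ and let $\B_\Y\in\mathcal{B}$ be the unique type with $(\B_\Y)_{\downarrow F}=\C_\Y$ furnished by the refinement bijection. Since $z\in\sigma(\Z)$ for some $\Z\in\mathcal{B}$, the type $\Z=\langle F\cup E\tr z\rangle$ is at our disposal. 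A type $\X\in\mathcal{B}_{\uparrow z}$ lies over $\Y$, meaning $(\X)_{\downarrow F\cup\{z\}}=\Y$ and $(\X)_{\downarrow F\cup E}=\B_\Y$, exactly when $\X\in\Y^{\langle-1\rangle}$ (Definition~\ref{defin:tyepexppp}); moreover $\sigma(\B_\Y)\cap\sigma(\Y)=\bigsqcup_{\X\in\Y^{\langle-1\rangle}}\sigma(\X)$, a disjoint union of typesets, so the whole matter reduces to understanding this one intersection.

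First I would show $\rro(\sigma(\B_\Y)\cap\sigma(\Y))=\rro(\Y)$, the inclusion $\subseteq$ being clear. Given $\mathrm{D}\in\rro(\Y)$, realise it as $D_0\subseteq\sigma(\Y)\subseteq\sigma(\C_\Y)$, and apply the melding property of the refinement $\mathcal{B}$ over $\mathcal{C}$ (Definitions~\ref{defin:wellplsucc},~\ref{defin:refinbundle}) to the $\mathcal{B}$ conform function on $\{z\}\cup D_0$ sending $z\mapsto\Z$ and $D_0\mapsto\B_\Y$; this yields $g\in\mathrm{G}_F$ with $g(z)\in\sigma(\Z)$ and $g[D_0]\subseteq\sigma(\B_\Y)$. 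Now $g(z)$ and $z$ realise the same type $\Z$ over $F\cup E$, so by the definition of equality of types there is $h\in\mathrm{G}_{F\cup E}$ with $h(g(z))=z$; then $h\circ g\in\mathrm{G}_{F\cup\{z\}}$ carries $D_0$ into $\sigma(\B_\Y)$ (as $h$ fixes $F\cup E$) while preserving each point's type over $F\cup\{z\}$, hence $h\circ g[D_0]\subseteq\sigma(\B_\Y)\cap\sigma(\Y)$. In particular $\Y^{\langle-1\rangle}\neq\emptyset$, which is the parenthetical remark preceding the Lemma. The automorphism $h$ fixing $F\cup E$ is the crucial device here: it repairs the fact that melding of $\mathcal{B}$ fixes only $F$, and it substitutes for the free-position hypothesis of Lemma~\ref{lem:conformsuxk}, which is unavailable because $z$ need not sit freely over $E$.

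The main obstacle is the passage from ``$\bigsqcup_{\X\in\Y^{\langle-1\rangle}}\sigma(\X)$ has age $\rro(\Y)$'' to ``some single $\X\in\Y^{\langle-1\rangle}$ has $\rro(\X)=\rro(\Y)$''. Since the poset of ranks is not assumed linear, there need not be a largest piece a priori, so this does not follow formally from the union having full age. This is exactly where oligomorphicity (making $\Y^{\langle-1\rangle}$ finite) and the age-indivisibility result Corollary~\ref{cor:agindfreambund} are needed: together they produce a single type $\overline{\Y}\in\Y^{\langle-1\rangle}$ carrying the full age $\rro(\Y)$ and acting as a melding successor of $\Y$. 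In the binary case $|\Y^{\langle-1\rangle}|\leq 1$, so this step is vacuous, which is precisely why the non-binary case is the hard one.

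Finally I would set $\overline{\mathcal{Y}}=\{\overline{\Y}\mid\Y\in\mathcal{C}_{\uparrow z}\}\subseteq\mathcal{B}_{\uparrow z}$. By construction $(\overline{\Y})_{\downarrow F\cup\{z\}}=\Y\in\mathcal{C}_{\uparrow z}$, so $\overline{\mathcal{Y}}$ is a successor of $\mathcal{C}_{\uparrow z}$ whose restriction map to $\mathcal{C}_{\uparrow z}$ is a bijection; thus it will be a $z$-continuation once shown to be a refinement. To verify the remaining melding clause of Lemma~\ref{lem:charrefv3}, I would move a given finite $A\subseteq\sigma(\mathcal{C}_{\uparrow z})=\sigma(\mathcal{C})$ into $\sigma(\overline{\mathcal{Y}})$ by a single $g\in\mathrm{G}_{F\cup\{z\}}$: first run the melding-plus-$h$ device above simultaneously over all $\Y$, using the conform function $a\mapsto\B_{\Y(a)}$, to land $A$ inside $\bigcup_{\Y}\bigl(\sigma(\B_\Y)\cap\sigma(\Y)\bigr)$; then, using the melding property of each selected $\overline{\Y}$ (Corollary~\ref{cor:agindfreambund}) together with a free-amalgamation step fixing $F\cup\{z\}$ in the style of the proofs of Lemma~\ref{lem:conformsuxk} and Lemma~\ref{lem:frmeldduu} (and Fact~\ref{fact:contemb}), push each chunk into the chosen typeset $\sigma(\overline{\Y})$. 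This makes $\overline{\mathcal{Y}}$ a refinement of $\mathcal{C}_{\uparrow z}$, hence a $z$-continuation. The closing rank assertion is then immediate: the refinement bijection pairs each $\overline{\Y}$ lying over a given $\C$ with the corresponding $\Y\in\C_{\uparrow z}$ and preserves ranks, so for every $\C\in\mathcal{C}$ the set of ranks occurring among the successors of $\C$ in $\overline{\mathcal{Y}}$ coincides with the set of ranks occurring in $\C_{\uparrow z}$.
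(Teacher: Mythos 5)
Your proposal is in substance the paper's own proof: the two load-bearing ingredients coincide. Your ``melding-plus-$h$ device'' --- apply the melding property of the refinement $\mathcal{B}$ of $\mathcal{C}$ to a finite set together with $z$, then compose with an automorphism in $\mathrm{G}_{\iota(\mathcal{B})}$ carrying $g(z)$ back to $z$ --- is exactly the paper's Claim that every finite $A\subseteq\sigma(\mathcal{C}_{\uparrow z})$ can be moved into $\sigma(\mathcal{B}_{\uparrow z})$ by an element of $\mathrm{G}_{\iota(\mathcal{C})\cup\{z\}}$, and the selection of the sub-bundle is done, as in the paper, by Corollary~\ref{cor:agindfreambund}. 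The one structural difference is how you read that Corollary: it is a bundle-level statement whose hypothesis is precisely your simultaneous melding-plus-$h$ step and whose conclusion is already that some $\overline{\mathcal{Y}}\subseteq\mathcal{B}_{\uparrow z}$ is a refinement of $\mathcal{C}_{\uparrow z}$; so your per-type reading of it, and your final paragraph re-verifying the melding clause of Lemma~\ref{lem:charrefv3} by combining per-type moves through a free-amalgamation step, are redundant. That redundancy is fortunate, because as an independent argument that combination step would be the weak point: the paper warns (after Lemma~\ref{lem:movestrsucc2}) that a successor bundle all of whose types are melding successors of their restrictions need not be a melding successor as a bundle, and the free-position/$^\ast$-successor structure that drives the amalgamation arguments in Lemmas~\ref{lem:frmeldduu} and~\ref{lem:conformsuxk} is simply not available for the arbitrarily selected types $\overline{\Y}\in\Y^{\langle -1\rangle}$ --- overcoming exactly this is the purpose of the age-indivisibility machinery of Section~\ref{sect:aingind} packaged in the Corollary, so you should lean on its bundle-level conclusion rather than rebuild it. Your identification of where oligomorphicity enters (finiteness of the fibers and bundles, so the finite-colouring argument behind the Corollary applies), and your derivation of the closing rank statement from rank preservation under refinement bijections, are both correct and match the paper.
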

\begin{proof}
Claim: There exists for every finite subset $A\subseteq \sigma(\mathcal{C}_{\uparrow z})$ a function $h\in \mathrm{G}_{\iota(\mathcal{C})\cup \{z\}}$ with $h[A]\subseteq \sigma(\mathcal{B}_{\uparrow z})$. 
\vskip 5pt
\noindent
Proof of Claim: Let $A$ be a finite subset of $\sigma(\mathcal{C}_{\uparrow z})= \sigma(\mathcal{C})\setminus\{z\}$. Then $A\cup \{z\}$ is a finite subset of $\sigma(\mathcal{C})$. Hence there exists a function $g\in \mathrm{G}_{\iota(\mathcal{C})}$ with $g[A\cup \{z\}]\subseteq \sigma(\mathcal{B})$.  Note  $g(z)\in \sigma(\Z)$. There exists a function $f\in \mathrm{G}_{\iota(\Z)}=\mathrm{G}_{\iota(\mathcal{B})}$ with $f(g(z))=z$.  The function $f$ is an element of $\mathrm{G}_{\iota(\mathcal{C})}$ because $\iota(\mathcal{C})\subseteq \iota(\mathcal{B})$. Hence $f\circ g[A]\subseteq \sigma(\mathcal{B}_{\uparrow z})$. Let $h=f\circ g$, verifying the Claim. 

According to Corollary \ref{cor:agindfreambund} there exists then a bundle $\overline{\mathcal{Y}}\subseteq \mathcal{B}_{\uparrow z}$ which is a refinement of the bundle $(\mathcal{C})_{\uparrow z}$.
\end{proof}

\section{A basic vertex partition theorem}\label{sect:basvertthm}

\begin{defin}\label{defin:mho-game}
Let $U$ be an infinite set, $(\boldsymbol{\mathscr{S}}, \sqsubseteq)$ a partially ordered set, $(\mathfrak{R};\subseteq)$ a linearly ordered set, $\sigma$ a map from $\mathscr{S}$ to the set of infinite subsets of $U$ and $\rro$ a map from  $\boldsymbol{\mathscr{S}}$ to $\mathfrak{R}$. We say that the property
\[
\mho\big(U, (\boldsymbol{\mathscr{S}};\sqsubseteq), (\mathfrak{R};\subseteq), \sigma, \rro\big)
\] 
holds if for all $\X,\Y\in \boldsymbol{\mathscr{S}}$:

\begin{enumerate}
\item[i.] $\X\sqsubseteq \Y$ implies $\sigma(\X)\subseteq \sigma(\Y)$.
\item[ii.] $\X\sqsubseteq \Y$ implies $\rro(\X)\subseteq \rro(\Y)$. 
\item[iii.] For all  $\X\in \boldsymbol{\mathscr{S}}$ and all $\mathfrak{r}\in\mathfrak{R}$ with $\mathfrak{r}\subseteq \rro(\X)$ there exists an $\R\in \boldsymbol{\mathscr{S}}$ with $\R\sqsubseteq \X$ and $\rro(\R)=\mathfrak{r}$.   
\item[iv.] $\boldsymbol{\mathscr{S}}$ has a maximum $\U$ with $\sigma(\U)=U$. 
\end{enumerate}

Let
$\mho\big(U, (\boldsymbol{\mathscr{S}};\sqsubseteq), (\mathfrak{R};\subseteq), \sigma, \rro\big)$. The elements of $\boldsymbol{\mathscr{S}}$ will be called {\em sorts}. The elements of $\mathfrak{R}$ will be called {\em ranks} and $\rro(\X)$ the {\em rank of the sort $\X$}.   A sort $\X$ is an {\em $\mathfrak{r}$-restriction} of a sort $\Y$ if $\X\sqsubseteq \Y$ and if $\rro(\X)=\mathfrak{r}$. A sort $\X$ is a {\em refinement} of a sort $\Y$ if $\X\sqsubseteq \Y$ and if $\rro(\X)=\rro(\Y)$. Observe that $\X$ is a refinement of $\X$ and that a refinement of a refinement is a refinement. 
\end{defin}

\begin{lem}\label{lem:realsortyp}
Let $\mathrm{U}$ be a countable homogeneous structure whose age has free amalgamation. Let $\boldsymbol{\mathscr{S}}$ denote the set of formed types of $\mathrm{U}$. For two types $\X$ and $\Y$ in $\mathscr{S}$ let $\X\sqsubseteq \Y$ if $\X$ is a successor of $\Y$.   Let $\mathfrak{R}$ denote the set of ranks of the set of formed types $\boldsymbol{\mathscr{S}}$. If the structure $\mathrm{U}$ is rank linear then:

\[
\mho\big(U, (\boldsymbol{\mathscr{S}};\sqsubseteq), (\mathfrak{R};\subseteq), \sigma, \rro\big).
\]
\end{lem}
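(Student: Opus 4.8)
The plan is to verify the structural hypotheses packaged in Definition \ref{defin:mho-game} together with its four clauses (i)--(iv), treating everything except clause (iii) as bookkeeping. First I would record the routine structural facts. The relation $\sqsubseteq$, ``is a successor of'', is a partial order on $\boldsymbol{\mathscr{S}}$: it is reflexive, it is transitive because $\iota$ grows and $\sigma$ shrinks along successors, and it is antisymmetric since $\X\sqsubseteq\Y\sqsubseteq\X$ forces $\iota(\X)=\iota(\Y)$ and $\sigma(\X)=\sigma(\Y)$, hence $\X=\Y$. The map $\sigma$ takes values among the infinite subsets of $U$ by Corollary \ref{cor:tyinfA}, and $\rro$ maps $\boldsymbol{\mathscr{S}}$ onto $\mathfrak{R}$ by the definition of $\mathfrak{R}$. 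This is the only point at which rank linearity enters: since every formed type is a type we have $\mathfrak{R}\subseteq\mathfrak{R}(\mathrm{U})$, and a subset of the linearly ordered set $(\mathfrak{R}(\mathrm{U});\subseteq)$ is again linearly ordered, giving the linear order $(\mathfrak{R};\subseteq)$ demanded by the $\mho$-framework.

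Clauses (i) and (ii) I would dispatch at once: $\X\sqsubseteq\Y$ means $\X$ is a successor of $\Y$, so $\sigma(\X)\subseteq\sigma(\Y)$ by definition, whence $\rro(\X)=\rro(\sigma(\X))\subseteq\rro(\sigma(\Y))=\rro(\Y)$ by monotonicity of the age under inclusion. For clause (iv) I would exhibit $\U=\langle\emptyset\tr u\rangle$ as the maximum: it is a formed type since the empty sequence is a form sequence and $\U$ has form $\U\emptyset$ by the base clause of the inductive definition; it is the maximum because, by Definition \ref{defin:formedtypes}, every formed type is by construction a successor of $\U$; and $\sigma(\U)=U$ because $\mathrm{G}$ acts transitively on $U$.

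The substance lies in clause (iii), and here the plan is to let Lemma \ref{lem:restriction} supply the rank restriction and then check that the type it produces remains inside $\boldsymbol{\mathscr{S}}$. Given a formed type $\X$ and $\mathfrak{r}\in\mathfrak{R}$ with $\mathfrak{r}\subseteq\rro(\X)$, note $\mathfrak{r}\in\mathfrak{R}(\mathrm{U})$ and apply Lemma \ref{lem:restriction} to obtain $\Z=\langle Z\tr z\rangle$ with $\rro(\X+\Z)=\mathfrak{r}$; put $\R:=\X+\Z$. From Definition \ref{defin:additiontyp}, $\R=\X\sqcap\Z$, so $\iota(\R)=\iota(\X)\cup Z\supseteq\iota(\X)$ and $\sigma(\R)\subseteq\sigma(\X)$, i.e.\ $\R\sqsubseteq\X$, while $\rro(\R)=\mathfrak{r}$ as required.

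The one genuine point to verify, and what I expect to be the only real obstacle, is that $\R$ is a \emph{formed} type and not merely an arbitrary type of the right rank. For this I would observe that $\R=\X+\R_{\downarrow Z}$ with $\R_{\downarrow Z}=\Z$ (the unique predecessor of $\R$ with sockel $Z$, by disjointness of typesets over a fixed sockel), so clause (2) of the inductive definition gives that $\R$ has form $\X+$. As $\X$ is formed it carries a form $\U\mathscr{F}_0$, and Lemma \ref{lem:succsuccform} then yields that $\R$ has form $\U\,\mathscr{F}_0\concat+$ as a successor of $\U$; since $\mathscr{F}_0\concat+$ is again a form sequence, $\R\in\boldsymbol{\mathscr{S}}$. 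This completes clause (iii) and hence establishes $\mho\big(U,(\boldsymbol{\mathscr{S}};\sqsubseteq),(\mathfrak{R};\subseteq),\sigma,\rro\big)$.
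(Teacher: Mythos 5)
Your proposal is correct and follows essentially the same route as the paper's proof: the bookkeeping clauses are handled via Corollary \ref{cor:tyinfA}, transitivity of the automorphism group, and the observation that $\mathfrak{R}$ inherits linearity as a subset of $\mathfrak{R}(\mathrm{U})$, while clause (iii) is obtained exactly as in the paper by applying Lemma \ref{lem:restriction} and noting that $\R=\X+\Z$ has form $\X+$, hence is formed by Lemma \ref{lem:succsuccform}. Your explicit verification that $\sqsubseteq$ is antisymmetric (via equality of sockel and typeset) and that $\R_{\downarrow Z}=\Z$ only spells out details the paper leaves implicit, so there is no substantive difference.
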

\begin{proof}
It follows from Corollary \ref{cor:tyinfA} that $\sigma$ is a map from $\boldsymbol{\mathscr{S}}$ to the set of infinite subsets of $U$. The structure $\mathrm{U}$ is rank linear implying that the automorphism group of $\mathrm{U}$ acts transitively on $U$. Implying in turn that $\U:=\langle \emptyset\tr x\rangle$ is a type of $\mathrm{U}$ with $\sigma(\U)=U$. Hence,   the set $U$ of elements of the structure $\mathrm{U}$ is infinite. Item (1) in Section \ref{section: formedtyp} together with Definition \ref{defin:formedtypes} imply that $\U\in \boldsymbol{\mathscr{S}}$. It follows then from Lemma \ref{lem:succsuccform}  that
$(\boldsymbol{\mathscr{S}}, \sqsubseteq)$ is a partially ordered set. The set of ranks of the formed types  is a subset of the set of ranks all types and hence $(\mathfrak{R};\subseteq)$ is a linearly ordered set.  By definition $\rro$ is map from  $\boldsymbol{\mathscr{S}}$ to $\mathfrak{R}$. 

Let $\X,Y\in \boldsymbol{\mathscr{S}}$. Item i. is part of the definition of successor. Item ii. follows from Item i. using the Definition of the function $\rro$. Let $\mathfrak{r}\in \mathfrak{R}$ with $\mathfrak{r}\subseteq \rro(\X)$. It follows from Lemma \ref{lem:restriction} that there exists a type $\Z$ with $\rro(\X+\Z)=\mathfrak{r}$. Put $\R=\X+\Z$. Then $\rro(\R)=\mathfrak{r}$ and $\R$ is a successor of $\X$ having form $\X+$. Then $R\in \boldsymbol{\mathscr{S}}$ because $\X\in \boldsymbol{\mathscr{S}}$.  The type $\U$ is the maximum of the partial order $(\boldsymbol{\mathscr{S}};\sqsubseteq)$. 
\end{proof}

\begin{lem}\label{lem:realsortypt}
Let $\mathrm{U}$ be a countable homogeneous structure whose age has free amalgamation. Let $\boldsymbol{\mathscr{T}}$ denote the set of  types of $\mathrm{U}$. For two types $\X$ and $\Y$  let $\X\sqsubseteq \Y$ if $\X$ is a successor of $\Y$.   Let $\mathfrak{R}$ denote the set of ranks of the set of  types $\boldsymbol{\mathscr{T}}$. If the structure $\mathrm{U}$ is rank linear then:

\[
\mho\big(U, (\boldsymbol{\mathscr{T}};\sqsubseteq), (\mathfrak{R};\subseteq), \sigma, \rro\big).
\]
\end{lem}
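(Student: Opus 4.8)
The plan is to verify the four conditions of Definition \ref{defin:mho-game} directly, following the proof of Lemma \ref{lem:realsortyp} but noting that replacing the formed types $\boldsymbol{\mathscr{S}}$ by the set of all types $\boldsymbol{\mathscr{T}}$ only simplifies the argument. First I would record the general setup. By Corollary \ref{cor:tyinfA} every typeset $\sigma(\T)$ is infinite, so $\sigma$ really maps $\boldsymbol{\mathscr{T}}$ into the infinite subsets of $U$. Since $\mathrm{U}$ is rank linear its automorphism group acts transitively on $U$, whence $\U:=\langle \emptyset\tr x\rangle$ is a type with $\sigma(\U)=U$, and in particular $U$ is infinite. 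By the definition of $\rro(\T):=\rro(\sigma(\T))$, the map $\rro$ sends $\boldsymbol{\mathscr{T}}$ to $\mathfrak{R}$.

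Next I would check the two orderings. The successor relation $\sqsubseteq$ is reflexive (each type is a successor of itself), transitive (if $\iota(\T)\subseteq\iota(\S)\subseteq\iota(\R)$ and $\sigma(\R)\subseteq\sigma(\S)\subseteq\sigma(\T)$ then $\R$ is a successor of $\T$), and antisymmetric (if $\X\sqsubseteq\Y$ and $\Y\sqsubseteq\X$ then $\iota(\X)=\iota(\Y)$ and $\sigma(\X)=\sigma(\Y)$, so $\X=\Y$); hence $(\boldsymbol{\mathscr{T}};\sqsubseteq)$ is a partial order. Here, unlike in Lemma \ref{lem:realsortyp}, no appeal to the composition lemma for forms is needed, the transitivity being elementary. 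For $(\mathfrak{R};\subseteq)$ observe that $\mathfrak{R}$, the set of ranks of all types, is exactly $\mathfrak{R}(\mathrm{U})$, which is a linear order precisely because $\mathrm{U}$ is rank linear.

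For the numbered items, Item i.\ is part of the definition of successor. Item ii.\ follows from Item i.\ together with $\rro(\T)=\rro(\sigma(\T))$, since $\sigma(\X)\subseteq\sigma(\Y)$ forces every finite structure embeddable in $\mathrm{U}_{\downarrow \sigma(\X)}$ to embed in $\mathrm{U}_{\downarrow \sigma(\Y)}$, i.e.\ $\rro(\X)\subseteq\rro(\Y)$. Item iv.\ holds because every type $\T$ is a successor of $\U$ (as $\iota(\U)=\emptyset\subseteq\iota(\T)$ and $\sigma(\T)\subseteq U=\sigma(\U)$), so $\U$ is the maximum of $(\boldsymbol{\mathscr{T}};\sqsubseteq)$ with $\sigma(\U)=U$.

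The only clause carrying content is Item iii., and this is exactly where the passage to all types shortens the argument. Given $\X$ and $\mathfrak{r}\in\mathfrak{R}$ with $\mathfrak{r}\subseteq\rro(\X)$, Lemma \ref{lem:restriction} supplies a type $\Z$ with $\rro(\X+\Z)=\mathfrak{r}$; I would set $\R=\X+\Z$, a successor of $\X$ of rank $\mathfrak{r}$. Whereas the proof of Lemma \ref{lem:realsortyp} had then to verify that $\X+\Z$ is a formed type, here $\R$ is automatically an element of $\boldsymbol{\mathscr{T}}$, so nothing more is required. Consequently there is no genuine obstacle to overcome: the statement is a routine check, the substantive work having already been done by Corollary \ref{cor:tyinfA}, Lemma \ref{lem:restriction}, and the rank-linearity hypothesis.
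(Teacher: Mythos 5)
Your proposal is correct and matches the paper's intent exactly: the paper's proof of this lemma is simply ``analogous to the proof of Lemma \ref{lem:realsortyp}'', and your argument is precisely that adaptation, correctly observing that the only point needing attention -- Item iii.\ via Lemma \ref{lem:restriction} -- becomes easier because $\X+\Z$ lies in $\boldsymbol{\mathscr{T}}$ automatically, with no formedness to verify.
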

\begin{proof}
Analogues to the proof of Lemma \ref{lem:realsortyp}. 
\end{proof}

A subset $S$ of $U$ is {\em large}  if there exists a set   $\mathscr{W}\subseteq \boldsymbol{\mathscr{S}}$ with
 $\U\in \mathscr{W}$ and  so that  $\X\in \mathscr{W}$ implies that $S\cap \sigma(\X)$ is infinite and that the following formula $\phi_{\mathscr{W}}(\X)$ holds:

\[  
\phi_{\mathscr{W}}(\X):=
\begin{cases}
\text{For all $\mathfrak{r}\in \mathfrak{R}$ with $\mathfrak{r}\subseteq \boldsymbol{\rho}(\X)$}   &(1)\\  
\text{there exists a  refinement  $\Y$ of $\X$ so that}                                                                       &(2)\\     
\text{for all  refinements $\Z\ $ of $\Y$}                                                                                              &(3)\\
\text{there exists an $\mathfrak{r}$-restriction  $\R\in \mathscr{W}$ of $\Z$}.                    &(4)
\end{cases}
\]

\noindent
The set   $\mathscr{W}$ is then a {\em witness} for  $S$ being large.    For a given $\mathfrak{r}\in \mathfrak{R}$ with $\mathfrak{r}\subseteq \boldsymbol{\rho}(\X)$ formula $\phi_{\mathscr{W}}(\X,\mathfrak{r})$ is the formula consisting of lines (2), (3) and (4) of formula $\phi_{\mathscr{W}}(\X,S)$. It is important to observe that the set $S$ does not appear in any of the four lines of formula $\phi_{\mathscr{W}}(\X)$.  Intuitively the formula says that the set $S$ is large enough so that going through the four alternating quantifiers along the partial order $(\boldsymbol{\mathscr{S}}, \sqsubseteq)$ will always produce a sort $\R$ of desired rank for which $S\cap \sigma(\R)$ is infinite.

\begin{defin}\label{defin:gameS_1S_2}
Let  $\mho\big(U, (\boldsymbol{\mathscr{S}};\sqsubseteq), (\mathfrak{R};\subseteq), \sigma, \rro\big)$ and let $S\subseteq U$.  Then $\lambda_S$ is a function which assigns to some of the sorts in $\boldsymbol{\mathscr{S}}$ an ordinal number as a {\em label}. This labelling $\lambda_S$ is defined recursively as follows:

If $\sigma(\X)\setminus S$ is finite then $\lambda_S(\X):=0$. If a sort $\X$ is not yet labelled but if there exists a rank $\mathfrak{R}\ni \mathfrak{r}\subseteq \rro(\X)$ so that  for every  refinement $\Y$ of $\X$ there exists a  refinement $\Z$ of $\Y$  for which all $\mathfrak{r}$-restrictions  are labelled,  let $\lambda_S(\X)$ be the smallest ordinal larger than all labels $\lambda_S(\Y)$ with $\Y\sqsubset \X$ and $\Y\not=\X$. 
\end{defin}

\begin{lem}\label{lem:sortinfS}
Let  $\mho\big(U, (\boldsymbol{\mathscr{S}};\sqsubseteq), (\mathfrak{R};\subseteq), \sigma, \rro\big)$ and let $S\subseteq U$.   If a sort $\X$ is labelled by $\lambda_S$ then every refinement of $\X$ is labelled and  $\sigma(\X)\cap S$ is infinite.
\end{lem}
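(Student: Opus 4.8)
The plan is to establish the two assertions separately: closure of labelled sorts under refinement requires no induction and follows simply by transferring the defining clause of $\lambda_S$, whereas the infinitude of $\sigma(\X)\cap S$ is proved by transfinite induction on the ordinal label $\gamma=\lambda_S(\X)$.

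First I would show that every refinement $\X'$ of a labelled sort $\X$ is itself labelled. If $\lambda_S(\X)=0$ then $\sigma(\X)\setminus S$ is finite; since $\X'\sqsubseteq\X$, clause (i) of $\mho$ gives $\sigma(\X')\subseteq\sigma(\X)$, so $\sigma(\X')\setminus S$ is finite and $\lambda_S(\X')=0$. If $\lambda_S(\X)>0$, then $\X$ was labelled using some rank $\mathfrak{r}\subseteq\rro(\X)$ with the property that every refinement $\Y$ of $\X$ admits a refinement $\Z$ of $\Y$ all of whose $\mathfrak{r}$-restrictions are labelled. Now $\rro(\X')=\rro(\X)$, so $\mathfrak{r}\subseteq\rro(\X')$; and since a refinement of a refinement is a refinement, every refinement $\Y'$ of $\X'$ is also a refinement of $\X$, to which the above property applies verbatim, producing the required $\Z$. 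Hence $\X'$ satisfies the defining clause of $\lambda_S$ and is labelled.

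Next I would prove $\sigma(\X)\cap S$ infinite by transfinite induction on $\gamma=\lambda_S(\X)$. As $\sigma$ maps into the infinite subsets of $U$, the set $\sigma(\X)$ is infinite, so the base case $\gamma=0$ is immediate: $\sigma(\X)\setminus S$ finite forces $\sigma(\X)\cap S$ infinite. For $\gamma>0$, apply the defining clause of $\X$ with the refinement $\Y=\X$ to obtain a refinement $\Z$ of $\X$ all of whose $\mathfrak{r}$-restrictions are labelled. Since $\mathfrak{r}\subseteq\rro(\X)=\rro(\Z)$, clause (iii) of $\mho$ furnishes an $\mathfrak{r}$-restriction $\R\sqsubseteq\Z\sqsubseteq\X$, which is labelled. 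Because $\X$ is ``not yet labelled'' at the moment it receives label $\gamma$, the already-labelled sort $\R$ cannot equal $\X$; thus $\R\sqsubseteq\X$ with $\R\ne\X$, and the recursive clause forces $\lambda_S(\R)<\gamma$. The induction hypothesis then gives that $\sigma(\R)\cap S$ is infinite, and $\R\sqsubseteq\X$ yields $\sigma(\R)\subseteq\sigma(\X)$ by clause (i), so $\sigma(\X)\cap S\supseteq\sigma(\R)\cap S$ is infinite.

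The step I expect to require the most care is verifying that the witnessing $\mathfrak{r}$-restriction $\R$ is distinct from $\X$: this is exactly what makes the label strictly decrease and lets the induction proceed, and it hinges on reading the definition of $\lambda_S$ correctly, namely that $\X$ is assigned its label only when it is ``not yet labelled'' while all relevant $\mathfrak{r}$-restrictions already are. Once this is secured, the monotonicity clauses (i), (ii) and the restriction clause (iii) of $\mho$ supply everything else.
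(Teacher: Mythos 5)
Your proof is correct and takes essentially the same approach as the paper's: the first part transfers the witnessing rank of $\X$ to any refinement using transitivity of refinement, and the second part descends along labels using clause (iii) of $\mho$ together with the fact that the witnessing $\mathfrak{r}$-restrictions $\R\sqsubseteq\X$, $\R\neq\X$ were labelled strictly earlier and hence carry strictly smaller labels. The paper casts the second part as a minimal-counterexample argument over the labels rather than a transfinite induction, and is terser about the label-$0$ case and about $\R\neq\X$, but these are the same argument.
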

\begin{proof}
Let $\X$ be labelled and $\mathfrak{r}$ the rank provided by Definition \ref{defin:gameS_1S_2} for which $\X$ became labelled.   Let $\Y$ be a refinement of $\X$ and $\Y'$ a refinement of $\Y$. Then $\Y'$ is a refinement of $\X$ and hence there exists a  refinement $\Z$ of $\Y$  for which all $\mathfrak{r}$-restrictions  are labelled. 

Let $\mathscr{T}$ be the set of labelled sorts $\T$ for which  $\sigma(\T)\cap S$ is finite. If $\mathscr{T}$ is not empty let $\mathfrak{q}=\min\{\lambda_S(\T)\mid \T\in \mathscr{T}\}$ and $\X\in \mathscr{T}$ with $\lambda_S(\X)=\mathfrak{q}$. Let $\mathfrak{r}$ be the rank provided by Definition \ref{defin:gameS_1S_2} for which $\X$ became labelled.  The sort $\X$ is a refinement of $\X$. Hence there exists a sort $\Z$ of $\X$ for which all $\mathfrak{r}$-restrictions are labelled. This set of $\mathfrak{r}$-restrictions is not empty according to Item $iii.$ of Definition \ref{defin:mho-game}. The set $\sigma(\R)$ of each of those $\mathfrak{r}$-restrictions $\R$ contains infinitely many elements of $S$ and is a subset of $\sigma(\X)$. 
\end{proof} 

Hence if a sort $\X$ is labelled by $\lambda_S$ then $\sigma(\X)\cap S$ is infinite and the following formula \\

$\Phi(\X,S):=$
\[  
\begin{cases}
\text{there exists an $\mathfrak{r}\in \mathfrak{R}$ with $\mathfrak{r}\subseteq \boldsymbol{\rho}(\X)$}   &(1)\\  
\text{so that for all   refinements  $\Y$ of $\X$ }                                                                       &(2)\\     
\text{there exists a  refinement $\Z\ $ of $\Y$ for which all}                                                                                              &(3)\\
\text{$\mathfrak{r}$-restrictions $\R$ of $\Z$ are are labelled by $\lambda_S$}                    &(4)
\end{cases}
\]
holds. Note here that if a sort $\X$ is not labelled then  $\neg\Phi(\X)$. Let $\Phi(\X,\mathfrak{r},S)$ if the sub-formula of $\Phi(\X,S)$ consisting of lines (2), (3) and (4) holds and if $\mathfrak{r}\in \mathfrak{R}$ with $\mathfrak{r}\subseteq \rro(\X)$. 

\begin{defin}\label{defin:witness}
For $S\subseteq U$  let $\mathscr{W}(S)$ be the set of sorts which are labelled by $\lambda_S$. Let  $\overline{\mathscr{W}(S)}$ be the set of sorts which are not  labelled by $\lambda_S$ and let $\overline{S}=U\setminus S$.
\end{defin}

\begin{lem}\label{lem:formphion} 
Let  $\mho\big(U, (\boldsymbol{\mathscr{S}};\sqsubseteq), (\mathfrak{R};\subseteq), \sigma, \rro\big)$ and let $S\subseteq U$.   If a sort $\X$ is not labelled by $\lambda_S$, that is if $\X\in \overline{\mathscr{W}(S)}$,    then $\phi_{\overline{\mathscr{W}}(S)}(\X)$ and $\sigma(\X)\cap \overline{S}$ is infinite.
\end{lem}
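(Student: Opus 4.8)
The statement decomposes into two claims, and the plan is to obtain both by unwinding Definition~\ref{defin:gameS_1S_2}; there is essentially no combinatorial content, only a careful De~Morgan negation of the labelling clause. First I would record that $\lambda_S$ arises from iterating a monotone operator on subsets of $\boldsymbol{\mathscr{S}}$: a sort enters the labelled set once either its $\sigma$-part misses $S$ only finitely often, or the nested refinement condition holds relative to the sorts labelled so far. Since $\boldsymbol{\mathscr{S}}$ is a set this iteration stabilises, so $\mathscr{W}(S)$ is a fixed point and consequently $\X\in\mathscr{W}(S)$ holds if and only if either $\sigma(\X)\setminus S$ is finite, or there is a rank $\mathfrak{r}\subseteq\rro(\X)$ such that for every refinement $\Y$ of $\X$ some refinement $\Z$ of $\Y$ has all its $\mathfrak{r}$-restrictions in $\mathscr{W}(S)$. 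Indeed, were this condition to hold for an unlabelled $\X$ relative to the final set $\mathscr{W}(S)$, the recursion would still assign $\X$ a label, contradicting $\X\in\overline{\mathscr{W}(S)}$; and conversely any labelled sort meets the condition at the stage it is labelled, hence by monotonicity relative to all of $\mathscr{W}(S)$.

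Granting this fixed-point characterisation, the infinitude claim is immediate: since $\X\in\overline{\mathscr{W}(S)}$, the base clause $\lambda_S(\X):=0$ did not apply, so $\sigma(\X)\setminus S$ is not finite. Because $\sigma(\X)\subseteq U$ and $\overline{S}=U\setminus S$, we have $\sigma(\X)\cap\overline{S}=\sigma(\X)\setminus S$, which is therefore infinite.

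For $\phi_{\overline{\mathscr{W}(S)}}(\X)$ I would fix an arbitrary rank $\mathfrak{r}\in\mathfrak{R}$ with $\mathfrak{r}\subseteq\rro(\X)$. Since $\X$ is unlabelled, the fixed-point characterisation says the second (inductive) clause fails for this $\mathfrak{r}$, i.e. the statement ``for every refinement $\Y$ of $\X$ there is a refinement $\Z$ of $\Y$ all of whose $\mathfrak{r}$-restrictions lie in $\mathscr{W}(S)$'' is false. Negating the alternation $\forall\Y\,\exists\Z\,\forall\R$ yields a refinement $\Y$ of $\X$ such that for every refinement $\Z$ of $\Y$ there is an $\mathfrak{r}$-restriction $\R$ of $\Z$ with $\R\notin\mathscr{W}(S)$, that is $\R\in\overline{\mathscr{W}(S)}$. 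As $\mathfrak{r}$ was arbitrary, this is precisely lines~(1)--(4) of $\phi_{\overline{\mathscr{W}(S)}}(\X)$.

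The only points demanding care --- the ``main obstacle'', such as it is --- are justifying the fixed-point characterisation of $\mathscr{W}(S)$ from the transfinite recursion, and performing the quantifier negation correctly; here I would invoke the third item of Definition~\ref{defin:mho-game}, applied to $\Z$ and $\mathfrak{r}$ (noting $\mathfrak{r}\subseteq\rro(\X)=\rro(\Z)$ as refinements preserve rank), to guarantee that $\mathfrak{r}$-restrictions of $\Z$ actually exist, so that the negated universal genuinely produces an unlabelled restriction rather than a vacuous witness. No other axiom of $\mho(\dots)$ is needed for this purely formal argument.
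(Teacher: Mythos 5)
Your proof is correct and follows essentially the same route as the paper: the paper's proof also consists of observing that $\sigma(\X)\cap\overline{S}=\sigma(\X)\setminus S$ being finite would force the label $0$, and that being unlabelled means $\neg\Phi(\X,S)$, which after negating the quantifier alternation is literally the formula $\phi_{\overline{\mathscr{W}(S)}}(\X)$. The only difference is that you explicitly justify, via monotonicity and stabilisation of the labelling recursion, the fixed-point fact that the paper records without proof in the note preceding the lemma (your appeal to Item iii of Definition \ref{defin:mho-game} is harmless but unnecessary, since negating the universal in line (4) already produces a genuine unlabelled $\mathfrak{r}$-restriction).
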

\begin{proof}
If $\sigma(\X)\cap \overline{S}$ is finite then $\sigma(\X)\setminus S$ is finite and hence $\X$ is labelled having label 0. If a sort $\X$ is not labelled then  $\neg\Phi(\X,S)$ which is the formula  $\phi_{\overline{\mathscr{W}(S)}}(\X)$. 
\end{proof}

\begin{lem}\label{lem:implofform}
Let  $\mho\big(U, (\boldsymbol{\mathscr{S}};\sqsubseteq), (\mathfrak{R};\subseteq), \sigma, \rro\big)$ and let $S\subseteq U$.  If $\Phi(\X,\mathfrak{r}, S)$ then $\phi_{\mathscr{W}(S)}(\X,\mathfrak{r})$.  If for a sort $\X$ the label $\lambda_S(\X)=0$ then  $\phi_{\mathscr{W}(S)}(\X)$. 
\end{lem}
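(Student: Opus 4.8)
The plan is to verify both implications using the single witness $\Y:=\X$ for the existential quantifier in $\phi_{\mathscr{W}(S)}$, exploiting that $\X$ is a refinement of itself and that, by Definition~\ref{defin:witness}, $\mathscr{W}(S)$ is exactly the set of $\lambda_S$-labelled sorts.

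For the first implication I would assume $\Phi(\X,\mathfrak{r},S)$; in particular $\mathfrak{r}\subseteq\rro(\X)$, and for every refinement $\Y$ of $\X$ there is a refinement of $\Y$ all of whose $\mathfrak{r}$-restrictions are labelled. To establish $\phi_{\mathscr{W}(S)}(\X,\mathfrak{r})$ I take $\Y:=\X$ and check the remaining clause: given an arbitrary refinement $\Z$ of $\X$, I must produce an $\mathfrak{r}$-restriction of $\Z$ lying in $\mathscr{W}(S)$. The key move is to feed $\Z$ itself back into the universal quantifier of $\Phi$: since $\Z\sqsubseteq\X$ and $\rro(\Z)=\rro(\X)$, the sort $\Z$ is a refinement of $\X$, so $\Phi$ yields a refinement $\Z'$ of $\Z$ all of whose $\mathfrak{r}$-restrictions are labelled. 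Because $\rro(\Z')=\rro(\X)\supseteq\mathfrak{r}$, Item~iii of Definition~\ref{defin:mho-game} guarantees that at least one $\mathfrak{r}$-restriction $\R$ of $\Z'$ exists, and by hypothesis $\R$ is labelled, i.e.\ $\R\in\mathscr{W}(S)$. Finally, transitivity of $\sqsubseteq$ gives $\R\sqsubseteq\Z'\sqsubseteq\Z$ with $\rro(\R)=\mathfrak{r}$, so $\R$ is in fact an $\mathfrak{r}$-restriction of $\Z$, as required.

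For the second implication I would suppose $\lambda_S(\X)=0$, so that $\sigma(\X)\setminus S$ is finite, and again take $\Y:=\X$ for each admissible $\mathfrak{r}\subseteq\rho(\X)$. Any $\mathfrak{r}$-restriction $\R$ of any refinement $\Z$ of $\X$ satisfies $\R\sqsubseteq\Z\sqsubseteq\X$, whence $\sigma(\R)\subseteq\sigma(\X)$ by Item~i of Definition~\ref{defin:mho-game}; therefore $\sigma(\R)\setminus S\subseteq\sigma(\X)\setminus S$ is finite, so $\lambda_S(\R)=0$ and $\R\in\mathscr{W}(S)$. Since such restrictions exist by Item~iii, the clause $\phi_{\mathscr{W}(S)}(\X,\mathfrak{r})$ holds for every $\mathfrak{r}\subseteq\rro(\X)$, which is precisely $\phi_{\mathscr{W}(S)}(\X)$.

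The main obstacle is purely the mismatch of quantifier order between $\Phi$, which has the shape $\forall\Y\,\exists\Z\,\forall\R$, and $\phi_{\mathscr{W}(S)}$, which has the shape $\exists\Y\,\forall\Z\,\exists\R$. The reabsorption trick that resolves this — choosing the outer existential witness to be $\X$ itself, re-applying the universal hypothesis of $\Phi$ to each $\Z$, and then collapsing an $\mathfrak{r}$-restriction of the deeper sort $\Z'$ down to an $\mathfrak{r}$-restriction of $\Z$ by transitivity of $\sqsubseteq$ — is the only genuinely non-routine point; the remainder is bookkeeping with the monotonicity of $\sigma$ and $\rro$ and the existence clause Item~iii. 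Note that neither part needs Lemma~\ref{lem:sortinfS}, only the axioms of the $\mho$-configuration.
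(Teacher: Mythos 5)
Your proof is correct and is essentially the paper's own argument: both parts witness the existential quantifier with $\Y:=\X$, re-apply the universal clause of $\Phi(\X,\mathfrak{r},S)$ to each refinement $\Z$ to obtain $\Z'$, invoke Item~iii of Definition~\ref{defin:mho-game} for non-emptiness, and pull the $\mathfrak{r}$-restriction back to $\Z$ by transitivity of $\sqsubseteq$ (respectively, use monotonicity of $\sigma$ to see every $\R\sqsubseteq\X$ inherits label $0$). Nothing is missing; the extra detail you give on why $\lambda_S(\R)=0$ for $\R\sqsubseteq\X$ merely spells out what the paper states in one line.
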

\begin{proof}
Let $\Phi(\X,\mathfrak{r}, S)$. To establish line (2) of formula $\phi_{\mathscr{W}}(\X)$ put $\Y=\X$. Then for all refinements $\Z$ of $\X=\Y$  there exists a refinement $\Z'$ of $Z$  for  which all  $\mathfrak{r}$-restrictions $\R$ are sorts in $\mathscr{W}$. Every one of those $\mathfrak{r}$-restrictions is a $\mathfrak{r}$-restriction of $\Z$.  It follows from  Item ~iii.  of Definition~\ref{defin:mho-game} that this set of $\mathfrak{r}$-restrictions is not empty. 

Let $\X$ be a sort with $\lambda_S(\X)=0$ and $\mathfrak{r}\subseteq \rro(\X)$. Then $\R$ is labelled and  $\lambda_S(\R)=0$ for every sort $\R\sqsubseteq \X$.  Hence putting $\X$ for $\Y$ in formula $\phi_{\mathscr{W}(S)}(\X)$ let $\Z$ be a refinement of $\X$. According to Item iii. of Definition~\ref{defin:mho-game} there exists an $\mathfrak{r}$-restriction $\R$ of $\Z$.
\end{proof}

\begin{lem}\label{lem:smtobig}
Let  $\mho\big(U, (\boldsymbol{\mathscr{S}};\sqsubseteq), (\mathfrak{R};\subseteq), \sigma, \rro\big)$ and let $S\subseteq U$.  Let $\X$ be a $\lambda_S$-labelled sort for which $\Phi(\X,\mathfrak{r}_0)$. Then $\Phi(\X,\mathfrak{r})$ for all $\mathfrak{r}\in \mathfrak{R}$ with $\mathfrak{r}_0\subseteq \mathfrak{r}\subseteq \rro(\X)$. 
\end{lem}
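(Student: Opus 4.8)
The plan is to show that the very same witness delivered by $\Phi(\X,\mathfrak{r}_0)$ already witnesses $\Phi(\X,\mathfrak{r})$. The case $\mathfrak{r}=\mathfrak{r}_0$ is immediate, so fix $\mathfrak{r}\in\mathfrak{R}$ with $\mathfrak{r}_0\subseteq\mathfrak{r}\subseteq\rro(\X)$. Since $\mathfrak{r}\subseteq\rro(\X)$ already holds, to establish $\Phi(\X,\mathfrak{r})$ it remains to verify lines $(2)$--$(4)$: given an arbitrary refinement $\Y$ of $\X$, I must produce a refinement $\Z$ of $\Y$ all of whose $\mathfrak{r}$-restrictions are labelled by $\lambda_S$. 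Applying $\Phi(\X,\mathfrak{r}_0)$ to this $\Y$ hands me a refinement $\Z$ of $\Y$ all of whose $\mathfrak{r}_0$-restrictions are labelled. Because $\Z$ is a refinement of $\X$ we have $\rro(\Z)=\rro(\X)\supseteq\mathfrak{r}$, so by Item $iii.$ of Definition \ref{defin:mho-game} the $\mathfrak{r}$-restrictions of $\Z$ exist; I claim each of them is labelled, which finishes the proof with this very $\Z$.

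So let $\R$ be an $\mathfrak{r}$-restriction of $\Z$, i.e.\ $\R\sqsubseteq\Z$ and $\rro(\R)=\mathfrak{r}$. I would show $\R$ is labelled by checking that $\R$ meets the inductive labelling condition of Definition \ref{defin:gameS_1S_2} with the rank $\mathfrak{r}_0$ (legitimate since $\mathfrak{r}_0\subseteq\mathfrak{r}=\rro(\R)$). Take any refinement $\Y'$ of $\R$ and put $\Z'=\Y'$. Each $\mathfrak{r}_0$-restriction $\Q$ of $\Z'=\Y'$ satisfies $\Q\sqsubseteq\Y'\sqsubseteq\R\sqsubseteq\Z$ by transitivity of $\sqsubseteq$, together with $\rro(\Q)=\mathfrak{r}_0$; hence $\Q$ is an $\mathfrak{r}_0$-restriction of $\Z$ and is therefore labelled by the choice of $\Z$. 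Thus for every refinement $\Y'$ of $\R$ there is a refinement of $\Y'$, namely $\Y'$ itself, all of whose $\mathfrak{r}_0$-restrictions are labelled, which is exactly the premise required to label $\R$.

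The one point that needs care, and the only real obstacle, is to turn ``$\R$ meets the labelling premise'' into ``$\R$ is labelled.'' This is the closure property of the monotone recursion in Definition \ref{defin:gameS_1S_2}: the set $\mathscr{W}(S)$ of $\lambda_S$-labelled sorts is precisely the least collection closed under the two labelling clauses, so any sort whose premise is satisfied relative to already-labelled sorts must itself lie in $\mathscr{W}(S)$, for otherwise the recursion would not have terminated. No circularity arises, since when $\mathfrak{r}_0\subsetneq\mathfrak{r}$ the relevant restrictions $\Q$ lie strictly below $\R$ and are labelled independently of $\R$, by the hypothesis on $\Z$ rather than by any appeal to $\R$. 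Since $\R$ was an arbitrary $\mathfrak{r}$-restriction of $\Z$ and $\Y$ an arbitrary refinement of $\X$, this establishes $\Phi(\X,\mathfrak{r})$.
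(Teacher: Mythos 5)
Your proof is correct and follows essentially the same route as the paper's: both apply $\Phi(\X,\mathfrak{r}_0)$ to an arbitrary refinement $\Y$ of $\X$, then use transitivity of $\sqsubseteq$ to see that every $\mathfrak{r}$-restriction $\R$ of the resulting sort has all of its $\mathfrak{r}_0$-restrictions labelled, so that $\Phi(\R,\mathfrak{r}_0)$ holds and $\R$ is itself labelled by the closure property of the labelling recursion (the paper's remark that an unlabelled sort satisfies $\neg\Phi$). The only difference is presentational: the paper wraps this in a proof by contradiction starting from $\neg\Phi(\X,\mathfrak{r})$, while you argue directly.
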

\begin{proof}
If $\neg \Phi(\X,\mathfrak{r})$ there exists a refinement $\Y$ of $\X$ so that for all refinements $\Z$ of $\Y$  there exists an $\mathfrak{r}$-restriction $\R$  of $\Z$   which is not labelled. Because  $\Phi(\X,\mathfrak{r}_0,S)$ and every refinement $\Z$ of $\Y$ is a refinement of $\X$ there exists for all of the refinements $\Z$ of $ \Y$ a refinement $\Z'$ of $\Z$ for which all $\mathfrak{r}_0$-restrictions $\R'$ of $\Z'$ are are labelled. If $\R$ is an $\mathfrak{r}$-restriction of $\Z'$ then each of its $\mathfrak{r}_0$-restrictions is an $\mathfrak{r}_0$-restriction of $\Z'$ and hence all $\mathfrak{r}_0$-restrictions $\R'$ of $\R$ are are labelled. Implying $\Phi(\R,\mathfrak{r}_0)$. 

All of those refinements $\Z'$ of $\Z$ which in turn is a refinement of $\Y$ are themselves refinements of $\Y$.  Leading to the contradiction that there exists a refinement $\Z'$ of $\Y$ for which  there exists an $\mathfrak{r}$-restriction $\R$  of $\Z'$ with $\rro(\R)=\mathfrak{r}$  which is not labelled.
\end{proof} 

The proof of the next Lemma uses the assumption that $(\mathfrak{R};\subseteq)$ is  a linearly ordered set. That is  for every rank $\mathfrak{r}_0$ and every rank $\mathfrak{r}$ either $\mathfrak{r}\subseteq \mathfrak{r}_0$ or $\mathfrak{r}_0\subseteq \mathfrak{r}$. 

\begin{lem}\label{lem:gammaplI}
Let  $\mho\big(U, (\boldsymbol{\mathscr{S}};\sqsubseteq), (\mathfrak{R};\subseteq), \sigma, \rro\big)$  and let $S\subseteq U$.  Let $\X$ be a $\lambda_S$-labelled sort. Then $\phi_{\mathscr{W}(S)}(\X)$ and $\sigma(\X)\cap S$ is infinite.
\end{lem}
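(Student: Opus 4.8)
The plan is to establish the two conclusions separately. That $\sigma(\X)\cap S$ is infinite is immediate from Lemma \ref{lem:sortinfS}, since $\X$ is assumed $\lambda_S$-labelled. The real content is $\phi_{\mathscr{W}(S)}(\X)$, and I would obtain it by transfinite induction on the label $\alpha:=\lambda_S(\X)$. If $\alpha=0$ then $\phi_{\mathscr{W}(S)}(\X)$ holds by the second assertion of Lemma \ref{lem:implofform}, so assume $\alpha>0$. By Definition \ref{defin:gameS_1S_2} the sort $\X$ became labelled by means of some witnessing rank $\mathfrak{r}_0\subseteq\rro(\X)$, which is precisely to say that $\Phi(\X,\mathfrak{r}_0,S)$ holds. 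Recall that $\phi_{\mathscr{W}(S)}(\X)$ asserts $\phi_{\mathscr{W}(S)}(\X,\mathfrak{r})$ for every $\mathfrak{r}\in\mathfrak{R}$ with $\mathfrak{r}\subseteq\rro(\X)$; so I fix such an $\mathfrak{r}$ and, using the linearity of $(\mathfrak{R};\subseteq)$, split according to whether $\mathfrak{r}_0\subseteq\mathfrak{r}$ or $\mathfrak{r}\subsetneq\mathfrak{r}_0$.

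In the first case $\mathfrak{r}_0\subseteq\mathfrak{r}\subseteq\rro(\X)$, and Lemma \ref{lem:smtobig} promotes $\Phi(\X,\mathfrak{r}_0,S)$ to $\Phi(\X,\mathfrak{r},S)$, whereupon the first assertion of Lemma \ref{lem:implofform} yields $\phi_{\mathscr{W}(S)}(\X,\mathfrak{r})$. In the second case $\mathfrak{r}\subsetneq\mathfrak{r}_0$, and I would verify $\phi_{\mathscr{W}(S)}(\X,\mathfrak{r})$ directly, taking $\X$ itself as the refinement $\Y$ required in line (2) of the formula. Given an arbitrary refinement $\Z$ of $\X$, instantiate the universal quantifier of $\Phi(\X,\mathfrak{r}_0,S)$ at $\Z$ to obtain a refinement $\Z'$ of $\Z$ all of whose $\mathfrak{r}_0$-restrictions are $\lambda_S$-labelled; Item iii of Definition \ref{defin:mho-game} supplies at least one such $\mathfrak{r}_0$-restriction $\R_0$, and it is labelled. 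Provided $\mathfrak{r}_0\subsetneq\rro(\X)$ we have $\rro(\R_0)=\mathfrak{r}_0\neq\rro(\X)$, so $\R_0\neq\X$, and since $\R_0\sqsubseteq\Z'\sqsubseteq\Z\sqsubseteq\X$ transitivity gives $\R_0\sqsubset\X$; by the recursion defining $\lambda_S$ this forces $\lambda_S(\R_0)<\alpha$. The induction hypothesis then gives $\phi_{\mathscr{W}(S)}(\R_0)$, in particular $\phi_{\mathscr{W}(S)}(\R_0,\mathfrak{r})$ because $\mathfrak{r}\subsetneq\mathfrak{r}_0=\rro(\R_0)$; feeding the witnessing refinement of $\R_0$ back as its own refinement produces an $\mathfrak{r}$-restriction $\R\in\mathscr{W}(S)$ with $\R\sqsubseteq\R_0\sqsubseteq\Z$, that is, an $\mathfrak{r}$-restriction of $\Z$ lying in $\mathscr{W}(S)$. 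As $\Z$ was arbitrary, $\phi_{\mathscr{W}(S)}(\X,\mathfrak{r})$ follows.

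The one delicate point, and the step I expect to be the main obstacle, is the degenerate possibility $\mathfrak{r}_0=\rro(\X)$, in which the extracted $\R_0$ is merely a refinement of $\X$ and could a priori coincide with $\X$, blocking the induction. I would dispatch this case before the split above, by exploiting the well-foundedness built into Definition \ref{defin:gameS_1S_2}: when $\mathfrak{r}_0=\rro(\X)$, applying $\Phi(\X,\rro(\X),S)$ with $\Y=\X$ yields a refinement $\Z_0$ of $\X$ all of whose refinements are labelled; since these labels are assigned before $\X$ receives its own, $\Z_0$ is itself already labelled and hence $\Z_0\neq\X$, so $\Z_0$ is a proper refinement of $\X$ with $\lambda_S(\Z_0)<\alpha$. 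The induction hypothesis gives $\phi_{\mathscr{W}(S)}(\Z_0)$, and because a refinement of $\Z_0$ is again a refinement of $\X$ (refinement being transitive by Definition \ref{defin:mho-game}), each witness for $\phi_{\mathscr{W}(S)}(\Z_0,\mathfrak{r})$ serves verbatim as a witness for $\phi_{\mathscr{W}(S)}(\X,\mathfrak{r})$; this settles $\phi_{\mathscr{W}(S)}(\X)$ for all $\mathfrak{r}\subseteq\rro(\X)$ at once. With this case removed we may assume $\mathfrak{r}_0\subsetneq\rro(\X)$, which is exactly what the argument of the previous paragraph required. Throughout, the essential use of the hypothesis that $(\mathfrak{R};\subseteq)$ is linear is the trichotomy that lets every target rank $\mathfrak{r}$ be compared with the witnessing rank $\mathfrak{r}_0$.
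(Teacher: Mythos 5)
Your proof is correct and is essentially the paper's own argument: the paper runs the same induction in the guise of a minimal-label counterexample, disposes of label $0$ by Lemma \ref{lem:implofform}, uses linearity of $(\mathfrak{R};\subseteq)$ together with Lemma \ref{lem:smtobig} and Lemma \ref{lem:implofform} to force $\mathfrak{r}\subsetneq\mathfrak{r}_0$, and then, exactly as you do, extracts labelled $\mathfrak{r}_0$-restrictions of smaller label, applies minimality to get $\phi_{\mathscr{W}(S)}$ for them, and from those pulls out a labelled $\mathfrak{r}$-restriction; the infiniteness claim is, as you say, just Lemma \ref{lem:sortinfS}. The only structural difference is where the line-(2) witness is placed: the paper fixes once and for all the refinement $\Z$ of $\X$ supplied by the labelling condition at $\Y=\X$ (so that every refinement $\Z'$ of $\Z$ inherits labelled $\mathfrak{r}_0$-restrictions from $\Z$ by transitivity), whereas you take $\Y=\X$ itself and invoke the condition anew at each refinement $\Z$; both instantiations work, and your explicit treatment of the degenerate case $\mathfrak{r}_0=\rro(\X)$ makes visible a point the paper passes over silently.

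One step in your main case deserves tightening. You derive $\lambda_S(\R_0)<\alpha$ from ``$\R_0$ is labelled, $\R_0\sqsubset\X$, $\R_0\neq\X$''. But the recursion of Definition \ref{defin:gameS_1S_2} only makes $\lambda_S(\X)$ exceed the labels of proper predecessors that are \emph{already labelled at the moment $\X$ receives its label}; a proper predecessor that becomes labelled at a later stage of the recursion can in principle end up with a label $\geq\lambda_S(\X)$, so the inference is not available from the post-hoc formula $\Phi(\X,\mathfrak{r}_0,S)$ alone. The repair is exactly the reasoning you already use in your degenerate case: take $\Z'$ and its $\mathfrak{r}_0$-restrictions from the condition under which $\X$ \emph{became} labelled (that condition quantifies over all refinements of $\X$, so it applies to your arbitrary $\Z$); those restrictions are then labelled before $\X$, are therefore distinct from $\X$, and hence have labels below $\lambda_S(\X)$ by the very definition of $\lambda_S(\X)$. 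The paper's own wording (``Because $\Phi(\X,\mathfrak{r}_0,S)$ there exists a refinement $\Z$ of $\X$ for which all $\mathfrak{r}_0$-restrictions $\R$ of $\Z$ are labelled and $\lambda_S(\R)<\lambda_S(\X)$'') has the same wrinkle and requires the same reading, so this does not separate your argument from the paper's; it is simply the one place where ``labelled'' must be understood as ``labelled before $\X$''.
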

\begin{proof}
Let $\X$ be a labelled sort for which $\lambda_S(\X)$ is minimal amongst all of the labelled sorts for which it is not the case that $\phi_{\mathscr{W}(S)}(\X)$. Hence, according to Lemma \ref{lem:implofform}, there exists an $\mathfrak{r}\in \mathfrak{R}$ for which it is not the case that $\phi_{\mathscr{W}(S)}(\X,\mathfrak{r})$. If $\lambda_S(\X)=0$, that is if $\sigma(\X)\setminus S$ is finite, then $\phi_{\mathscr{W}(S)}(\X)$ according to Lemma~\ref{lem:implofform}. Because $\X$ is labelled there exists an $\mathfrak{r}_0\in \mathfrak{R}$ with $\Phi(\X,\mathfrak{r}_0,S)$. Implying, because $\neg\phi_{\mathscr{W}(S)}(\X)$,  according to Lemma \ref{lem:smtobig} and Lemma \ref{lem:implofform}  that $\mathfrak{r}\subsetneq \mathfrak{r}_0$. 

Every refinement $\Z$ of a sort  $\Y$ which is a refinement of $\X$  is a refinement of $\X$. Because $\Phi(\X,\mathfrak{r}_0,S)$ there exists a refinement $\Z$ of $\X$ for which all $\mathfrak{r}_0$-restrictions $\R$ of $\Z$ are labelled and $\lambda_S(\R)<\lambda_S(\X)$. (This sort $\Z$ will be used for line (2) of formula $\phi_{\mathscr{W}(S)(S)}(\X,S)$.) Implying that for every refinement $\Z'$ of $\Z$ all $\mathfrak{r}_0$-restrictions $\R$ of $\Z'$ are labelled and $\lambda_S(\R)<\lambda_S(\X)$. Then $\phi_{\mathscr{W}(S)}(\R)$, for every one of those $\mathfrak{r}_0$-restrictions $\R$ of $\Z'$,   because of the minimality of $\lambda_S(\X)$. Hence and because $\mathfrak{r}\subsetneq \rro(\R)$,  there exists a labelled $\mathfrak{r}$-restriction $\R'$ of $\R$. This labelled $\mathfrak{r}$-restriction $\R'$ of $\R$ is a labelled $\mathfrak{r}$-restriction of $\Z'$.
\end{proof}

\begin{thm}\label{thm:ramsW}
Let $\mho\big(U, (\boldsymbol{\mathscr{S}};\sqsubseteq), (\mathfrak{R};\subseteq), \sigma, \rro\big)$ and let $S\subseteq U$ and let $(S,\overline{S}=U\setminus S)$ be a partition of $U$. Then  $S$ is large with witness $\mathscr{W}(S)$ or $\overline{S}$ is large with witness $\overline{\mathscr{W}(S)}$. 
\end{thm}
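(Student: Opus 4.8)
The plan is to split on whether the maximum sort $\U$ receives a label under the recursive labelling $\lambda_S$. This dichotomy is exhaustive, and the substantive work has already been carried out in the preparatory Lemmas \ref{lem:gammaplI} and \ref{lem:formphion}, so the argument reduces to reading off the definition of ``large'' in each of the two cases.

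First I would suppose that $\U$ is labelled by $\lambda_S$, i.e.\ $\U\in \mathscr{W}(S)$, and claim that $\mathscr{W}(S)$ is a witness for $S$ being large. By the definition of largeness one needs $\U\in \mathscr{W}(S)$, which holds by the case assumption, together with the requirement that for every $\X\in \mathscr{W}(S)$ the set $S\cap\sigma(\X)$ is infinite and the self-referential formula $\phi_{\mathscr{W}(S)}(\X)$ holds. Both of these are exactly the conclusion of Lemma \ref{lem:gammaplI} applied to each labelled sort $\X$. Hence $S$ is large with witness $\mathscr{W}(S)$.

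Next I would suppose that $\U$ is not labelled by $\lambda_S$, i.e.\ $\U\in \overline{\mathscr{W}(S)}$, and claim that $\overline{\mathscr{W}(S)}$ is a witness for $\overline{S}$ being large. Again the required membership $\U\in \overline{\mathscr{W}(S)}$ holds by the case assumption, and for every sort $\X\in \overline{\mathscr{W}(S)}$ Lemma \ref{lem:formphion} delivers precisely that $\sigma(\X)\cap\overline{S}$ is infinite and that $\phi_{\overline{\mathscr{W}(S)}}(\X)$ holds. This is exactly what it means for $\overline{S}$ to be large with witness $\overline{\mathscr{W}(S)}$. Since every $S\subseteq U$ falls into one of the two cases, the theorem follows.

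I do not expect any real obstacle at this stage. The genuine difficulty, namely controlling how the four-quantifier formula $\phi_{\mathscr{W}}(\X)$ propagates downward along the order $(\boldsymbol{\mathscr{S}};\sqsubseteq)$ and how it interacts with the labelling, was absorbed into Lemma \ref{lem:gammaplI} (which crucially uses the linearity of $(\mathfrak{R};\subseteq)$) and into the supporting label calculus of Lemmas \ref{lem:sortinfS}, \ref{lem:implofform} and \ref{lem:smtobig}. Consequently Theorem \ref{thm:ramsW} is essentially a one-line case split. The only point that needs care is the self-reference in the definition of a witness: the set named in line $(4)$ of $\phi$ must be the very collection $\mathscr{W}(S)$ (respectively $\overline{\mathscr{W}(S)}$) being offered as the witness, and this is exactly the form in which the two lemmas are stated.
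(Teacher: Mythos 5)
Your proposal is correct and is essentially the paper's own proof: the same case split on whether $\U$ is $\lambda_S$-labelled, with Lemma \ref{lem:gammaplI} handling the labelled case and Lemma \ref{lem:formphion} handling the unlabelled case. If anything, your citation of Lemma \ref{lem:formphion} in the second case is more precise than the paper's text, which cites Lemma \ref{lem:gammaplI} for both cases (evidently a slip, since Lemma \ref{lem:gammaplI} speaks only of labelled sorts).
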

\begin{proof}
If the type $\U$ is $\lambda_S$-labelled then $S$ is large with witness $\mathscr{W}(S)$ according to Lemma \ref{lem:gammaplI}. If the type $\U$ is not $\lambda_S$-labelled then $\overline{S}$ is large with witness $\overline{\mathscr{W}(S)}$ according to Lemma \ref{lem:gammaplI}. 

\end{proof}

\begin{defin}\label{defin:gammaV}
Let $\mho\big(U, (\boldsymbol{\mathscr{S}};\sqsubseteq), (\mathfrak{R};\subseteq), \sigma, \rro\big)$. For $\mathscr{Z}\subseteq \boldsymbol{\mathscr{S}}$ let:
\begin{align*}
& \gamma_\mathscr{Z}(\X):=
\begin{cases}
\text{For all  refinements $\Y$ of $\X$}       &(1)\\
\text{For all $\mathfrak{r}\in \mathfrak{R}$ with $\mathfrak{r}\subseteq \boldsymbol{\rho}(\X)=\boldsymbol{\rho}(\Y)$}   &(2)\\  
\text{there exists an $\mathfrak{r}$-restriction  $\R$ of $\Y$ with $\R\in \mathscr{Z}$}.                          &(3)
\end{cases}
\end{align*}
\end{defin}

Let $\mho\big(U, (\boldsymbol{\mathscr{S}};\sqsubseteq), (\mathfrak{R};\subseteq), \sigma, \rro\big)$.  Let $S$ be a large subset of  \/ $U$ with witness $\mathscr{W}$. For $\X\in \mathscr{W}$  let $\X'$ be the sort supplied by formula $\phi_{\mathscr{W}}(\X,\rro(\X),S)$ in line (2) of formula $\phi_{\mathscr{W}}(\X,S)$. Let $\mathscr{V}=\{\X'\mid \X\in \mathscr{W}\}$. It follows that if $\X\in \mathscr{V}$ then $\psi_{\mathscr{V}}(\X)$ holds, which is: 
\[  
\psi_{\mathscr{V}}(\X):=
\begin{cases}
\text{for all   refinements $\Y\ $ of $\X$}                                                                                              &(3)\\
\text{there exists a $\rro(X)$-restriction  $\R\in \mathscr{W}$ of $\Y$}.                    &(4)
\end{cases}
\]
If $\X\in \mathscr{V}$ then the set $\sigma(\X)\cap S$ is infinite because  $\X\sqsupseteq \Y\sqsupseteq  \R$ and $\sigma(\R)\cap S$ is infinite.

\begin{lem}\label{lem:phi2psi} 
Let $\mho\big(U, (\boldsymbol{\mathscr{S}};\sqsubseteq), (\mathfrak{R};\subseteq), \sigma, \rro\big)$ and  let $S$ be a large subset of\/ $U$ with witness $\mathscr{W}$. Then for every element  $\X\in \mathscr{V}$ the set $S\cap \sigma(\X)$ is infinite and formula $\gamma_{\mathscr{V}}(\X)$ holds.
\end{lem}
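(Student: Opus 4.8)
The plan is to prove the two assertions separately, first disposing of the infinitude of $S\cap\sigma(\X)$ and then establishing $\gamma_{\mathscr{V}}(\X)$ by a descent that alternates between the witness $\mathscr{W}$ and the derived set $\mathscr{V}$. Throughout I would use only that membership in $\mathscr{V}$ guarantees the necessary condition $\psi_{\mathscr{V}}$, together with the strong property $\phi_{\mathscr{W}}$ enjoyed by every sort of $\mathscr{W}$, and the fact that a $\mathfrak{r}$-restriction of rank equal to $\rro$ is the same thing as a refinement.

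For the infinitude, I would note that since $\X\in\mathscr{V}$ the formula $\psi_{\mathscr{V}}(\X)$ holds, so applying it to the refinement $\Y=\X$ of $\X$ produces a $\rro(\X)$-restriction $\R\in\mathscr{W}$ of $\X$. As $\R\sqsubseteq\X$ gives $\sigma(\R)\subseteq\sigma(\X)$ by item~i of Definition~\ref{defin:mho-game}, and as $\sigma(\R)\cap S$ is infinite because $\R\in\mathscr{W}$ witnesses that $S$ is large, the set $\sigma(\X)\cap S$ is infinite.

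For $\gamma_{\mathscr{V}}(\X)$ I would fix a refinement $\Y$ of $\X$ and a rank $\mathfrak{r}\in\mathfrak{R}$ with $\mathfrak{r}\subseteq\rro(\X)=\rro(\Y)$, and seek an $\mathfrak{r}$-restriction of $\Y$ lying in $\mathscr{V}$. The first step descends from $\mathscr{V}$ into $\mathscr{W}$: because $\psi_{\mathscr{V}}(\X)$ holds and $\Y$ is a refinement of $\X$, there is a $\rro(\X)$-restriction $\R\in\mathscr{W}$ of $\Y$, and $\R$ is in fact a refinement of $\Y$ since $\rro(\R)=\rro(\X)=\rro(\Y)$. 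Now I exploit that $\R\in\mathscr{W}$ carries $\phi_{\mathscr{W}}(\R)$: as $\mathfrak{r}\subseteq\rro(\R)$, the subformula $\phi_{\mathscr{W}}(\R,\mathfrak{r})$ supplies a refinement $\Q$ of $\R$ such that every refinement of $\Q$ has an $\mathfrak{r}$-restriction in $\mathscr{W}$; applying this to the refinement $\Q$ of itself yields an $\mathfrak{r}$-restriction $\W\in\mathscr{W}$ of $\Q$. Finally I return from $\mathscr{W}$ to $\mathscr{V}$: since $\W\in\mathscr{W}$, the sort $\W'$ supplied in line~(2) of $\phi_{\mathscr{W}}(\W,\rro(\W))$ belongs to $\mathscr{V}$ by the very definition of $\mathscr{V}$, and being a refinement of $\W$ it has $\rro(\W')=\rro(\W)=\mathfrak{r}$. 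Chasing $\W'\sqsubseteq\W\sqsubseteq\Q\sqsubseteq\R\sqsubseteq\Y$ and using transitivity of $\sqsubseteq$ gives $\W'\sqsubseteq\Y$, so $\W'$ is an $\mathfrak{r}$-restriction of $\Y$ with $\W'\in\mathscr{V}$, which is exactly what line~(3) of $\gamma_{\mathscr{V}}(\X)$ demands.

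The only real difficulty here is organizational rather than mathematical: one must remember that $\mathscr{V}$-membership is known to us only through the weaker condition $\psi_{\mathscr{V}}$, so $\phi_{\mathscr{W}}$ cannot be invoked on $\X$ directly. This is what forces the two-stage descent $\mathscr{V}\to\mathscr{W}\to\mathscr{V}$, and the step I expect to require the most care is recognizing that the map $\W\mapsto\W'$ is precisely the device that reconverts the $\mathscr{W}$-membership obtained at rank $\mathfrak{r}$ back into the $\mathscr{V}$-membership required, while the relation $\W'\sqsubseteq\W$ preserves the property of being an $\mathfrak{r}$-restriction of $\Y$.
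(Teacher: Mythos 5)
Your proof is correct, and its core is the same two-stage descent as the paper's own argument: use $\psi_{\mathscr{V}}(\X)$ to pass from the given refinement $\Y$ of $\X$ to a $\rro(\X)$-restriction of $\Y$ lying in $\mathscr{W}$, then invoke $\phi_{\mathscr{W}}$ on that sort to obtain an $\mathfrak{r}$-restriction of $\Y$ in $\mathscr{W}$; likewise your infinitude argument ($\R\sqsubseteq \X$ with $\R\in\mathscr{W}$, plus item i of Definition \ref{defin:mho-game}) coincides with the paper's remark immediately preceding the lemma. Where you genuinely differ is your third step, the conversion $\W\mapsto \W'$: the paper's proof stops as soon as it has produced an $\mathfrak{r}$-restriction $\R\in\mathscr{W}$ of $\Y$, which, read literally, establishes $\gamma_{\mathscr{W}}(\X)$ rather than $\gamma_{\mathscr{V}}(\X)$. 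Since there is no inclusion $\mathscr{W}\subseteq\mathscr{V}$ in general, the statement as written does require your extra move of replacing $\W$ by the sort $\W'\in\mathscr{V}$ supplied by line (2) of $\phi_{\mathscr{W}}(\W,\rro(\W),S)$, which is a refinement of $\W$ and hence still an $\mathfrak{r}$-restriction of $\Y$. This is not a cosmetic point: Lemma \ref{lem:phi2psi} is used to prove Theorem \ref{thm:fin game}, where the restrictions must land in the refinement closure $\boldsymbol{\mathscr{V}}$ of $\mathscr{V}$, and membership in $\mathscr{W}$ alone would not put them there, whereas membership in $\mathscr{V}$ does. So your write-up is, if anything, more complete than the paper's proof, which leaves this last reconversion implicit.
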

\begin{proof} 
Let $\X\in \mathscr{V}$ then $\psi_{\mathscr{V}}(\X)$.  Let $\Y$ be a  refinement of $\X$ and let $\mathfrak{r}\in \mathfrak{R}$ with $\mathfrak{r}\subseteq \rro(\X)$. Then according to formula $\psi_{\mathscr{V}}(\X)$ there exists a $\rro(\X)$-restriction  $\T\in \mathscr{W}$ of $\Y$.  Because $\T\in \mathscr{W}$ formula $\phi_{\mathscr{W}}(\T)$ holds.  Implying that there exists an $\mathfrak{r}$-restriction $\R\in \mathscr{W}$ of $\T$ and in turn of $\Y$ because a restriction of a refinement of $\Y$ is a restriction of $\Y$. 
 \end{proof}
Note that if $\X\in \mathscr{V}$ and if $\Y$ is a refinement of $\X$ then $\gamma(\Y)$ and $\sigma(\Y)\cap S$ is infinite because a refinement of a refinement is a refinement.  Let $\boldsymbol{\mathscr{V}}:=\{\Y\in \boldsymbol{\mathscr{S}}\mid \exists \X\in \mathscr{V}\, (\text{$\Y$ is a refinement of $\X$)}\}$. If $\Y\in \boldsymbol{\mathscr{V}}$ then  $\Z\in \boldsymbol{\mathscr{V}}$ if $\Z$ is a refinement of $\Y$.  The set $\boldsymbol{\mathscr{V}}$ is the {\em refinement closure} of $\mathscr{V}$.

\begin{thm}\label{thm:fin game}
Let $\mho\big(U, (\boldsymbol{\mathscr{S}};\sqsubseteq), (\mathfrak{R};\subseteq), \sigma, \rro\big)$ and $(S,P)$ be a partition of $U$. Then $S$ or $P$ is large, say $S$. There exists then a  refinement $\V$ of $\U$  and a subset $\boldsymbol{\mathscr{V}}\subseteq \boldsymbol{\mathscr{S}}$ so that $\V\in \boldsymbol{\mathscr{V}}$  and for all $\X\in \boldsymbol{\mathscr{V}}$:
\begin{enumerate}
\item $S\cap \sigma(\X)$ is infinite. 
\item If $\Y$ is a  refinement of $\X$ then $\Y\in \boldsymbol{\mathscr{V}}$. 
\item Formula $\gamma_{\boldsymbol{\mathscr{V}}}(\X)$ holds. 
\end{enumerate}
\end{thm}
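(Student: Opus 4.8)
The plan is to assemble the machinery built in the run-up to the statement; essentially nothing new is needed, only a careful reindexing. First I would invoke Theorem \ref{thm:ramsW}, which says that for the partition $(S,\overline{S})$ one of $S$, $\overline{S}=P$ is large; relabelling if necessary, I assume $S$ is large with witness $\mathscr{W}:=\mathscr{W}(S)$. By the definition of a witness we have $\U\in\mathscr{W}$, so the construction is anchored at the maximum sort $\U$ guaranteed by item iv of Definition \ref{defin:mho-game}.

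Next I would recall the set $\mathscr{V}=\{\X'\mid \X\in\mathscr{W}\}$ introduced in the paragraph preceding Lemma \ref{lem:phi2psi}, where $\X'$ is the refinement of $\X$ delivered by line (2) of $\phi_{\mathscr{W}}(\X,\rro(\X),S)$ (this formula holds for every $\X\in\mathscr{W}$). Taking $\V:=\U'$ produces a refinement of $\U$ lying in $\mathscr{V}$. I then set $\boldsymbol{\mathscr{V}}$ to be the refinement closure of $\mathscr{V}$, namely
\[
\boldsymbol{\mathscr{V}}:=\{\Y\in\boldsymbol{\mathscr{S}}\mid \exists\,\X\in\mathscr{V}\ (\Y\text{ is a refinement of }\X)\}.
\]
Since every sort is a refinement of itself, $\mathscr{V}\subseteq\boldsymbol{\mathscr{V}}$, and therefore $\V\in\boldsymbol{\mathscr{V}}$.

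For the three listed properties I would argue as follows. Property (2) is immediate: a refinement of a refinement is a refinement (Definition \ref{defin:mho-game}), so $\boldsymbol{\mathscr{V}}$ is closed under refinement. For (3), fix $\X\in\boldsymbol{\mathscr{V}}$ and choose $\X_0\in\mathscr{V}$ with $\X$ a refinement of $\X_0$; Lemma \ref{lem:phi2psi} gives $\gamma_{\mathscr{V}}(\X_0)$. Given a refinement $\Y$ of $\X$ and a rank $\mathfrak{r}\subseteq\rro(\Y)=\rro(\X)=\rro(\X_0)$, I note that $\Y$ is also a refinement of $\X_0$, so $\gamma_{\mathscr{V}}(\X_0)$ supplies an $\mathfrak{r}$-restriction $\R\in\mathscr{V}\subseteq\boldsymbol{\mathscr{V}}$ of $\Y$; this is exactly $\gamma_{\boldsymbol{\mathscr{V}}}(\X)$. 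For (1), I apply $\gamma_{\mathscr{V}}(\X_0)$ to the refinement $\X$ of $\X_0$ with $\mathfrak{r}=\rro(\X)$: an $\rro(\X)$-restriction is a refinement, so I obtain a refinement $\R\in\mathscr{V}$ of $\X$, and by Lemma \ref{lem:phi2psi} the set $\sigma(\R)\cap S$ is infinite; since $\R\sqsubseteq\X$ yields $\sigma(\R)\subseteq\sigma(\X)$ by item i of $\mho$, the set $\sigma(\X)\cap S$ is infinite as well.

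The whole argument is bookkeeping rather than genuine mathematics, so I do not expect a conceptual obstacle; the one point that I expect to be most error-prone is the index change from $\gamma_{\mathscr{V}}$, as delivered by Lemma \ref{lem:phi2psi}, to the required $\gamma_{\boldsymbol{\mathscr{V}}}$. Getting this right hinges on two small observations that must be stated explicitly: that $\mathscr{V}\subseteq\boldsymbol{\mathscr{V}}$ (so restrictions landing in $\mathscr{V}$ automatically land in $\boldsymbol{\mathscr{V}}$), and that rank is constant along chains of refinements, so that every rank $\mathfrak{r}$ admissible below $\X$ is also admissible below $\X_0$, allowing the witnessing formula at $\X_0$ to be reused verbatim at the refined sort $\X$.
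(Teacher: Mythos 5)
Your proposal is correct and follows essentially the same route as the paper: invoke Theorem \ref{thm:ramsW} to get that $S$ or $P$ is large, take $\mathscr{V}=\{\X'\mid \X\in\mathscr{W}\}$ with $\V=\U'$, and let $\boldsymbol{\mathscr{V}}$ be the refinement closure, with the three properties coming from Lemma \ref{lem:phi2psi} and the observation that a refinement of a refinement is a refinement. The only difference is that you write out explicitly the transfer from $\gamma_{\mathscr{V}}(\X_0)$ to $\gamma_{\boldsymbol{\mathscr{V}}}(\X)$ and the infiniteness of $S\cap\sigma(\X)$, which the paper leaves implicit in the discussion preceding the theorem.
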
 
\begin{proof} 
Let $(S,P)$ be a partition of $U$. It follows from Theorem \ref{thm:ramsW} that $S$ or $P$ is large, say $S$ is large, with witness, say $\mathscr{W}$. Let $\mathscr{V}=\{X'\mid \text{$\X\in \mathscr{W}\}$ and $\rro(\X)\supseteq \mathfrak{r}\in \mathfrak{R}$} \}$ and let $\V=\U'$. Let $\boldsymbol{\mathscr{V}}$ be the refinement closure of $\mathscr{V}$. 
\end{proof}

\subsection{$\mho$ for  homogeneous structures with free amalgamation}

For this subsection, let $\mathrm{U}$ be a  countable homogeneous structure whose age has free amalgamation. Let $U$ be the set of elements of $\mathrm{U}$ and let $\boldsymbol{\mathscr{S}}$ denote the set of formed types of $\mathrm{U}$. For two types $\X$ and $\Y$ in $\mathscr{S}$ let $\X\sqsubseteq \Y$ if $\X$ is a successor of $\Y$.   Let $\mathfrak{R}$ denote the set of ranks of the formed  types $\boldsymbol{\mathscr{S}}$. For $\boldsymbol{\mathscr{V}}\subseteq \mathscr{S}$ and $\X\in \mathscr{S}$ the formula $\gamma_{\boldsymbol{\mathscr{V}},\mathscr{S}}(\X)$ is:
\begin{align*}
& \gamma_{\boldsymbol{\mathscr{V}},\mathscr{S}}(\X):=
\begin{cases}
\text{For all  refinements $\Y\in \mathscr{S}$ of $\X$}       &(1)\\
\text{For all $\mathfrak{r}\in \mathfrak{R}$ with $\mathfrak{r}\subseteq \boldsymbol{\rho}(\X)=\boldsymbol{\rho}(\Y)$}   &(2)\\  
\text{there exists an $\mathfrak{r}$-restriction  $\R\in \mathscr{S}$ of $\Y$ with $\R\in \boldsymbol{\mathscr{V}}$}.                          &(3)
\end{cases}
\end{align*}

\begin{defin}\label{defin:constrnewb}
Let $\mathrm{U}$ be a countable homogeneous structure which is rank linear and whose age has free amalgamation and let $U$ be the set of elements of $\mathrm{U}$. Let $\U=\langle \emptyset\tr x\rangle$ and let $S\subseteq U$.  A subset  $\boldsymbol{\mathscr{V}}$ of the set $\boldsymbol{\mathscr{S}}$ of formed types is {\em constructive for $S$}  if there exists a type $\V\in  \boldsymbol{\mathscr{V}}$ with $\rro(\V)=\rro(\U)$ and so that for all  types $\X\in  \boldsymbol{\mathscr{V}}$
\begin{enumerate}
\item $S\cap \sigma(\X)$ is infinite. 
\item If $\Y$ is a  refinement of $\X$ then $\Y\in \boldsymbol{\mathscr{V}}$. 
\item Formula $\gamma_{\boldsymbol{\mathscr{V}},\mathscr{S}}(\X)$ holds. 
\end{enumerate}
 
\end{defin}

The following Theorem \ref{thm:rmsfreeh} is  a direct consequence of Theorem~\ref{thm:fin game} and Lemma \ref{lem:realsortyp}. 

\begin{thm}\label{thm:rmsfreeh}
Let $\mathrm{U}$ be a countable homogeneous structure which is rank linear and whose age has free amalgamation. Let $U$ be the set of elements of $\mathrm{U}$. If $(S,P)$ is a partition of $U$ then there exists a constructive set $\boldsymbol{\mathscr{V}}\subseteq \mathscr{S}$ for $S$ or there exists  a constructive set $\boldsymbol{\mathscr{V}}\subseteq \mathscr{S}$ for $P$. 
\end{thm}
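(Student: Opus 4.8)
The plan is to instantiate the abstract framework of Section~\ref{sect:basvertthm} with the concrete poset of formed types and then read off the conclusion of Theorem~\ref{thm:fin game} in the vocabulary of Definition~\ref{defin:constrnewb}. The content of the statement lives entirely in Lemma~\ref{lem:realsortyp} and Theorem~\ref{thm:fin game}; what remains is pure translation.

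First I would apply Lemma~\ref{lem:realsortyp}. Since $\mathrm{U}$ is rank linear and its age has free amalgamation, taking $\boldsymbol{\mathscr{S}}$ to be the set of formed types, $\X\sqsubseteq\Y$ to mean that $\X$ is a successor of $\Y$, and $\mathfrak{R}$ the set of ranks of formed types, Lemma~\ref{lem:realsortyp} guarantees that $\mho\big(U,(\boldsymbol{\mathscr{S}};\sqsubseteq),(\mathfrak{R};\subseteq),\sigma,\rro\big)$ holds. This is precisely the hypothesis of Theorem~\ref{thm:fin game}.

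Next I would apply Theorem~\ref{thm:fin game} to the partition $(S,P)$. It yields that one of $S$, $P$ is large --- say $S$ --- together with a refinement $\V$ of $\U$ and a set $\boldsymbol{\mathscr{V}}\subseteq\boldsymbol{\mathscr{S}}$ such that $\V\in\boldsymbol{\mathscr{V}}$ and, for every $\X\in\boldsymbol{\mathscr{V}}$, the set $S\cap\sigma(\X)$ is infinite, every refinement of $\X$ again lies in $\boldsymbol{\mathscr{V}}$, and $\gamma_{\boldsymbol{\mathscr{V}}}(\X)$ holds. It then remains only to check that this $\boldsymbol{\mathscr{V}}$ meets the three requirements of Definition~\ref{defin:constrnewb} for $S$. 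Requirements (1) and (2) are verbatim the conclusions of Theorem~\ref{thm:fin game}. For the demanded type, recall from Definition~\ref{defin:mho-game} that a refinement $\V$ of $\U$ satisfies $\rro(\V)=\rro(\U)$, so the required $\V\in\boldsymbol{\mathscr{V}}$ with $\rro(\V)=\rro(\U)$ is exactly the $\V$ produced by Theorem~\ref{thm:fin game}.

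The only genuine point to verify --- and the place where the abstraction of Section~\ref{sect:basvertthm} must be reconciled with the present setting --- is that requirement (3) of Definition~\ref{defin:constrnewb}, namely $\gamma_{\boldsymbol{\mathscr{V}},\mathscr{S}}(\X)$, coincides with the formula $\gamma_{\boldsymbol{\mathscr{V}}}(\X)$ supplied by Theorem~\ref{thm:fin game}. But $\gamma_{\boldsymbol{\mathscr{V}},\mathscr{S}}$ is by definition just $\gamma_{\boldsymbol{\mathscr{V}}}$ with its sort quantifiers spelled out as quantifiers over the set $\mathscr{S}$ of formed types; since Lemma~\ref{lem:realsortyp} was applied with $\boldsymbol{\mathscr{S}}=\mathscr{S}$, the two formulas are literally identical. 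Hence $\boldsymbol{\mathscr{V}}$ is constructive for $S$. Running the same argument in the symmetric case where $P$ is large produces a constructive set for $P$, which completes the proof. I do not anticipate any real obstacle: the step is bookkeeping between the abstract and concrete vocabularies, and the substantive work has already been done upstream.
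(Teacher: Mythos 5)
Your proposal is correct and takes essentially the same route as the paper, whose entire proof is the remark that Theorem \ref{thm:rmsfreeh} is a direct consequence of Lemma \ref{lem:realsortyp} and Theorem \ref{thm:fin game}. Your write-up merely makes explicit the bookkeeping the paper leaves implicit: that a refinement $\V$ of $\U$ has $\rro(\V)=\rro(\U)$ as required by Definition \ref{defin:constrnewb}, and that $\gamma_{\boldsymbol{\mathscr{V}},\mathscr{S}}$ coincides with the abstract $\gamma_{\boldsymbol{\mathscr{V}}}$ once $\boldsymbol{\mathscr{S}}$ is instantiated as the set of formed types.
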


\section{Constructing a monochromatic copy of $\U$}\label{sect:constru1}

Let $\mathrm{U}$ be a  countable, oligomorphic,    free amalgamation  homogeneous relational structure. Let $\mathscr{S}$ be the set of formed types of $\mathrm{U}$.  

Let $(S,P)$ be a partition of $U$. It follows from Theorem \ref{thm:rmsfreeh}   that for one of the parts $S$ or $P$, say $S$, there exists a refinement $\V$ of $\U=\langle \emptyset\tr x\rangle$ and a constructive set  $\boldsymbol{\mathscr{V}}$ of types of $\mathrm{U}$ so that $\V\in \boldsymbol{\mathscr{V}}$  and $S\cap \sigma(\X)$ is infinite for all $\X\in \boldsymbol{\mathscr{V}}$.

 \begin{lem}\label{lem:existastolog}
 Let the structure $\mathrm{U}$ be oligomorphic and let $\mathfrak{R}$ be the set of ranks of the types of $\mathrm{U}$.  Let $\mathcal{C}\subseteq \boldsymbol{\mathscr{V}}$ be a bundle. Associated with every type $\C\in \mathcal{C}$ is a number $n_\C\in\omega$ and an $n_\C$ tuple $\tau(\C)=(\mathfrak{r}_{0,\C}, \mathfrak{r}_{1,\C}, \mathfrak{r}_{2,\C}, \dots, \mathfrak{r}_{n_\C-1,\C})$ with $\rro(\C)\supseteq \mathfrak{r}_i\in \mathfrak{R}$ for all $i\in n_\C$. Then:

   There exists a $^\ast$-successor $\boldsymbol{\mathscr{V}}\supseteq\mathcal{B}=\{\B(\C,i)\mid \text{ $\C\in \mathcal{C}$ and $i\in n_\C$}\}$ of $\mathcal{C}$ so that $\rro(\B(\C,i))= \mathfrak{r}_{i,\C}$ and $\B(\C,i)$ is a successor of $\C$ for every $\C\in \mathcal{C}$ and $i\in n_\C$. 
 \end{lem}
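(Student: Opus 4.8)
The plan is to reduce the statement to a finite construction, realize each prescribed rank by a free‑position sum, thread all the resulting types onto one common sockel by means of the free‑successor machinery, and finally invoke the constructive character of $\boldsymbol{\mathscr{V}}$ to guarantee membership. First I would use oligomorphy: by Note \ref{note:iligolig} the set of types with sockel $\iota(\mathcal{C})$ is finite, so $\mathcal{C}$ is finite, and since each $n_\C$ is finite the index set $I=\{(\C,i):\C\in\mathcal{C},\ i\in n_\C\}$ is finite. I fix once and for all a linear order $\preceq$ on $I$; this will serve as the $^\ast$‑linear order of the bundle $\mathcal{B}$ being built, and I write $\B(\C,i)\prec\B(\C',i')$ whenever $(\C,i)\prec(\C',i')$.

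For the core per‑pair construction, given $(\C,i)\in I$ I have $\mathfrak{r}_{i,\C}\subseteq\rro(\C)$, so Lemma \ref{lem:restriction} supplies a type $\Z_{\C,i}$ with $\rro(\C+\Z_{\C,i})=\mathfrak{r}_{i,\C}$. Using free types together with Lemma \ref{lem:freeduch} I translate the $\Z_{\C,i}$ by automorphisms so that their sockels become pairwise disjoint, disjoint from $\iota(\mathcal{C})$, and in mutual free position. I set $\B(\C,i)^{\ast}:=\iota(\Z_{\C,i})$, let $E:=\bigcup_{(\C,i)\in I}\B(\C,i)^{\ast}$ be this disjoint union, and put $\iota(\mathcal{B}):=\iota(\mathcal{C})\cup E$. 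The reason for using the genuine sum $\C+\Z_{\C,i}$, rather than an arbitrary rank‑$\mathfrak{r}_{i,\C}$ restriction, is that by Lemma \ref{lem:trmeldcan} (equivalently by Lemma \ref{lem:movestrsucc2}, $\C+\Z_{\C,i}$ being a formed $\C+$ successor) it is already a melding successor of $\C$ of exactly the wanted rank.

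Next I assemble a $^\ast$‑successor. For $\B=\B(\C,i)$ write $\B{\uparrow}^{\ast}=\bigcup_{\D\succ\B}\D^{\ast}$ and $\B{\downarrow}^{\ast}=\bigcup_{\D\prec\B}\D^{\ast}$, and define $\B$ to be the free $\B{\uparrow}^{\ast}$‑successor of the base obtained from $\C+\Z_{\C,i}$ by a further free $(\B{\downarrow}^{\ast})$‑successor. By Lemma \ref{lem:freeduch} free successors preserve rank, so $\rro(\B(\C,i))=\mathfrak{r}_{i,\C}$, each $\B(\C,i)$ is a successor of $\C$ with sockel $\iota(\mathcal{B})$, and each is a formed type. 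I then check the three clauses of Definition \ref{defin:neutralsucc}: clause (1) holds since the sets $\B(\C,i)^{\ast}$ are pairwise disjoint; clause (2) holds by construction, $\B$ being the free $\B{\uparrow}^{\ast}$‑successor of its base; clause (3) holds because the base is $\C+\Z_{\C,i}$ (melding over $\C$) followed by a free successor (melding by Lemma \ref{lem:freeduch}), and a melding successor of a melding successor is melding. Thus $\mathcal{B}$ is a $^\ast$‑successor of $\mathcal{C}$, and by Lemma \ref{lem:frmeldduu} a melding successor of $\mathcal{C}$.

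The remaining and hardest point is $\mathcal{B}\subseteq\boldsymbol{\mathscr{V}}$, and here the freedom in the choices above must be spent. I would run the construction inductively along $\preceq$: having produced the part of $\mathcal{B}$ below $(\C,i)$ inside $\boldsymbol{\mathscr{V}}$, I first refine $\C$ to the sockel accumulated so far by a free successor, which has rank $\rro(\C)$ and so lies in $\boldsymbol{\mathscr{V}}$ by clause (2) of Definition \ref{defin:constrnewb} (refinement closure); applying $\gamma_{\boldsymbol{\mathscr{V}},\mathscr{S}}$ (clause (3) of Definition \ref{defin:constrnewb}) to this refinement with target rank $\mathfrak{r}_{i,\C}$ yields an $\mathfrak{r}_{i,\C}$‑restriction inside $\boldsymbol{\mathscr{V}}\cap\mathscr{S}$, which I use as the genuine part of $\B(\C,i)$. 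The types already committed are then extended over the newly introduced sockel elements only by free successors, so they remain refinements of their previous versions, stay in $\boldsymbol{\mathscr{V}}$ by refinement closure, and retain their $^\ast$‑successor structure (the new elements become free directions above them). The main obstacle I expect is precisely this reconciliation: $\gamma_{\boldsymbol{\mathscr{V}},\mathscr{S}}$ asserts only the \emph{existence} of some restriction in $\boldsymbol{\mathscr{V}}$, whereas the $^\ast$‑successor demands a rigid common sockel together with the melding clause (3). In the binary case the difficulty disappears because every successor is melding (Lemma \ref{lem:binstrmeld}) and each $\Y^{\langle-1\rangle}$ is a singleton; in general it is resolved exactly by working inside the formed types, so that the restriction supplied by $\gamma_{\boldsymbol{\mathscr{V}},\mathscr{S}}$ is melding over $\C$ (Lemma \ref{lem:movestrsucc2}), and by introducing all further sockel elements as free successors (Lemma \ref{lem:freeduch}), which change neither ranks nor membership in $\boldsymbol{\mathscr{V}}$, with finiteness of $I$ ensuring that the induction terminates after finitely many sockel extensions.
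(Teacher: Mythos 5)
Your operative argument --- the induction in the final paragraph, where each new type is the $\mathfrak{r}_{i,\C}$-restriction in $\boldsymbol{\mathscr{V}}$ supplied by $\gamma_{\boldsymbol{\mathscr{V}},\mathscr{S}}$ applied to the free-successor refinement of $\C$ over the accumulated sockel, while the previously committed types are lifted by free successors (hence remain refinements and stay in $\boldsymbol{\mathscr{V}}$ by clause (2) of Definition \ref{defin:constrnewb}) and the new type is made $\preceq$-largest with $^\ast$-set the freshly added sockel elements --- is essentially identical to the paper's proof, which inducts on $\sum_{\C\in\mathcal{C}} n_\C$, takes $\R\in\boldsymbol{\mathscr{V}}$ as a restriction of the free $E$-successor $\D'$ of $\D$, sets $\R^\ast=F=\iota(\R)\setminus\iota(\D')$, and replaces each $\B\in\mathcal{B}$ by its free $F$-extension $\hat{\B}$ with $\hat{\B}^\ast=\B^\ast$. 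Your opening one-shot construction via Lemma \ref{lem:restriction} could not by itself guarantee $\mathcal{B}\subseteq\boldsymbol{\mathscr{V}}$, which you correctly recognize before replacing it, so the proposal as a whole follows the paper's route.
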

 \begin{proof}
Note that if such a $^\ast$-successor $\mathcal{B}$ exists and $n_\C=0$ for some $\C\in \mathcal{C}$ then the free $(\iota(\mathcal{B})\setminus \iota(\mathcal{C}))$-successor, say $\B$,  is the only successor of  $\mathcal{C}$ in $\mathcal{B}$ and that then $\B^\ast=\emptyset$. 

By induction on the number  $n=\sum_{\C\in\mathcal{C}}n_\C$. Because $\U$ is oligomorphic the bundle $\mathcal{C}$ consists of finitely many types, implying that $n\in \omega$.  If  $n=0$ let $\mathcal{B}$ be the trivial $^\ast$ successor of $\mathcal{C}$.   Let $\mathcal{C}\in \boldsymbol{\mathscr{V}}$ be a bundle, $\tau$ a function which associates as above an $n_\C$ tuple of ranks  with every type $\C\in \mathcal{C}$ and  $\mathcal{B}=\{\B(\C,i)\mid \text{ $\C\in \mathcal{C}$ and $i\in n_\C$}\}$ a $^\ast$-successor of the bundle $\mathcal{C}$ with $\preceq$ the $^\ast$ linear order of $\mathcal{B}$. Let $\D\in \mathcal{C}$ with $\tau(\D)=(\mathfrak{r}_{0,\D}, \mathfrak{r}_{1,\D}, \mathfrak{r}_{2,\D}, \dots, \mathfrak{r}_{n_\D-1,\D})$. Assume  $\rro(\D)\supseteq \mathfrak{r}_{n_\D,\D}\in \mathfrak{R}$. Let $\tau'$ be an association of tuples of ranks  which agrees with $\tau$ on every type $\C\not=\D$ and for which $\tau'(\D)=(\mathfrak{r}_{0,\D}, \mathfrak{r}_{1,\D}, \mathfrak{r}_{2,\D}, \dots, \mathfrak{r}_{n_\D-1,\D}, \mathfrak{r}_{n_\D,\D})$. We have to show that it is possible to extend the Lemma from the tuple assignment $\tau$ to the tuple assignment $\tau'$.

Let $E=(\iota(\mathcal{B})\setminus \iota(\mathcal{C}))$ and let $\D'$ be the free $E$-successor of  $\D$. That is  $\D'$ as a successor of $\D$ has the form $\D\dasharrow$. Hence $\D'\in \mathscr{S}$ according to Lemma \ref{lem:succsuccform} and $\rro(\D')=\rro(\D)$ according to Lemma \ref{lem:freeduch}.    It follows from Definition \ref{defin:constrnewb} and the fact that $\boldsymbol{\mathscr{V}}$ is constructive that   $\D'\in \boldsymbol{\mathscr{V}}$. Hence there exists an $\mathfrak{r}_{n_\D,\D}$-restriction $\R\in \boldsymbol{\mathscr{V}}$ of $\D'$.   Let $E'=\iota(\R)\setminus \iota(\mathcal{C})$ and $F=\iota(\R)\setminus \iota(\D')=E'\setminus E$. Recall Definition \ref{defin:neutralsucc}.    For $\B\in \mathcal{B}$ let $\hat{\B}$ be  the free $((\B\uparrow^\ast)\cup F)$-successor  of $\B_{\downarrow E\setminus (\B\uparrow^\ast)}$. 

Then $\hat{\mathcal{B}}=\{\hat{\B(\C,i)}\mid \text{ $\C\in \mathcal{C}$ and $i\in n_\C$}\}$.  If $\B\in \mathcal{B}$ then $\rro(\hat{\B})=\rro(B_{\downarrow E\setminus (\B\uparrow^\ast}))=\rro(\B)$. Hence $\rro(\hat{\B(\C,i)})=\rro(\B(\C,i))$. For $\{\B,\P\}\subseteq \mathcal{B}$ let $\hat{\B}\preceq \hat{\P}$ if $\B\preceq \P$.  Put $\hat{\B}^\ast=\B^\ast$. Extend $\hat{\mathcal{B}}$ to $\hat{\mathcal{D}}\cup \{R\}$ and $\preceq$ to $\preceq'$  by letting $\R$ be the new $\preceq'$-largest type with $\R^\ast=F$. 

\end{proof}

\begin{lem}\label{lem:jonxtC}
Let $\mathcal{C}\subseteq  \boldsymbol{\mathscr{V}}$ be a bundle and $\Z\in \mathcal{C}$. Then there exists a refinement $\bar{\Z}\in\boldsymbol{\mathscr{V}}$ of $\Z$ so that for every element $x\in \sigma(\bar{\Z})$ exists a refinement $\mathcal{C}'$of $\mathcal{C}_{\uparrow x}$ with $\mathcal{C'}\in \boldsymbol{\mathscr{V}}$.
\end{lem}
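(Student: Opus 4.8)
The strategy is to build, once and for all, a single melding successor of $\mathcal{C}$ lying inside $\boldsymbol{\mathscr{V}}$ that already realises every rank which can occur over $\Z$, and then to glue it against $\mathcal{C}_{\uparrow x}$ by means of Lemma~\ref{lem:conformsuxk}. The reason for working this way is that $\boldsymbol{\mathscr{V}}$ is not $\mathrm{G}$-invariant, so a refinement of $\mathcal{C}_{\uparrow x}$ found in $\boldsymbol{\mathscr{V}}$ for one $x$ cannot simply be transported by an automorphism to the other elements of $\sigma(\bar\Z)$; the whole difficulty is therefore front-loaded into producing one rank-rich bundle and choosing $\bar\Z$ so that the gluing lemma applies uniformly.

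\emph{Setup.} Since $\mathrm{U}$ is oligomorphic, $\mathcal{C}$ and each $\C_{\uparrow z}$ are finite. If $z,z'\in\sigma(\Z)$ then some $g\in\mathrm{G}_{\iota(\mathcal{C})}$ sends $z$ to $z'$, and by Note~\ref{note:bundlerefxind} this $g$ carries $\C_{\uparrow z}$ onto $\C_{\uparrow z'}$ preserving ranks; hence the multiset $\tau(\C)$ of ranks occurring in $\C_{\uparrow z}$ is independent of $z\in\sigma(\Z)$, and each of its entries is contained in $\rro(\C)$. I feed the assignment $\C\mapsto\tau(\C)$ into Lemma~\ref{lem:existastolog} and obtain a $^\ast$-successor $\mathcal{B}=\{\B(\C,i)\}\subseteq\boldsymbol{\mathscr{V}}$ of $\mathcal{C}$ with $\rro(\B(\C,i))=\mathfrak{r}_{i,\C}$, each $\B(\C,i)$ a formed successor of $\C$; by Lemma~\ref{lem:frmeldduu} it is a melding successor of $\mathcal{C}$. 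Put $E:=\iota(\mathcal{B})\setminus\iota(\mathcal{C})$ and let $\bar\Z$ be the free $E$-successor of $\Z$. By Lemma~\ref{lem:freeduch} $\rro(\bar\Z)=\rro(\Z)$, so $\bar\Z$ is a refinement of $\Z$; it has form $\Z\dasharrow$, so it is a formed refinement of $\Z\in\boldsymbol{\mathscr{V}}$ and hence $\bar\Z\in\boldsymbol{\mathscr{V}}$ by the refinement closure of Definition~\ref{defin:constrnewb}. This is the desired $\bar\Z$.

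\emph{The gluing.} Fix $x\in\sigma(\bar\Z)$. Then $x\in\sigma(\Z)$, $x\notin\iota(\mathcal{B})$, and $x$ lies in the typeset of the free $E$-successor of $\Z$, which is exactly the hypothesis on $\Z'=\bar\Z$ required by Lemma~\ref{lem:conformsuxk}. Over each $\C\in\mathcal{C}$ the finite bundle $\C_{\uparrow x}$ and the family $\{\B(\C,i)\}_i$ carry the same multiset $\tau(\C)$ of ranks, so I can fix a rank-preserving bijection $\beta\colon\mathcal{C}_{\uparrow x}\to\mathcal{B}$ with $\D_{\downarrow\iota(\mathcal{C})}=(\beta(\D))_{\downarrow\iota(\mathcal{C})}$ and $\rro(\beta(\D))=\rro(\D)$; since $\mathcal{B}$ is a melding successor of $\mathcal{C}$, this exhibits $\mathcal{B}$ as agreeing with $\mathcal{C}_{\uparrow x}$ in the sense of Definition~\ref{defin:conformsux}. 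Lemma~\ref{lem:conformsuxk} now tells me that $\D$ and $\beta(\D)$ are in free position for every $\D\in\mathcal{C}_{\uparrow x}$ and that $\mathcal{C}':=\mathcal{B}\stackrel{\beta}{\sqcap}\mathcal{C}_{\uparrow x}$ is a refinement of $\mathcal{C}_{\uparrow x}$.

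\emph{Membership in $\boldsymbol{\mathscr{V}}$.} A typical type of $\mathcal{C}'$ is $\beta(\D)\sqcap\D$. By Lemma~\ref{lem:agejoinST} its rank is $\rro(\beta(\D))\cap\rro(\D)=\rro(\beta(\D))$, and it is a successor of $\beta(\D)$, so it is a refinement of $\beta(\D)$. Moreover $\beta(\D)$ is a formed successor of $\C$ and $\D$ is a further successor of $\C$ in free position with it, so the join $\beta(\D)\sqcap\D$ is of a bracket form $\C[\,\cdot\,]$ (Definition~\ref{defin:floorbr}), whence by Lemma~\ref{lem:succsuccform} it is itself a formed type. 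Being a formed refinement of $\beta(\D)\in\mathcal{B}\subseteq\boldsymbol{\mathscr{V}}$, it lies in $\boldsymbol{\mathscr{V}}$ by Definition~\ref{defin:constrnewb}. Thus $\mathcal{C}'\subseteq\boldsymbol{\mathscr{V}}$, and since $x\in\sigma(\bar\Z)$ was arbitrary the lemma is proved. The main obstacle, and the two places needing care, are the equality of the two rank multisets over each $\C$ (which is what makes the agreement function $\beta$ exist) and the verification that each join is genuinely a formed refinement of a member of $\boldsymbol{\mathscr{V}}$ (which is what lets refinement closure deliver membership); both are settled above.
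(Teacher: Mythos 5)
Your proof is correct and takes essentially the same route as the paper's: the same rank-invariance observation via Note \ref{note:bundlerefxind}, the same construction of a rank-realising $^\ast$-successor $\mathcal{B}\subseteq\boldsymbol{\mathscr{V}}$ through Lemma \ref{lem:existastolog} and Lemma \ref{lem:frmeldduu}, the same choice of $\bar{\Z}$ as the free $(\iota(\mathcal{B})\setminus\iota(\mathcal{C}))$-successor, the same gluing via an agreement function and Lemma \ref{lem:conformsuxk}, and the same appeal to formedness of the joins plus constructiveness of $\boldsymbol{\mathscr{V}}$ for membership. The only deviations are cosmetic: you track a multiset of ranks and a bijection $\beta$ where the paper lists distinct ranks and uses the (generally non-injective) unique-index map, and you phrase the bracket form of $\beta(\D)\sqcap\D$ over $\C$ rather than over $\beta(\D)$.
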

\begin{proof}
Let $\mathcal{C}\in \boldsymbol{\mathscr{V}}$ be a bundle and $\Z\in \mathcal{C}$. For $x\in \sigma(\Z)$ and $\C\in \mathcal{C}$  let $\mathrm{ranks}(\C,x)$ be the set of ranks of the types in $\C_{\uparrow x}$. It follows from Note \ref{note:bundlerefxind} that if $y\in \sigma(\Z)$ then $\mathrm{ranks}(\C,y)=\mathrm{ranks}(\C,x)$. Let  $\mathrm{ranks}(\C,\Z)=\mathrm{ranks}(\C,x)$ for any $x\in \sigma(\Z)$. The ranks in the set $\mathrm{ranks}(\C,\Z)$ are subsets of $\rro(\C)$ because they are ranks of successors of $\C$. Let $n_\C=|\mathrm{ranks}(\C,Z)|$ and $\tau(\C)=(\mathfrak{r}_{0,\C}, \mathfrak{r}_{1,\C}, \mathfrak{r}_{2,\C}, \dots, \mathfrak{r}_{n_\C-1,\C})$ a list of those ranks. It follows from Lemma \ref{lem:existastolog} that there exists a $^\ast$-successor $\boldsymbol{\mathscr{V}}\supseteq\mathcal{B}=\{\B(\C,i)\mid \text{ $\C\in \mathcal{C}$ and $i\in n_\C$}\}$ of $\mathcal{C}$ so that  $\rro(\B(\C,i))= \mathfrak{r}_{i,\C}$ and the type $\B(\C,i)$ is a successor of $\C$ for every $\C\in \mathcal{C}$ and $i\in n_\C$. It follows from Lemma \ref{lem:frmeldduu} that the bundle $\mathcal{B}$ is a melding successor of the bundle $\mathcal{C}$. Let $\bar{\Z}$ be the free $(\iota(\mathcal{B})\setminus \iota(\mathcal{C}))$-successor of $\Z$. The type $\bar{\Z}$ is a successor of $\Z$ of the form $\Z\dasharrow$. Hence $\bar{\Z}$ is a formed type. According to Lemma \ref{lem:freeduch} the type $\bar{\Z}$ is a refinement of the type $\Z$. The set $\boldsymbol{\mathscr{V}}$ of formed types is constructive implying $\bar{\Z}\in \boldsymbol{\mathscr{V}}$ according to Definition \ref{defin:constrnewb}.

Let $x\in \sigma(\bar{\Z})$. Then $x$ is in free position with every type in $\mathcal{B}$.  For $\D\in \mathcal{C}_{\uparrow x}$ and $\C=\D_{\downarrow \iota(\mathcal{C})}$ there exists a unique $i\in n_\C$ with $\rro(\D)=\mathfrak{r}_{i,\C}$. Let $\beta(\D)=\B(\C,i)\in \mathcal{B}$. It follows that $\rro(\D)=\rro(\beta(\D))$ and that the bundle $\mathcal{B}$ agrees with the bundle $\mathcal{C}_{\uparrow x}$ with agreement function $\beta$. Hence according to Lemma \ref{lem:conformsuxk} the bundle $\mathcal{B}\stackrel{\beta}{\sqcap}\mathcal{C}_{\uparrow x}$ is a refinement of the bundle $\mathcal{C}_{\uparrow x}$. 

For $\D\in \mathcal{C}_{\uparrow x}$ the types $\beta(\D)\in \mathcal{B}$ and $\D$ are in  free position.  The join $\beta(\D)\sqcap \D$ of $\beta(\D)$ and $\D$ is a successor of $\beta(D)$ having the form $\beta(\D)[\{x\}]$ and $\beta(\D)$ is a formed type. Hence $\beta(\D)\sqcap \D\in \mathscr{S}$ is a formed type. Because $\rro(\bet(\D))=\rro(\D)=\beta(\D)\sqcap \D$ the type $\beta(\D)\sqcap \D$ is a refinement of the type $\beta(\D)$. The set $\boldsymbol{\mathscr{V}}$ of formed types is constructive implying $\beta(\D)\sqcap \D\in \boldsymbol{\mathscr{V}}$ according to Definition \ref{defin:constrnewb}. Put $\mathcal{C}'=\mathcal{B}\stackrel{\beta}{\sqcap}\mathcal{C}_{\uparrow x}$. 

\end{proof}

\begin{thm}\label{thm:manaaaod}
Every oligomorphic, rank linear, free amalgamation homogeneous structure $\mathrm{U}$ is indivisible. 
\end{thm}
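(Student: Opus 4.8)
The plan is to run, inside whichever colour class is ``large'', the Rado-graph construction outlined in Section~\ref{sect:guide}, using the constructive set of formed types produced by Theorem~\ref{thm:rmsfreeh} to guarantee that each step can be carried out, and then to certify the resulting point set as a copy of $\mathrm{U}$ via the Pouzet Lemma (Lemma~\ref{lem:charercop}). Fix a colouring $\mathfrak{c}\colon U\to 2$. By Theorem~\ref{thm:rmsfreeh} one of the two classes, say $S$, admits a constructive set $\boldsymbol{\mathscr{V}}\subseteq\mathscr{S}$, so by Definition~\ref{defin:constrnewb} there is a refinement $\V\in\boldsymbol{\mathscr{V}}$ of $\U$ with $\rro(\V)=\rro(\U)=\mathfrak{A}$, the set $S\cap\sigma(\X)$ is infinite for every $\X\in\boldsymbol{\mathscr{V}}$, and $\boldsymbol{\mathscr{V}}$ is closed under refinement and satisfies $\gamma_{\boldsymbol{\mathscr{V}},\mathscr{S}}$ throughout. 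I will build $C\subseteq S$ inducing a copy of $\mathrm{U}$; since $C\subseteq S$ this is a monochromatic copy.

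The recursion produces an increasing chain of finite sets $C_n=\{c_0,\dots,c_{n-1}\}\subseteq S$ together with bundles $\mathcal{C}_n\subseteq\boldsymbol{\mathscr{V}}$ subject to the invariant that $C_n\subseteq\iota(\mathcal{C}_n)$ and that $\mathcal{C}_n$ \emph{covers} all types over the current copy: for every type $\T$ with $\iota(\T)\subseteq C_n$ there is $\C\in\mathcal{C}_n$ with $\C_{\downarrow\iota(\T)}=\T$. The base case is $C_0=\emptyset$, $\mathcal{C}_0=\{\V\}$; covering holds here because rank linearity forces $\mathrm{G}$ to be transitive, so $\U$ is the only type with empty sockel and $\V_{\downarrow\emptyset}=\U$. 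To discharge a pending Pouzet requirement, a type $\T$ with $\iota(\T)\subseteq C_n$, I use covering to select $\Z\in\mathcal{C}_n$ with $\Z_{\downarrow\iota(\T)}=\T$, whence $\sigma(\Z)\subseteq\sigma(\T)$. Lemma~\ref{lem:jonxtC} then supplies a refinement $\bar\Z\in\boldsymbol{\mathscr{V}}$ of $\Z$ such that for every $x\in\sigma(\bar\Z)$ there is a refinement $\mathcal{C}'\in\boldsymbol{\mathscr{V}}$ of $(\mathcal{C}_n)_{\uparrow x}$. As $\bar\Z\in\boldsymbol{\mathscr{V}}$, the set $S\cap\sigma(\bar\Z)$ is infinite, so I may pick $c_n\in S\cap\sigma(\bar\Z)$ with $c_n\notin\iota(\mathcal{C}_n)$; then $c_n\in\sigma(\bar\Z)\subseteq\sigma(\Z)\subseteq\sigma(\T)$, witnessing $\sigma(\T)\cap C\neq\emptyset$. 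Set $C_{n+1}=C_n\cup\{c_n\}$ and $\mathcal{C}_{n+1}=\mathcal{C}'$. Oligomorphy makes every bundle finite, so Lemma~\ref{lem:existastolog}, on which Lemma~\ref{lem:jonxtC} rests, applies; and the covering invariant passes to $\mathcal{C}_{n+1}$, since a type over $C_{n+1}$ restricts to a covered type over $C_n$, extends to a $\{c_n\}$-successor lying in $(\mathcal{C}_n)_{\uparrow c_n}$, and is matched by the refinement bijection of $\mathcal{C}_{n+1}$, whose types keep the same ranks and only shrink their typesets. A standard dovetailing over the countably many pairs $(F,\T)$ with $F\subseteq C$ finite and $\iota(\T)=F$ (finitely many $\T$ per $F$, by oligomorphy) ensures that every requirement is eventually handled.

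Once all requirements are met, $\sigma(\T)\cap C\neq\emptyset$ for every type $\T$ with $\iota(\T)\subseteq C$, so by the Pouzet Lemma (Lemma~\ref{lem:charercop}) the set $C$ induces a copy of $\mathrm{U}$; being contained in $S$ it is monochromatic, establishing that $\mathrm{U}$ is indivisible.

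The main obstacle is the inductive step that keeps the bundle ``good'': passing from $\mathcal{C}_n\subseteq\boldsymbol{\mathscr{V}}$ over $C_n$ to $\mathcal{C}_{n+1}\subseteq\boldsymbol{\mathscr{V}}$ over $C_n\cup\{c_n\}$ while still covering all types and keeping every typeset meeting $S$ infinitely. This is exactly the ``meshing'' of the enlarged sockel-bundle $(\mathcal{C}_n)_{\uparrow x}$ with a rank-matched refinement of $\mathcal{C}_n$ described in Section~\ref{sect:guide}, and it is precisely what Lemma~\ref{lem:jonxtC} delivers, through the $^\ast$-successor construction of Lemma~\ref{lem:existastolog}, the melding Lemma~\ref{lem:frmeldduu}, and the join Lemma~\ref{lem:conformsuxk}; rank linearity enters only via Theorem~\ref{thm:rmsfreeh} and oligomorphy only via finiteness of bundles. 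The remaining subtlety, to be handled carefully, is that the copy is the union of the chosen points $c_n$ alone, while the auxiliary sockel elements accumulating in $\iota(\mathcal{C}_n)$ need not be coloured $S$ and play no part in the Pouzet verification.
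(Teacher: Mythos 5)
Your overall architecture is the paper's own: Theorem~\ref{thm:rmsfreeh} produces the constructive set $\boldsymbol{\mathscr{V}}$ inside one colour class $S$, Lemma~\ref{lem:jonxtC} drives a recursion, and the copy is assembled from points of $S$ (certifying the final set by the Pouzet Lemma instead of building an embedding along an enumeration of $U$, as the paper does, is only a cosmetic difference). The genuine problem is that your inductive invariant is strictly weaker than the paper's, and the weakening is fatal. You maintain only a \emph{covering} condition: every type with sockel inside $C_n$ extends to a type of $\mathcal{C}_n$. The paper's invariant (item (2) in its proof of Theorem~\ref{thm:manaaaod}) is that $\mathcal{B}(n)$ is a \emph{refinement}, in the sense of Definition~\ref{defin:refinbundle} --- a bijection \emph{and} a melding successor (Definition~\ref{defin:wellplsucc}) --- of the bundle $\mathcal{C}(n)$ of \emph{all} types whose sockel is the set of chosen points. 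You keep the surjectivity half of this and drop the melding half.

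The gap surfaces in the one clause where you claim the invariant propagates: that a type $\T'$ with sockel $F\cup\{c_n\}$, $F\subseteq C_n$, ``extends to a $\{c_n\}$-successor lying in $(\mathcal{C}_n)_{\uparrow c_n}$.'' That assertion is equivalent to $\sigma(\T')\cap\sigma(\mathcal{C}_n)\not=\emptyset$, and this does \emph{not} follow from covering: if $\C\in\mathcal{C}_n$ is the type covering $\T'_{\downarrow F}$, then $\sigma(\T')$ and $\sigma(\C)$ are both subsets of $\sigma(\T'_{\downarrow F})$, but they can be disjoint --- typically when the rank of $\C$ is realized through Gaifman edges to auxiliary elements of $\iota(\mathcal{C}_n)\setminus C_n$ (this is exactly how $\mathfrak{r}$-restrictions are manufactured in Lemma~\ref{lem:restriction}) and the freshly chosen point $c_n$, having been picked inside such a typeset, carries those edges as well; a forbidden irreducible configuration then kills every candidate point. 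Proving the needed non-emptiness is precisely the Claim inside Lemma~\ref{lem:erefx234}, and its proof uses the melding property of the bundle over the full type bundle of the chosen points; after that, Corollary~\ref{cor:agindfreambund} --- the age-indivisibility machinery of Section~\ref{sect:aingind} --- is still required to select, from the possibly many successors in $(\mathcal{C}_n)_{\uparrow c_n}$-land lying above each type over $C_{n+1}$, a coherent sub-bundle that is again a refinement. Your step invokes Lemma~\ref{lem:jonxtC} but never Lemma~\ref{lem:erefx234}: Lemma~\ref{lem:jonxtC} only refines $(\mathcal{C}_n)_{\uparrow c_n}$, i.e.\ successors of your bundle's \emph{own} types, and is silent about types over $C_{n+1}$ that fail to extend into $\sigma(\mathcal{C}_n)$ at all. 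That this is a real mathematical obstruction and not a bookkeeping nuisance is shown by the paper's Example~\ref{ex:strmelmeld}: in non-binary structures, isomorphic finite subsets of a typeset need not be conjugate under the sockel stabilizer, so configurations cannot simply be ``moved'' into the bundle. This is why the paper defines $\mathcal{B}(n+1)$ by intersecting the output $(\mathcal{B}(n))'$ of Lemma~\ref{lem:jonxtC} with the $x_n$-continuation $\overline{\mathcal{X}}$ supplied by Lemma~\ref{lem:erefx234}, thereby re-establishing the refinement of the full bundle over the chosen points at every step. To repair your proof you would have to strengthen your invariant to the paper's and carry out exactly that combination.
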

\begin{proof}

Let $(u_i\mid i\in \omega)$ be an $\omega$ enumeration of $U$. Let $U_n=\{u_i\mid i\in n\}$. We will construct step by step an embedding $f$ of $\mathrm{U}$ with  $f(u_i)=x_i\in S$ for every $i\in \omega$.  The construction will procede such that for every $1\leq n\in \omega$ and for $\mathcal{C}(n)$ being the bundle of all types with sockel $A(n):=\{x_i\mid i\in n\}$:
\begin{enumerate}
\item The function $f_n$ with  $f_n(u_i)=x_i\in S$ for all  $i\in n$ is an embedding of $\mathrm{U}_{\downarrow \{u_i\mid i\in n\}}$ into $\mathrm{U}$. 
\item There exists a refinement $\mathcal{B}(n)\subseteq \boldsymbol{\mathscr{V}}$ of the bundle $\mathcal{C}(n)$.    
\end{enumerate}
For $n=0$ let $f_0$ be the empty function,  $\mathcal{C}(0)$ be the singleton bundle $\{\langle \emptyset\tr x\rangle\}$ of types and let $\mathcal{B}(0)$ be the singleton bundle $\{\V\}$ of types. It follows from Lemma \ref{lem:begconst}  that the bundle $\{\V\}$ is a refinement of the bundle $\{\langle \emptyset\tr x\rangle\}$. 

Let $f_n$ and $A(n)=\{x_i\in S\mid i\in n\}\subseteq S$ and $\mathcal{C}(n)$ and $\mathcal{B}(n)$ be established.   Let $g_n\in\mathrm{G}$ such that $g_n(u_i)=f_n(u_i)=x_i$ for all $i\in n$.  Let $k$ be the index for which $g_n(u_n)\in \sigma(\C_k(n))$ and $\Z= \B_k(n)$.  Then for every element $x\in \sigma(\Z)$ every extension $h$ of $f_n$ to $u_n$ with $h(u_n)=x$ is an embedding of $\mathrm{U}_{\downarrow \{u_i\mid i\in n+1\}}$ into $\mathrm{U}$.

According to Lemma \ref{lem:jonxtC}  there exists a refinement $\bar{\Z}\in\boldsymbol{\mathscr{V}}$ of $\Z$ so that for every element $x\in \sigma(\bar{\Z})$ exists a refinement $(\mathcal{B}(n))'$of $(\mathcal{B}(n))_{\uparrow x}$ with $(\mathcal{B}(n))'\in \boldsymbol{\mathscr{V}}$. Because $\bar{\Z}\in \boldsymbol{\mathscr{V}}$ the set $S\cap \sigma(\bar{Z})$ is infinite. Pick $x_n\in S\cap \sigma(\bar{Z})$ and let $f_{n+1}$ be the extension of $f_n$ with $f_{n+1}(u_n)=x_n$. Let $(\mathcal{B}(n))'$ be a refinement of  $(\mathcal{B}(n))_{\uparrow x_n}$ with $(\mathcal{B}(n))'\in \boldsymbol{\mathscr{V}}$. There exists according to Lemma \ref{lem:erefx234}  a bundle $\overline{\mathcal{X}}\subseteq (\mathcal{B}(n))_{\uparrow x_n}$ which is an  $x_n$-continuation  of the bundle $\mathcal{C}_{\uparrow z}$. Then $\overline{\mathcal{X}}$ is a refinement of $\mathcal{C}_{\uparrow x_n}$.  Put $\mathcal{B}(n+1)=\{\B\in (\mathcal{B}(n))' \mid \B_{\downarrow \iota(\mathcal{B}(n))}\in \overline{\mathcal{X}}\}$, which is a refinement of $\mathcal{C}_{\uparrow x_n}$. The bundle $\mathcal{C}_{\uparrow x_n}:=\mathcal{C}_{n_1}$ is the bundle of all types with sockel $\{x_i\mid i\in n+1\}$. 

At the end of this process we constructed a sequence of embeddings $f_0\subseteq f_1\subseteq f_2\subseteq f_3\subseteq \dots$ of $U_0\subseteq U_1\subseteq U_2\subseteq U_3\subseteq \dots$ into $U$. Hence  $f:=\bigcup_{1\leq n\in \omega}f_n$ is an embedding of $\U$ into $\U$. Finally $f[U]\subseteq S$ because  $f_n{U_n}\subseteq S$ for all $n\in \omega$. 

\end{proof}

\subsection{Proof of Theorem \ref{thm:Knfrreegen}}\label{subsect:allirred}   Theorem \ref{thm:Knfrreegen}:

\noindent
Let $2\leq k\in \omega$ and let $n> k$. Let $\mathfrak{B}$ be the class of all irreducible k-uniform hypergraphs having at least $n$ vertices. Let $\mathfrak{A}$ be the age of all finite $k$-uniform hypergraphs which do not embed any one of the hypergraphs in $\mathfrak{B}$.  Then $\mathfrak{A}$ is a free amalgamation age. The  countable homogeneous structure $\mathrm{U}$ whose age is $\mathfrak{A}$  is rank linear and hence indivisible.  The linear order of the ranks of the types of $\mathrm{U}$ consists of $n-k+1$ elements. 

\begin{proof}
Let $\T=\langle F\tr x\rangle$ be a type of $\mathrm{U}$. Let $\mathrm{F}$ be the hypergraph with $F$ as set of elements. The hypergraph $\mathrm{F}$ has hyperedges of size $k$ and of size $k-1$. A set $S\subseteq F$ of size $k$ is a hyperedge of $\mathrm{F}$ if and only if it is a hyperedge of $\mathrm{U}$. A set $S\subseteq F$ of size $k-1$ is a hyperedge of $\mathrm{F}$ if and only if the set $S\cup \{x\}$ is a hyperedge of $\mathrm{U}$. For $\mathrm{A}$  a finite $k$-uniform hypergraph with $A\cap F=\emptyset$ let $\mathrm{A}^{(\T)}$ be the hypergraph with set of elements $A^{(\T)}=A\cup F$ and so that a $k$-element subset $S$ of $A^{(\T)}$ is a hyperedge of $\mathrm{A}^{(\T)}$ if and only if $S$ is a hyperedge of $\mathrm{A}$ or $S$ is a hyperedge of $\mathrm{F}$ or $|S\cap A|=1$ and $S\cap F$ is a $k-1$ element hyperedge of $\mathrm{F}$. According to  Fact \ref{fact:contemb} the structure $\mathrm{A}^{(\T)}$ has an embedding $f$ into $\mathrm{U}$ with $f(v)=v$ for all $v\in F$ if it is an element of $\mathfrak{A}$. Note that if $f$ is such an embedding then $f[A] \subseteq \sigma(\T)$. Hence if for a finite $k$-uniform hypergraph $\mathrm{A}$ with $A\cap F=\emptyset$ the corresponding structure $\mathrm{A}^{(\T)}\in \mathfrak{A}$ then $\mathrm{A}\in \rro(\T)$. 

Let $A$ be a finite subset of $\sigma(\T)$. Then the identity map is an embedding of $\mathrm{U}_{\downarrow A\cup F}$ into $\mathrm{U}$ and $\mathrm{A}:=\mathrm{U}_{\downarrow A}\in \rro(\T)$. Note that the hypergraph $\mathrm{A}^{(\T)}$ is obtained from the hypergraph $\mathrm{U}_{\downarrow A\cup F}$ by removing all of its  hyperedges $S$ with $S\cap F\not=\emptyset$ and $|S\cap A|\geq 2$.  If  one of the hypergraphs $\mathrm{B}\in \mathfrak{B}$  has an embedding $h$ into $\mathrm{A}^{(\T)}$ then $\mathrm{U}_{\downarrow h[B]}$  has the same or more hyperedges than $\mathrm{B}$ and hence would be isomorphic to one of the hypergraphs in $\mathfrak{B}$. Every structure $\mathrm{A}\in \rro(\T)$ has an embedding into $\sigma(\T)$. It follows that if $\mathrm{A}\in \rro(\T)$ then $\mathrm{A}^{(\T)}\in \mathfrak{A}$. Hence a finite $k$-uniform hypergraph $\mathrm{A}$ is an element of $\rro(\T)$ if and only if $\mathrm{A}^{(\T)}$ does not embed an element of the boundary $\mathrm{B}$. 

For a finite relational structure $\X$ let $\omega(\X)$ be the largest number for which $\X$ has an induced and irreducible  substructure with $\omega(\X)$ elements. (Generalizing the clique number of a graph.) Let $F'$ be the union of the hyperedges of $\mathrm{F}$ which have $k-1$ elements. Let $\mathrm{F}'=\mathrm{U}_{\downarrow F'}$ and let $\omega'(\T)$ be the number $\omega(\mathrm{F}')$. 
Let $\mathrm{A}$ be a finite $k$-uniform hypergraph with $A\cap F=\emptyset$. For a set $B\subseteq A\cup F$ is the hypergraph  $\mathrm{A}^{(\T)}_{\downarrow B}$ irreducible if and only if both structures $\mathrm{F}_{\downarrow B\cap F'}$ and $\mathrm{A}_{\downarrow B\cap A}$ are irreducible. That is $\omega(\mathrm{A}^{(\T)})=\omega(\mathrm{F}')+\omega(\mathrm{A})$.  Hence $\mathrm{A}\in \rro(\T)$ if and only if $\omega(\mathrm{A})<n-\omega(\mathrm{F}')$. Because  a finite $k$-uniform hypergraph $\mathrm{A}$ is an element of $\rro(\T)$ if and only if $\mathrm{A}^{(\T)}$ does not embed an element of the boundary $\mathrm{B}$ we obtained:

\vskip 3pt
\noindent
(*): $\mathrm{A}\in \rro(\T)$ if and only if $\omega(\mathrm{A})<n-\omega'(\T)$.
\vskip 3pt

It follows that $\U$ is rank linear. Note that if $F'=\emptyset$ then $\omega(\mathrm{F}')=0$ and hence $\rro(\T)=\mathfrak{A}$. If $F'\not=\emptyset$ then $\omega(\mathrm{F}')\geq k-1$. If $\omega(\mathrm{F}')=k-1$ and $n\leq 2k-1$ then $\sigma(\T)$ does not contain any hyperedges. The number $\omega(\mathrm{F}')\leq n-2$ because if $\omega(\mathrm{F}')\geq  n-1$ then $x$ together with any maximal irreducible subset of $F'$ would form an irreducible subset of size $\geq n$. Note that for every natural number $l$ with $k-1\leq l\leq  n-2$ there exists a type $\T(l)=\langle F\tr x\rangle$ with $|F|=l$ and $F'=F$. Let $(\mathrm{R};\subseteq)$ be the linear order of ranks of types of $\mathrm{U}$. If $n=k+1$ then $|\mathfrak{R}|=2$. The rank of $\U$ together with the rank of $\T(k-1)$ which consists of all finite 3-uniform hypergraphs which do not contain any hyperedges. In general then $|\mathfrak{R}|=n-k+1$. 

\end{proof}

\subsection{Proof of Theorem \ref{thm:singlrant}}\label{subsect:onerank}
In the special case in which $|\mathfrak{R}(\mathrm{U})|=1$ there is a simple proof of a stronger version of Theorem \ref{thm:main}. We will use the following result of Peter Cameron, see  \cite{Cameronage} Theorem  3.2. 

\begin{lem}\label{lem:Cameronage}[Cameron]
Let S and R be two countable relational structures for which every structure in the age of $S$ is an element of the age of $\R$. If the structure R is oligomorphic then there exists an embedding of the structure S into the structure R.
\end{lem}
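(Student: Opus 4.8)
The plan is to build the desired embedding by a compactness argument, with oligomorphicity of $\mathrm{R}$ entering through König's Lemma. Working in the common relational language, I would first fix an $\omega$-enumeration $s_0, s_1, s_2, \dots$ of $S$ and write $S_n := \mathrm{S}_{\downarrow \{s_0, \dots, s_{n-1}\}}$ for the finite induced substructures it determines. A naive ``forth'' construction extending one element at a time does not work: at stage $n$ one has a partial embedding $f_n \colon S_n \to R$, and although $S_{n+1}$ does embed into $\mathrm{R}$ (it lies in $\rro(\mathrm{S}) \subseteq \rro(\mathrm{R})$), there is no reason an arbitrary such embedding restricts to $f_n$, and $f_n$ may fail to extend at all. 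So instead I would organise the partial embeddings into a tree and select a coherent chain by compactness.

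Concretely, for each $n$ let $X_n$ be the set of embeddings of $S_n$ into $\mathrm{R}$, which is nonempty because $S_n \in \rro(\mathrm{S}) \subseteq \rro(\mathrm{R})$. Declare two such embeddings $g, g'$ equivalent when the tuples $(g(s_0), \dots, g(s_{n-1}))$ and $(g'(s_0), \dots, g'(s_{n-1}))$ lie in the same orbit of $\Aut(\mathrm{R})$. Define a tree whose level-$n$ nodes are these orbit-classes, the parent of the class of an embedding of $S_{n+1}$ being the class of its restriction to $S_n$. Restriction is well defined on orbit-classes and assigns a unique parent to each node, so this is genuinely a tree, rooted at the single empty-tuple class at level $0$. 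Every level is nonempty by the age hypothesis, and every level is finite precisely because $\mathrm{R}$ is oligomorphic; hence the tree is infinite and finitely branching, and König's Lemma yields an infinite branch $C_0, C_1, C_2, \dots$ of orbit-classes in which every embedding in $C_{n+1}$ restricts to one in $C_n$.

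It remains to convert the branch into an increasing chain of honest embeddings, and this is where the automorphisms of $\mathrm{R}$ do the real work. I would construct $f_0 \subseteq f_1 \subseteq f_2 \subseteq \cdots$ with $f_n \in C_n$ as follows. Given $f_n \in C_n$, choose any $h \in C_{n+1}$; then the restriction of $h$ to $S_n$ lies in $C_n$ alongside $f_n$, so their image tuples are $\Aut(\mathrm{R})$-equivalent, and there is $\alpha \in \Aut(\mathrm{R})$ with $\alpha(h(s_i)) = f_n(s_i)$ for all $i < n$. Setting $f_{n+1} := \alpha \circ h$ gives an embedding of $S_{n+1}$ that extends $f_n$ and still lies in $C_{n+1}$. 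The union $f := \bigcup_n f_n$ is then an embedding of $\mathrm{S}$ into $\mathrm{R}$, as required.

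The main obstacle, and the reason oligomorphicity is indispensable, is exactly the failure of the greedy construction noted above: without finiteness of each level there is no guarantee of an infinite branch, and it is the branch that lets us pick at every stage an orbit-class of partial embeddings known to extend to arbitrarily large finite pieces of $\mathrm{S}$. The automorphism-correction in the last step is the mechanism that upgrades ``lies in the same orbit-class'' to ``literally extends the chosen map,'' and it is the only point at which homogeneity-type reasoning about $\mathrm{R}$ (here merely the action of $\Aut(\mathrm{R})$ within an orbit) is used.
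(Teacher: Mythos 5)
The paper does not actually prove this lemma: it is quoted as Theorem 3.2 of Cameron's article \cite{Cameronage}, so there is no in-paper argument to compare yours against. Your proof is correct, and it is essentially the classical argument behind Cameron's theorem: orbit-classes of partial embeddings of the $\mathrm{S}_n$ form a tree whose levels are nonempty (by the age hypothesis) and finite (by oligomorphicity), König's Lemma supplies an infinite branch, and composing with automorphisms of $\mathrm{R}$ converts the branch into an increasing chain $f_0\subseteq f_1\subseteq\cdots$ of embeddings whose union embeds $\mathrm{S}$ into $\mathrm{R}$. The two points that need checking both hold as you claim: restriction is well defined on orbit-classes (an automorphism witnessing equivalence of $(n+1)$-tuples also witnesses equivalence of their initial $n$-tuples), and the corrected map $\alpha\circ h$ both extends $f_n$ and remains in the class $C_{n+1}$, so the induction closes and the argument is complete.
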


\noindent
Theorem \ref{thm:singlrant}:\\
\noindent
Let $\mathrm{U}$ be a countable, oligomorphic,   homogeneous relational  structure. If $|\mathfrak{R}(\mathrm{U})|=1$ then $\mathrm{U}$ is indivisible.
\begin{proof}
Let $\S$ be a type of $\mathrm{U}$ and $\mathrm{H}$ be the group of automorphisms of the structure $\mathrm{U}_{\downarrow \sigma(\T)}$. Let $\mathrm{G}$ be the automorphism group of the homogeneous structure $\mathrm{U}$. The group $\mathrm{G}_{\iota(\T)}$ acting on $\sigma(\T)$ is oligomorphic and is a subgroup of the group $\mathrm{H}$. Hence the group $\mathrm{H}$ is oligomorphic. 

Let $(S,P)$ be a partition of $U$. If $P$ does not induce a copy of $\mathrm{U}$ then there exists according to Lemma \ref{lem:charercop} a type $\T$ of $\mathrm{U}$ with $\iota(\T)\subseteq P$ and with $\sigma(\P)\subseteq S$. Because $\rro(\T)=\rro(\U)$ which is equal to the age of $\mathrm{U}$ that there exists an embedding of $\mathrm{U}$ into $\sigma(\S)$. 

\end{proof}

Let $\mathrm{U}$ be the homogeneous free amalgamation relational structure with one unary relation $E$ and empty boundary. Let $\boldsymbol{L}$ be this language  of one unary relation.   Let $e\in U$ with $E(e)$. Let $u\in U$ with $\neg E(u)$. Then $\T=\langle \emptyset \tr e\rangle$ is a type with $\sigma(\T)=\{x\in U\mid E(x)\}$. $\S=\langle \emptyset\tr u\rangle$ is a type with $\sigma(\S)=\{x\in U\mid \neg E( x)\}$. The age of $\T$ is the class of all finite $\boldsymbol{L}$ structures in which $E(x)$ for all of its elements. The age of $\S$ is the class of all finite $\boldsymbol{L}$ structures in which $\neg E(x)$ for all of its elements. Hence $\mathrm{U}$ does not satisfy the assumptions of Theorem \ref{thm:singlrant}. It is also clearly not indivisible.

The Rado graph and generalizations of it to graphs with several types of edges are obvious examples of homogeneous structures $\mathrm{U}$  with $|\mathfrak{R}(\mathrm{U})|=1$. The following Lemma  shows that there are plenty of examples of structures $\mathrm{U}$ with $|\mathfrak{R}(\mathrm{U})|=1$. First the following definition. 

\begin{defin}\label{defin:onfyp}
Let $\mathrm{M}$ be a structure. The structure $\mathrm{M}$ is {\em 3-irreducible} if for every 3-element subset $\{x,y,z\}$ of $M$ there exists a tuple $\vec{a}$ with entries in $M$ and a relation symbol $R$ in the language of $\mathrm{M}$ with $\{x,y,z\}\subseteq \vec{a}$ and $R(\vec{a})$.   That is if the 3-Gaifman hypergraph is complete.

A set  $A\subseteq M$ is  {\em $\mathrm{M}$-type conform} if $|A|\geq 2$, $M\setminus A\not=\emptyset$  and   for all $\{x,y\}\subseteq A$ there exists an isomorphism $f: (M\setminus A)\cup \{x\}\to (M\setminus A)\cup \{y\}$  with $f(x)=y$ and  $f(v)=v$ for all $v\in M\setminus A$. 
\end{defin}

Let $\T=\langle F\tr x\rangle$ be a type of $\mathrm{U}$. Let $M$ be a finite subset of $F\cup \sigma(\T)$ with $M\cap F\not=\emptyset$ and with $M\cap \sigma(F)\geq 2$. Let $\mathrm{M}=\mathrm{U}_{\downarrow M}$. Then $M\cap \sigma(\T)$ is $\mathrm{M}$-type conform. It follows that if a structure  $\mathrm{A}\in \mathfrak{A}(\mathrm{U})$ is not a structure in $\rro(\T)$ then there exists a boundary structure $\mathrm{B}$ for which there exists an induced substructure $\mathrm{A}'$ of $\mathrm{A}$ and an embedding $h$ of $\mathrm{A}'$ into $\mathrm{B}$ so that: $|A'|\geq 2$,   the set $h[A']$ is $\mathrm{B}$-conform, for $a\in A'$ exists an embedding $k$ of the substructure of $\mathrm{B}$ induced by $\{a\}\cup B\setminus A'$ into $\mathrm{U}$ with $k(a)\in \sigma(\T)$ and $k[B\setminus A']\subseteq F$. Conversely assume that the boundary structure $\mathrm{B}$ has conformal subset $A$. Let $a\in A$.  Then the substructure $\mathrm{C}$ of $\mathrm{B}$ induced by $B\setminus A\cup \{a\}$ is in the age of $\mathrm{U}$. There exists an embedding $k$ of $\mathrm{C}$ into $\mathrm{U}$. Then the structure $\mathrm{A}'$ is not in the rank of  the type $\langle k[B\setminus A\tr k(a)\rangle$.  We conclude that the ranks of the typesets of a homogeneous structure $\mathrm{U}$ depend on the partitions of the boundary structures into conformal subsets and their complements. 
Actually, if $n\geq 2$ is the smallest arity of the relations in the language of $\mathrm{U}$ we can disregard the conformal subsets $A$ of a boundary structure $\mathrm{B}$ with $|B\setminus A|<n-1$.

\begin{lem}\label{lem:singlrant}
Let $\mathrm{U}$ be a countable and oligomorphic relational homogeneous structure, then:
\begin{enumerate}
\item If every structure in the boundary of\/  $\mathrm{U}$ consists of two elements or is 3-irreducible then $|\mathfrak{R}(\mathrm{U})|=1$ and hence  $\mathrm{U}$ is indivisible.
\item If no structure $\mathrm{M}$  in the boundary of\/  $\mathrm{U}$ contains an $\mathrm{M}$-type conform set $A\subseteq M$  then $|\mathfrak{R}(\mathrm{U})|=1$ and hence $\mathrm{U}$ is indivisible.
\item If for every structure $\mathrm{M}$, with $|M|\geq 3$,   in the boundary of\/  $\mathrm{U}$:   For all $\mathrm{M}$-type conform sets $A\subseteq M$ there exists an edge $\{a,b,c\}$ in the 3-Gaifman hypergraph of $\mathrm{M}$ with $\{a,b\}\subseteq  A$ and $c\in M\setminus A$.  Then $|\mathfrak{R}(\mathrm{U})|=1$ and hence $\mathrm{U}$ is indivisible.

\item If for every structure $\mathrm{M}$, with $|M|\geq 3$,   in the boundary of\/  $\mathrm{U}$:  For all $\mathrm{M}$-type conform sets $A\subseteq M$ there exists a three element set $\{a,b,c\}$ which is not in the 3-Gaifman hypergraph of $\mathrm{M}$ with $\{a,b\}\subseteq  A$ and $c\in M\setminus A$.  Then $|\mathfrak{R}(\mathrm{U})|=1$ and hence $\mathrm{U}$ is indivisible. 
\end{enumerate}
\end{lem}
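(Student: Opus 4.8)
The plan is to prove all four parts uniformly by reducing each of them to the single assertion $|\mathfrak{R}(\mathrm{U})|=1$ and then invoking Theorem \ref{thm:singlrant}, which already delivers indivisibility from $|\mathfrak{R}(\mathrm{U})|=1$ for oligomorphic homogeneous $\mathrm{U}$. So the whole task is to show, under each hypothesis, that every type $\T=\langle F\tr x\rangle$ has full rank $\rro(\T)=\mathfrak{A}$, i.e. that $\sigma(\T)$ realizes the entire age. The common engine is the correspondence set up in the paragraphs preceding the Lemma: a failure $\mathrm{A}\notin\rro(\T)$ for some finite $\mathrm{A}\in\mathfrak{A}$ means that no placement of a copy of $\mathrm{A}$ into $\sigma(\T)$ over the sockel $F$ (respecting the single-point type of $x$ over $F$) lies in $\mathfrak{A}$, and this forces a boundary structure $\mathrm{B}$ to occur together with a distinguished $\mathrm{B}$-type conform subset $A_B\subseteq B$, with $|A_B|\geq 2$, playing the role of the ``typeset side'' while $B\setminus A_B$ embeds into $F$ as the ``sockel side.'' First I would make this precise in the exact form needed: $|\mathfrak{R}(\mathrm{U})|=1$ holds as soon as, for every $\T$ and every $\mathrm{A}\in\mathfrak{A}$, \emph{some} completion of $\mathrm{A}$ over $F$ avoids all boundary structures, the only freedom in such a completion lying in the relations that meet $A_B$ in at least two points and $F$ in at least one, which I will call the multi-point cross relations.

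For parts (1), (2) and (3) I would use the sparse (free) completion $\mathrm{A}^{(\T)}$, which carries no multi-point cross relation at all. If $\mathrm{A}^{(\T)}$ embedded a boundary structure $\mathrm{B}$, its image would split across the two sides with at least two points on the typeset side, and, there being no multi-point cross relation available, the induced conform subset $A_B$ of $\mathrm{B}$ would carry no $3$-Gaifman edge $\{a,b,c\}$ with $\{a,b\}\subseteq A_B$ and $c\in B\setminus A_B$. This is exactly what parts (1) and (3) forbid: part (3) hypothesizes that every conform subset carries such an edge, and part (1) forces the same conclusion, since a two-element boundary structure admits no conform subset (conformity needs $|M|\geq 3$) while a $3$-irreducible one has \emph{every} triple as a $3$-Gaifman edge. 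Part (2) is the degenerate case in which no boundary structure has a conform subset, so no rank drop can arise. In each case $\mathrm{A}^{(\T)}\in\mathfrak{A}$, hence $\mathrm{A}\in\rro(\T)$ for all $\mathrm{A}$ and $\T$, giving $|\mathfrak{R}(\mathrm{U})|=1$.

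For part (4) I would run the dual argument with a dense completion $\mathrm{M}_{\max}$, obtained from $\mathrm{A}^{(\T)}$ by adjoining a relating tuple on every cross triple, so that each triple with two points on the typeset side and one on the sockel side becomes a $3$-Gaifman edge. A boundary structure $\mathrm{B}$ embedding into $\mathrm{M}_{\max}$ again splits with at least two points on the typeset side and yields a conform subset $A_B$; but now every triple $\{a,b,c\}$ with $\{a,b\}\subseteq A_B$ and $c\in B\setminus A_B$ is a $3$-Gaifman edge of $\mathrm{B}$, being one in $\mathrm{M}_{\max}$ and $\mathrm{B}$ being induced. This contradicts the hypothesis of part (4) that some such triple is a non-edge. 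Hence $\mathrm{M}_{\max}$ avoids every boundary structure, $\mathrm{M}_{\max}\in\mathfrak{A}$, and once more $|\mathfrak{R}(\mathrm{U})|=1$.

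The hard part will be making the correspondence between rank drops and conform subsets of boundary structures fully rigorous: I must verify that the chosen completion (sparse for (1)--(3), dense for (4)) genuinely lies in $\mathfrak{A}$, that any boundary structure inside it meets the typeset side in at least two points (else it already embeds into $\mathrm{U}_{\downarrow F\cup\{x\}}\in\mathfrak{A}$), and that it restricts on the typeset side to a conform subset. The dense completion of part (4) is the delicate point, since adjoining the relating tuples can realize every cross triple as a $3$-Gaifman edge only when a relation of arity at least three is present; in the purely binary regime the dense and sparse completions coincide and this dual mechanism degenerates, so part (4) is genuinely a higher-arity phenomenon. I would isolate exactly this as a separate claim and check it carefully against the boundary-structure bookkeeping before assembling the four cases.
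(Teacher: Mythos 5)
Your proposal is essentially the paper's own proof: the paper likewise reduces all four items to showing $\rro(\T)=\mathfrak{A}$ for every type $\T=\langle F\tr x\rangle$ and then invokes Theorem \ref{thm:singlrant}, notes that Item (3) subsumes Items (1) and (2), uses for Item (3) exactly your sparse completion (no relation meeting $A$ in two or more points and $F$ in at least one), uses for Item (4) exactly your dense completion (every cross triple $\{a,b,v\}$ made a 3-Gaifman edge), rules out boundary structures inside the completion by the same case analysis (at most one point on the typeset side forces an embedding into $\mathrm{U}_{\downarrow F\cup\{x\}}$, two or more points yield a type-conform subset $B\cap A$ contradicting the respective hypothesis), and finishes with the extension property, Fact \ref{fact:contemb}. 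The arity caveat you flag for Item (4) is real but is equally unaddressed in the paper's proof: over a language with no ternary relation symbol the dense completion simply does not exist, and indeed for the triangle-free homogeneous graph the hypothesis of Item (4) holds automatically (no triple of a binary structure is a 3-Gaifman edge) while $|\mathfrak{R}(\mathrm{U})|=2$, so your explicit isolation of this point as a higher-arity assumption is added care on your side rather than a gap relative to the paper's argument.
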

\begin{proof}
Note that Item (3) implies Items (1) and (2). For both Items (3) and (4) we have to prove that given a  type  $\T=\langle F\tr x\rangle$ and a structure   $\mathrm{A}\in \mathfrak{A}(\mathrm{U})$ there exists an embedding of $\mathrm{A}$ into $\sigma(\T)$. We may assume that $A\cap U=\emptyset$.  Let $\mathrm{M}$ be the structure with $M=F\cup A$ for which $\mathrm{U}_{\downarrow F}=\mathrm{M}_{\downarrow F}$.  For every $a\in A$ is the function $f: F\cup \{x\}\to F\cup \{a\}$ with $f(x)=a$ and $f(v)=v$ for all $v\in F$ an isomorphism of $\mathrm{U}_{\downarrow F\cup \{x\}}$ to   $\mathrm{M}_{\downarrow F \cup \{a\}}$. Then:

\vskip 3pt
\noindent
In the case of Item (3): In addition,  the structure $\mathrm{M}$ has the property that  if $R_\mathrm{M}(\vec{x})$ for a relation symbol $R$ and a tuple $\vec{x}$ with entries in $M$,   then all entries of $\vec{x}$ are in $A$  or all entries of $\vec{x}$  except possibly one are in $F$.  That is there is no edge of the 3-Gaifman graph having two elements in $A$ and one element in $F$. Let $B\subseteq M$.

The structure  $\mathrm{M}_{\downarrow B}$ can not be isomorphic to a structure in the boundary, because:  If $|B\cap A|\geq 2$ and $B\cap F\not=\emptyset$ then $B\cap A$ would be $\mathrm{M}$-type conform and not contain an edge of the 3-Gaifman graph having two elements in $B\cap A$ and one element in $F$. If $B\subseteq A$ then $A$ would not be in the age of $\mathrm{U}$. If $B\cap A=\{a\}$ for some element $a\in A$ then $\mathrm{B}$ would have an embedding into $\mathrm{U}$. If $|B|=2$ and $B$ not in $A$ or not in $F$ then $|B\cap A|=1=|B\cap F|$.   It follows that $\mathrm{M}$ is in the age of $\mathrm{U}$. There exists an embedding $f$ of $\mathrm{M}$ with $f(v)=v$ for all $v\in F$. Implying $f[A]\subseteq \sigma(\T)$.   Let $g$ be an extension of $f$ to an automorphism of $\mathrm{U}$.

\vskip 3pt
\noindent
In the case of Item (4): In addition,  the structure $\mathrm{M}$ has the property that for every two elements $a$ and $b$ in $A$ and every element $v\in F$ the set $\{a,b,v\}$ is an edge of the 3-Gaifman hypergraph of $\mathrm{M}$. Let $B\subseteq M$.

The structure  $\mathrm{M}_{\downarrow B}$ can not be isomorphic to a structure in the boundary, because:  If $|B\cap A|\geq 2$ and $B\cap F\not=\emptyset$ then $B\cap A$ would be $\mathrm{M}$-type conform and  contain an edge of the 3-Gaifman graph having two elements in $B\cap A$ and one element in $F$. If $B\subseteq A$ then $A$ would not be in the age of $\mathrm{U}$. If $B\cap A=\{a\}$ for some element $a\in A$ then $\mathrm{B}$ would have an embedding into $\mathrm{U}$. If $|B|=2$ and $B$ not in $A$ or not in $F$ then $|B\cap A|=1=|B\cap F|$. It follows that $\mathrm{M}$ is in the age of $\mathrm{U}$. There exists an embedding $f$ of $\mathrm{M}$ with $f(v)=v$ for all $v\in F$. Implying $f[A]\subseteq \sigma(\T)$.   Let $g$ be an extension of $f$ to an automorphism of $\mathrm{U}$.

\end{proof}

If  there exists  for every structure $\mathrm{B}$ in the boundary of $\mathrm{U}$ a relation $R$ in the language of $\mathrm{U}$ for which $R_\mathrm{B}(\vec{x})$ so that every element of $B$ is an entry of $\vec{x}$, then $\mathrm{U}$ satisfies the conditions of Item (1) in Lemma \ref{lem:singlrant}. Providing examples generalizing the situation of graphs with several types of edges.

\subsection{Weak indivisibility}\label{subsec:weakindiv2}

In the case that $\mathrm{U}$ is binary Section \ref{sect:aingind} is not needed except for the fact that the typesets of types of $\mathrm{U}$ are age indivisible. In the binary case this also follows from Corollary \ref{cor:aindibinn}  below. The proof of Theorem \ref{thm:wiekstru5r} may help to follow the proof of Theorem \ref{thm:hennsage}. The essential ingredients of the proof of Theorem \ref{thm:wiekstru5r} appeared in \cite{EZS3}.

Theorem~\ref{thm:wiekstru5r} deals just with group actions. Hence the age function $\rro$ has to be understood as $\rro_\mathrm{G}$.  Then $\rro(U)$ is the set of finite subsets of $U$.  A subgroup $\mathrm{G}$ of the symmetric group $\mathfrak{S}(U)$ is a {\em free amalgamation group} if the closure $\overline{\mathrm{G}}$ of $\mathrm{G}$ is the automorphism group of a free amalgamation homogeneous structure with domain $U$. It follows from the discussion in Subsection \ref{subsect:gropversio}  that Lemma  \ref{lem:freecoppr} and Corollary \ref{cor:commeldstrm} apply for the types of any free amalgamation group $\mathrm{G}$.

\begin{thm}\label{thm:wiekstru5r}  Let $\mathrm{G}$ be a vertex transitive free amalgamation group acting on a countable infinite set $U$.   Let  $\mathfrak{c}: U\to 2$ be a two colouring of $U$. Then, if $\rro(\mathfrak{c}^{-1}(i))\not=\rro(U)$ the set   $\mathfrak{c}^{-1}(i+1)$   embeds a copy of $\mathrm{G}$. 

\end{thm}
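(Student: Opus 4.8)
The plan is to argue the contrapositive together with the Pouzet Lemma (Lemma~\ref{lem:charercop}). Write $R=\mathfrak{c}^{-1}(i)$ and $B=\mathfrak{c}^{-1}(i+1)$, and fix, using $\rro(R)\neq\rro(U)$, a finite structure $\mathrm{A}$ in the age $\mathfrak{A}=\rro(U)$ that has no embedding into $R$. Suppose, for contradiction, that $B$ contains no copy of $\mathrm{U}$. By Lemma~\ref{lem:charercop} there is then a type $\T=\langle F\tr x\rangle$ with $F\subseteq B$ and $\sigma(\T)\subseteq R$; I will call such a type \emph{failing}. The point of the argument is that $\mathrm{A}$ cannot be hidden in a sufficiently rich typeset, which will eventually force $R$ to embed $\mathrm{A}$ after all.

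The first, cheap, observation is that $\mathrm{A}$ cannot hide inside any full–rank typeset: if $\rro(\Y)=\mathfrak{A}$ then $\sigma(\Y)$ embeds a copy of $\mathrm{A}$, and since that copy cannot lie inside $R$ it must meet $B$; hence every full–age typeset meets $B$. Combining this with the age–indivisibility of typesets (Corollary~\ref{cor:aindibinn} in the binary case, and its analogue from Section~\ref{sect:aingind} in general) upgrades the statement. For the free type $\X=\langle F\tr y\rangle$ over any finite $F\subseteq B$ one has $\rro(\X)=\mathfrak{A}$ by Lemma~\ref{lem:freeduch}, while $\sigma(\X)\cap R\subseteq R$ cannot embed $\mathrm{A}$, so $\rro(\sigma(\X)\cap R)\neq\mathfrak{A}$; age–indivisibility of $\sigma(\X)$ then gives $\rro(\sigma(\X)\cap B)=\mathfrak{A}$. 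Thus the part in $B$ of the typeset of the free type over any sockel in $B$ already carries the full age. Since $\sigma(\X)$ itself induces a copy of $\mathrm{U}$ (Lemma~\ref{lem:freecoppr}), this is the engine that keeps a construction inside $B$ alive.

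With this in hand I would build the desired copy of $\mathrm{U}$ inside $B$ by the back–and–forth template of Section~\ref{sect:guide}, maintaining at stage $n$ a partial copy $C_n\subseteq B$ with a fixed isomorphism from $\mathrm{U}_{\downarrow U_n}$ onto $\mathrm{U}_{\downarrow C_n}$, together with the workspace $\sigma(\W_n)\cap B$, where $\W_n$ is the free type over $C_n$ (of full age, by the previous paragraph). To extend by the next demanded element I must realise over $C_n$ the exact type it carries in $\mathrm{U}$; when that type is free this is immediate from the workspace, and when it is not I would try to transport the prescribed one–point extension into the full–age region by the free–amalgamation moving lemmas (Lemma~\ref{lem:tyinfA}, Lemma~\ref{lem:commeldstrm} and Lemma~\ref{lem:trmeldcan}), which allow a finite configuration in one typeset to be pushed, while fixing the sockel, into a join of typesets of the correct rank.

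\textbf{The main obstacle} is precisely the realisation of the non–free, low–rank demands. A failing type of small rank can genuinely occur even when $R$ omits $\mathrm{A}$, so a naive greedy construction can get stuck; one must choose the images $C_n$ so that the typesets actually demanded always have their part in $B$ of full local rank. I would control this by a rank induction along the partial order $(\mathfrak{R};\subseteq)$: age–indivisibility splits each typeset into a part in $R$ and a part in $B$, one of which inherits the full local rank, and the construction must always descend into the side rich in $B$, using free amalgamation to guarantee that such a side is available at every node. This step carries the genuine content (its ingredients are those of \cite{EZS3}); in the non–binary case it is complicated further by the failure of the melding property (Remark~\ref{remark:meld}), which is why one must move points by taking the free amalgam of the demanded extension with a carrier in $B$, rather than by a single element of $\mathrm{G}_{\iota(\T)}$.
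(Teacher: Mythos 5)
The gap sits exactly where you place ``the main obstacle,'' and your proposal does not close it. Your engine controls only \emph{free} types over sockels in $B$: it says nothing about the non-free one-point extensions that your back-and-forth must realise, and a failing type whose typeset lies entirely in $R$ can perfectly well be among the demanded types even though $R$ omits $\mathrm{A}$. Your proposed repair --- a rank induction along $(\mathfrak{R};\subseteq)$, ``always descending into the side rich in $B$'' --- is not available under the hypotheses of Theorem \ref{thm:wiekstru5r}: the theorem assumes neither rank linearity nor that the structure is oligomorphic, so $(\mathfrak{R};\subseteq)$ need not be linear and the bundles of demanded types need not be finite; making that descent coherent against all future demands is precisely the content of the paper's main construction (Sections \ref{sect:basvertthm}--\ref{sect:constru1}, Theorem \ref{thm:manaaaod}), which needs exactly those extra hypotheses. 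There is also a circularity in your sources: Corollary \ref{cor:aindibinn} is deduced in the paper from Theorem \ref{thm:wiekstruncor}, i.e.\ from the very statement you are proving, so it cannot be cited here; the appeal to Section \ref{sect:aingind} (Theorem \ref{thm:hennsage}) for the general case is formally independent, but it imports machinery far heavier than the statement needs --- machinery whose proof the paper explicitly models on the proof of this very theorem.

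The idea you are missing is that no copy inside $B$ ever needs to be constructed. The paper argues by induction on the cardinality $n$ of the omitted age member, with the induction hypothesis relativised to copies: $P^\ast(n)$ says that inside any copy $D$, if the red part of $D$ age-omits some $n$-element set, then the blue part of $D$ embeds a copy; $P(1)$ holds by transitivity. For the step, suppose $B$ embeds no copy. Pouzet's Lemma \ref{lem:charercop} gives a failing type $\T$, and $\sigma(\X)$, for $\X$ the free type over $\iota(\T)$, induces a copy by Lemma \ref{lem:freecoppr}. Since $B\cap\sigma(\X)$ embeds no copy either, the contrapositive of $P^\ast(n)$ places an image $h[A\setminus\{a\}]$ of any prescribed $n$-element part of an $(n+1)$-element age member $A$ inside $R\cap\sigma(\X)$. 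The types $\T$ and $\S=\langle h[A\setminus\{a\}]\tr h(a)\rangle$ are then in free position, so Corollary \ref{cor:commeldstrm} produces a point $z\in\sigma(\T)\cap\sigma(\S)\subseteq R$, and $h[A\setminus\{a\}]\cup\{z\}$ witnesses $A\in\rro(R)$. Hence $R$ omits no $(n+1)$-element age member, which is the contrapositive of $P(n+1)$. The entire proof uses only Lemmas \ref{lem:charercop}, \ref{lem:freeduch}, \ref{lem:freecoppr} and Corollary \ref{cor:commeldstrm}; no age-indivisibility of typesets and no analysis of the rank order enter at all, which is what makes the theorem available without the hypotheses of the main theorem.
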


\begin{proof}
 Consider the proposition $P(n)$:  If $\rro(\mathfrak{c}^{-1}(0))$ does not contain some set $A\in \rro(U)$ with $|A|=n$, then $\mathfrak{c}^{-1}(1)$  embeds a copy of $\mathrm{G}$.  The statement $P(n)$ implies the statement $P^\ast(n)$: If there exists a copy $\mathrm{D}$ of  $\mathrm{G}$ for which $\rro(\mathfrak{c}^{-1}(0)\cap D)$ does not contain some set  $A\in \rro(U)$ with $|A|=n$, then $\mathfrak{c}^{-1}(1)\cap D$  embeds a copy of $\mathrm{G}$.  If $P(1)$ does not hold then $U= \mathfrak{c}^{-1}(1)$ because $\mathrm{G}$ acts transitively. We will prove that $P(n)$ implies $P(n+1)$. 
 
 Assume that the set  $\mathfrak{c}^{-1}(1)$ does not embed a copy of $\mathrm{G}$. Then 
$\mathfrak{c}^{-1}(1)$ is not a copy of $\mathrm{G}$.  There exists, according to  Lemma~\ref{lem:charercop}, a type $\T$ of  $\mathrm{G}$ with $\iota(\T)\subseteq \mathfrak{c}^{-1}(1)$ and with $\sigma(\T)\subseteq \mathfrak{c}^{-1}(0)$. Let $\X$ be the free type with $\iota(\X)=\iota(\T)$. It follows from Lemma \ref{lem:freecoppr} that $\sigma(\X)$ induces a copy of  $\mathrm{G}$. Let $A\in \rro(U)$ with $|A|=n+1$. Let $a\in A$ and $\mathrm{B}$ be the restriction of $\mathrm{A}$ to  $A\setminus \{a\}$.  $P^\ast(n)$ implies that there exists a function  $h\in \mathrm{G}$ with   $h[B]\subseteq \mathfrak{c}^{-1}(0)\cap \sigma(\X)$.  Let $\S=\langle h[B]\tr h(a)\rangle$.  It follows from Corollary \ref{cor:commeldstrm} that there is an element $z\in \sigma(\T)\cap \sigma(\S)$. Hence  $z\in \mathfrak{c}^{-1}(0)$ and $z\in \sigma(\S)$.  Implying that $A\in \rro(\mathfrak{c}^{-1}(0))$. 
\end{proof}

Translating to homogeneous structures, we obtain Theorem \ref{thm:wiekstruncor}:

\vskip 3pt
\noindent
Let $\mathrm{U}$ be a free amalgamation homogeneous relational structure with a transitive automorphism group $\mathrm{G}$. Let $\mathfrak{A}$ be the age of $\mathrm{U}$.  Let  $\mathfrak{c}: U\to 2$ be a two colouring of $U$. Then, if  $\rro(\mathrm{U}_{\downarrow (\mathfrak{c}^{-1}(i))})\not=\mathfrak{A}$ the induced substructure    $\mathrm{U}_{\downarrow \mathfrak{c}^{-1}(i+1)}$   embeds a copy of $\mathrm{U}$. (Hence $\mathrm{U}$ is age indivisible.)

\begin{cor}\label{cor:aindibinn}
Let $\mathrm{U}$ be a binary, free amalgamation homogeneous relational structure. Then the typesets of types of $\mathrm{U}$ are age indivisible.
\end{cor}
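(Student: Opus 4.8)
The plan is to reduce the corollary to the weak indivisibility result Theorem \ref{thm:wiekstruncor}, applied not to $\mathrm{U}$ globally but to the induced substructure sitting on a single typeset. So fix a type $\T=\langle F\tr x\rangle$ of $\mathrm{U}$ with $F=\iota(\T)$; the goal is to show that $\sigma(\T)$ is age indivisible, i.e.\ that for every partition $(S_0,S_1)$ of $\sigma(\T)$ at least one of $\rro(S_0),\rro(S_1)$ equals $\rro(\sigma(\T))$.

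First I would invoke Corollary \ref{cor:freambinmeld}: because $\mathrm{U}$ is binary, the induced substructure $\mathrm{U}_{\downarrow \sigma(\T)}$ is itself a free amalgamation homogeneous structure, whose group of automorphisms is $\mathrm{G}_{\iota(\T)}$ acting on $\sigma(\T)$. Next I would check that this action is transitive, which is immediate from the definition of the typeset: if $y,z\in \sigma(\T)$ then there are $g,h\in \mathrm{G}_F$ with $g(x)=y$ and $h(x)=z$, whence $h\circ g^{-1}\in \mathrm{G}_F$ carries $y$ to $z$, and its restriction to $\sigma(\T)$ (which $\mathrm{G}_F$ preserves setwise) is an automorphism of $\mathrm{U}_{\downarrow \sigma(\T)}$ sending $y$ to $z$.

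Having established that $\mathrm{U}_{\downarrow \sigma(\T)}$ is a free amalgamation homogeneous relational structure with a transitive automorphism group, I would apply Theorem \ref{thm:wiekstruncor} directly to it. The theorem yields that $\mathrm{U}_{\downarrow \sigma(\T)}$ is age indivisible. Finally I would unwind the definitions: the age of $\mathrm{U}_{\downarrow \sigma(\T)}$ is $\rro(\sigma(\T))$, and for a subset $S_i\subseteq \sigma(\T)$ the age of $(\mathrm{U}_{\downarrow \sigma(\T)})_{\downarrow S_i}$ coincides with $\rro(S_i)$, so the age indivisibility of the structure $\mathrm{U}_{\downarrow \sigma(\T)}$ is precisely the statement that the set $\sigma(\T)$ is age indivisible.

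I do not expect a genuine obstacle here; the work was already carried out in proving Corollary \ref{cor:freambinmeld} and Theorem \ref{thm:wiekstruncor}. The only point requiring care is verifying that the hypotheses of the two cited results line up, namely that binarity is used exactly where needed (in Corollary \ref{cor:freambinmeld}, to guarantee that $\mathrm{U}_{\downarrow \sigma(\T)}$ is again free amalgamation homogeneous with automorphism group $\mathrm{G}_{\iota(\T)}$) and that the transitivity hypothesis of Theorem \ref{thm:wiekstruncor} is met by the orbit description of $\sigma(\T)$. The conceptual content is simply that, in the binary case, each typeset carries the structure of a smaller free amalgamation homogeneous structure, so the global weak indivisibility theorem applies locally to it.
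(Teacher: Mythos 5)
Your proposal is correct and is essentially the paper's own proof: the paper likewise deduces the corollary by citing Corollary \ref{cor:freambinmeld} (so that $\mathrm{U}_{\downarrow \sigma(\T)}$ is a free amalgamation homogeneous structure with automorphism group $\mathrm{G}_{\iota(\T)}$) and then applying Theorem \ref{thm:wiekstruncor}. Your explicit verification that $\mathrm{G}_{\iota(\T)}$ acts transitively on $\sigma(\T)$ is a detail the paper leaves implicit, but it is exactly the right check for the transitivity hypothesis of Theorem \ref{thm:wiekstruncor}.
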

\begin{proof}
Let $\T$ be a type of $\mathrm{U}$. It follows from Lemma \ref{cor:freambinmeld} that the structure $\mathrm{U}_{\downarrow \sigma(\T)}$ is a free amalgamation homogeneous structure with  $\mathrm{G}_{\iota(\T)}$ as group of automorphisms. It follows from Theorem \ref{thm:wiekstruncor} that $\sigma(\T)$ is age indivisible. 
\end{proof}

\section{Age indivisibility of the groups $\mathrm{G}_F$}\label{sect:aingind}

For this section let $\mathrm{U}$ be a countable free amalgamation homogeneous structure and let $\mathrm{G}$ be the group of automorphisms of $\mathrm{U}$. See Subsection \ref{subsect:gropversio} for the notion of $\mathrm{G}$-age, that is $\rro_\mathrm{G}(S)$,  of a set $S\subseteq U$. 

\begin{defin}\label{defin:copacco9m}
Let $\mathcal{B}=\{\B_j\mid j\in n\in\omega \}$ be a bundle of types of $\mathrm{U}$.  

A function $f: \iota(\mathcal{B})\cup \sigma(\mathcal{B})\to \iota(\mathcal{B})\cup \sigma(\mathcal{B})$ is an {\em embedding of $\mathcal{B}$ into $\mathcal{B}$ if for every finite $A\subseteq \iota(\mathcal{B})\cup \sigma(\mathcal{B})$ there exists a function $g\in \mathrm{G}_{\iota(\mathcal{B})}$ whose restriction to $A$ is equal to the restriction of $f$ to $A$. The image of an embedding of $\mathcal{B}$ into $\mathcal{B}$ is a {\em copy of $\mathcal{B}$}. (In particular $\iota(\mathcal{B})\cup \sigma(\mathcal{B})$ is a copy of $\mathcal{B}$ and if $f$ is an embedding of $\mathcal{B}$  then $f(v)=v$ for all $v\in \iota(\mathcal{B})$.)  If  $\iota(\mathcal{B})\cup \sigma(\mathcal{B})=U$} then $f$ is an embedding of the homogeneous structure $\mathrm{U}$.

Let $C$ be a copy of $\mathcal{B}$ and $j\in n$. A type $\T=\langle \iota(\mathcal{B})\cup F\tr x\rangle$ is an {\em $F$-successor of $\B_j$ within $C$} if $\T$ is an $F$-successor of $\B_j$ and if $F\subseteq C\setminus\iota(\mathcal{B})$.  (Note that then $x\in \sigma(\B_j)$.) Let $\T=\langle \iota(\mathcal{B})\cup F\tr x\rangle$ be an $F$-successor of $\B_j$ within $C$. A copy $D$ of $\mathcal{B}$ is {\em neutral to $\T$ within $C$} if $D\subseteq C$ and if $\sigma(\T)\cap \sigma(\S)\cap C\not=\emptyset$ for every type $\S=\langle \iota(\mathcal{B})\cup E\tr y\rangle$   which is an $E$-successor of $\B_j$ within $D$.

For a set  $S\subseteq  \sigma(\mathcal{B})$ let $\rrro(S)=$
\[
\{A\subseteq \sigma(\mathcal{B})\mid \text{$A$ is finite and there exists $g\in\mathrm{G}_{\iota(\mathcal{B})}$ with $g[A]\subseteq S$}\}.
\]
The   set $S$ is {\em $\mathcal{B}$-age complete} if $\rrro(S)=\rrro(\sigma(\mathcal{B}))$, that is if for every finite set  $A\subseteq \sigma(\mathcal{B})$ there exists a function $g\in\mathrm{G}_{\iota(\mathcal{B})}$ with $g[A]\subseteq S$. (In particular every copy of $\mathcal{B}$ is $\mathcal{B}$-age complete.) 
\end{defin}

Observe: The group $\mathrm{G}_{\iota(\mathcal{B})}$ acts on $\sigma(\mathcal{B})$. It is possible to extend the relational structure $\mathrm{U}_{\downarrow \sigma(\mathcal{B})}$ by adding further relations to obtain a free amalgamation homogeneous structure $\mathrm{H}$ with $\mathrm{G}_{\iota(\mathcal{B})}$ as group of automorphisms.  Then $\rrro(S)=\rro_{\mathrm{G}_{\iota(\mathcal{B})}}(S)$ for $S\subseteq \sigma(\mathcal{B})$.

\begin{lem}\label{lem:tsetwicopy}
Let $\mathcal{B}=\{\B_j\mid j\in n \}$ be a bundle and $C$ be   a copy of $\mathcal{B}$.   Let $\T=\langle \iota(\mathcal{B})\cup F\tr x\rangle$ be an $F$-successor of $\B_j$ within $C$ for some index $j\in n$. Then $\sigma(\T)\cap \sigma(\B_j)\cap C\not=\emptyset$. (Note $\sigma(\T)\cap \sigma(\B_j)\cap C=\sigma(\T)\cap C$ because $\sigma(\T)\subseteq \sigma(\B_j)$.)
\end{lem}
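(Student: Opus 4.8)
The plan is to transport a realization of $\T$ across the embedding $f$ whose image is the copy $C$. The obstacle to doing this naively is that, by the definition of an embedding of $\mathcal{B}$, the map $f$ fixes only $\iota(\mathcal{B})$ pointwise, whereas the sockel of $\T$ is the larger set $\iota(\mathcal{B})\cup F$; so $\T$ cannot be moved through $f$ directly. First I would pull the set $F$ back along $f$. Since $C=f[\iota(\mathcal{B})\cup\sigma(\mathcal{B})]$, the map $f$ is injective, fixes $\iota(\mathcal{B})$, and carries each typeset $\sigma(\B_k)$ into itself, while $F\subseteq C\setminus\iota(\mathcal{B})$; hence there is a unique finite $F_0\subseteq\sigma(\mathcal{B})$ with $f[F_0]=F$. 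Choosing, from the definition of an embedding, a function $g\in\mathrm{G}_{\iota(\mathcal{B})}$ that agrees with $f$ on the finite set $\iota(\mathcal{B})\cup F_0$, I would set $\T_0:=g^{-1}[\T]$. Because $g$ fixes $\iota(\B_j)=\iota(\mathcal{B})$ and leaves $\sigma(\B_j)$ invariant, Note~\ref{note:typeim} gives that $\T_0$ is again a successor of $\B_j$, with $\iota(\T_0)=\iota(\mathcal{B})\cup F_0$ and $\sigma(\T_0)=g^{-1}[\sigma(\T)]\subseteq\sigma(\B_j)$; that is, $\T_0$ is an $F_0$-successor of $\B_j$ whose typeset lies inside $\sigma(\mathcal{B})$, the domain of $f$.

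Next I would use that $\sigma(\T_0)$ is nonempty (in fact infinite, by Corollary~\ref{cor:tyinfA}) and contained in $\sigma(\mathcal{B})$, so I may fix some $z_0\in\sigma(\T_0)$. The final step pushes $z_0$ forward while simultaneously restoring the original sockel $\iota(\mathcal{B})\cup F$. Applying $f$ to the finite set $\iota(\mathcal{B})\cup F_0\cup\{z_0\}$, there is a $g'\in\mathrm{G}_{\iota(\mathcal{B})}$ agreeing with $f$ on it. Then $g'$ and $g$ agree on $\iota(\T_0)=\iota(\mathcal{B})\cup F_0$, so by Note~\ref{note:typeim} $g'[\T_0]=g[\T_0]=\T$, and $z_0\in\sigma(\T_0)$ yields $g'(z_0)\in\sigma(\T)$. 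Since $g'(z_0)=f(z_0)$ and $z_0\in\sigma(\mathcal{B})$, the element $g'(z_0)$ lies in $C=f[\iota(\mathcal{B})\cup\sigma(\mathcal{B})]$. Thus $g'(z_0)\in\sigma(\T)\cap C$, and because $\sigma(\T)\subseteq\sigma(\B_j)$ this is exactly a point of $\sigma(\T)\cap\sigma(\B_j)\cap C$, giving nonemptiness.

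The crux — and the only place genuine care is needed — is the coordination of the two group elements $g$ and $g'$: the first is chosen to define the pulled-back type $\T_0$, the second to land $z_0$ in the copy. They must agree on the sockel $\iota(\T_0)$, which holds because both agree with $f$ there, so that this single $g'$ at once carries $\T_0$ back to $\T$ and carries $z_0$ into $C$. Everything else — injectivity of $f$, the $\mathrm{G}_{\iota(\mathcal{B})}$-invariance of $\sigma(\B_j)$, and the inclusion $\sigma(\T_0)\subseteq\sigma(\mathcal{B})$ — I expect to be routine verifications directly from the definitions in Definition~\ref{defin:copacco9m} and Note~\ref{note:typeim}.
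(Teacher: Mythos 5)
Your proof is correct and follows essentially the same route as the paper's: the paper also pulls the type back through a group element $g\in\mathrm{G}_{\iota(\mathcal{B})}$ agreeing with $f$ on $\iota(\mathcal{B})\cup f^{-1}[F]$, takes the point $g^{-1}(x)\in\sigma(g^{-1}[\T])$ (your $z_0$, specialized), and pushes it into $C$ via a second group element $h$ (your $g'$) agreeing with $f$ on the enlarged finite set, the crux in both arguments being that the two group elements agree on the sockel so that $h\circ g^{-1}\in\mathrm{G}_{\iota(\T)}$, equivalently $g'[\T_0]=\T$. Your invocation of Note~\ref{note:typeim} is just a packaged form of the paper's composition argument, so the two proofs coincide in substance.
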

\begin{proof}
Let $f$ be an embedding of $\mathcal{B}$ into $\mathcal{B}$ whose image is $C$ and let $g\in \mathrm{G}_{\iota(\mathcal{B})}$ which agrees with $f$ on $\iota(\mathcal{B})\cup f^{-1}[F]$. Let  $g^{-1}[\T]=\langle \iota(\mathcal{B})\cup g^{-1}[F]\tr g^{-1}(x)\rangle$. Clearly $f(g^{-1})(x)\in C$.  The function $f$ is an embedding of $\mathcal{B}$ into $\mathcal{B}$  and hence there exists a function $h\in \mathrm{G}_{\iota(\mathcal{B})}$ which agrees with $f$ on $F\cup g^{-1}(x)$.  The function $h\circ g^{-1}\in \mathrm{G}_{\iota{\mathcal{B}}\cup F}=\mathrm{G}_{\iota(\T)}$ and $x\in \sigma(\T)$.  Hence $f\circ g^{-1}(x)=h\circ g^{-1}(x)\in \sigma(\T)$.  
\end{proof}

\begin{lem}\label{lem:neutcop7}
Let $\mathcal{B}=\{\B_j\mid j\in n \}$ be a bundle and let $\T=\langle \iota(\mathcal{B})\cup F\tr x\rangle$ be an $F$-successor of $\B_j$ within the copy $\iota(\mathcal{B})\cup \sigma(\mathcal{B})$ of $\mathcal{B}$ for some index $j\in n$. Then there exists a copy $D$ of  $\mathcal{B}$ which is neutral to $\T$ within the copy $\iota(\mathcal{B})\cup \sigma(\mathcal{B})$. 
\end{lem}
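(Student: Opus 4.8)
The plan is to produce the desired copy $D$ as a copy of $\mathcal{B}$ sitting inside the ambient copy $C := \iota(\mathcal{B})\cup\sigma(\mathcal{B})$ which is \emph{free over} $F$, by which I mean that no relation of $\mathrm{U}$ holds on a tuple meeting both $F$ and $D\setminus\iota(\mathcal{B})$ while otherwise lying inside $\iota(\mathcal{B})\cup F\cup(D\setminus\iota(\mathcal{B}))$; I shall also arrange $D\cap F=\emptyset$. Granting such a $D$, neutrality follows quickly. Let $\S=\langle\iota(\mathcal{B})\cup E\tr y\rangle$ be any $E$-successor of $\B_j$ within $D$, so $E\subseteq D\setminus\iota(\mathcal{B})$ and hence $E\cap F=\emptyset$. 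The common predecessor of $\T$ and $\S$ is $\B_j$, since $\sigma(\T)\cup\sigma(\S)\subseteq\sigma(\B_j)$, and there is no Gaifman edge between $F$ and $E$ because $D$ is free over $F$; thus $\T$ and $\S$ are in free position. By Corollary \ref{cor:commeldstrm} the join $\R:=\T\sqcap\S=\langle\iota(\mathcal{B})\cup F\cup E\tr z\rangle$ exists, and $\sigma(\R)\subseteq\sigma(\T)\cap\sigma(\S)$. Moreover $\R$ is an $(F\cup E)$-successor of $\B_j$ within $C$: its sockel is $\iota(\mathcal{B})\cup(F\cup E)$ with $F\cup E\subseteq\sigma(\mathcal{B})=C\setminus\iota(\mathcal{B})$, and $\sigma(\R)\subseteq\sigma(\B_j)$. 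Applying the already-established Lemma \ref{lem:tsetwicopy} to $\R$ and the copy $C$ gives $\sigma(\R)\cap C\neq\emptyset$, whence $\sigma(\T)\cap\sigma(\S)\cap C\neq\emptyset$; as $\S$ was arbitrary, $D$ is neutral to $\T$ within $C$.

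It remains to construct $D$. Fix an enumeration $\sigma(\mathcal{B})=\{s_0,s_1,\dots\}$ and build, by recursion on $k$, elements $d_0,d_1,\dots$ of $\sigma(\mathcal{B})\setminus F$ so that $f_k:=\{(s_i,d_i)\mid i<k\}$ together with the identity on $\iota(\mathcal{B})$ is a partial isomorphism of $\mathrm{U}$ and the set $\{d_i\mid i<k\}$ is free over $F$. At stage $k$ let $\mathrm{M}$ be the $\boldsymbol{L}$-structure on $\iota(\mathcal{B})\cup F\cup\{d_0,\dots,d_{k-1}\}\cup\{w\}$, with $w$ a new point, defined as the free amalgam over $\iota(\mathcal{B})\cup\{d_0,\dots,d_{k-1}\}$ of the current structure $\mathrm{U}_{\downarrow\iota(\mathcal{B})\cup F\cup\{d_0,\dots,d_{k-1}\}}$ and of the structure attaching $w$ to $\iota(\mathcal{B})\cup\{d_0,\dots,d_{k-1}\}$ exactly as $s_k$ is attached to $\iota(\mathcal{B})\cup\{s_0,\dots,s_{k-1}\}$ under $f_k$. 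The two factors agree on the shared part because $f_k$ is a partial isomorphism, and the amalgam is free, so $\mathrm{M}\in\mathfrak{A}$. By Fact \ref{fact:contemb} there is an embedding $h$ of $\mathrm{M}$ into $\mathrm{U}$ fixing $\iota(\mathcal{B})\cup F\cup\{d_0,\dots,d_{k-1}\}$ pointwise; set $d_k:=h(w)$.

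Each verification at stage $k$ is then routine. Since $h$ is injective and fixes $F$ and the earlier $d_i$, we get $d_k\notin F$ and $d_k\neq d_i$; since $\mathrm{M}$ carries no relation meeting both $w$ and $F$, the set $\{d_0,\dots,d_k\}$ remains free over $F$; and since $d_k$ realizes over $\iota(\mathcal{B})$ the same type as $s_k$, which lies in $\sigma(\mathcal{B})$, we have $d_k\in\sigma(\mathcal{B})$ and $f_{k+1}$ is again a partial isomorphism. Putting $D:=\iota(\mathcal{B})\cup\{d_k\mid k\in\omega\}$, the map fixing $\iota(\mathcal{B})$ and sending $s_k\mapsto d_k$ agrees on every finite set with some $g\in\mathrm{G}_{\iota(\mathcal{B})}$ by homogeneity of $\mathrm{U}$, so it is an embedding of $\mathcal{B}$ into $\mathcal{B}$ and $D$ is a copy of $\mathcal{B}$ with $D\subseteq C$ and $D\cap F=\emptyset$.

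\textbf{Main obstacle.} The delicate part is the recursive construction of $D$: one must simultaneously keep the partial map a partial isomorphism over $\iota(\mathcal{B})$, so that the limit is genuinely a copy of $\mathcal{B}$ (rather than merely avoiding $F$), and preserve freeness over $F$, while confirming at each step that the two factors of the amalgam are compatible and that the resulting free amalgam lands in $\mathfrak{A}$. Note that only free amalgamation and the extension property (Fact \ref{fact:contemb}) are used here, so oligomorphicity is not needed; once these are invoked correctly the argument reduces to the bookkeeping above, and the transfer of the intersection point into $C$ is handled entirely by Lemma \ref{lem:tsetwicopy}.
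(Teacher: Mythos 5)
Your proof is correct and takes essentially the same approach as the paper: recursively build, via free amalgamation and Fact \ref{fact:contemb}, a copy $D$ of $\mathcal{B}$ whose points outside $\iota(\mathcal{B})$ have no Gaifman edges to $F$, so that any $E$-successor $\S$ of $\B_j$ within $D$ is compatible with and in free position to $\T$, and Corollary \ref{cor:commeldstrm} supplies a point of $\sigma(\T)\cap\sigma(\S)$, which automatically lies in $C=\iota(\mathcal{B})\cup\sigma(\mathcal{B})$ since $\sigma(\T)\subseteq\sigma(\B_j)\subseteq C$. The only cosmetic differences are that the paper keeps the new points free of initial segments of an enumeration of all of $\sigma(\mathcal{B})$ (you keep them free only of $F$, which is all that is actually used) and that your closing appeal to Lemma \ref{lem:tsetwicopy} is redundant for the full copy.
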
 
\begin{proof}
Let $\{u_i\mid i\in \omega\}$ be an $\omega$-enumeration of $\sigma(\mathcal{B})$. The set $D\setminus \iota(\mathcal{B})=\{v_i\mid i\in \omega\}$ will be constructed recursively so that for every $i\in n$ there exists a function $f_n\in \mathrm{G}_{\iota(\mathcal{B})}$ with $f_n(u_i)=v_i$ and so that the set $\{v_i\mid i\in n\}$ is in free position to the set $\{u_i\mid i\in n\}$. If $\{v_i\mid i\in n\}$ has been constructed let $\S=\langle \{v_i\mid i\in n\}\tr f(u_n)\rangle$. Using to Lemma \ref{lem:freeduch}  pick $v_n\in \sigma(\R)$ for $\R$ being the free $\{u_i\mid  i\in n\}$-successor of the type $\S$ and pick $h\in \mathrm{G}_{\iota(\mathcal{B})\cup \{v_i\mid i\in n\}}$ with $h(f_n(u_n))=v_n$. Let $f_{n+1}=h\circ f_n$. The function $f=\bigcup_{n\in \omega}f_n$ has then the property that it agrees on every finite subset with a function in $\mathrm{G}_{\iota(\mathcal{B})}$ implying that $f[\iota(\mathcal{B})\cup \sigma(\mathcal{B})]:=D$ is a copy of $\mathcal{B}$.   

Let $\S=\langle \iota(\mathcal{B})\cup E\tr y\rangle$ be an $E$-successor of $\B_j$ within the copy $D$.  The types $\T$ and $\S$ are compatible and in free position. Hence it follows from Corollary \ref{cor:commeldstrm} that there exists an element $z\in \sigma(\T)\cap \sigma(\S)$.   
\end{proof}

\begin{lem}\label{lem:neutcop33}
Let $\mathcal{B}=\{\B_j\mid j\in n \}$ be a bundle and $C$ a copy of $\mathcal{B}$  and let $\T=\langle \iota(\mathcal{B})\cup F\tr x\rangle$ be an $F$-successor of $\B_j$ within $C$ for some index $j\in n$. Then there exists a copy $D$ of  $\mathcal{B}$ which is neutral to $\T$ within $C$. 
\end{lem}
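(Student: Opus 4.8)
The plan is to reduce the general statement to its special case, Lemma~\ref{lem:neutcop7}, by transporting everything along an embedding realizing the copy $C$. I would fix an embedding $f$ of $\mathcal{B}$ into $\mathcal{B}$ with image $C$; thus $f$ is injective, fixes $\iota(\mathcal{B})$ pointwise, and restricts to a bijection of $\iota(\mathcal{B})\cup\sigma(\mathcal{B})$ onto $C$. The single fact I would use throughout is that $f$ respects successor typesets: if $\R'=\langle\iota(\mathcal{B})\cup F'\tr w'\rangle$ is a type with $F'\cup\{w'\}\subseteq\sigma(\mathcal{B})$ and $\R:=\langle\iota(\mathcal{B})\cup f[F']\tr f(w')\rangle$, then $f[\sigma(\R')]\subseteq\sigma(\R)$. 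This follows because, for any $w''\in\sigma(\R')$, the definition of embedding of $\mathcal{B}$ supplies $h\in\mathrm{G}_{\iota(\mathcal{B})}$ agreeing with $f$ on the finite set $\iota(\mathcal{B})\cup F'\cup\{w',w''\}$, whence $h[\R']=\R$ (Definition~\ref{defin:typeim}) and $f(w'')=h(w'')\in\sigma(\R)$ (Note~\ref{note:typeim}). The same remark for $f^{-1}$, together with the setwise invariance of each $\sigma(\B_j)$ under $\mathrm{G}_{\iota(\mathcal{B})}$, shows that $f$ and $f^{-1}$ carry representatives of $\B_j$ to representatives of $\B_j$.

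With this in hand I would pull $\T$ back. Lemma~\ref{lem:tsetwicopy} provides a representative $x\in\sigma(\T)\cap C$; setting $x'=f^{-1}(x)$ and $\T':=\langle\iota(\mathcal{B})\cup f^{-1}[F]\tr x'\rangle$, the remark of the first paragraph shows $x'\in\sigma(\B_j)$, and $f^{-1}[F]\subseteq\sigma(\mathcal{B})$, so $\T'$ is an $f^{-1}[F]$-successor of $\B_j$ within the full copy $\iota(\mathcal{B})\cup\sigma(\mathcal{B})$. Lemma~\ref{lem:neutcop7} then yields a copy $D'$ of $\mathcal{B}$ neutral to $\T'$ within $\iota(\mathcal{B})\cup\sigma(\mathcal{B})$, and I would take $D:=f[D']$. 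Since $D'$ is itself the image of an embedding of $\mathcal{B}$ and embeddings compose, $D$ is again a copy of $\mathcal{B}$, and $D\subseteq C$.

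To finish I would verify that $D$ is neutral to $\T$ within $C$. Given an $E$-successor $\S=\langle\iota(\mathcal{B})\cup E\tr y\rangle$ of $\B_j$ within $D$, Lemma~\ref{lem:tsetwicopy} applied to the copy $D$ lets me pick $y\in\sigma(\S)\cap D$; then, exactly as for $\T'$, the type $\S':=\langle\iota(\mathcal{B})\cup f^{-1}[E]\tr f^{-1}(y)\rangle$ is an $f^{-1}[E]$-successor of $\B_j$ within $D'$. Neutrality of $D'$ to $\T'$ delivers $z'\in\sigma(\T')\cap\sigma(\S')$ (the clause ``within the full copy'' being automatic since $\sigma(\T')\subseteq\sigma(\mathcal{B})$). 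Applying the first-paragraph remark once to the pair $(\T',\T)$ and once to $(\S',\S)$ gives $f(z')\in\sigma(\T)$ and $f(z')\in\sigma(\S)$, while $f(z')\in C$ by construction, so $z:=f(z')$ witnesses $\sigma(\T)\cap\sigma(\S)\cap C\neq\emptyset$.

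The main obstacle, and the only point that genuinely goes beyond Lemma~\ref{lem:neutcop7}, is forcing the common witness of $\sigma(\T)$ and $\sigma(\S)$ to land inside the prescribed copy $C$ rather than merely in $U$. This is exactly what is secured by pushing $z'$ forward through the embedding $f$ instead of through an arbitrary automorphism: an embedding of $\mathcal{B}$ maps the relevant successor typesets into their images, so the witness stays in $C=f[\iota(\mathcal{B})\cup\sigma(\mathcal{B})]$. The remaining verifications (that $\T'$ and $\S'$ have the claimed sockels and restrict to $\B_j$) are all routine instances of the first-paragraph remark.
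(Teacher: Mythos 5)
Your proposal is correct and takes essentially the same route as the paper's proof: both transport the problem along the embedding $f$ realizing $C$ (pulling $\T$ back, applying Lemma~\ref{lem:neutcop7} inside the full copy $\iota(\mathcal{B})\cup\sigma(\mathcal{B})$, setting $D=f[N]$, then pulling $E$-successors within $D$ back and pushing the neutrality witness forward into $C$). The only difference is cosmetic: the paper performs the transport with group elements $g,l\in\mathrm{G}_{\iota(\mathcal{B})}$ agreeing with $f$ on the relevant finite sets (writing $g^{-1}[\T]$, $l^{-1}[\S]$), while you encapsulate the same finite-agreement mechanism in your opening remark and use $f^{-1}$ together with Lemma~\ref{lem:tsetwicopy} to choose representatives.
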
 
\begin{proof}
Let $f$ be an embedding whose image is $C$ and let $g\in \mathrm{G}_{\iota(\mathcal{B})}$ which agrees with $f$ on $f^{-1}[\iota(\mathcal{B})\cup F]$.   The type $g^{-1}[\T]= \langle f^{-1}[\iota(\mathcal{B})\cup F]\tr g^{-1}(x)\rangle$ is an $f^{-1}[F]$ successor of $\B_j$ within the copy $\iota(\mathcal{B})\cup \sigma(\mathcal{B})$ of $\mathcal{B}$. According to Lemma \ref{lem:neutcop7} there exists a copy $N$ of  $\mathcal{B}$ which is neutral to $g^{-1}[\T]$ within the copy $\iota(\mathcal{B})\cup \sigma(\mathcal{B})$. Let $h$ be an embedding of $\mathcal{B}$ whose image is $N$. Let $D$ be the copy which is the image of the embedding $f\circ h$. Then $D\subseteq C$. Note that if $l\in \mathrm{G}_{\iota(\mathcal{B})}$ which agrees with $f$ on $f^{-1}[\iota(\mathcal{B})\cup F]$ then $l^{-1}[\T]=g^{-1}[\T]$. 

Let $\S=\langle \iota(\mathcal{B})\cup E\tr y\rangle$  be is an $E$-successor of $\B_j$ within $D$ and let $l\in \mathrm{G}_{\iota(\mathcal{B})}$ which agrees with $f$ on $f^{-1}[\iota(\mathcal{B})\cup F \cup E]$. Then the type $l^{-1}[\S]=\langle \iota(\mathcal{B})\cup f^{-1}[E]\tr l^{-1}(y)\rangle$ is an $f^{-1}[E]$-successor of $\B_j$ within the copy $N$. Implying that there exists an element $z\in \sigma(l^{-1}[\T])\cap \sigma(l^{-1}[\S])=\sigma(g^{-1}[\T])\cap \sigma(l^{-1}[\S])$.  Then $f(z)\in \sigma(\T)\cap \sigma(\S)\cap C$.  
\end{proof}

\begin{defin}\label{defin:Stransdef}
Let $\mathcal{B}=\{\B_j\mid j\in n\}$ be a bundle and $S\subseteq \sigma(\mathcal{B})$.  

Then $P=(P_{i,j};\text{$i\in m_j\in \omega$ and $j\in n$})$    is a {\em colouring  of $S$} if $P_{i,j}\cap P_{i',j'}=\emptyset$ unless $i=i'$ and $j=j'$ and if\/  $\bigcup_{i\in m_j}P_{i,j}=S\cap \sigma(\B_j)$  for every $j\in n$.  The colouring $P$ is an $m$-colouring if $\max\{m_j\mid j\in n\}=m$. The colouring $P$ is a {\em colouring of $\mathcal{B}$} if it is a colouring of $\sigma(\mathcal{B})$. 

For a given colouring $P=(P_{i,j};\text{$i\in m_j\in \omega)$ and $j\in n$}\}$   let $\mathcal{E}$, or $\mathcal{E}(P)$ if the distinction is necessary, be the set of functions $\epsilon$ with $\epsilon(j)\in m_j$ for all $j\in n$. For a function $\epsilon\in \mathcal{E}$ let $P_\epsilon:=\bigcup_{j\in n}P_{\epsilon(j),j}$.  
\end{defin}

\begin{defin}\label{defin:ageindn}
Let $\mathcal{B}=\{\B_j\mid j\in n\}$ be a bundle and $S\subseteq \sigma(\mathcal{B})$.  The set $S$ is {\em $\mathcal{B}$-age indivisible} if for every colouring $P=(P_{i,j})$ of $S$ there exists a function $\epsilon\in \mathcal{E}$ for which  $\rrro(P_\epsilon)=\rrro(S)$. The bundle $\mathcal{B}$ is {\em $\mathcal{B}$-age indivisible} if the set $\sigma(\mathcal{B})$ is  $\mathcal{B}$-age indivisible. A type $\T$ is {\em age indivisible} if for every partition $(S_0,S_1)$ of $\sigma(\T)$ there exists an $i\in 2$ so that for all finite subsets $A$ of $\sigma(\T)$ there exists a function $f\in \mathrm{G}_{\iota(\T)}$ with $f[A]\subseteq S_i$. 

Let $P=(P_{i,j})$ be a colouring of $\mathcal{B}$  and let $\epsilon'\in \mathcal{E}$. For a set $J\subseteq n$ of indices and a function $\epsilon'\in \mathcal{E}$ let $\mathcal{E}_{J,\epsilon'}$ be the set of functions in $\mathcal{E}$ which agree with $\epsilon'$ on $J$.  A copy $C$ of $\mathcal{B}$ is {\em $(J,\epsilon')$-monochromatic} if  $C\cap \sigma(\B_j)\subseteq P_{\epsilon'(j),j}$ for every index $j\in J$. 

The bundle $\mathcal{B}=\{\B_j\mid j\in n\}$    is {\em weakly indivisible} if for every $2$-colouring $P=(P_{i,j};i\in 2, j\in n)$ of $\mathcal{B}$ exists a set $J\subseteq n$ of indices and a function $\epsilon'\in \mathcal{E}$  and a copy $C$ of $\mathcal{B}$ which is $(J,\epsilon')$-monochromatic and so that $\rrro(P_\epsilon\cap C)=\rrro(\sigma(\mathcal{B}))$ for every function $\epsilon\in \mathcal{E}_{J,\epsilon'}$.  
\end{defin}

\begin{thm}\label{thm:hennsage}
Let $\mathrm{U}$ be a countable free amalgamation homogeneous structure. Then every bundle  $\mathcal{B}=\{\B_j\mid j\in n\in \omega\}$ of types of $\mathrm{U}$ is weakly indivisible. 
\end{thm}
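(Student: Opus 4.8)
The plan is to prove Theorem~\ref{thm:hennsage} by induction on the number $n$ of types in the bundle $\mathcal{B}=\{\B_j\mid j\in n\}$, using the machinery of neutral copies developed in Lemmata~\ref{lem:tsetwicopy}, \ref{lem:neutcop7}, and \ref{lem:neutcop33}. The single-type base case $n=1$ is essentially a statement that a type $\T$ is age indivisible, and the engine that drives the whole argument is the observation (via Lemma~\ref{lem:tsetwicopy}) that inside any copy $C$ of $\mathcal{B}$, every successor type $\T$ of a $\B_j$ has $\sigma(\T)\cap C\neq\emptyset$, while Lemma~\ref{lem:neutcop33} lets me produce, inside any copy $C$, a sub-copy $D$ that is \emph{neutral} to a prescribed successor $\T$, meaning every successor $\S$ of $\B_j$ within $D$ meets $\sigma(\T)$ inside $C$.

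The strategy for a given $2$-colouring $P=(P_{i,j};i\in 2,j\in n)$ of $\mathcal{B}$ is the familiar one for proving weak indivisibility (as the author signals, this is the argument of Theorem~\ref{thm:wiekstru5r} lifted from single typesets to bundles, and the ingredients appeared in \cite{EZS3}). I would proceed analogously to the proof of Theorem~\ref{thm:wiekstru5r}, running an induction on the size of the finite sets $A$ whose copies I am trying to embed monochromatically, combined with an induction that peels off one index $j\in n$ at a time. Concretely: I try to build a copy $C$ on which the colour class $P_{1,\cdot}$ is ``age complete'' for some block of indices; if that fails at some index $j$, Lemma~\ref{lem:charercop} (the Pouzet Lemma, applied to the group $\mathrm{G}_{\iota(\mathcal{B})}$ acting on $\sigma(\mathcal{B})$, which by the Observation after Definition~\ref{defin:copacco9m} realizes $\rrro$ as a genuine $\mathrm{G}$-age) produces a successor type $\T$ of $\B_j$ with $\iota(\T)$ in one colour and $\sigma(\T)$ in the other. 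Then I pass to a neutral copy $D$ supplied by Lemma~\ref{lem:neutcop33}: within $D$ every successor of $\B_j$ still meets $\sigma(\T)$ inside $C$, which forces the finite configurations I failed to place in one colour to reappear in the other, giving the age-completeness $\rrro(P_\epsilon\cap C)=\rrro(\sigma(\mathcal{B}))$ demanded by Definition~\ref{defin:ageindn} for the surviving colour choices $\epsilon$.

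The bookkeeping obstacle—and the main difficulty—is that Definition~\ref{defin:ageindn} does not ask for a single monochromatic copy but for a copy $C$ together with a subset $J\subseteq n$ of indices and a fixed choice $\epsilon'$ on $J$ such that \emph{all} extensions $\epsilon\in\mathcal{E}_{J,\epsilon'}$ remain age complete in $C$. So I must track simultaneously the indices on which I have committed to a colour (the set $J$) and the indices on which both colours must still be witnessed, and I must ensure that the neutral-copy refinement does not destroy age completeness at the already-committed coordinates. I would handle this by choosing $J$ to be exactly the set of indices $j$ for which some colour class on $\sigma(\B_j)\cap C$ is \emph{not} age complete—forcing $\epsilon'(j)$ to be the unique age-complete colour there—and then showing that for $j\notin J$ both colours are age complete, so every $\epsilon\in\mathcal{E}_{J,\epsilon'}$ works. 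Verifying that a single copy $C$ can be found witnessing all these conditions at once, uniformly over the finitely many coordinates, is where the repeated application of the neutrality lemma and a diagonal back-and-forth construction (as in Lemma~\ref{lem:neutcop7}) must be assembled carefully; the finiteness of $n$ and of each $m_j$ keeps the recursion bounded. The free amalgamation hypothesis enters precisely through Corollary~\ref{cor:commeldstrm}, which guarantees the nonempty intersections $\sigma(\T)\cap\sigma(\S)$ underlying every neutrality argument.
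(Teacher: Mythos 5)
Your proposal assembles the right ingredients---the neutral-copy machinery of Lemmata \ref{lem:tsetwicopy}, \ref{lem:neutcop7} and \ref{lem:neutcop33}, an induction on the size of the finite sets $A$, the extraction of a blocking type from a failed monochromatic construction, and free amalgamation entering through Corollary \ref{cor:commeldstrm}---but it is missing the one device that makes the paper's proof close: the choice of $J$. The paper takes $J$ to be a \emph{largest} subset of $n$ for which some $(J,\epsilon')$-monochromatic copy $X$ of $\mathcal{B}$ exists (such a $J$ exists because $J=\emptyset$ qualifies and $n$ is finite). Monochromaticity at $J$ is then automatic, by definition, and maximality is exactly what turns the dichotomy into a contradiction: taking $A$ of minimal size with $A\notin\rrro(P_\epsilon\cap C)$ for some $\epsilon\in\mathcal{E}_{J,\epsilon'}$ and some copy $C\subseteq X$, and taking $a\in A\cap\sigma(\B_j)$ with $j\notin J$, one attempts a back-and-forth construction of a sub-copy of $C$ all of whose points at index $j$ have colour $0$; if it succeeds, $J\cup\{j\}$ contradicts the maximality of $J$, and if it fails, one obtains an $F$-successor $\T$ of $\B_j$ within $C$ with $\sigma(\T)\cap C\subseteq P_{1,j}$. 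Then the neutral copy $D$ of Lemma \ref{lem:neutcop33}, together with the inductive hypothesis on $|A|-1$ applied to $D$ (which is why the statement must be proved for \emph{all} copies $Y\subseteq X$, not just for $X$ itself), places $A\setminus\{a\}$ into $P_\epsilon\cap D$ and the last point into $\sigma(\T)\cap\sigma(\S)\cap C\subseteq P_{1,j}$, contradicting $A\notin\rrro(P_\epsilon\cap C)$. Your replacement of the explicit failed construction by the Pouzet Lemma relativized to $\mathrm{G}_{\iota(\mathcal{B})}$ is harmless; your replacement of the maximality argument is not.

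Concretely, defining $J$ a posteriori as the set of indices at which some colour class of $\sigma(\B_j)\cap C$ fails to be age complete, with $\epsilon'(j)$ ``the unique age-complete colour'', fails for three reasons. First, it is circular: $J$ is defined in terms of the copy $C$ you have yet to construct. Second, it presupposes that at each such index at least one of the two colours is age complete; that dichotomy is essentially the age-indivisibility statement (Theorem \ref{thm:agindfreambund}) which the paper \emph{derives from} Theorem \ref{thm:hennsage}, so you cannot assume it here. Third, and decisively, non-age-completeness of colour $1-\epsilon'(j)$ on $\sigma(\B_j)\cap C$ does not give the pointwise condition $C\cap\sigma(\B_j)\subseteq P_{\epsilon'(j),j}$ demanded by Definition \ref{defin:ageindn}; passing from ``the bad colour fails to realize some finite configuration'' to ``there is a sub-copy avoiding the bad colour entirely at index $j$, simultaneously for all $j\in J$'' is precisely the hard content that the maximality-of-$J$ argument supplies. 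Your proposed outer induction on $n$ has a similar unaddressed difficulty: a copy of a sub-bundle on $n-1$ types is not a copy of $\mathcal{B}$ (copies are images of embeddings of all of $\iota(\mathcal{B})\cup\sigma(\mathcal{B})$), so it is unclear what the inductive hypothesis for $n-1$ indices would hand you when the $n$-th typeset is adjoined; the paper avoids this by treating all $n$ indices in a single argument.
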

\begin{proof}
Let $\mathcal{B}=\{\B_j\mid j\in n\in \omega\}$ be a bundle  of types of $\mathrm{U}$ and let $P=(P_{i,j};i\in 2, j\in n)$ be a $2$-colouring of $\mathcal{B}$.

Let $J$ be a largest subset of $n$ for which there exists a function $\epsilon'\in \mathcal{E}$ and a copy $X$ of $\mathcal{B}$ which is $(J,\epsilon')$-monochromatic. The statement: $A\in \rrro(P_\epsilon\cap Y)$ for every number $k\in \omega$ and for every copy $Y\subseteq X$ of $\mathcal{B}$ and every $\epsilon\in \mathcal{E}_{J,\epsilon'}$ and every set $A\in \rrro(\sigma(\mathcal{B}))$ with $|A|=k$, implies the Theorem. If this statement does not hold, then  there exists  a largest number $k\in \omega$ so that $A\in \rrro(P_{\overline{\epsilon}}\cap Y)$ for  every $\overline{\epsilon}\in \mathcal{E}_{J,\epsilon'}$ and  for every copy $Y\subseteq X$ of $\mathcal{B}$  and every set $A\in \rrro(\sigma(\mathcal{B}))$ with $|A|=k$. Let $\epsilon\in \mathcal{E}_{J,\epsilon'}$ and $C\subseteq X$ be a copy of $\mathcal{B}$ and $A\in \rrro(\sigma(\mathcal{B}))$ with $|A|=k+1$ for which $A\not\in \rrro(P_\epsilon\cap C)$.

The set $A$ is an element of $\rrro(C)$ because $C$ is a copy of $\mathcal{B}$.   If $A\cap \sigma(\B_j)=\emptyset$ for all $j\in n\setminus J$ then $A\in \rrro(P_{\epsilon'}\cap C)$, because $C$ is $(J,\epsilon')$-monochromatic.   Hence  $A\in \rrro(P_{\epsilon}\cap C)$ because $\epsilon$ and $\epsilon'$ agree on $J$. Let $a\in A\cap \sigma(\B_j)$  with $j\in n\setminus J$ and assume without loss that $\epsilon(j)=1$. 

Enumerate $\sigma(\mathcal{B})$ into an $\omega$-sequence $(u_i;i\in \omega)$ and begin to construct recursively a sequence $(v_i\in C;i\in \omega)$ which has the property that the function $f: \iota(\mathcal{B})\cup \sigma(\mathcal{B})\to \iota(\mathcal{B})\cup \{v_i\mid i\in \omega\}$ with $f(u_i)=v_i\in C$ for all $i\in \omega$ and with $f(x)=x$ for all $x\in\iota(\mathcal{B})$ is an embedding and so that if  $v_i\in\sigma(\B_j)$ then  $v_i\in P_{0,j}$. If successful we would  obtain a copy $X\subseteq D$ of $\mathcal{B}$ which violates the maximality of the set $J$ of indices. Hence  there is an $m\in \omega$ and a type $\T=\langle \iota(\mathcal{B})\cup \{v_i\mid i\in m\}|x\rangle$ with $\sigma(\T)\subseteq \sigma(\B_j)$ and with  $\sigma(\T)\cap C\subseteq P_{1,j}$. Let $F=\{v_i\mid i\in m\}$. Then $\T$ is an  $F$-successor of $\B_j$ within $C$. 

It follows from  Lemma \ref{lem:neutcop33} that there exists a copy $D$ of  $\mathcal{B}$ which is neutral to $\T$ within $C$. Then $E :=A\setminus \{a\}\in \rrro(P_\epsilon\cap D)$. Hence there exists a function $h\in \mathrm{G}_{\iota(\mathcal{B})}$ with $h[E]\subseteq P_\epsilon\cap D\subseteq P_\epsilon\cap C$. The type  $\S=\langle \iota(\mathcal{B})\cup h[E]\tr h(a)\rangle$ is an $h[E]$-successor of $\B_j$ within $D$. Because $D$ is neutral to $\T$ within $C$ there exists and element $z\in \sigma(\T)\cap \sigma(\S)\cap C$. Then $z\in P_{1,j}\subseteq P_\epsilon$ because $z\in \sigma(\T)\cap C$. There exists a function $g\in \mathrm{G}_{\iota(\mathcal{B})\cup h[E]}$ with $z=g\circ h(a)$. Implying that the function $g\circ h\in \mathrm{G}_{\iota(\mathcal{B})}$ maps $A$ into $P_\epsilon\cap C$ in contradiction to $A\not\in \rrro(P_\epsilon\cap C)$.
\end{proof}

\begin{cor}\label{cor:hennsage}
Let $\mathcal{B}=\{\B_j\mid j\in n\}$ be a bundle.  Let $P=(P_{i,j};i\in 2, j\in n)$ be a $2$-colouring of $\mathcal{B}$. Then there exists a function $\epsilon\in \mathcal{E}(P)$ with $\rrro(P_\epsilon)=\rrro(\sigma(\mathcal{B}))$. 
\end{cor}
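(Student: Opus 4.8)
The plan is to derive the corollary directly from Theorem \ref{thm:hennsage} (weak indivisibility) together with the evident monotonicity of the relative age function $\rrro$. The entire substance lies in the theorem; the corollary is just the observation that passing from "inside a good copy $C$" to "inside all of $\sigma(\mathcal{B})$" can only enlarge the relevant relative age.

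First I would apply Theorem \ref{thm:hennsage} to the given $2$-colouring $P=(P_{i,j};i\in 2, j\in n)$ of $\mathcal{B}$. This supplies an index set $J\subseteq n$, a function $\epsilon'\in \mathcal{E}(P)$, and a copy $C$ of $\mathcal{B}$ which is $(J,\epsilon')$-monochromatic and satisfies $\rrro(P_\epsilon\cap C)=\rrro(\sigma(\mathcal{B}))$ for every $\epsilon\in \mathcal{E}_{J,\epsilon'}$. Since $\epsilon'$ trivially agrees with itself on $J$, we have $\epsilon'\in \mathcal{E}_{J,\epsilon'}$, and hence $\rrro(P_{\epsilon'}\cap C)=\rrro(\sigma(\mathcal{B}))$. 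I would take $\epsilon=\epsilon'$ to be the function claimed by the corollary.

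Next I would record the monotonicity of $\rrro$: if $S\subseteq T\subseteq \sigma(\mathcal{B})$ then $\rrro(S)\subseteq \rrro(T)$, because any witnessing $g\in \mathrm{G}_{\iota(\mathcal{B})}$ with $g[A]\subseteq S$ also satisfies $g[A]\subseteq T$, directly from Definition \ref{defin:copacco9m}. Applying this to the inclusions $P_{\epsilon'}\cap C\subseteq P_{\epsilon'}\subseteq \sigma(\mathcal{B})$, where the last containment holds because each $P_{i,j}\subseteq \sigma(\B_j)\subseteq \sigma(\mathcal{B})$ by Definition \ref{defin:Stransdef}, yields the chain
\[
\rrro(\sigma(\mathcal{B}))=\rrro(P_{\epsilon'}\cap C)\subseteq \rrro(P_{\epsilon'})\subseteq \rrro(\sigma(\mathcal{B})).
\]
Equality is forced throughout, so in particular $\rrro(P_{\epsilon'})=\rrro(\sigma(\mathcal{B}))$, which is exactly the assertion of the corollary. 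There is essentially no obstacle: the only points requiring a moment's care are verifying that $\epsilon'\in \mathcal{E}_{J,\epsilon'}$ (so that the conclusion of Theorem \ref{thm:hennsage} may legitimately be invoked for $\epsilon'$) and that each colour class $P_{i,j}$ lies inside $\sigma(\mathcal{B})$, both of which are immediate from the definitions.
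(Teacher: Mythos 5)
Your proof is correct and is precisely the argument the paper intends: the corollary is stated without proof as an immediate consequence of Theorem \ref{thm:hennsage}, obtained by taking $\epsilon=\epsilon'$ (which lies in $\mathcal{E}_{J,\epsilon'}$) and using the monotonicity of $\rrro$ under inclusion. Nothing is missing.
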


If $B$ is a transitivity  class of the group of automorphisms $\mathrm{G}$ of $\mathrm{U}$ then $\B=\langle \emptyset\tr x\rangle$,  with $x\in B$,  is a type of $\mathrm{U}$ with $\sigma(\B)=B$. Hence the following Theorem \ref{thm:wieklyindivun}, stating that every oligomorphic free amalgamation countable homogeneous structure is {\em weakly indivisible},   is a special case of Theorem \ref{thm:hennsage}.    

\begin{thm}\label{thm:wieklyindivun}
Let $\mathrm{U}$ be a countable free amalgamation homogeneous structure for which the group of automorphisms $\mathrm{G}$ of $\mathrm{U}$ has only finitely many transitivity classes. Let $\mathcal{B}=\{B_j\mid j\in n\in \omega\}$ be the partition of $U$ into the transitivity classes of $\mathrm{G}$. Let $\mathfrak{c}: U\to 2$ be a two colouring of $U$ and let $\mathcal{E}$ be the set of all functions $\epsilon: n\to 2$. 

Then there exists  a copy $C$ of\/ $\mathrm{U}$ and a set $J\subseteq n$ and a function $\epsilon'\in \mathcal{E}$ so that $C\cap B_j\subseteq \mathfrak{c}^{-1}(\epsilon'(j))$ for every $j\in J$ and so that the class of finite structures which have an embedding into the  restriction of\/ $\mathrm{U}$ to the set $\bigcup_{j\in n}\mathfrak{c}^{-1}\epsilon(j)$ is the age of $\mathrm{U}$ for every function $\epsilon\in \mathcal{E}$ which agrees with $\epsilon'$ on $J$. 
\end{thm}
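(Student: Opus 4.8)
The plan is to read the statement as the structure-language translation of the weak indivisibility of one specific bundle, exactly as announced in the paragraph preceding the theorem, and to invoke Theorem~\ref{thm:hennsage}.

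First I would build the bundle. By the observation preceding the theorem, for each transitivity class $B_j$ and any $x_j\in B_j$ the pair $\B_j:=\langle \emptyset\tr x_j\rangle$ is a type with $\sigma(\B_j)=B_j$. Since $\mathrm{G}$ has only finitely many transitivity classes, $\mathcal{B}:=\{\B_j\mid j\in n\}$ is a bundle with $\iota(\mathcal{B})=\emptyset$ and $\sigma(\mathcal{B})=\bigcup_{j\in n}B_j=U$. Two facts about this bundle are the hinges of the argument. Because $\iota(\mathcal{B})\cup\sigma(\mathcal{B})=U$, Definition~\ref{defin:copacco9m} tells us that an embedding of $\mathcal{B}$ is exactly an embedding of $\mathrm{U}$, so a copy of $\mathcal{B}$ is a copy of $\mathrm{U}$. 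And because $\iota(\mathcal{B})=\emptyset$, the age function $\rrro$ of Definition~\ref{defin:copacco9m} is $\rro_{\mathrm{G}_\emptyset}=\rro_\mathrm{G}$, so by Fact~\ref{fact:equdefage} the equality $\rrro(S)=\rrro(U)$ for $S\subseteq U$ is equivalent to the structural equality $\rro(\mathrm{U}_{\downarrow S})=\mathfrak{A}$.

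Next I would convert the data. Given $\mathfrak{c}\colon U\to 2$, set $P_{i,j}:=\mathfrak{c}^{-1}(i)\cap B_j$ for $i\in 2$, $j\in n$; this is a $2$-colouring of $\mathcal{B}$ in the sense of Definition~\ref{defin:Stransdef}, since for each $j$ the blocks $P_{0,j},P_{1,j}$ partition $\sigma(\B_j)=B_j$ and blocks with distinct $j$ lie in disjoint classes. Here every $m_j=2$, so $\mathcal{E}$ is precisely the set of functions $n\to 2$ of the statement, and for $\epsilon\in\mathcal{E}$ one has $P_\epsilon=\bigcup_{j\in n}\bigl(\mathfrak{c}^{-1}(\epsilon(j))\cap B_j\bigr)$, the set denoted $\bigcup_{j\in n}\mathfrak{c}^{-1}\epsilon(j)$ in the statement. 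Applying Theorem~\ref{thm:hennsage}, the bundle $\mathcal{B}$ is weakly indivisible, so by Definition~\ref{defin:ageindn} there are $J\subseteq n$, a function $\epsilon'\in\mathcal{E}$ and a copy $C$ of $\mathcal{B}$ that is $(J,\epsilon')$-monochromatic with $\rrro(P_\epsilon\cap C)=\rrro(\sigma(\mathcal{B}))$ for every $\epsilon\in\mathcal{E}_{J,\epsilon'}$.

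Finally I would translate the conclusion back. The copy $C$ of $\mathcal{B}$ is a copy of $\mathrm{U}$. The $(J,\epsilon')$-monochromaticity reads $C\cap\sigma(\B_j)\subseteq P_{\epsilon'(j),j}$ for $j\in J$, and since $\sigma(\B_j)=B_j$ and $P_{\epsilon'(j),j}\subseteq\mathfrak{c}^{-1}(\epsilon'(j))$ this gives $C\cap B_j\subseteq\mathfrak{c}^{-1}(\epsilon'(j))$ for all $j\in J$. For the age clause, fix $\epsilon\in\mathcal{E}$ agreeing with $\epsilon'$ on $J$, that is $\epsilon\in\mathcal{E}_{J,\epsilon'}$; from $P_\epsilon\cap C\subseteq P_\epsilon\subseteq\sigma(\mathcal{B})$ and the monotonicity of $\rrro$ under inclusion, the weak-indivisibility equality forces $\rrro(P_\epsilon)=\rrro(\sigma(\mathcal{B}))=\rrro(U)$, which by the first paragraph is exactly $\rro(\mathrm{U}_{\downarrow P_\epsilon})=\mathfrak{A}$. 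I expect no genuine obstacle: all the substance sits in Theorem~\ref{thm:hennsage}, and the remaining work is a dictionary between bundle-language and structure-language. The one point needing a moment's care is the passage from $\rrro(P_\epsilon\cap C)$, which weak indivisibility provides, to $\rrro(P_\epsilon)$, which the statement needs; this is immediate from $P_\epsilon\cap C\subseteq P_\epsilon\subseteq\sigma(\mathcal{B})$ together with monotonicity of $\rrro$.
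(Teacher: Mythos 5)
Your proposal is correct and is exactly the paper's argument: the paper proves this theorem by observing that each transitivity class $B_j$ is the typeset of the type $\langle\emptyset\tr x_j\rangle$, so the partition into transitivity classes is a finite bundle with empty sockel, and then declaring the theorem a special case of Theorem~\ref{thm:hennsage}. Your write-up merely makes explicit the routine dictionary (copies of the bundle are copies of $\mathrm{U}$ since $\iota(\mathcal{B})\cup\sigma(\mathcal{B})=U$, the identification $\rrro=\rro_{\mathrm{G}}$ via Fact~\ref{fact:equdefage}, and the monotonicity step from $\rrro(P_\epsilon\cap C)$ to $\rrro(P_\epsilon)$), all of which is sound.
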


Let $\mathrm{U}$ be a free amalgamation homogeneous relational structure with a transitive automorphism group $\mathrm{G}$. Let $\mathfrak{A}$ be the age of $\mathrm{U}$.  Let  $\mathfrak{c}: U\to 2$ be a two colouring of $U$. Then, if  $\rro(\mathrm{U}_{\downarrow (\mathfrak{c}^{-1}(i))})\not=\mathfrak{A}$ the induced substructure    $\mathrm{U}_{\downarrow \mathfrak{c}^{-1}(i+1)}$  is a copy of $\mathrm{U}$. (Hence $\mathrm{U}$ is age indivisible.)

\begin{thm}\label{thm:agindfreambund}
Let $\mathrm{U}$ be a countable homogeneous structure whose age has free amalgamation. Let $\mathcal{B}=\{\B_j\mid j\in n\}$ be a bundle of types of $\mathrm{U}$ and  $S\subseteq \sigma(\mathcal{B})$ with $\rrro(S)=\rrro(\sigma(\mathcal{B}))$. Then there exists for every $m\in \omega$ and every $m$-colouring $P$ of $S$ a function $\epsilon\in \mathcal{E}(P)$ with $\rrro(P_\epsilon\cap S)=\rrro(\sigma(\mathcal{B}))$.

Every set $S\subseteq \sigma(\mathcal{B})$ with $\rrro(S)=\rrro(\sigma(\mathcal{B}))$ is $\mathcal{B}$-age indivisible. 

Every bundle $\mathcal{B}=\{\B_j\mid j\in n\}$ of types of $\mathrm{U}$ is $\mathcal{B}$-age indivisible.

Every type $\T$ of $\mathrm{U}$ is age indivisible.
\end{thm}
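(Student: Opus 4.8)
The plan is to reduce all four assertions to the first one---the $m$-colouring statement---and then to prove that one by induction on $m$, the engine being the $2$-colour case packaged in Corollary~\ref{cor:hennsage} and its source, the weak indivisibility Theorem~\ref{thm:hennsage}. First I would record the reductions. Since $P$ is a colouring of $S$ we have $P_\epsilon\subseteq S$, so $P_\epsilon\cap S=P_\epsilon$ and the first statement asserts $\rrro(P_\epsilon)=\rrro(\sigma(\mathcal{B}))$. Every colouring in the sense of Definition~\ref{defin:Stransdef} is an $m$-colouring for $m=\max_j m_j<\omega$; hence, whenever $\rrro(S)=\rrro(\sigma(\mathcal{B}))$, the first statement gives an $\epsilon$ with $\rrro(P_\epsilon)=\rrro(S)$, which is precisely the $\mathcal{B}$-age indivisibility of $S$ (second assertion). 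Taking $S=\sigma(\mathcal{B})$ yields the third assertion. For the fourth, apply the first statement to the single-type bundle $\mathcal{B}=\{\T\}$, where $\sigma(\mathcal{B})=\sigma(\T)$ and $\iota(\mathcal{B})=\iota(\T)$: a partition $(S_0,S_1)$ of $\sigma(\T)$ is the $2$-colouring $P_{0,0}=S_0$, $P_{1,0}=S_1$, and the resulting $\epsilon$ selects an $i\in 2$ with $\rrro(S_i)=\rrro(\sigma(\T))$, which unwinds to exactly the condition in Definition~\ref{defin:ageindn} that every finite $A\subseteq\sigma(\T)$ can be moved into $S_i$ by some $f\in\mathrm{G}_{\iota(\T)}$.

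Next I would prove the first statement by induction on $m$, stated for an \emph{arbitrary} age-complete $S$ (that is, $\rrro(S)=\rrro(\sigma(\mathcal{B}))$). The case $m=1$ is trivial, since the only $\epsilon$ gives $P_\epsilon=S$. For the inductive step I collapse the top colour against the rest: put $Q_{0,j}=P_{0,j}$ and $Q_{1,j}=\bigcup_{1\le i<m_j}P_{i,j}$, a $2$-colouring of $S$. The $2$-colour case (discussed below) yields a per-class selector $\delta$ with $\rrro(Q_\delta)=\rrro(\sigma(\mathcal{B}))$. The set $T:=Q_\delta\subseteq S$ is then again age-complete, and $P$ restricts to $T$ as a colouring using at most $m-1$ colours per class (on the classes where $\delta$ chose $Q_1$ the colours $1,\dots,m_j-1$ survive, on the others a single colour). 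Applying the induction hypothesis to the age-complete set $T$ produces $\epsilon$ with $\rrro(P_\epsilon\cap T)=\rrro(\sigma(\mathcal{B}))$, whence $\rrro(P_\epsilon)=\rrro(\sigma(\mathcal{B}))$. It is essential that the hypothesis be available for the proper subset $T$, which is exactly why the induction must carry arbitrary age-complete $S$ and not merely $S=\sigma(\mathcal{B})$.

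The heart of the matter, and the step I expect to be the main obstacle, is therefore the $2$-colour case for a proper age-complete set $S$: given $S\cap\sigma(\B_j)=P_{0,j}\sqcup P_{1,j}$ with $\rrro(S)=\rrro(\sigma(\mathcal{B}))$, find $\epsilon$ with $\rrro(P_\epsilon)=\rrro(\sigma(\mathcal{B}))$. For $S=\sigma(\mathcal{B})$ this is exactly Corollary~\ref{cor:hennsage}; the new content is that $S$ need not be all of $\sigma(\mathcal{B})$ and, lacking oligomorphy, need not contain a copy of $\mathcal{B}$. A first move is to extend the colouring to all of $\sigma(\mathcal{B})$ by assigning the complement $\sigma(\B_j)\setminus S$ to one of the two colours and to invoke Theorem~\ref{thm:hennsage}. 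The genuine difficulty is that this complement is \emph{not} age-negligible, so it cannot simply be discarded from the chosen colour classes; the Ramsey pressure that forces a good $\epsilon$ comes precisely from $\rrro(S)=\rrro(\sigma(\mathcal{B}))$, which makes monochromatic finite configurations unavoidable inside $S$ (heuristically, a colour that internally omitted some irreducible pattern would, by pigeonhole across the classes, prevent $S$ from realising the free amalgam of many copies of that pattern). The clean way to exploit this is to re-run the neutral-copy argument of Theorem~\ref{thm:hennsage} with ``monochromatic'' read relative to the true colours $P_{i,j}\subseteq S$: maximise $J$ over copies that are $S$-colour monochromatic on $J$, and at the critical level extract, from a blocked extension, a type $\T$ whose typeset inside the working copy lies in a single $S$-colour, then produce via a neutral copy (Lemma~\ref{lem:neutcop33}) and the existence of joins (Corollary~\ref{cor:commeldstrm}) a point of $\sigma(\T)\cap\sigma(\S)$ in that $S$-colour, contradicting the blockage. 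Age-completeness of $S$ is what guarantees, at each stage, that the finite configurations driving the argument can be realised inside $S$.
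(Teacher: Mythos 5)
Your reductions of the last three assertions to the first one are correct, and your induction on $m$ by collapsing the top colour against the rest is sound and plays exactly the role of the paper's ``repeated Ramseying''. The gap is precisely at the point you yourself flag as the heart of the matter: the $2$-colour case for a \emph{proper} age-complete subset $S$ is never actually proved, and the relativization of Theorem \ref{thm:hennsage} that you sketch does not go through. That proof is driven by copies of $\mathcal{B}$ (the maximal set $J$ of monochromatic indices, the recursive construction of a copy inside $C$ avoiding one colour on class $j$, the neutral copy of Lemma \ref{lem:neutcop33}), and the hypothesis $\rrro(S)=\rrro(\sigma(\mathcal{B}))$ gives no control whatsoever over how copies meet $S$: the theorem has no oligomorphy assumption, so $S$ need not contain any copy of $\mathcal{B}$, and $\sigma(\mathcal{B})\setminus S$ can itself be age-complete and contain copies. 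Concretely, if ``$(J,\epsilon')$-monochromatic'' is read as $C\cap\sigma(\B_j)\subseteq P_{\epsilon'(j),j}$, the copies your construction must produce would have to lie inside $S$ and may not exist; if it is read as $C\cap\sigma(\B_j)\cap S\subseteq P_{\epsilon'(j),j}$, then a copy disjoint from $S$ is vacuously monochromatic and the maximality of $J$ becomes empty --- for the Rado graph partitioned into two copies $S$ and $S'$, the maximal $J$ is attained by $X=S'$, inside which $P_\epsilon\cap C=\emptyset$ for every subcopy $C$ and every $\epsilon$. Under that second reading the step ``$A\cap\sigma(\B_j)=\emptyset$ for all $j\in n\setminus J$ implies $A\in\rrro(P_{\epsilon'}\cap C)$'' also fails, since $A\in\rrro(C)$ places a $\mathrm{G}_{\iota(\mathcal{B})}$-image of $A$ in $C$ but not in $C\cap S$; and under the first reading the blocked construction only yields a type $\T$ with $\sigma(\T)\cap C\subseteq P_{1,j}\cup(\sigma(\B_j)\setminus S)$, so the point $z\in\sigma(\T)\cap\sigma(\S)\cap C$ obtained at the end need not lie in $S$ and no contradiction results. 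Your closing claim that age-completeness lets the finite configurations of the argument be realised inside $S$ is the crux of the difficulty, not its resolution: they must be realised inside $P_\epsilon\cap C$ and $P_\epsilon\cap D$ for particular copies $C,D$ produced mid-argument, and age-completeness is a statement about $\sigma(\mathcal{B})$ only.

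The missing idea --- and the paper's actual route --- is to make the copy-based statement finitary \emph{before} $S$ enters at all. From Corollary \ref{cor:hennsage} one extracts by compactness (K\"onig's lemma, exactly as in Lemma \ref{lem:immind}) a localized statement: for every finite $A\subseteq\sigma(\mathcal{B})$ there is a finite $R(A)\subseteq\sigma(\mathcal{B})$ such that every $2$-colouring admits an $\epsilon$ with $A\in\rrro\bigl(P_\epsilon\cap R(A)\bigr)$; iterating this gives the same for $m$-colourings. Only then is $S$ used: since $R(A)$ is finite and $\rrro(S)=\rrro(\sigma(\mathcal{B}))$, some $g\in\mathrm{G}_{\iota(\mathcal{B})}$ moves $R(A)$ into $S$, the given colouring of $S$ pulls back along $g$ to a colouring of $R(A)$, and pushing the localized conclusion forward along $g$ gives $A\in\rrro\bigl(P_\epsilon\cap g[R(A)]\bigr)\subseteq\rrro(P_\epsilon)$. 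Finite sets, unlike copies, can be transported into $S$; that single compactness step is what bridges the gap your sketch leaves open. Your pigeonhole heuristic (a colour omitting a pattern would prevent $S$ from realising free amalgams of many copies of it) is not a substitute, because a colouring can split every one of the disjoint copies; the conclusion genuinely needs the Ramsey-type content of Corollary \ref{cor:hennsage}, localized by compactness.
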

\begin{proof}
It follows  from Corollary \ref{cor:hennsage} via a standard compactness argument that for ever finite subset $A$ of $\sigma(\mathcal{B})$ there exists a finite subset $R(A)$ of $\sigma(\mathcal{B})$ so that for every 2-colouring of $\sigma(\mathcal{B})$ there exists  function $\epsilon\in \mathcal{E}(P)$  for which $A\in \rrro(P_\epsilon\cap R(A))$.  Implying, via repeated Ramseying, that for every finite set $A\subseteq \sigma(\mathcal{B})$ and every $m\in \omega$ there exists a finite set $R(A)\subseteq \sigma(\mathcal{B})$ so that for every   $m$-colouring $P$ of $\sigma(\mathcal{B})$ there exists a function $\epsilon\in \mathcal{E}(P)$  for which $A\in \rrro(P_\epsilon\cap R(A))$. Which implies the Theorem. Let $S\subseteq \sigma(\mathcal{B})$ with $\rrro(S)=\rrro(\sigma(\mathcal{B}))$ and $m\in \omega$ and $P$ an $m$-colouring of $S$. Assume for a contradiction that for every function $\epsilon\in \mathcal{E}(P)$ there exists a finite $A_\epsilon\in \rrro(\sigma(\mathcal{B}))$ with $A_\epsilon\subsetneq P_\epsilon$. The set $A=\bigcup_{\epsilon\in \mathcal{E}}A_\epsilon$ is finite because $\mathcal{E}$ is finite, hence $A\in \rrro(S)$. This  contradicts that there exists an $\epsilon\in \mathcal{E}$ for which $A\in  \rrro(P_\epsilon\cap R(A))$. 
\end{proof}

\begin{cor}\label{cor:agindfreambund}
Let $\mathrm{U}$ be a countable homogeneous structure whose age has free amalgamation. Let $\mathcal{B}'=\{\B'_j\mid j\in n\}$ be a bundle of types of $\mathrm{U}$ and let $z\in \sigma(\mathcal{B}')$. Let the bundle  $\mathcal{B}=\{\B_j\mid j\in n\}$ be a refinement of the bundle $\mathcal{B}'=\{\B'_j\mid j\in n\}$ with refinement bijection $'$ so that  for every finite $A\subseteq \sigma((\mathcal{B}')_{\uparrow z})$ there exists a function $h\in \mathrm{G}_{\iota(\mathcal{B}')\cup \{z\}}$ with $h[A]\subseteq \sigma(\mathcal{B}_{\uparrow z})$.  

Then there exists a bundle $\overline{\mathcal{Y}}\subseteq \mathcal{B}_{\uparrow z}$ which is a refinement of the bundle $(\mathcal{B}')_{\uparrow z}$. 
\end{cor}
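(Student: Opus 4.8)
The plan is to recognise the desired bundle as the outcome of applying the bundle age-indivisibility Theorem~\ref{thm:agindfreambund} to a colouring of $\sigma(\mathcal{B}_{\uparrow z})$ by its types, and then to verify the refinement conditions of Lemma~\ref{lem:charrefv3}. Write $\mathcal{B}'':=(\mathcal{B}')_{\uparrow z}$, so that $\iota(\mathcal{B}'')=\iota(\mathcal{B}')\cup\{z\}$ and, following Definition~\ref{defin:copacco9m}, $\rrro$ is taken relative to the sockel $\iota(\mathcal{B}'')$, that is, with respect to the group $\mathrm{G}_{\iota(\mathcal{B}')\cup\{z\}}$. First I would record two preliminary facts. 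Every type $\X\in\mathcal{B}_{\uparrow z}$ has $\X_{\downarrow\iota(\mathcal{B}'')}\in\mathcal{B}''$, since its restriction to $\iota(\mathcal{B}')\cup\{z\}$ is a $\{z\}$-successor of $\X_{\downarrow\iota(\mathcal{B}')}\in\mathcal{B}'$; hence $\sigma(\mathcal{B}_{\uparrow z})\subseteq\sigma(\mathcal{B}'')$. Moreover the hypothesis, that every finite $A\subseteq\sigma(\mathcal{B}'')$ can be moved by some $h\in\mathrm{G}_{\iota(\mathcal{B}')\cup\{z\}}$ into $\sigma(\mathcal{B}_{\uparrow z})$, is exactly the statement $\rrro(S)=\rrro(\sigma(\mathcal{B}''))$ for $S:=\sigma(\mathcal{B}_{\uparrow z})$.

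Next I would build the colouring. For each $\D\in\mathcal{B}''$ the types of $\mathcal{B}_{\uparrow z}$ lying over $\D$ are exactly the members of $\D^{\langle -1\rangle}$ of Definition~\ref{defin:tyepexppp}; since these all share the sockel $\iota(\mathcal{B})\cup\{z\}$, their typesets are pairwise disjoint and their union is $S\cap\sigma(\D)$. Taking these typesets as the colour classes over the index $\D$ yields a colouring $P$ of $S$ in the sense of Definition~\ref{defin:Stransdef}, which has finitely many colours per index and finitely many indices in the oligomorphic setting in which the corollary is applied, the bundles being finite there. One also checks $\D^{\langle -1\rangle}\neq\emptyset$ for every $\D$: applying $\rrro(S)=\rrro(\sigma(\mathcal{B}''))$ to a singleton $\{y\}$ with $y\in\sigma(\D)$ produces $g\in\mathrm{G}_{\iota(\mathcal{B}')\cup\{z\}}$ with $g(y)\in S\cap\sigma(\D)$, because $\sigma(\D)$ is invariant under $\mathrm{G}_{\iota(\mathcal{B}')\cup\{z\}}$. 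Theorem~\ref{thm:agindfreambund} then supplies a selector $\epsilon\in\mathcal{E}(P)$ with $\rrro(P_\epsilon\cap S)=\rrro(\sigma(\mathcal{B}''))$; as each colour class is a subset of $S$ we have $P_\epsilon\subseteq S$, so $P_\epsilon\cap S=P_\epsilon$. Let $\overline{\mathcal{Y}}$ be the set of chosen types. By construction $\overline{\mathcal{Y}}\subseteq\mathcal{B}_{\uparrow z}$ contains exactly one type over each $\D\in\mathcal{B}''$, and $\sigma(\overline{\mathcal{Y}})=P_\epsilon$.

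Finally I would verify the three conditions of Lemma~\ref{lem:charrefv3} for $\overline{\mathcal{Y}}$ over $\mathcal{B}''$: it is a successor of $\mathcal{B}''$ because each chosen type restricts to its $\D\in\mathcal{B}''$ and $\iota(\overline{\mathcal{Y}})\supseteq\iota(\mathcal{B}'')$; the restriction map $\overline{\mathcal{Y}}\to\mathcal{B}''$ is a bijection since $\epsilon$ selects precisely one type over each $\D$; and condition~(2), that every finite $A\subseteq\sigma(\mathcal{B}'')$ can be moved by some $g\in\mathrm{G}_{\iota(\mathcal{B}'')}$ into $\sigma(\overline{\mathcal{Y}})$, is just $\rrro(\sigma(\overline{\mathcal{Y}}))=\rrro(\sigma(\mathcal{B}''))$, the conclusion of Theorem~\ref{thm:agindfreambund}. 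Hence $\overline{\mathcal{Y}}$ is a refinement of $(\mathcal{B}')_{\uparrow z}$. Rank preservation of each chosen type $\X_\D$ is then automatic: given $\mathrm{A}\in\rro(\D)$, realise it inside $\sigma(\D)$ and move it into $\sigma(\overline{\mathcal{Y}})\cap\sigma(\D)=\sigma(\X_\D)$ by an element of $\mathrm{G}_{\iota(\mathcal{B}'')}$, again using invariance of $\sigma(\D)$, so $\rro(\D)\subseteq\rro(\X_\D)$. The main obstacle is bookkeeping rather than a new idea, since the substantive content lies entirely in Theorem~\ref{thm:agindfreambund}: the delicate points are to present the type-partition of $S$ as a bona fide finite colouring of the bundle $\mathcal{B}''$ and to confirm that the hypothesis on $z$ is exactly the age-completeness $\rrro(S)=\rrro(\sigma(\mathcal{B}''))$ needed to invoke it.
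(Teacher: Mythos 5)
Your proposal is correct and follows essentially the same route as the paper's proof: recognise the hypothesis on $z$ as the age-completeness $\rrro(\sigma(\mathcal{B}_{\uparrow z}))=\rrro(\sigma((\mathcal{B}')_{\uparrow z}))$, colour $\sigma(\mathcal{B}_{\uparrow z})$ by the typesets of the types of $\mathcal{B}_{\uparrow z}$ grouped over the types of $(\mathcal{B}')_{\uparrow z}$, and let the selector $\epsilon$ supplied by Theorem~\ref{thm:agindfreambund} pick one type over each predecessor, giving the refinement with refinement bijection the inverse of the selection map. Your extra checks (nonemptiness of each $\D^{\langle -1\rangle}$, the finiteness of the colouring in the oligomorphic setting where the corollary is used, and rank preservation) are bookkeeping the paper leaves implicit, not a different argument.
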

\begin{proof} 
Then $S:=\sigma(\mathcal{B}_{\uparrow z})\subseteq \sigma((\mathcal{B}')_{\uparrow z})$  and $\rrro(S)= \rrro(\sigma((\mathcal{B}')_{\uparrow z}))$.  For every type $\Y\in (\mathcal{B}')_{\uparrow z}$ let $\{\Y_i\mid i\in m_\Y\in \omega\}$ be the bundle of types in $\mathcal{B}_{\uparrow z}$ with $\sigma(\Y_i)\subseteq \sigma(\Y)$.  For every $i\in m_\Y$ let $P_{i,\Y} =S\cap \sigma(\Y_i)$. Then $P=(P_{i,\Y}; i\in m_\Y, \Y\in (\mathcal{B}')_{\uparrow z})$ is a colouring of $S$. According to Theorem \ref{thm:agindfreambund} there exists a function $\epsilon\in \mathcal{E}(P)$ with $\rrro(P_\epsilon\cap S)=\rrro(\sigma((\mathcal{B}')_{\uparrow z}))$. Let $\overline{\mathcal{Y}}=\{\Y_{\epsilon(i),\Y}\mid \Y\in (\mathcal{B}')_{\uparrow z}\}$. Then the function $\underline{\epsilon}$ which maps $\Y\in (\mathcal{B}')_{\uparrow z}$ to the type $\Y_{\epsilon(i),\Y}\in \overline{\mathcal{Y}}$ is a bijection of  $(\mathcal{B}')_{\uparrow z}$ to $\overline{\mathcal{Y}}$. Also $\rrro(P_\epsilon)=\rrro(\sigma(\mathcal{Y}))$. Hence there exists  for every finite $A\subseteq \sigma((\mathcal{B}')_{\uparrow z})$ a function $f\in \mathrm{G}_{\iota(\mathcal{B}')\cup \{z\}}$ with $f[A]\subseteq \sigma(\overline{\mathcal{Y}})$. Implying that the bundle $\overline{\mathcal{Y}}$ is a refinement of the bundle $(\mathcal{B}')_{\uparrow z}$ with refinement bijection $\underline{\epsilon}^{-1}$. 
\end{proof}

\section{Necessary conditions} \label{sect:necess}

\noindent
Let $\mathrm{G}$ be a subgroup of the symmetric group of a countable infinite set $U$.  Consult Subsection~\ref{subsect:gropversio} for the notion of age indivisible and the notions of $\rro(S)$ and $\rro_\mathrm{G}$.  Note that if a set $S$ is age indivisible then $S$ is infinite unless $|S|=1$.    The following example shows that it is possible that $\mathrm{G}$ is indivisible and has a type $\T$ with $|\sigma(\T)|=1$.   (I believe this example has been orally  communicated  to me by Peter Cameron a long, long time ago.)

\begin{example}\label{ex:redbluetr}
\normalfont{Let $U$ be the set of two-element subsets of $\omega$. Let $\mathrm{G}$ be the action of the symmetric group of $\omega$ on $U$. Let $a$ and $b$ be two two-element subsets of $\omega$ with ${a\cap b}=1$. Let $c$ be the two element subset of $\omega$ with $c=\{a\setminus b, b\setminus a\}$.  Let $\T=\langle \{a,b\}\tr c\rangle$. Then $\sigma(\T)=\{c\}$. It follows from the standard Ramsey theorem on two element subsets  of $\omega$ that the group $\mathrm{G}$ is indivisible.
}\end{example}

The following Lemmata  are well known within the standard interpretation of age via the function $\rro$. That is the process of ``repeated Ramseying" and the connection of the ``finite versions of Ramsey theorems" and the corresponding ``infinite versions" via compactness as in Lemma \ref{lem:immind} are well established results in Ramsey theory.   A version of Corollary \ref{cor:Frageind} appeared already in \cite{Fra}.

\begin{defin}\label{defin:nageind}
Let $n\in \omega$. A set $S\subseteq U$ is {\em $n$-age indivisible} if for every partition $(S_i; i\in n)$ of $S$ there exists an index $i\in n$ with $\rro(S_i)=\rro(S)$.  (Of course as there is no structure $\mathrm{U}$ but only a group $\mathrm{G}$ acting on $U$ the interpretation of $\rro$ has to be as $\rro_\mathrm{G}$.)
\end{defin}

\begin{lem}\label{lem:immindp}
Let $n\in \omega$. A set $S\subseteq U$ is $n$-age indivisible if and only if for every $A\in \rro(S)$ and every partition $(S_i; i\in n)$ of $S$ there exists an index $i\in n$ and  a function  $f\in \mathrm{G}$ with $f[A]\subseteq S_i$. That is $S$ is age indivisible if and only if $\rro(S)=\bigcup_{i\in n}\rro(S_i)$ for all partitions $(S_i;i\in n)$ of $S$.
\end{lem}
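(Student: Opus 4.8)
The plan is to show that the two displayed conditions are literally a restatement of one another and then to prove the substantive equivalence with $n$-age indivisibility. First I would record the elementary monotonicity of the $\mathrm{G}$-age: if $T\subseteq T'$ then $\rro(T)\subseteq \rro(T')$, so in particular $\rro(S_i)\subseteq \rro(S)$ for each block of a partition, whence $\bigcup_{i\in n}\rro(S_i)\subseteq \rro(S)$ always holds. Since by Definition \ref{defin:equdefage} a finite set $A$ lies in $\rro(S_i)$ exactly when there is an $f\in \mathrm{G}$ with $f[A]\subseteq S_i$, the condition ``for every $A\in \rro(S)$ there is an index $i$ and an $f\in\mathrm{G}$ with $f[A]\subseteq S_i$'' says precisely that $\rro(S)\subseteq \bigcup_{i\in n}\rro(S_i)$, that is $\rro(S)=\bigcup_{i\in n}\rro(S_i)$. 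This disposes of the final ``that is'' equivalence with no work; I will call this common condition $(\ast)$.

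For the easy direction, if $S$ is $n$-age indivisible then every partition $(S_i;i\in n)$ has a block $S_i$ with $\rro(S_i)=\rro(S)$, and then $\rro(S)=\rro(S_i)\subseteq \bigcup_{j\in n}\rro(S_j)\subseteq \rro(S)$ forces equality, so $(\ast)$ holds.

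The substantive direction is the contrapositive of ``$(\ast)$ for all partitions $\Rightarrow$ $S$ is $n$-age indivisible''. Suppose $S$ is not $n$-age indivisible, witnessed by a partition $(S_i;i\in n)$ with $\rro(S_i)\subsetneq \rro(S)$ for every $i$. For each $i$ I would choose $A_i\in \rro(S)\setminus \rro(S_i)$ together with a witness $g_i\in \mathrm{G}$, and set $B_i:=g_i[A_i]\subseteq S$; then $B_i\notin \rro(S_i)$, since $h[B_i]\subseteq S_i$ would give $(h\circ g_i)[A_i]\subseteq S_i$ and hence $A_i\in \rro(S_i)$. The key step is now to combine the blocks: put $A:=\bigcup_{i\in n}B_i$, a finite subset of $S$, so that $A\in \rro(S)$ via the identity. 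For each $i$ one then has $A\notin \rro(S_i)$, for any $h\in \mathrm{G}$ with $h[A]\subseteq S_i$ would restrict to $h[B_i]\subseteq S_i$, contradicting $B_i\notin \rro(S_i)$. Thus $A\in \rro(S)\setminus \bigcup_{i\in n}\rro(S_i)$, so $(\ast)$ fails for this partition, completing the contrapositive.

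The whole argument is ``soft'' and uses neither free amalgamation, oligomorphy, nor homogeneity: the only real idea is that the witnessing copies $B_i$ can all be taken inside $S$ and then merged into a single finite set $A\subseteq S$, which automatically contains each $B_i$ and therefore defeats every block at once. I expect the only point needing care is the $\mathrm{G}$-invariance used twice, namely that membership in $\rro(S_i)$ depends only on the $\mathrm{G}$-orbit of a finite set, so that passing from $A_i$ to $B_i=g_i[A_i]$ preserves non-membership and restricting $h[A]$ to $h[B_i]$ keeps us inside $S_i$; both are immediate from the definition of $\rro_\mathrm{G}$.
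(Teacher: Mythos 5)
Your proof is correct and takes essentially the same route as the paper's: the paper likewise chooses, for each block $S_i$ of a witnessing bad partition, a set $A_i\in\rro(S)\setminus\rro(S_i)$ together with a function $g_i\in\mathrm{G}$ sending it into $S$, forms the single finite set $A=\bigcup_{i\in n}g_i[A_i]\subseteq S$ (which lies in $\rro(S)$ via the identity), and derives the contradiction that some $h\circ g_i$ maps $A_i$ into $S_i$. The only differences are cosmetic: you spell out the trivial direction and the ``that is'' reformulation, which the paper leaves implicit, and you avoid the paper's unnecessary side claim that the copies $g_i[A_i]$ can be taken inside $\bigcup_{j\in n\setminus\{i\}}S_j$.
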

\begin{proof}
If not then  there exists for every $i\in n$ a finite set $A_i\subseteq U$ and  functions  $f_i, g_i\in \mathrm{G}$ with $f_i[A]\subseteq S$ and $g_i[A]\subseteq \bigcup_{j\in n\setminus\{i\}}S_j$  but $h[A_i]$ is not a subset  of $S_i$ for every function $h\in \mathrm{G}$. Let $A=\bigcup_{i\in n}g_i[A_i]$.  The set $A$ is a finite subset of $S\subseteq U$ and hence in $\rro(S)$ because the identity function maps it into $S$. According to the assumption of the Lemma there exists an index $i\in n$ and a  function $h\in \mathrm{G}$ with $h[A]\subseteq S_i$. Leading  to the contradiction $h[A_i]\subseteq S_i$. 
\end{proof}

\begin{lem}\label{lem:immind}
Let $n\in \omega$. A set $S\subseteq U$ is $n$-age indivisible if and only if for every set $A\in \rro(S)$ there exists a set $B\in \rro(S)$ so that for every partition $(B_i; i\in n)$ of $B$ there exists an index $i\in n$ for which  $A\in \rro(B_i)$ .  
\end{lem}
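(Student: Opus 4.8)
The plan is to prove the two implications separately, treating the right-hand ``finite Ramsey'' statement as equivalent to $n$-age indivisibility via compactness, which is exactly the ``repeated Ramseying plus compactness'' mentioned before the lemma. Throughout I would work only with $\mathrm{G}$ and the age function $\rro=\rro_\mathrm{G}$, and I would first record two elementary facts that drive everything. First, $\rro$ is \emph{monotone}: if $T\subseteq T'$ then $\rro(T)\subseteq\rro(T')$, since a witness $g\in\mathrm{G}$ with $g[A]\subseteq T$ also satisfies $g[A]\subseteq T'$. Second, $\rro$ is \emph{finitely determined}: $A\in\rro(T)$ holds if and only if $A\in\rro(T_0)$ for some finite $T_0\subseteq T$, because the witnessing image $g[A]$ is itself a finite subset of $T$; equivalently, $A\notin\rro(T)$ if and only if $A\notin\rro(T_0)$ for every finite $T_0\subseteq T$. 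Finally, by Lemma \ref{lem:immindp}, being $n$-age indivisible is the same as $\rro(S)=\bigcup_{i\in n}\rro(S_i)$ for every partition $(S_i;i\in n)$ of $S$.

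For the implication from the finite statement to $n$-age indivisibility, I would simply pull a partition of $S$ back to a partition of $B$. Fix a partition $(S_i;i\in n)$ of $S$ and a set $A\in\rro(S)$; I must produce an index $i$ with $A\in\rro(S_i)$. The finite statement yields $B\in\rro(S)$ together with a witnessing $g\in\mathrm{G}$ with $g[B]\subseteq S$. Setting $B_i:=B\cap g^{-1}[S_i]$ gives a partition $(B_i;i\in n)$ of $B$, so by hypothesis there are an $i$ and an $h\in\mathrm{G}$ with $h[A]\subseteq B_i$; then $g\circ h[A]\subseteq g[B_i]\subseteq S_i$, so $A\in\rro(S_i)$. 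Since $A$ was arbitrary this gives $\rro(S)\subseteq\bigcup_i\rro(S_i)$, and the reverse inclusion is automatic, so $S$ is $n$-age indivisible.

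For the converse I would argue contrapositively and assemble a single bad colouring of $S$ by compactness. Assume the finite statement fails: there is $A\in\rro(S)$ such that every finite $B\subseteq S$ (these all lie in $\rro(S)$ via the identity) admits an $n$-colouring $c_B:B\to n$ with $A\notin\rro(c_B^{-1}(i))$ for every $i$. Call a colouring \emph{bad on} a finite $B\subseteq S$ if $A\notin\rro(c^{-1}(i)\cap B)$ for all $i$. In the compact product space $n^S$ the sets $K_B:=\{c\in n^S:\ c\text{ is bad on }B\}$ are clopen and, by the failure hypothesis, nonempty. By monotonicity a colouring bad on $B$ is bad on every subset of $B$, whence $K_{B_1\cup\cdots\cup B_k}\subseteq\bigcap_j K_{B_j}$; so the family $\{K_B\}$ has the finite intersection property and, by compactness, $\bigcap_B K_B\neq\emptyset$. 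Any $c$ in this intersection is bad on every finite $B\subseteq S$, and by finite determination this forces $A\notin\rro(c^{-1}(i))$ for all $i$. Thus $(c^{-1}(i);i\in n)$ is a partition of $S$ with $A\in\rro(S)\setminus\rro(c^{-1}(i))$ for every $i$, so no part has $\rro$ equal to $\rro(S)$, contradicting $n$-age indivisibility.

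The substantive step is the compactness argument of the last paragraph, and what makes it go through is the finite-determination property of $\rro$: the clause ``$A\notin\rro(T)$'' is witnessed by the finite subsets of $T$, which is precisely what lets the local bad colourings coalesce into a global one. The only point needing care is that the hypothesis supplies bad partitions for arbitrary $B\in\rro(S)$, whereas the compactness argument needs them for finite subsets $B\subseteq S$; this is harmless, since such $B$ already belong to $\rro(S)$ and so the hypothesis applies to them directly.
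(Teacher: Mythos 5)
Your proof is correct and takes essentially the same approach as the paper's: the forward implication by pulling the partition of $S$ back to a partition of $B$ (the paper condenses this into a ``we may assume without loss that $B\subseteq S$'' step, which your explicit use of $g\in\mathrm{G}$ and $B_i:=B\cap g^{-1}[S_i]$ is exactly the justification for), and the converse by a compactness argument resting on the finite determination of $\rro$. The only difference is cosmetic: the paper runs the compactness step as K\"onig's Lemma on initial segments of an enumeration of $S$, while you phrase the same argument topologically via the finite intersection property in the compact space $n^S$.
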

\begin{proof}
If for every $A\in \rro(S)$ there exists a $B^{(A)}\in \rro(S)$ so that for every partition $(B^{(A)}_i; i\in n)$ of $B^{(A)}$ there exists an index $i\in n$ for which  $A\in \rro(B_i)$ let $(S_i; i\in n)$ be a partition of $S$.  We may assume without loss that $B^{(A)}\subseteq S$. Let $A\subseteq S$. Then the partition $(S_i; i\in n)$ of $S$ induces a partition of  $B^{(A)}$ implying that $A\subseteq S_i$ for some index $i\in n$. Hence $S$ is $n$-age indivisible according to Lemma  \ref{lem:immindp} 

Let a finite $A\subseteq S$ be such that for every  finite $B\subseteq S$ there exists a partition  $(B_i;i\in n)$ of $B$ with $A\not\in \rro(B_i)$ for any $i\in n$. Let $\{s_j\mid j\in \omega\}$ be an enumeration of $S$.  Then every initial sequence $\{s_j\mid j\in n\in \omega\}$  has a partition $(P_i;i\in n)$ with $A\not\in \bigcup_{i\in n} \rro(P_i)$.  Using K\"{o}nigs Lemma there exists a partition of $S$ into $n$ parts for which $A$ is not a subset of any part of this partition. 
\end{proof}
Hence:
\begin{cor}\label{cor:Frageind}
Let $S$ and $T$ be two subsets of $U$ with $\rro(S)=\rro(T)$ and $n\in \omega$. Then $S$ is $n$-age indivisible if and only if $T$ is $n$-age indivisible. (That is, being $n$-age indivisible is a property of the age.)
\end{cor}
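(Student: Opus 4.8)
The plan is to read off the corollary directly from the characterization of $n$-age indivisibility given in Lemma \ref{lem:immind}, the point being that this characterization is phrased entirely in terms of the age and makes no reference to the set $S$ beyond the class $\rro(S)$.

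First I would record what Lemma \ref{lem:immind} asserts for $S$: the set $S$ is $n$-age indivisible if and only if the following statement $\Psi(\rro(S))$ holds,
\[
\forall A\in\rro(S)\ \exists B\in\rro(S)\ \bigl(\text{every partition }(B_i;i\in n)\text{ of }B\text{ has an }i\text{ with }A\in\rro(B_i)\bigr).
\]
The essential observation is that in $\Psi$ the set $S$ enters only through the class $\rro(S)$: the outer quantifier on $A$ ranges over $\rro(S)$, the quantifier on $B$ ranges over $\rro(S)$, and the inner matrix ``$\exists i\ (A\in\rro(B_i))$'' speaks only about the finite sets $A$, $B_i$ and the group $\mathrm{G}$. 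In particular the finite sets $B$ are drawn from $\rro(S)$ and are not required to be subsets of $S$, so nothing in the condition depends on $S$ itself once $\rro(S)$ is fixed.

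Consequently I would write the same statement $\Psi(\rro(T))$ for $T$ and invoke the hypothesis $\rro(S)=\rro(T)$. Since the two classes coincide, $\Psi(\rro(S))$ and $\Psi(\rro(T))$ quantify over exactly the same collections of finite sets $A$ and $B$, while their inner matrices are literally identical; hence $\Psi(\rro(S))\Leftrightarrow\Psi(\rro(T))$ as logical statements. By Lemma \ref{lem:immind}, $\Psi(\rro(S))$ is equivalent to ``$S$ is $n$-age indivisible'' and $\Psi(\rro(T))$ is equivalent to ``$T$ is $n$-age indivisible'', so $S$ is $n$-age indivisible if and only if $T$ is, which is exactly the assertion that $n$-age indivisibility is a property of the age.

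There is no real obstacle here; the only care required is to confirm that the auxiliary sets $B$ in Lemma \ref{lem:immind} indeed range over all of $\rro(S)$ (rather than over subsets of $S$), since this is precisely what licenses replacing $\rro(S)$ by the equal class $\rro(T)$. This is immediate from the formulation of the lemma.
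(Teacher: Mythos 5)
Your proof is correct and takes essentially the same approach as the paper: the paper derives this corollary immediately from Lemma \ref{lem:immind} (signalled by the word ``Hence:'' preceding the statement), precisely because the criterion in that lemma is phrased entirely in terms of the class $\rro(S)$. Your explicit check that both quantifiers range over $\rro(S)$ rather than over subsets of $S$, and that the inner condition $A\in\rro(B_i)$ involves only the finite sets and the group, is exactly the observation the paper leaves implicit.
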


\begin{lem}\label{lem:immindpp3}
Let  $S\subseteq U$ be age indivisible.  Let $(S_i; i\in n\in \omega)$ be a partition of $S$ into $n$ parts. Then there exists an index $i\in n$ with $\rro(S_i)=\rro(S)$. 
\end{lem}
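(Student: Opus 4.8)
The plan is to prove the statement by induction on the number $n$ of parts, exploiting the fact that age indivisibility is precisely $2$-age indivisibility (Definition \ref{defin:nageind} with $n=2$) together with the observation, recorded in Corollary \ref{cor:Frageind}, that being $2$-age indivisible is a property of the age alone. The base cases are immediate: for $n=1$ the only partition is $(S)$ and $\rro(S)=\rro(S)$, while $n=2$ is exactly the hypothesis that $S$ is age indivisible.

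For the inductive step, suppose the claim holds for all partitions into $n$ parts and let $(S_i;\, i\in n+1)$ be a partition of $S$ into $n+1$ parts. First I would coalesce the last two blocks, producing the $n$-part partition $(S_0,\dots,S_{n-2},\, S_{n-1}\cup S_n)$ of $S$. By the induction hypothesis there is a block in this coarser partition whose $\rro$ equals $\rro(S)$. If that block is one of the original $S_i$ with $i\le n-2$, the proof is finished for this step.

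The remaining possibility is that the distinguished block is the coalesced one, that is $\rro(S_{n-1}\cup S_n)=\rro(S)$. This is where the property-of-the-age fact is used: since $\rro(S_{n-1}\cup S_n)=\rro(S)$ and $S$ is $2$-age indivisible, Corollary \ref{cor:Frageind} yields that $S_{n-1}\cup S_n$ is itself $2$-age indivisible. Applying this to its own $2$-partition $(S_{n-1},S_n)$ produces an index $j\in\{n-1,n\}$ with $\rro(S_j)=\rro(S_{n-1}\cup S_n)=\rro(S)$, which completes the induction.

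The only genuinely non-formal ingredient is the transfer of $2$-age indivisibility from $S$ to the merged block $S_{n-1}\cup S_n$, which is exactly the content of Corollary \ref{cor:Frageind} (resting in turn on the compactness argument via König's Lemma in Lemma \ref{lem:immind}); everything else is bookkeeping with the partition. I expect this transfer to be the crux of the argument: without knowing that age indivisibility depends only on the age, one could not legitimately re-split the coalesced block.
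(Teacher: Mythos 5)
Your proof is correct and follows essentially the same route as the paper's: an induction on the number of parts whose crux is transferring age indivisibility to a coalesced block via Corollary \ref{cor:Frageind}. The only immaterial difference is the direction of coalescing --- the paper merges the first $n$ blocks $R_0=\bigcup_{i\in n}S_i$ and applies the two-part indivisibility of $S$ before invoking the induction hypothesis on $R_0$, whereas you merge the last two blocks and apply the induction hypothesis to $S$ first, then split the merged block using its inherited $2$-age indivisibility.
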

\begin{proof}
Induction on $n$. For  $n=1$ the Lemma holds. From $n$ to $n+1$: Let $(S_i; i\in n+1)$ be a partition of $S$ into $n+1$ parts. Let $R_0=\bigcup_{i\in n}S_i$ and $R_1=S_n$. If $\rro(R_1)=\rro(S)$ we are done. Otherwise $\rro(R_0)=\rro(S)$. Hence, using Corollary \ref{cor:Frageind} there exists an $i\in n$ with $\rro(S_i)=\rro(S)$. 
\end{proof}

\subsection{Proof of Theorem \ref{them:necesssary}} We are now ready to prove: 

\vskip 3pt
\noindent
Let $\mathrm{G}$ be a subgroup of the symmetric group of a countable infinite set $U$. If $\mathrm{G}$ contains two age indivisible types $\T$ and $\S$ having infinite typesets such that $\rro(\T)\setminus \rro(\S)\not=\emptyset$ and $\rro(\S)\setminus \rro(\T)\not=\emptyset$ then the group $\mathrm{G}$ is divisible.

\begin{proof}
Assume that $U=\omega$. For two finite subsets $X$ and $Y$ of $\omega$ let $X\prec Y$ if  $\max\{(X\cup Y)\setminus(X\cap Y)\}\in Y$. This lexicographic order of the finite subsets of $\omega$ is a total order.

Let $\T=\langle F;x\rangle$ and $\S=\langle E;y\rangle$ and let $A\in \rro(\T)\setminus\rro(\S)$ and $B\in \rro(\S)\setminus\rro(\T)$. Note that the sets  $\sigma(\T)$ and $\sigma(\S)$ are infinite.  Then let:
\begin{enumerate}
\item $P$ be the set of elements $n\in \omega$ for which there exists a function $f\in \mathrm{G}$ with $f[F]\prec \{n\}$ and $f(x)=n$ and so that: If there exists a function $g\in \mathrm{G}$ with $g[E]\prec \{n\}$ and $g(y)=n$ then $f[F]\prec g[E]$. 
\item $Q$  be the set of elements $n\in \omega$ for which there exists a function $f\in \mathrm{G}$ with $f[E]\prec \{n\}$ and $f(y)=n$ and so that: If there exists a function $g\in \mathrm{G}$ with $g[F]\prec \{n\}$ and $g(x)=n$ then $f[E]\prec g[F]$. 
\item $R$ be the set of elements $n\in\omega$ with $n\not\in P\cup Q$. 
\end{enumerate}
Then $(P,Q,R)$ is a partition of $\omega$.  For if $n\in P\cap Q$ there is a function $h\in \mathrm{G}$ with $h[E]=F$ and $h(y)=x$. Implying that $\rro(\T)=\rro(\S)$. It is not possible that there exists an embedding $k$ of $\mathrm{G}$ mapping $\omega$ into $R$. Because $\sigma(\T)$ is infinite and hence there is a $z\in \sigma(\T)$ with $k[F]\prec \{k(z)\}$ and $F\prec \{z\}$. Implying, because there is a function $g\in\mathrm{G}$ which agrees with $k$ on the set $\{s\in \omega\mid s\leq z\}$, that $g[F]\prec \{g(z)\}$ and that  $g(z)=k(z)\in P\cup Q$.  

Assume that there is an embedding $k$ of $\mathrm{G}$ mapping  $\omega$ into $Q$. Note that if there exists a $z\in \sigma(k[\T])$ with $k[F]\prec \{z\}$ then, because there exists a function $f\in \mathrm{G}$ with $f[F]=k[F]\prec \{f(x)\}=\{z\}$,    there exists a function $g\in \mathrm{G}$ with $z\in \sigma\langle g[E]\tr g(y)\rangle$ and with $g[E]\prec k[F]$.  Otherwise $z\in P$ contradicting $z\in \sigma(k[\T])\subseteq Q$. Let $\mathcal{L}$ be the set of types $\L=\langle L\tr z\rangle$ with $L\prec k[F]$ and for which there exists a function $g\in \mathrm{G}$ with $g\langle E\tr y\rangle=\langle L\tr z \rangle$. Because $L\prec k[F]$ and because the set of functions $L\to L$ is finite,   the set of types in $\mathcal{L}$ is finite. For every type $\L\in \mathcal{L}$ let $g_{\L}\in \mathrm{G}$ be a function with $g_\L(\S)=\L$.  Let $S=\{z\in \sigma\langle k[F]\tr k(x)\rangle\mid \{z\}\prec k[F]\}$. The set $S$ is finite. 

For every $z\in (\sigma\langle k[F]\tr k(x)\rangle)\setminus S$ there exists a function $g_\L\in \mathrm{G}$ with $z\in \sigma(g_\L\langle E\tr y\rangle)$.  Hence, for every $v\in \sigma(\T)$ either $k(v)$ is an element of the finite set $S$ or there is a type $\L\in \mathcal{L}$ so that $k(v)\in \sigma(g_\L\langle E\tr y\rangle)$. Because $\sigma(\T)$ is age indivisible it follows from Lemma~\ref{lem:immindpp3} that there exist a function $h\in \mathrm{G}$ and a type $\L\in \mathcal{L}$ for which $k\circ h[A]\subseteq \sigma(g_\L\langle E\tr y\rangle)=\sigma(g_\L(\S))$. Hence $g_\L^{-1}\circ k\circ h[A]\subseteq \sigma(\S)$ in contradiction to $A\not\in \rro(\S)$. 

The argument then  for no embedding $f$ of $\mathrm{G}$ mapping $\omega$ into $P$ is analogous.

\end{proof}

Theorem  \ref{them:necesssary} implies Theorem \ref{thm:maincecess}, which is:

\vskip 3pt
\noindent
Let the age $\mathfrak{A}$ of the homogeneous structure $\mathrm{U}$ be a  free amalgamation class. If $\mathrm{U}$ is not rank linear then $\mathrm{U}$ is divisible.

\begin{proof}
It follows from Theorem \ref{thm:agindfreambund} that every type $\T$ of $\mathrm{U}$ is age indivisible and from Corollary \ref{cor:tyinfA} that it has an infinite typeset. Hence, if the partial order $(\mathfrak{R},\subseteq)$ of ranks of types of $\mathrm{U}$ is not linearly ordered then there are two age indivisible types $\T$ and $\S$ of $\mathrm{U}$ and hence of the automorphism group  $\mathrm{G}$ of $\mathrm{U}$ with $\rro(\T)\setminus \rro(\S)\not=\emptyset$ and $\rro(\S)\setminus \rro(\T)\not=\emptyset$. It follows from Theorem \ref{them:necesssary} and Lemma \ref{lem:ageintypp} that $\mathrm{G}$ and hence $\mathrm{U}$ is divisible.

\end{proof}

\section{Examples}\label{sect:examples}

\begin{example}\label{ex:1} \normalfont{Let $\boldsymbol{L}\not=\emptyset$ be a relational language with only finitely many relation symbols of any given arity. Implying that for any subset $\boldsymbol{L}'\subseteq \boldsymbol{L}$ a countable  homogeneous structure in language $\boldsymbol{L}'$ is oligomorphic.  The class of all finite $\boldsymbol{L}$-structures has free amalgamation. But the homogeneous structure whose age is the class of all finite $\boldsymbol{L}$-structures is divisible because the interpretation of  relations in $\boldsymbol{L}$ might be reflexive. Hence the partial order  $(\mathfrak{R}, \subseteq)$ of ranks of the types of the homogeneous structure is not linear. Let $\mathfrak{L}$ be the class of all finite $\boldsymbol{L}$-structures for which  in every interpretation and for every relation symbol $R\in \boldsymbol{L}$: $R(x_0,x_1,\dots,x_{n-1})$ implies that $x_i\not=x_j$ for all $\{i,j\}\subseteq n$ with $i\not=j$. As stipulated in Section \ref{sect:prelim}.  The class $\mathfrak{L}$ is a free amalgamation class. Let $\mathrm{U}$ be the countable homogeneous structure with age $\mathfrak{L}$.  The boundary consists of structures    $\mathrm{B}$ for which there exists  a relation $R$ in the language $\boldsymbol{L}$  for which $R_\mathrm{B}(\vec{x})$ and  so that every element of $B$ is an entry of $\vec{x}$. Implying that $\mathrm{U}$ satisfies the conditions of Item (1) in Lemma \ref{lem:singlrant}. Hence $\mathrm{U}$ is indivisible.
}\end{example}

The next example is a small but not trivial example of a free amalgamation homogeneous structure for which the partial order of ranks of types is not a linear order.

\begin{example}\label{ex:twtriange}
\normalfont{Let the relational language $\boldsymbol{L}$ have two binary relation symbols $R$ and $B$. Let   $\mathfrak{A}$ be the class of all finite $\boldsymbol{L}$-structures $\mathrm{A}$ for which $R_\mathrm{A}$ forms the edges of a triangle free simple graph and for which $B_{\mathrm{A}}$ forms the edges of a triangle free simple graph and for which $R_\mathrm{A}(x,y)$ implies $\neg B_\mathrm{A}(x,y)$. Let $\mathrm{U}$ be the homogeneous graph whose age is $\mathfrak{A}$. Let $u\in U$ and let $\T=\langle \{u\}\tr x\rangle$ with $R_\mathrm{U}(u,x)$ and let $\S=\langle u\tr y\rangle$ with $B_\mathrm{U}(u,x)$. Then $\sigma(\T)$ does not contain an edge of the form $R_\mathrm{A}$ and $\sigma(\S)$ does not contain an edge of the form $B_\mathrm{A}$. Hence $\mathrm{U}$ is not rank linear and therefore divisible. On the other hand it follows from  \cite{SaCan} that for every colouring function $\mathfrak{c}: U\to n\in \omega$ there exists a copy $C$ of $\mathrm{U}$ for which $|\mathfrak{c}[C]|\leq 2$. 
}\end{example}

Theorem \ref{thm:main} can effectively be used to determine for a given finite  boundary of irreducible structures whether the corresponding homogeneous structure is indivisible. But it could require a fair bit of work.  Which the next example  for just 3-uniform hypergraphs on at most five verices shows.

\begin{example}\label{ex:nelinenine5}
\normalfont{ Let $\mathfrak{B}$ be a boundary which consists of irreducible 3-uniform hypergraphs on at most five vertices. Let then  $\mathfrak{A}$ be the free amalgamation  age  of finite 3-uniform hypergraphs whose boundary is  $\mathfrak{B}$ and let $\mathrm{U}$ be the countable homogeneous 3-uniform hypergraph with age $\mathfrak{A}$. Let $\mathrm{K}$ be the irreducible  3-uniform hypergraph on five vertices and four hyperedges.  This structure is unique up to isomorphisms.  

If $\mathfrak{B}$ contains a structure with not more than two verices the structure $\mathrm{U}$ does not exist. If $\mathcal{B}$ contains the structure on three vertices and one hypredge then $\mathrm{U}$ is the countable infinite  three uniform hypergraph having no edges. If $\mathcal{B}$ contains the structure on three vertices and no edge then $\mathrm{U}$ is the countable infinite  complete three uniform hypergraph. 

According to the discussion after Definition \ref{defin:onfyp} the set $\mathcal{B}$ contains one or both of the two  irreducible 3-uniform hypergraphs having four vertices then the age of $\mathrm{U}$ does not contain a hypergraph embedding those structures but otherwise the ranks of the typesets are not influenced. The only way to have a boundary element $\mathrm{B}$ with a conformal subset $A$ and $|B\setminus A|\geq 2$ is if $\mathrm{B}$ contains a monomorphic copy of the hypergraph $\mathrm{K}$. Let $\mathfrak{C}$ be the set of all five element three uniform hypergraphs which contain a monomorphic copy of $\mathrm{K}$. If $\mathfrak{B}\supseteq \mathfrak{C}$ let $\{x,y,z\}$ be an edge in $\mathrm{U}$ and $\T=\langle \{z,y\}\tr x\rangle$. Then $\sigma(\T)$ does not contain a hyperedge. If $\mathfrak{C}\cap \mathfrak{B}=\emptyset$ then $\rro(\T)$ is equal to the age of $\mathrm{U}$ for every type $\T$. 

Otherwise $\mathrm{K}\in \mathfrak{B}$ and  $\mathfrak{B}\setminus \mathfrak{C}\not=\emptyset$. Let  $\T=\langle T=\langle F\tr x\rangle$ be a type of $\mathrm{U}$. If $T$ does not contain a pair $\{a,b\}$ for which $\{a,b,x\}$ is a hyperedge of $\mathrm{U}$ then $\rro(\T)$ is equal to the age of $\mathrm{U}$. Let $\mathrm{M}$ be the structure with $M=F\cup \{u,v,w\}$ for which $\{u,v,w\}$ forms a hyperedge of $\mathrm{M}$ and $F\cap \{u,v,w\}=\emptyset$. $\mathrm{M}_{\downarrow F}=\mathrm{U}_{\downarrow F}$. If it is possible to add hyperedges with one vertex in $F$ and the other two in $\{u,v,w\}$, obtaining the structure $\mathrm{M}'$,   in such a way that for every pair $\{a,b\}\in F$ for which $\{a,b,x\}$ is a hyperedge of $\mathrm{U}$ the structure $\mathrm{M}'_{\downarrow\{a,b,u,v,w\}}$ is in $\mathfrak{B}\setminus \mathfrak{C}$ then $\mathrm{M}'$ is in the age of $\mathrm{U}$. Then $\rro(\T)$ is equal to the age of $\U$. Otherwise $\sigma(\T)$ does not contain an edge. To determine the cases in which such a structure $\mathrm{M}'$ can actually be found will require quite a bit more work. In either case the rank of $\T$ is equal to the age of $\mathrm{U}$ or $\sigma(\T)$ does not contain a triangle. 

We conclude that the rank of a type $\T$ of $\mathrm{U}$ is either equal to the age of $\mathrm{U}$ or the typeset of $\T$ does not contain a hyperedge. Hence $|\mathfrak{R}|=1$ and or  $|\mathfrak{R}|=2$. That is rank linear in both cases. We obtained:

\begin{lem}\label{lem:3hyper6}
If\/  $\mathrm{U}$ is a countable infinite  homogeneous  3-uniform hypergraph whose boundary consist of hypergraphs on at most five vertices, then $\mathrm{U}$ is indivisible. 

\end{lem}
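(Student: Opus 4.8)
The plan is to reduce the statement to rank linearity and then invoke Theorem \ref{thm:main}. First I would dispose of the degenerate boundaries already identified above: if $\mathfrak{B}$ contains a structure on at most two vertices then $\mathrm{U}$ does not exist; if $\mathfrak{B}$ contains the three-vertex hyperedge then $\mathrm{U}$ is the edgeless hypergraph, and if it contains the three-vertex non-edge then $\mathrm{U}$ is the complete hypergraph. In each of these last two cases every type has the same rank, so $|\mathfrak{R}(\mathrm{U})|=1$ and indivisibility is immediate from Theorem \ref{thm:singlrant}. So I may assume $\mathfrak{B}$ consists of irreducible hypergraphs on four or five vertices.

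For these boundaries I would assemble the case analysis carried out above. Since $\mathrm{U}$ is a $3$-uniform hypergraph the smallest arity is $3$, so by the discussion following Definition \ref{defin:onfyp} only conformal subsets $A$ of a boundary structure $\mathrm{B}$ with $|B\setminus A|\geq 2$, that is with $2\leq |A|\leq 3$ inside a five-vertex $\mathrm{B}$, can force a type to have rank smaller than $\mathfrak{A}$. As argued above, such a configuration occurs only when $\mathrm{B}$ contains a monomorphic copy of the four-hyperedge structure $\mathrm{K}$. Running through the possibilities for $\mathfrak{B}$ relative to the family $\mathfrak{C}$ of five-vertex hypergraphs containing a copy of $\mathrm{K}$, one sees in every case that each type $\T=\langle F\tr x\rangle$ either satisfies $\rro(\T)=\mathfrak{A}$, or has a typeset $\sigma(\T)$ that contains no hyperedge.

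Consequently every rank of a type of $\mathrm{U}$ is either $\mathfrak{A}$ or the age of the edgeless hypergraph, so $|\mathfrak{R}(\mathrm{U})|\leq 2$; when two ranks occur they are linearly ordered by inclusion, whence $\mathrm{U}$ is rank linear. Finally, the language of $\mathrm{U}$ consists of a single ternary relation, so $\mathrm{U}$ is oligomorphic as in Example \ref{ex:1}, and since every boundary structure is irreducible the age $\mathfrak{A}$ has free amalgamation by Note \ref{note:freboundcop}. Being countable, oligomorphic, free amalgamation and rank linear, $\mathrm{U}$ is indivisible by Theorem \ref{thm:main}. The main obstacle is the combinatorial bookkeeping of the middle paragraph, namely verifying that every relevant conformal subset really does stem from a copy of $\mathrm{K}$ and that the resulting restricted typesets are exactly the edgeless ones; but this is precisely the content of the analysis already carried out above.
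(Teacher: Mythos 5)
Your proposal is correct and takes essentially the same approach as the paper: it reduces the lemma to rank linearity via the same case analysis on conformal subsets and monomorphic copies of $\mathrm{K}$ (every type has rank either $\mathfrak{A}$ or the edgeless age), and then invokes Theorem \ref{thm:main}, with oligomorphicity coming from the finite language and free amalgamation from the irreducible boundary via Note \ref{note:freboundcop}. Your only deviation is handling the degenerate three-vertex boundary cases explicitly through Theorem \ref{thm:singlrant}, which the paper leaves implicit; this is harmless and, if anything, slightly more careful.
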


}\end{example}

For the next example see Ramark \ref{remark:meld} and the Lemma preceding it.

\begin{example}\label{ex:strmelmeld}\normalfont{
Let $\mathfrak{L}$ be the class of relational structures with one binary relation and two ternary relations. For any $\mathfrak{L}$-structure the binary relation forms a simple graph whose edges will be called {\em graph edges}. Both ternary relations form 3-uniform hypergraphs, the hyperedges of one will be called {\em blue hyperedges} and the hyperedges of the other {\em red hyperedges}.  In addition, if $\mathrm{L}\in \mathfrak{L}$ then no graph edge of $\mathrm{L}$ is a subset of any blue hypergraph edge but may be a subset of a red hypergraph edge. No blue hypergraph edge occupies the same set of vertices as any of the red hypergraph edges.    Let $\mathrm{K}\in \mathfrak{L}$ be the irreducible  3-uniform hypergraph on five vertices and four blue hyperedges and no graph edges and no red hypergraph edges. This structure is unique up to isomorphisms.   Let $\mathrm{M}$ be the $\mathfrak{L}$-structure with $M=\{x_0,x_1,x_2,x_3\}$.  The structure $\mathrm{M}$ has the two blue hyperedges $\{x_0,x_1, x_2\}$ and $\{x_0,x_1,x_3\}$ together with the graph edge $\{x_2,x_3\}$. Then $\mathrm{M}$ is irreducible. The structure $\mathrm{N}$ is obtained from the structure $\mathrm{M}$ by augmenting it with two additional red hypergraph edges which have the graph edge of $\mathrm{M}$ as a subset.  Let $\mathfrak{A}\subseteq \mathfrak{L}$ be the class of structures which do not contain a monomorphic copy of $\mathrm{M}$ except for the structure $\mathrm{N}$  and which do not contain a monomorphic copy of $\mathrm{K}$.   The class $\mathfrak{A}$ is a free amalgamation age.  Let $\mathrm{U}$ be the countable homogeneous structure whose age is $\mathfrak{A}$. Let $\mathrm{G}$ be the group of automorphisms of $\mathrm{U}$. 

 Let  $\T=\langle T\tr x\rangle$ be a type of $\mathrm{U}$. If $T$ does not contain a pair $\{a,b\}$ for which $\{a,b,x\}$ is a blue hyperedge  then $\rro(\T)$ is equal to the age of $\mathrm{U}$. Let $\{a,b\}\subseteq F$ for which $\{a,b,x\}$ is a blue hyperedge. Then $\sigma(\T)$ does not contain a blue hyperedge, but does contain a graph edge, say $\{u,v\}$. The red hyperedges $\{a,u,v\}$ and $\{b,u,v\}$ can be added to obtain the structure $\mathrm{N}$ not in the boundary.    Actually then $\mathrm{U}_{\downarrow \sigma(\T)}$ is isomorphic to the Rado graph.
 
 Let $F=\{a,b\}\subseteq U$ with $a\, \, \, \, \nedge{S}\, \, \, \, b$. For  $b\, \, \, \, \nedge{S}\, \, \, \, x $ let $\B$ be the type $\langle \{b\}\tr x\rangle$. Then $\rro(\B)=\rro(\U)$. Let $\C$ be the type $\langle \{a,b\}\tr y\rangle$ for which $\{a,b,y\}$ forms a blue hyperedge. Then $\mathrm{U}_{\downarrow \sigma(\T)}$ is isomorphic to the Rado graph. Let $\{u,v\}$ be a graph edge in $\sigma(\B)$ for which $u\, \, \, \, \nedge{S}\, \, \, \, b\, \, \, \, \nedge{S}\, \, \, \, v$ with $S=\{b,u,v\}$. There is no function $g\in \mathrm{G}_{\{b\}}$ because then the structure $\mathrm{U}_{\downarrow \{a,b, g(u),g(v)\}}$ would be a monomorphic copy of $\mathrm{M}$ to which a red hyperedge with set of  vertices $\{b,g(u),g(v)\}$ could not be added.

}\end{example}

Irreducible structures in a given language do in general not have the property of complete structures that the ones on a smaller set of elements can be embedded into the ones on a larger set of elements. Even in the case in which for every structure in the boundary all of the monomorphic copies of it are also in the boundary. This necessitates in Theorem \ref{thm:Knfrreegen}  to have an infinite boundary.

\begin{example}\label{ex:nelinenine}
\normalfont{Let $\mathfrak{B}$ be the class of all irreducible 3-uniform hypergraphs on exactly nine vertices. 
Let $\mathfrak{A}$ be the age of all finite 3-uniform hypergraphs which do not embed any one of the hypergraphs in $\mathfrak{B}$. The age $\mathfrak{A}$ has free amalgamation. Let $\mathrm{U}$ be the countable homogeneous structure whose age is $\mathfrak{A}$.  Let $\mathrm{A}$ be an irreducible 3-uniform hypergraph with five vertices and a set $E$ of  five hyperedges, such that the set of their complements forms a (graph) pentagon,  denoted $\overline{\mathrm{A}}$.  Note that  the structure obtained from $\mathrm{A}$ by removing a vertex is not irreducible.  Because $\mathrm{A}\in \mathfrak{A}$ we may assume that  the set of vertices $A$ of $\mathrm{A}$ is a subset of the set $U$ of vertices of $\mathrm{U}$.  Let $\T=\langle A\tr x\rangle$ be the type of $\mathrm{U}$ for which $\mathrm{U}_{\downarrow A}$ is  $\mathrm{A}$ and for which for $\{a,b\}\subseteq A$ the set $\{x,a,b\}$ is a hyperedge of $\mathrm{U}$ if and only if $\{a,b\}$ forms an edge of the pentagon $\overline{\mathrm{A}}$. Then $\rro(\T)$ contains a copy of $\mathrm{A}$ but does not contain a copy of the 3-uniform hypergraph $\mathrm{B}$ on four vertices and having four hyperedges. Let $\S=\langle S\tr y\rangle $ be the type of $\mathrm{U}$ for which $\mathrm{U}_{\downarrow S}$ is isomorphic to $\mathrm{B}$ and for which $\{y,c,d\}$ is a hyperedge of $\mathrm{U}$ for all $\{c,d\}\subseteq S$ with $c\not=d$.  Then $\rro(\S)$ does not contain a copy of $\mathrm{A}$ but does contain a copy of $\mathrm{B}$. It follows from Theorem \ref{thm:maincecess} that the homogeneous 3-uniform hypergraph $\mathrm{U}$ is divisible. 
}\end{example}

The partial order of the ranks of the typesets of a homogeneous structure has a maximum, the age of the structure. Example \ref{ex:strmelmeld} shows that the linear order of the ranks of a free amalgamation homogeneous structure can be the order of the rationals with a maximum added. It should be clear that the example can be adapted to show that any countable linear order with a maximum can be the linear order of the ranks of some homogeneous structure.  To adapt the example for the non binary case is possible but requires a bit more work.

\begin{example}\label{ex:strmelmeld}\normalfont{

Let $\boldsymbol{L}$ be the relational language with two binary, symmetric simple graph relations $E$ and $F$. As well as with a binary  oriented graph relation $O$. For $5\leq n\in\omega$ let $\mathrm{A}(n)$ be the $\boldsymbol{L}$-structure on $n$ vertices for which the $E$-graph forms a cycle and all of the other two element subsets are $F$-edges. $\mathrm{B}(n,m)$ is an $\boldsymbol{L}$-structure with $n+m$ vertices  which consists of an $\mathrm{A}(n)$-structure and an $\mathrm{A}(m)$-structure.  There exists a exactly one vertex $v$ in the $\mathrm{A}(m)$-structure of $\mathrm{B}(n,m)$ so that: 
\begin{enumerate}
\item For every vertex $x\in A(n)$ and every vertex $v\not=y\in A(m)$ exists an oriented $O$-edge from $x\to y$. 
\item For every vertex $x\in A(n)$ exists an oriented $O$-edge from $v(\to x)$. 
\end{enumerate}
Note that each of the structures $\mathrm{B}(n,m)$ is irreducible. The set $A(n)$ is a conformal subset of $\mathrm{B}(n,m)$ and it is the only conformal subset of $\mathrm{B}(n,m)$. (See Definition \ref{defin:onfyp}.)
Let $\mathrm{B}'(m)$ be an $\boldsymbol{L}$-structure with $m+1$ vertices containing an $\mathrm{A}(m)$-structure and one additional vertex $x$. There exists a vertex $v\in A(m)$ so that: For every vertex $v\not=y\in A(m)$ exists an oriented $O$-edge from $x\to y$. There exists an oriented $O$-edge from $v$ to $x$. 

Let $\alpha$ be a bijection from $\{n\mid 5\leq n\in \omega\}$ to the set of rational numbers. A structure $\mathrm{B}(n,m)$ is an element of the boundary if $\alpha(n)\geq \alpha(m)$. Let $\mathrm{U}$ be the homogeneous structure in language $\boldsymbol{L}$ having this boundary. 

Let $\T=\langle T\tr x\rangle$ be a type of $\mathrm{U}$. Let $r$ be the smallest rational for which there exists a structure $\mathrm{A}(m)$ embedded into $T$ with $\alpha(m)=r$ and for which $\mathrm{A}(m)$ together with $x$ induces a $\mathrm{B}'(m)$ structure in $\mathrm{U}$. The age of $\mathrm{U}$ is then the class of finite $\boldsymbol{L}$-structures which do not embed an $\mathrm{A}(n)$ with $\alpha(n)\geq r$.  If such a structure $\mathrm{A}(m)$ does not exists then the age of $\sigma(\T)$ is equal to the age of $\mathrm{U}$.  Then $\mathrm{U}$ is a free amalgamation homegeneous relational structure for which the $\subseteq$-order of the set of ranks of its types is order isomorphic to the rationals together with a maximum. 
}\end{example}


\begin{thebibliography}{LANVT}





\bibitem{Fra} R. \Fra, {\em Theory of Relations}, Studies in Logic and the Foundations of Mathematics, {\bf 145}, North-Holland Publishing Co., Amsterdam (2000).
\bibitem{Fraisse}
R.Fra\"\i ss\'e,  {\em Theory of relations, studies in logic and foundations of mathematics}.  118 (1986), Elsevier Science Publishing Co., Inc., U.S.A.
\bibitem{Hodges}
W. Hodges, {\em Model Theory}. Cambridge University Press, June 2008.  


\bibitem{Henson}
W. Henson, {\em Countable homogeneous relational structures and $\aleph_0$-categorical theories}, Journal of Symbolic Logic, \textbf(37), Issue 3 (1972),     494-500 


\bibitem{KomRo} P. Komj\'ath, V. R\"{o}dl, V.  {\em Coloring of universal graphs}, Graphs and  Combinatorics {\textbf 2}, Issue 1 (1986) 55-60. 

\bibitem{EZS1}
 M. El-Zahar, N. Sauer,  {\em The Indivisibility of the Homogeneous  $K_n$-free graphs},   Journal of Combinatorial Theory, Series
$B$, {\bf 47} (1989), no. 2, 162-170.

\bibitem{EZS2}
 M. El-Zahar, N. Sauer,  {\em On the divisibility of homogeneous directed graphs}, Candian J. Math. (\textbf{45}) (2), 1993 pp. 284-294. 
 
 \bibitem{EZS3}
  M. El-Zahar, N. Sauer,  {\em Ramsey-type properties of relational structures}, Discrete Mathematics, \textbf{94} (1991)  1-10. North-Holland.  

\bibitem{SaCan}
N.Sauer {\em Canonical vertex partitions}, Combinatorics Probability 
and Computing, \textbf{12} (2003), issue 5-6, pp 671-704.


\bibitem{ZahSauer1}M. EL-Zahar, N.W. Sauer, {\em On the Divisibility of Homogeneous Hypergraphs.}  Combinatorica {\bf 14} (2) (1994) 1-7. 

\bibitem{Dobrinen} Natasha Dobrinen, {\em The Ramsey theory of Henson graphs}, arXiv: 1901.06660v2 [math.CO] 27 Apr 2019. 

\bibitem{Honza} Jan Hubi\v{c}ka, Jaroslav Ne\v{s}et\v{r}il, {\em All those Ramsey classes}, arXiv: 1606.07979v4 [math.CO] 25 Jun 2016. 

\bibitem{Zucker} Andy Zucker, {\em Big Ramsey degrees and topological dynamics}, Groups, Geometry and Dynamics (2018), 235-276.  


\bibitem{Cameronage}
P. J. Cameron, {\em The age of a relational structur}, Directions in Infinite
Graph Theory and Combinatorics (ed. R. Diestel), Topic in Discrete Math.  \textbf{3},
49-67, North-Holland, Amsterdam, 1992. 

\bibitem{Cameronoligo}
P. J. Cameron, {\em Oligomorphic Permutation Groups}, London Math. Soc. Lecture Notes \textbf{152}, Cambridge Univ. Press, 1990.


\bibitem{Cameronrand}
P.J. Cameron, {\em The random graph},  Algorithms and Combinatorics, Springer, New York, 1997, \textbf{14}, 333-351. 

\bibitem{Cameronpigeon}
P.J. Cameron, {\em Generalized Pigeonhole Properties of Graphs and Oriented Graphs}, Europ. J. Combinatorics (2002) \textbf{23}, 257-274. 


\bibitem{Pos-Cop}
C. Laflamme, M. Pouzet, N. Sauer, R. Woodrow, {\em The poset of copies for automorphism groups of countable relational structures}, submitted to the Rosenberg Special Issue. It is available on arXiv: arXiv:2002.04771v1 [math.CO]. 

\bibitem{Siblings}
C. Laflamme, M. Pouzet, N. Sauer, R. Woodrow, {\em Siblings of an $\aleph_0$-categorical relational structure}, Submitted to Banff proceedings, editor: Lionel Nguyen Van Th\'{e}. It is available on arXiv: arXiv:1811.04185v2 [math.LO]. 





\end{thebibliography}
\end{document}